\tikzset{main node/.style={circle,draw,minimum
size=0.3em,inner sep=0.5pt}} 
\tikzset{state node/.style={circle,draw,minimum
size=2em,fill=blue!20,inner sep=0pt}} 
\tikzset{small node/.style={circle,draw,minimum size=0.5em,inner
sep=2pt,font=\sffamily\bfseries}}
\theoremstyle{plain} 
\newtheorem{theorem}{Theorem}[section]
\newtheorem{lemma}[theorem]{Lemma} 
\newtheorem{prop}[theorem]{Proposition}
\newtheorem{cor}[theorem]{Corollary}
\theoremstyle{definition} 
\newtheorem{defn}[theorem]{Definition}
\newtheorem{exa}[theorem]{Example} 
\theoremstyle{remark} 
\newtheorem{rmk}[theorem]{Remark}
\numberwithin{equation}{subsection}
\newcommand{\al}{\alpha}
\newcommand{\be}{\beta} 
\newcommand{\ep}{\varepsilon}
\newcommand{\Q}{{\mathbb Q}} 
\newcommand{\R}{{\mathbb R}}
\newcommand{\Z}{{\mathbb Z}} 
\newcommand{\C}{{\mathbb C}}
\newcommand{\ra}{\rightarrow} 
\newcommand{\bfw}{\mathbf{w}} 
\newcommand{\Be}{\mathcal B}
\newcommand{\fh}{V}
\newcommand{\Aut}{\text{\rm Aut}}
\newcommand{\coplanar}{coplanar\xspace} 
\newcommand{\simproots}{\Pi}
\newcommand{\posroots}{\Phi^+} 
\newcommand{\allroots}{\Phi}
\newcommand{\Sym}{\text{\rm Sym}}
\newcommand{\rootht}{\text{\rm ht}}
\newcommand{\macd}[2]{j^{#1}_{#2}({\rm{sgn}})}
\newcommand{\se}{\subseteq}
\newcommand{\inverse}{^{-1}}
\newcommand{\abs}[1]{\lvert #1 \rvert}
\newcommand{\exterior}[1]{\mathop{\mathpalette\exterior@{#1}}}
\newcommand{\exterior@}[2]{%
% raise to the baseline
\raisebox{\depth}{%
% select the script size relative to the current font
\fontsize{\sf@size}{0}%
% nullify the math surround
\m@th
% typeset \bigwedge, but using \textstyle in displays
$\ifx#1\displaystyle\textstyle\else#1\fi\bigwedge$}%
% the exponent, a bit nearer to the symbol
^{\mspace{-2mu}#2}%
% remove the \scriptspace
\kern-\scriptspace } 
\title{Orthogonal roots, Macdonald representations, and quasiparabolic
sets} 
\author{R.M. Green}
\address{Department of Mathematics, University of Colorado Boulder, Campus Box
395, Boulder, Colorado, USA, 80309}
\email{rmg@colorado.edu}
\author{Tianyuan Xu}
\address{Department of Mathematics and Statistics, University of Richmond,
Richmond, Virginia, USA, 23173}
\email{tianyuan.xu@richmond.edu}
\keywords{root system, Macdonald
    representation, quasiparabolic set, canonical basis, quantum isomorphism}
    \subjclass{Primary: 20F55; Secondary: 05E18, 05E10, 06A07.}
\begin{document} \maketitle

\begin{abstract}
Let $W$ be a simply laced Weyl group of finite type and rank $n$.  If $W$ has
type $E_7$, $E_8$, or $D_n$ for $n$ even, then the root system of $W$ has
subsystems of type $nA_1$.  This gives rise to an irreducible Macdonald
representation of $W$ spanned by $n$-roots, which are products of $n$ orthogonal
roots in the symmetric algebra of the reflection representation.  We prove that
in these cases, the set of all maximal sets of orthogonal positive roots has the
structure of a quasiparabolic set in the sense of Rains--Vazirani.  The
quasiparabolic structure can be described in terms of certain quadruples of
orthogonal positive roots which we call crossings, nestings, and alignments.
This leads to nonnesting and noncrossing bases for the Macdonald representation,
as well as some highly structured partially ordered sets. We use the $8$-roots
in type $E_8$ to give a concise description of a graph that is known to be
non-isomorphic but quantum isomorphic to the orthogonality graph of the $E_8$
root system.
\end{abstract}

%\tableofcontents

\section{Introduction}

If $W$ is a finite simply laced Weyl group, then it is possible to find a basis
of the reflection representation $\fh$ of $W$ that consists of orthogonal
positive roots when $W$ has type $E_7$, $E_8$, or $D_n$ for $n$ even. The goal
of this paper is to demonstrate that the set $X=X(W)$ of such bases has a rich
combinatorial structure, both by identifying $X$ with a subset of a Macdonald
representation of $W$, and by regarding $X$ as a quasiparabolic set in the sense
of Rains and Vazirani \cite{rains13}.  A {quasiparabolic set} for a Weyl group
$W$ is a $W$-set equipped with an integer-valued height function satisfying two
axioms that specify how the action of a reflection changes the height. The
axioms generalize properties satisfied by quotients of arbitrary Coxeter groups
by their parabolic subgroups that allow one to deform the action of Coxeter
group on the quotient to create a corresponding module for the Iwahori--Hecke
algebra of the group. 

Let $W$ be a finite Weyl group with root system $\allroots$ and Dynkin diagram
$\Gamma$. Let $\fh$ be the reflection representation of $W$ defined over $\Q$
(see Section \ref{sec:basics}), and let $\fh^*$ be the dual of $V$ over $\Q$.
The space of all rational-valued polynomial functions on $V$ is then the
symmetric algebra $\Sym(\fh^*)$, which is a $W$-module via the contragredient
action $(w\cdot\phi)(x) = \phi(w^{-1}(x))$.   Let $\Psi$ be a {\it subsystem} of
$\allroots$, meaning that $\emptyset \ne \Psi \subset \allroots$ and that $\Psi$
is also a root system, and let $\posroots$ and $\Psi^+$ be the set of positive
roots in $\allroots$ and $\Psi$, respectively. Following \cite{macdonald72}, we
use the (positive definite) inner product on $\fh$  to identify $\fh$ with
$\fh^*$, and we define the {\it Macdonald representation} $\macd{\Gamma}{\Psi} =
\macd{\allroots}{\Psi}$ of $W$ given by $\Psi$ to be the cyclic $\Q W$-submodule
of $\Sym(\fh^*)\cong \Sym(\fh)$ generated by $\pi_\Psi$, where $$ \pi_\Psi =
\prod_{\al \in \Psi^+} \al .$$ A short argument shows that the Macdonald
representation is an absolutely irreducible $W$-module.

When $W$ has type $E_7, E_8$, or $D_n$ for $n$ even, $\Phi$ contains subsystems
of type $nA_1$.  The set $\Psi^+$ for any such subsystem $\Psi$ consists of $n$
orthogonal positive roots, and we call an element of the form $w.\pi_\Psi$ (for
$w \in W$) an {\it $n$-root}. Thus, an $n$-root has the form $\al = \prod_{i =
1}^n \be_i$ where the elements $\be_i$ are orthogonal roots. Conversely, since
$W$ acts transitively on the set of maximal  sets of orthogonal roots (Lemma
\ref{lem:transitive}), every product of $n$ orthogonal roots is an $n$-root.
The transitivity of this $W$-action also implies that any two subsystems of type
$nA_1$ in $\Phi$ give rise to the same Macdonald representation, which we will
thus simply denote as $\macd{\Phi}{nA_1}$. The  representation
$\macd{\Phi}{nA_1}$ and the $n$-roots within it are the central objects of study
in this paper, and we summarize their definition below.

\begin{defn}
    \label{def:mac}
    Let $\Phi$ be a root system of type $E_7, E_8$, or $D_n$ for $n$ even. Let
    $W$ be the Weyl group of $\Phi$, and let $V$ be the reflection
    representation of $W$. We denote the Macdonald representation
    $\macd{\allroots}{\Psi}\subset \Sym(\fh^*)\cong \Sym(\fh)$ arising from any
    subsystem $\Psi$ of type $nA_1$ in $\Phi$ by $\macd{\Phi}{nA_1}$. We call
    each element of the form $\al=\prod_{i=1}^n \be_i\in \macd{\Phi}{nA_1}$
    where $\be_1,\cdots,\be_n$ are orthogonal roots of $\Phi$ an \emph{$n$-root}
    of $W$. 
\end{defn}

Given an $n$-root $\al=\prod_{i=1}^n\be_i$, the factors $\be_i$ are unique up to
reordering and multiplication by nonzero scalars, because they are the
irreducible factors of $\al$ in the unique factorization domain $\Q[\al_1,
\al_2, \ldots, \al_n]$, where the $\al_i$ correspond to the simple roots of
$\allroots$.  We say $\al$ is {\it positive} if all the factors $\be_i$ may be
taken to be positive or, equivalently, if evenly many of the components are
negative. If $\al=\prod_{i=1}^n \be_i$ is a positive root with all the $\be_i$
positive, we call the roots $\be_i$ the \emph{components} of $\al$. An $n$-root
$\al$ is {\it negative} if $-\al$ is positive. It is immediate from the
definitions that if $\al$ is an $n$-root then $-\al$ is also an $n$-root, and
that precisely one of $\al$ and $-\al$ is positive, similarly to how roots
appear in positive-negative pairs in ordinary root systems. If $\al$ is either a
root or an $n$-root, we define the \emph{absolute value} of $\al$, denoted
$\abs{\al}$, to be the positive element in the pair $\{\al,-\al\}$.  (We may
    view both ordinary roots and $n$-roots as special cases of \emph{$k$-roots},
    by which we mean products in $\Sym(\fh^*)$ of $k$ orthogonal roots of $W$
    for any fixed integer $1\le k\le n$.  The notion of $k$-roots plays an
    important role in our previous papers \cite{gx3} and \cite{green23}, and we
    will occasionally speak of $4$-roots, even when $n\neq 4$, in this paper.)

The set $\Phi^+_n$ of all positive $n$-roots admits a natural $W$-action given
by $w(\al)=\abs{w(\al)}$. Similarly, the set $X$ of sets of $n$ orthogonal roots
admits a natural $W$-action given by
$w(\{\be_1,\cdots,\be_n\})=\{\abs{w(\be_1)}, \cdots, \abs{w(\be_n)}\}$.  The map
sending each set $\{\be_1,\cdots,\be_n\}\in X$ to the product $\prod_{i=1}^n
\be_i\in \Phi^+_n$ respects these two $W$-actions, and we use it to identify $X$
with $\Phi^+_n$. In other words, we identify each positive $n$-root with its set of components.

We show that $X$ has the structure of a quasiparabolic set under a suitable
height function $\lambda$ (Theorem \ref{thm:qp}).  As we explain in sections
\ref{sec:crossnest} and \ref{sec:quasi}, to understand this structure it is
useful to consider quadruples $Q = \{\be_1, \be_2, \be_3, \be_4\}$ of four
orthogonal roots with the property that $(\be_1 + \be_2 + \be_3 + \be_4)/2$ is
also a root. We call such quadruples \emph{coplanar quadruples}, and show that
they fall into three distinct types, called {\it crossings}, {\it nestings}, and
{\it alignments}.  The height function $\lambda$ is given by
$\lambda(\gamma)=C(\gamma)+2N(\gamma)$, where $C(\gamma)$ and $N(\gamma)$ are
the numbers of crossings and nestings in $\gamma$, respectively, for each
$\gamma\in X$. The terms ``crossing'', ``nesting'', and ``alignment'' are
motivated by the theory of perfect matchings (Remark \ref{rmk:cna}).

As a quasiparabolic set, the set $X$ is equipped with a partial order $\le_Q$,
which is the weakest partial order such that $x\le_Q rx$ whenever $r$ is a
reflection such that $\lambda(x)\le\lambda(rx)$. We use the theory of
quasiparabolic sets to prove that $X$ has a unique maximally aligned $n$-root,
$\theta_A$, and a unique maximally nesting $n$-root, $\theta_N$; these two
elements are the unique minimal and maximal element of $X$ with respect to
$\le_Q$, respectively (Proposition \ref{prop:minmax}).  The $n$-roots that avoid
alignments, or the \emph{alignment-free} $n$-roots, form a quasiparabolic set
$X_I \subset X$ of a certain maximal parabolic subgroup $W_I$ of $W$. The
corresponding partial order on $X_I$ allows us to show that $X$ also has a
maximally crossing element, $\theta_C$, and that it is the unique minimal
element of $X_I$ (Proposition \ref{prop:neststab} (iii)). The set $X_I$ has a
natural bipartite structure, with the $n$-roots in $X_I$ with even levels and
those with odd levels partitioning $X_I$ into two equal-sized components that
are interchanged by every reflection in $W_I$ (Remark \ref{rmk:xht}).

The alignment-free $n$-roots in $X$ are one of three families that avoid a
particular type of coplanar quadruple, the other two being those that avoid
crossings and avoid nestings. Section \ref{sec:non} studies these three families
together. We use a version of Bergman's diamond lemma to show that the
noncrossing elements of $X$ form a basis for the Macdonald representation, as do
the nonnesting positive $n$-roots (Theorem \ref{thm:indep}). In addition, the
noncrossing basis behaves somewhat like a
simple system in a root system (Theorem \ref{thm:positive}) and may be viewed as
a canonical basis (Remark \ref{rmk:othersimple}).  The nonnesting
basis is naturally parametrized by a particular interval $[1, w_N]$ in the weak
Bruhat order of $W$ and has the structure of a distributive lattice (Theorem
\ref{thm:nonnest}). The element $w_N$, which we call the \emph{nonnesting
element} of $W$, is the unique shortest element taking the maximally crossing
$n$-root $\theta_C$ to the maximally aligned element $\theta_A$.  We note that
the {noncrossing basis} is essentially the same as a basis that appears in the
work of Fan \cite[Section 6]{fan97} and others, although the realization of the
basis as polynomials in roots seems to be new.

We say that two positive $n$-roots are \emph{sum equivalent} or {\it
$\sigma$-equivalent} if their sets of components have the same sum. We show that
the $\sigma$-equivalence classes are in canonical bijection with the nonnesting
and the noncrossing elements of $X$ in the following way: each
$\sigma$-equivalence class is an interval $[\be_1,\be_2]_Q=\{\gamma\in
X:\beta_1\le_Q \gamma\le_Q \beta_2\}$ in the quasiparabolic
order $\le_Q$, where the minimal element $\be_1$ and maximal element $\be_2$ are
the unique nonnesting and noncrossing $n$-roots in the class, respectively. The alignment-free
elements in $X$ form a single $\sigma$-equivalence class that is maximal with
respect to a natural order (Proposition \ref{prop:sumclass}).  Any set of
$\sigma$-equivalence class representatives forms a basis for the Macdonald
representation, and the change of basis matrix between any two such bases, such
as the one between the nonnesting and noncrossing basis, is always unitriangular
with integer entries once the bases are suitably ordered (Theorem \ref{thm:nesttri}).

As we explain in Section \ref{sec:abstract_fa}, the feature-avoiding $n$-roots
and $\sigma$-equivalence classes in the set $X$ can all be characterized
abstractly using the quasiparabolic structure of $X$, without using the
combinatorics of sets of roots.  This raises
the possibility of extending the notions of alignment-free, noncrossing, and
nonnesting elements to more general quasiparabolic sets.

The theory developed in this paper has various natural connections to many
previous works. In type $D_n$ for an even integer $n=2k$, the $n$-roots
correspond naturally to perfect matchings of the set $[n]=\{1,2,\dots,n\}$, and
the crossings, nestings, and alignments in $n$-roots recover the corresponding
notions in the theory of matchings.  Besides matchings, the quasiparabolic set
$X$ can be identified with the set of fixed-point free involutions in $S_n$,
which is one of the original motivating examples of a quasiparabolic set
\cite[Section 4]{rains13}.  The level function $\lambda=C+2N$ appears as a
useful statistic on matchings in \cite{simion96,diaz09,cheon15}, and has a
natural interpretation in the context of combinatorial game theory
\cite{irie21}; see Section \ref{sec:D}.

The Macdonald representation $\macd{\Phi}{nA_1}$ in type $D_n$ for $n$ even
recovers a Specht module in a very natural way: the action of the Weyl group $W$
factors through an obvious sign-forgetting map (Equation \eqref{eq:phimap}) to
induce an $S_n$-module structure on the Macdonald representation for the
symmetric group $S_n$, and the resulting module is isomorphic to a realization
of the Specht module corresponding to the two-row partition $(k,k)$ due to
Rhoades \cite{rhoades17} (Proposition \ref{prop:specht}). The noncrossing bases
and nonnesting bases have been studied extensively as the web basis and the
Specht basis, respectively, of the Specht module
\cite{russell19,im22,hwang23,kujawa24}; see Section \ref{sec:D}.

In type $E_7$, the Macdonald representation contains 135 positive 7-roots and
has degree 15 \cite[Proposition 4.12]{colombo10}. This representation has a long
history, going back the work of Coble in 1916 \cite[(65)]{coble1916} on the
G\"opel variety. There are also applications of $7$-roots to quantum information
theory and supergravity \cite[Section IV G]{cerchiai10}, \cite{duff07}. In this
case, the elements of the quasiparabolic set $X_I$ are in canonical bijection
with the 30 distinct labellings of the Fano plane, and the maximal and minimal
elements are given by $\{136, 145, 127, 235, 246, 347, 567\}$ and $\{123, 145,
246, 257, 347, 356, 167\}$, respectively (Proposition \ref{prop:e7top}).

In type $E_8$, there are 2025 positive $8$-roots. The bases of orthogonal roots
have applications to physics, where they can be used to prove the
Kochen--Specker theorem in quantum mechanics \cite{waegell15} (Section
\ref{sec:coxeter_elements}).  The Macdonald representation in this case has
degree $50$, but seems not to have been studied much before. The quasiparabolic
set $X_I$ in this case is a bipartite structure with 240 elements. As we explain
in Section \ref{sec:e8}, either partite component can be used to define a graph
that has an interesting relationship with two
strongly regular graphs studied recently by Schmidt \cite{schmidt24} (Remark
\ref{rmk:quantumiso}). Those two
graphs each have 120 vertices, and they have the remarkable property of being
quantum isomorphic (in the sense of \cite{atserias19}) but not isomorphic.

The properties of $n$-roots summarized in the last three paragraphs are
explained in more detail in Section \ref{sec:examples}. It is worth noting that
while these properties are type-specific, we have attempted to develop the
theory of $n$-roots in a type-independent way in the other parts of the paper in
general. In particular, we give a uniform proof for the fact that 
the positive $n$-roots form a quasiparabolic set in types $E_7, E_8,$ and $D_{2k}$ (Theorem \ref{thm:qp}).
While it is possible to
verify the theorem for types $E_7$ and $E_8$ using direct computation (which we
did, using the software SageMath \cite{sagemath}) and then separately deduce the
theorem for type $D_{2k}$ by considering the $S_{2k}$-action on its fixed-point
free involutions, our uniform proof of the theorem relying on Proposition
\ref{prop:miracle} has the advantage of being more conceptual and revealing more
details about the action of reflections on $n$-roots. Some of these details will
be further used in a forthcoming paper \cite{gx6}, where we will generalize
aspects of this paper and study quasiparabolic sets arising from $k$-roots for
more general values of $k$.

The rest of the paper is organized as follows. We recall the basics of root
systems in Section \ref{sec:roots}.  Section \ref{sec:crossnest} introduces the
key notions of crossings, nestings, and alignments in an $n$-root, and we
connect them to the theory of quasiparabolic sets in Section \ref{sec:quasi}.
 Section \ref{sec:non}
studies the alignment-free, noncrossing, and nonnesting $n$-roots.  Section
\ref{sec:examples} discusses the details of $n$-roots in the types $D_n$ with
$n$ even, $E_7$ and $E_8$. Section \ref{sec:conclusions} concludes the paper,
and includes discussions of the Poincar\'e polynomial of the set $X$ and of
orbits of $n$-roots under the action of Coxeter elements.

\section{Review of root systems}\label{sec:roots}

In this section, we recall the basic properties of simply laced root
systems of finite type. We will mostly follow the notation of the first two
chapters of \cite{humphreys90}, except in the case of type $E_7$, where we
follow \cite[Section 4]{green08}.

\subsection{Weyl groups, root systems, and reflection representations} 
\label{sec:basics}
The root systems in this paper will be irreducible simply laced root systems of finite type,
whose Dynkin diagrams are shown in Figure \ref{fig:ade}.  The vertices of the
Dynkin diagram $\Gamma$ index the {\it simple roots} $\simproots = \{\alpha_i :
    i \in \Gamma\}$. The {\it root lattice} $\Z\simproots$ is the free
    $\Z$-module on $\simproots$. We define a $\Z$-bilinear form $B$ on
    $\Z\simproots \times \Z\simproots$ by $$ B(\alpha_i, \alpha_j) =
    \begin{cases} 2 & \text{\ if \ } i = j;\\ -1 & \text{\ if \ $i$\ and\ $j$\
        are\ adjacent\ in\ } \Gamma;\\ 0 & \text{\ otherwise.}\\
    \end{cases} $$ If $\al_i \in \simproots$, then we define the {\it simple
    reflection} $s_i=s_{\al_i}$ to be the $\Z$-linear operator $\Z\simproots
    \rightarrow \Z\simproots$ given by $$ s_i(\be) = \be - B(\al_i, \be)\al_i
    .$$ The {\it Weyl group} $W = W(\Gamma)$ is the finite group generated by
    the simple reflections.

% Dynkin diagrams

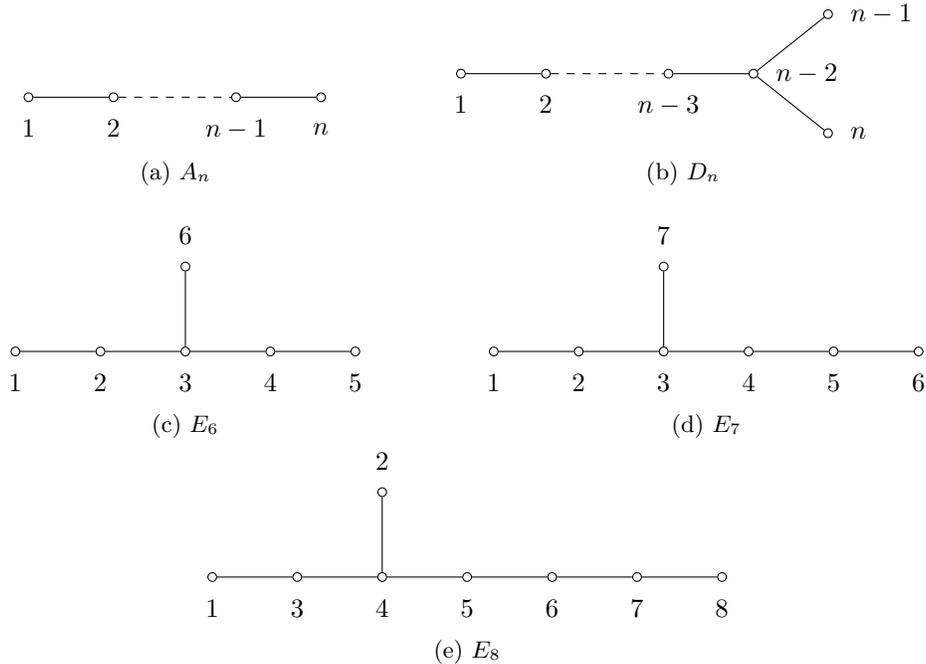
\begin{figure}[h!]
    \centering
    \subfloat[{$A_n$}]
    {
\begin{tikzpicture}

            \node[main node] (1) {};
            \node[main node] (2) [right=1cm of 1] {};
      %\node (33) [right=0.5cm of 32] {$\cdots$};
            \node[main node] (3) [right=1.5cm of 2] {};
            \node[main node] (4) [right=1cm of 3] {};

            \node (11) [below=0.1cm of 1] {\small{$1$}};
            \node (22) [below=0.1cm of 2] {\small{$2$}};
            \node (33) [below=0.1cm of 3] {\small{$n-1$}};
            \node (44) [below=0.15cm of 4] {\small{$n$}};

            \path[draw]
            (1)--(2)
            (3)--(4);

            \path[draw,dashed]
            (2)--(3);
\end{tikzpicture}
}
\quad\quad\quad
\subfloat[{$D_n$}]
{
\begin{tikzpicture}

            \node[main node] (1) {};
            \node[main node] (2) [right=1cm of 1] {};
      %\node (33) [right=0.5cm of 32] {$\cdots$};
            \node[main node] (3) [right=1.5cm of 2] {};
            \node[main node] (4) [right=1cm of 3] {};
            \node[main node] (5) [above right=0.7cm and 0.9cm of 4] {};
            \node[main node] (6) [below right=0.7cm and 0.9cm of 4] {};

            \node (11) [below=0.1cm of 1] {\small{$1$}};
            \node (22) [below=0.1cm of 2] {\small{$2$}};
            \node (33) [below=0.1cm of 3] {\small{$n-3$}};
            \node (44) [right=0.1cm of 4] {\small{$n-2$}};
            \node (55) [right=0.1cm of 5] {\small{$n-1$}};
            \node (66) [right=0.1cm of 6] {\small{$n$}};

            \path[draw]
            (1)--(2)
            (3)--(4)--(5)
            (4)--(6);

            \path[draw,dashed]
            (2)--(3);
\end{tikzpicture}
}\\
\subfloat[{$E_6$}]
{        
\begin{tikzpicture}
    \node[main node] (1) {};
    \node[main node] (2) [right=1cm of 1] {};
            \node[main node] (3) [right=1cm of 2] {};
            \node[main node] (4) [right=1cm of 3] {};
            \node[main node] (5) [right=1cm of 4] {};
            \node[main node] (6) [above=1cm of 3] {};
            %\node[main node] (8) [right=1cm of 6] {};

            \path[draw]
            (1)--(2)--(3)--(4)--(5)
            (3)--(6);

            \node (11) [below=0.1cm of 1] {\small{$1$}};
            \node (22) [below=0.1cm of 2] {\small{$2$}};
            \node (33) [below=0.1cm of 3] {\small{$3$}};
            \node (44) [below=0.1cm of 4] {\small{$4$}};
            \node (55) [below=0.1cm of 5] {\small{$5$}};
            \node (66) [above=0.1cm of 6] {\small{$6$}};
\end{tikzpicture}
}
\quad\quad\quad
\subfloat[{$E_7$}]
{
\begin{tikzpicture}
    \node[main node] (1) {};
    \node[main node] (2) [right=1cm of 1] {};
            \node[main node] (3) [right=1cm of 2] {};
            \node[main node] (4) [right=1cm of 3] {};
            \node[main node] (5) [right=1cm of 4] {};
            \node[main node] (6) [right=1cm of 5] {};
            \node[main node] (7) [above=1cm of 3] {};
            %\node[main node] (8) [right=1cm of 6] {};

            \path[draw]
            (1)--(2)--(3)--(4)--(5)--(6)
            (3)--(7);

            \node (11) [below=0.1cm of 1] {\small{$1$}};
            \node (22) [below=0.1cm of 2] {\small{$2$}};
            \node (33) [below=0.1cm of 3] {\small{$3$}};
            \node (44) [below=0.1cm of 4] {\small{$4$}};
            \node (55) [below=0.1cm of 5] {\small{$5$}};
            \node (66) [below=0.1cm of 6] {\small{$6$}};
            \node (77) [above=0.1cm of 7] {\small{$7$}};
\end{tikzpicture}
}
\quad
\subfloat[{$E_8$}]
{
\begin{tikzpicture}
    \node[main node] (1) {};
            \node[main node] (3) [right=1cm of 1] {};
            \node[main node] (4) [right=1cm of 3] {};
            \node[main node] (5) [right=1cm of 4] {};
            \node[main node] (6) [right=1cm of 5] {};
            \node[main node] (7) [right=1cm of 6] {};
            \node[main node] (8) [right=1cm of 7] {};
            \node[main node] (2) [above=1cm of 4] {};

            \path[draw]
            (1)--(3)--(4)--(5)--(6)--(7)--(8)
            (2)--(4);

            \node (11) [below=0.1cm of 1] {\small{$1$}};
            \node (22) [above=0.1cm of 2] {\small{$2$}};
            \node (33) [below=0.1cm of 3] {\small{$3$}};
            \node (44) [below=0.1cm of 4] {\small{$4$}};
            \node (55) [below=0.1cm of 5] {\small{$5$}};
            \node (66) [below=0.1cm of 6] {\small{$6$}};
            \node (77) [below=0.1cm of 7] {\small{$7$}};
            \node (88) [below=0.1cm of 8] {\small{$8$}};

\end{tikzpicture}
}
\caption{Dynkin diagrams of irreducible simply laced Weyl groups}
\label{fig:ade}
\end{figure}

The {\it root system} of $W$  is the set $\allroots = \{w(\al_i) : \al_i \in
\Pi, w \in W\}$. Each element of $\allroots$ is called a \emph{root}. The group
$W$ acts transitively on $\allroots$, and the form $B$ is $W$-invariant in the
sense that $B(\al, \be) = B(w(\al), w(\be))$ for all $w \in W$ and all $\al, \be
\in \allroots$. We say two roots are $\al,\beta\in \allroots$ are
\emph{orthogonal} if $B(\al,\be)=0$.

Each root $\alpha\in \Phi$ gives rise to a \emph{reflection} in $W$, which is
the self-inverse $\Z$-linear operator $s_\al:\Z\Pi\ra\Z\Pi$ generalizing simple
reflections and given by the formula 
\begin{equation}
    \label{eq:ref} s_\al(\be) = \be - B(\al, \be)\al.
\end{equation} The reflections in $W$ form a single conjugacy class. The
    $\Q$-vector space $\fh := \Q \otimes_\Z \Z\simproots$ affords the {\it
    reflection representation} of $W$, where each reflection $s_\al$ acts by
    Equation (\ref{eq:ref}). 

A subset $\Psi$ of $\Phi$ is called a \emph{subsystem} if $\Psi$ is itself a
root system (in the sense of \cite[Section 1.2]{humphreys90}). For each root
$\al\in \Phi$, the set $\Phi_\al:=\{\beta\in \Phi:B(\al,\beta)=0\}$ is
automatically a subsystem.

\subsection{Positive and simple systems}

\label{sec:basics2}
A subset $\Delta$ of a root system $\allroots$ is called a \emph{simple system}
if $\Delta$ is a vector space basis for $V$ and every root is a linear
combination of $\Delta$ with coefficients of like sign.  Given such a system
$\Delta$, we say a root $\al = \sum_{i \in \Gamma} c_i \al_i$ is \emph{positive}
(with respect to $\Delta$) if $c_i\ge 0$ for all $i$, and we call $\al$
\emph{negative} if $c_i\le 0$ for all $i$. The sets of positive and negative
roots are denoted by $\Phi_\Delta^+$ and $\Phi_\Delta^-$, and they are setwise
negations of each other.  For each root $\al$, the integer $\rootht(\al) =
\sum_{i \in \Gamma} c_i$ is called the {\it height} of $\al$. The set
$\{\al_i:c_i\neq 0\}$ is called the \emph{support} of $\al$ (with respect to
$\Delta$).

Each simple system $\Delta$ also gives rise to a partial order $\le_\Delta$ on
$\Phi$, which is defined by the condition that $\al \leq \be$ if and only if
$\be - \al$ is a nonnegative linear combination of $\Delta$.  With respect to
this partial order, $\allroots$ has a unique maximal element, $\theta_\Delta$,
called the {\it highest root}.

The set of simple roots $\Pi$ is an example of a simple system, and the
corresponding set of positive roots is an example of a \emph{positive system}.
Recall that each positive system $P$ contains a unique simple system, which is
the set $\Delta_P$ of all elements in $P$ that cannot be expressed as positive
linear combinations of other elements of $P$. The maps $\Delta\mapsto
\Phi_\Delta^+$ and $P\mapsto \Delta_P$ are mutually inverse bijections between
the sets of simple systems and positive systems in $\Phi$ \cite[Theorem
1.3]{humphreys90}. The simple systems of $\Phi$ are conjugate to each other
under the action of $W$, as are the positive systems.

From now on, we choose $\Pi$ as the default simple system of $\Phi$ and choose
$\Phi^+_\Pi$ as the default positive system. For each notion defined above
relative to a general simple system $\Delta$, an omission of the subscript in
the corresponding notation will indicate that $\Pi$ is chosen as $\Delta$.  For
example, the set of positive roots with respect to $\Pi$ will be denoted by
$\Phi^+$.

For any subsystem $\Psi$ of $\Phi$, the set $\Psi^+:=\Psi\cap\Phi^+$ is
automatically a positive system of $\Psi$. We call $\Psi^+$ the \emph{induced
positive system} of $\Psi$ with respect to $\Phi^+$. We call the corresponding
simple system of $\Psi$ the \emph{induced simple system} of $\Psi$. 

If $\Psi$ is a subsystem of the form $\Phi_\al$, i.e., if $\Psi$ is the
subsystem of roots orthogonal to a root $\al$, then we denote the induced simple
system by $\Pi_\al$. The elements of $\Pi_\al$ are thus the positive roots
orthogonal to $\al$ that cannot be expressed as a nonnegative linear combination
of other positive roots orthogonal to $\al$. Note that the elements of $\Pi_\al$
may not all lie in $\Pi$, but every simple root $\al_i\in \Pi$ that lies in $\Psi$ and is
orthogonal to $\al$ will lie in $\Pi_\al$. We denote the Weyl group corresponding to
$\Pi_\al$ by $W_\al$, so that $\Phi_\al$ is the root system of $W_\al$. It is
known that $W_\al$ is the stabilizer of $\al$ under the action of $W$, and
$W_\al$ is a direct product of irreducible simply laced Weyl groups
\cite{brink96,allcock13}. We will recall the known Dynkin type of the groups
$W_\al$ in the next subsection.

\begin{exa}\label{exa:d5}
Let $W$ be the Weyl group of type $D_5$ and let $\al_2$ be the simple root of
$W$ under the labelling used in Figure \ref{fig:ade}. The induced simple system
of $\Phi_{\al_2}$ is given by the disjoint union $$ \Pi_{\al_2}=\{\be_1=\al_2 + 2\al_3 +
\al_4 + \al_5\} \ \sqcup \ \{\be_2=\al_1 + \al_2 + \al_3, \be_3=\al_4,
\be_4=\al_5\},$$ 
where each root in one part of the union is orthogonal to every root in the other part. 
The Weyl group $W_{\al_2}$ corresponding to $\Phi_{\al_2}$ is the direct product
$W(A_1)\times W(A_3)$ of the Weyl groups of types $A_1$ and $A_3$, generated
respectively by the sets $\{s_{\be_1}\}$ and $\{s_{\be_2}, s_{\be_3},
s_{\be_4}\}$.
\end{exa}

\subsection{Explicit constructions}
\label{sec:constructions}

We now recall well-known explicit realizations of the root systems of types
$A,D$ and $E$ in coordinate systems. Let $\ep_1, \ep_2, \ldots, \ep_n$ be the
usual standard basis of the Euclidean space $\R^n$.  The vectors $\{\ep_i - \ep_j : 1 \leq i \ne j
\leq n\}$ form a root system of type $A_{n-1}$.  The simple roots $\Pi=\{\al_1,
\al_2, \ldots, \al_{n-1}\}$ are given by $\al_i = \ep_i - \ep_{i+1}$.  A root
$\ep_i - \ep_j$ is positive if $i < j$ and negative if $i > j$.  The highest
root (with respect to $\Pi$) is $\ep_1 - \ep_n$.  The bilinear form $B$ is the
Euclidean inner product on $\R^n$, and two roots are orthogonal if and only if they have
disjoint support. The Weyl group is isomorphic to $S_n$ and acts by permuting
the standard basis $\ep_1,..,\ep_n$.  The stabilizer $W_\al$ of each root
$\al$ is a Weyl group of type $A_{n-3}$, which is trivial if $n \leq 3$.

The vectors $\{\pm\ep_i \pm \ep_j : 1 \leq i < j \leq n\}$ form a root system of
type $D_n$.  The simple roots $\{\al_1, \al_2, \ldots, \al_n\}$ are given by
$\al_i = \ep_i - \ep_{i+1}$ for $i < n$, and $\al_n = \ep_{n-1} + \ep_{n}$. If $i
< j$, then the roots $\ep_i \pm \ep_j$ are positive, and the roots $-\ep_i \pm
\ep_j$ are negative. The highest root is $\ep_1 + \ep_2$.  The bilinear form $B$
is the Euclidean inner product, and two roots $\al$ and $\be$ are orthogonal if and
only if either (a) $\al$ and $\be$ have disjoint support or (b) $\al$ and $\be$
have the same support and $\al \ne \pm \be$.  The Weyl group has order
$2^{n-1}n!$, and acts by signed permutations of the standard basis, with the
restriction that each element effects an even number of sign changes
\cite[Section 2.10]{humphreys90}.  The stabilizer of a root is a Weyl group of
type $A_1 + D_{n-2}$, meaning $W(A_1) \times W(D_{n-2})$, where we interpret
$D_3$ as $A_3$ and $D_2$ as $A_1 + A_1$.  There is a well-known homomorphism
$\phi$ from $W(D_{2n})$ to the symmetric group $S_{2n}$ resulting from
forgetting the signs in a signed permutation; it is given by the following
formula:
\begin{equation}
    \label{eq:phimap} \phi(s_i) = \begin{cases} (i, i+1) & \text{if\ } i < 2n;\\
    (2n-1, 2n) & \text{if } i=2n.
      \end{cases}
  \end{equation}

Let $\ep_0, \ep_1, \ldots, \ep_7$ be the standard basis of $\R^8$. The root
system of type $E_7$ may be regarded as a subset of $\R^8$ as follows.  There
are 56 roots of the form $\pm 2(\ep_i - \ep_j)$ where $0 \leq i \ne j \leq 7$,
and there are 70 roots of the form $\sum_{i = 0}^7 \pm \ep_i$, where the signs
are chosen so that there are four $+$ and four $-$. The simple roots are $\al_1,
\al_2, \ldots, \al_7$, where $\al_i = 2(\ep_i - \ep_{i+1})$ for $1 \leq i \leq
6$, and $$ \al_7 = - \ep_0 - \ep_1 - \ep_2 - \ep_3 + \ep_4 + \ep_5 + \ep_6 +
\ep_7 .$$ A root of the form $2(\ep_i - \ep_j)$ is positive if $0 < i < j$ or $j
= 0$, and negative otherwise. A root of the form $\sum_{i = 0}^7 \pm \ep_i$ is
positive if and only if $\ep_0$ occurs with negative coefficient. The highest
root is $2(\ep_1 - \ep_0)$.  The bilinear form $B$ is $1/4$ of the Euclidean
inner 
product.  The stabilizer of a root is a Weyl group of type $D_6$. We call the
coordinates $\ep_i$ {\it Fano coordinates} because they are particularly
compatible with the combinatorics of the Fano plane; this will be important in
Section \ref{sec:e7}.

Let $\ep_1, \ep_2, \ldots, \ep_8$ be the standard basis of $\R^8$. The root
system of type $E_8$ may be regarded as a subset of $\R^8$ as follows.  There
are 112 roots of the form $\pm 2(\ep_i \pm \ep_j)$ where $1 \leq i \ne j \leq
8$, and there are 128 roots of the form $\sum_{i = 1}^8 \pm \ep_i$, where the
signs are chosen so that the total number of $-$ is even. The simple roots are
$\al_1, \al_2, \ldots, \al_8$, where $$ \al_1 = \ep_1 - \ep_2 - \ep_3 - \ep_4 -
\ep_5 - \ep_6 - \ep_7 + \ep_8 ,$$ $\al_2 = \ep_1 + \ep_2$, and $\al_i =
\ep_{i-1} - \ep_{i-2}$ for all $3 \leq i \leq 8$. If $k$ is the largest integer
such that $\ep_k$ appears in $\al$ with nonzero coefficient $c$, then $\al$ is
positive if and only if $c > 0$. The highest root is $2(\ep_7 + \ep_8)$.  The
bilinear form $B$ is $1/4$ of the Euclidean inner product.  The stabilizer of a root
is a Weyl group of type $E_7$. We call the coordinates $\ep_i$ the {\it standard
coordinates} for $E_8$.

\section{Combinatorics of coplanar quadruples}\label{sec:crossnest}

A {\it matching} of $[2n] := \{1, 2, \ldots, 2n\}$ is a collection of pairwise
disjoint size-2 subsets, or \emph{2-blocks}, of $[2n]$. The matching is {\it
perfect} if the union of the 2-blocks is the whole of $[2n]$.  If $1 \leq a < b
< c < d \leq 2n$, then a {\it crossing} is a subset of the matching of the form
$\{\{a, c\}, \ \{b, d\}\}$, a {\it nesting} is a subset of the form $\{\{a, d\},
\ \{b, c\}\}$, and an {\it alignment} is a subset of the form $\{\{a, b\}, \
\{c, d\}\}$.  For convenience, we will often denote each 2-block  $\{a,b\}$ in a
matching simply by $ab$ from now on.

In this section, we generalize crossings, nestings, and alignments to the notion
of \coplanar quadruples in the 
context of orthogonal sets of roots (Definition \ref{def:nestcross}, Remark
\ref{rmk:cna}). 
As explained in the introduction, we can naturally identify each $n$-root $\al=\prod_{i=1}^n\beta_i$ in the
Macdonald representation $\macd{\Phi}{nA_1}$ (Definition \ref{def:mac}) with
the set of its orthogonal components, and
it turns out that \coplanar
quadruples are very useful for understanding the action of $W$ on the 
orthogonal sets arising this way. 
We develop some key properties of  \coplanar quadruples in Theorem
\ref{thm:cna}, which is the main result of Section \ref{sec:crossnest}.  We also
show that crossings, nestings, and alignments can be distinguished from each
other based on the heights of the roots that they contain (Proposition
\ref{prop:htseq}), and we give a precise description of the ways in which two
coplanar quadruples can overlap (Proposition \ref{prop:overlap}).

\subsection{Coplanar quadruples}
We gather a few facts about $n$-roots and define coplanar quadruples in this
subsection.  The following two results on maximal orthogonal sets of roots are
well known, but we include proofs for ease of reference.

\begin{lemma}\label{lem:dmax}
    Let $W$ be a Weyl group of type $D_n$ for $n$ even. Suppose that $n=2k\ge
    4$.
    \begin{enumerate}
        \item[{\rm (i)}]{Every maximal orthogonal set of roots is of the form
     \begin{align*} \{& \pm(\ep_{i_1} + \ep_{j_1}),\
             \pm(\ep_{i_1} - \ep_{j_1}),\   \pm(\ep_{i_2} +
             \ep_{j_2}),\  \pm(\ep_{i_2} - \ep_{j_2}),\
             \ldots, & \pm(\ep_{i_k} + \ep_{j_k}),\
 \pm(\ep_{i_k} - \ep_{j_k}) \}\end{align*} where we have 
 $\{i_1,j_1,\dots,i_k,j_k\}=\{1,2,\dots, 2k-1,2k\}$ as sets and
  the signs are chosen independently.}
\item[{\rm (ii)}]{Every maximal orthogonal set of positive roots is of the form
     $$ \{ \ep_{i_1} + \ep_{j_1}, \ \ep_{i_1} - \ep_{j_1}, \ \ep_{i_2} +
     \ep_{j_2}, \ \ep_{i_2} - \ep_{j_2}, \ \ldots, \ep_{i_k} + \ep_{j_k}, \
 \ep_{i_k} - \ep_{j_k} \} ,$$ and these sets are in bijection with perfect
 matchings $\{ \{i_1, j_1\}, \ \{i_2, j_2\},\ \ldots, \{i_k, j_k\}\}$ of the set
 $[n]$ that satisfy $i_r < j_r$ for all $1 \leq r \leq k$.}
\end{enumerate}
\end{lemma}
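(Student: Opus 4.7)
The plan is to read off the structure of any maximal orthogonal set $S$ of roots directly from the orthogonality criterion recalled in Section \ref{sec:constructions}: two $D_n$ roots $\pm\ep_i\pm\ep_j$ and $\pm\ep_k\pm\ep_l$ are orthogonal precisely when their supports $\{i,j\}$ and $\{k,l\}$ are either disjoint, or equal with the roots not being negatives of each other.

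First, I would associate to $S$ the graph $G$ on vertex set $[n]$ whose edges are the supports $\{i,j\}$ of the roots occurring in $S$. The criterion immediately forces $G$ to have maximum degree one, since if some index $i$ lay in two edges $\{i,j\}$ and $\{i,m\}$ with $j\ne m$, then the corresponding two roots would share exactly the single index $i$, violating orthogonality. Hence $G$ is a (partial) matching on $[n]$. For each edge $\{i,j\}$ of $G$, the four roots supported on $\{i,j\}$ are $\pm(\ep_i+\ep_j)$ and $\pm(\ep_i-\ep_j)$; orthogonality within this set is equivalent to avoiding opposite pairs, so $S$ contains at most one representative from $\{\pm(\ep_i+\ep_j)\}$ and at most one from $\{\pm(\ep_i-\ep_j)\}$, giving at most two roots with each support.

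Next, I would upgrade both bounds to equalities via maximality. If some vertex $i$ of $[n]$ were uncovered by $G$, then since $n=2k\ge 4$ is even, at least one other vertex $j$ would also be uncovered, and $\ep_i+\ep_j$ would be a root orthogonal to every element of $S$, contradicting maximality; so $G$ is a perfect matching. Likewise, if some edge $\{i,j\}$ of $G$ contributed only one root to $S$, the missing sibling among $\pm(\ep_i\pm\ep_j)$ could be adjoined, again a contradiction. Together these give the description in (i), and in particular $|S|=n$, consistent with the fact that pairwise orthogonal nonzero vectors in $V$ are linearly independent. Part (ii) then follows at once: among the four roots supported on $\{i_r,j_r\}$ with $i_r<j_r$, precisely $\ep_{i_r}+\ep_{j_r}$ and $\ep_{i_r}-\ep_{j_r}$ are positive, so selecting positive signs throughout yields the advertised bijection with perfect matchings of $[n]$ having $i_r<j_r$ in each block.

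There is no serious obstacle beyond bookkeeping; the only care needed is in presenting the support-pair graph cleanly and in verifying that the parity argument producing a second uncovered vertex genuinely uses $n$ even.
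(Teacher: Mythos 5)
Your argument is correct and follows essentially the same route as the paper's proof: use the support criterion for orthogonality in type $D_n$ together with maximality to force collinear pairs $\{\ep_i\pm\ep_j\}$ on a perfect matching of $[n]$, with part (ii) obtained by selecting the positive representatives. The support-graph bookkeeping and the explicit parity remark about uncovered vertices are just a more detailed presentation of the same idea.
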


\begin{proof}
Let $R$ be a maximal orthogonal set of roots. By symmetry, we may reduce to the
case where all the roots in $R$ are positive.  If $R$ contains the root $\ep_i
\pm \ep_j$, then $R$ must also contain the root $\ep_i \mp \ep_j$, because
otherwise $R \cup \{\ep_i \mp \ep_j\}$ would be a set of orthogonal roots that
was larger than $R$. It follows that $R$ consists of $n/2$ pairs of roots such
that each pair has the same support, and roots from distinct pairs have disjoint
supports. This completes the proof of (i).

Part (ii) follows from (i) and the fact that if $1 \leq i < j \leq n$, then the
roots $\ep_i \pm \ep_j$ are positive.
\end{proof}

It follows from Lemma \ref{lem:dmax} (ii) that a maximal orthogonal set of
positive roots in type $D_n$ (for $n$ even) contains the root $\ep_i - \ep_j$ if
and only if it contains the root $\ep_i + \ep_j$. We will call such a pair of
roots $\{\ep_i \pm \ep_j\}$ a {\it collinear pair} of roots.

\begin{lemma}\label{lem:transitive}
If $W$ is a Weyl group of type $E_7$, $E_8$, or $D_n$ with $n \geq 4$ even, then
$W$ acts transitively on the set $\mathcal{M}(W)$ of unordered maximal sets of
orthogonal roots of $W$.
\end{lemma}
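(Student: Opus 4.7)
The plan is a short induction on the rank of $W$, leveraging the stabilizer structure recalled in Section \ref{sec:constructions}. Given $M, M' \in \mathcal{M}(W)$, I would pick any $\alpha \in M$ and $\alpha' \in M'$ and use the transitivity of $W$ on $\Phi$ to find $w \in W$ with $w\alpha = \alpha'$. Replacing $M$ by $wM$, I may assume $\alpha' \in M \cap M'$. Then $M \setminus \{\alpha'\}$ and $M' \setminus \{\alpha'\}$ both lie in $\mathcal{M}(W_{\alpha'})$: membership in $\Phi_{\alpha'}$ is immediate from pairwise orthogonality, and maximality inside $\Phi_{\alpha'}$ is forced, since any root of $\Phi_{\alpha'}$ that could enlarge $M \setminus \{\alpha'\}$ would also enlarge $M$ itself, contradicting the maximality of $M$ in $\Phi$. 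Hence it suffices to show that $W_{\alpha'}$ acts transitively on $\mathcal{M}(W_{\alpha'})$.

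From Section \ref{sec:constructions}, the stabilizer $W_{\alpha'}$ has type $A_1 + D_{n-2}$ in $D_n$, type $D_6$ in $E_7$, and type $E_7$ in $E_8$. I would handle these by the rank-induction as follows. For a reducible stabilizer $W(A_1) \times W(D_{n-2})$, every max orthogonal set splits uniquely as a disjoint union of max orthogonal sets in the two factors (since the component subsystems span orthogonal subspaces of $V$), so transitivity factors over the product; the $A_1$ factor is trivially transitive, and the $D_{n-2}$ factor (with $n-2$ even and $\geq 2$) follows by induction, with the base case $D_2 = A_1 + A_1$ trivial. The $E_7$ case then reduces to the already-proved $D_6$ case, and the $E_8$ case reduces to $E_7$.

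The main step to confirm carefully is the bookkeeping of the reduction, i.e., the correspondence $M \mapsto M \setminus \{\alpha'\}$ between elements of $\mathcal{M}(W)$ containing $\alpha'$ and elements of $\mathcal{M}(W_{\alpha'})$, together with the disjoint-union factorization in the reducible stabilizer case. Once these structural observations are made, the whole argument is driven purely by the transitivity of $W$ on $\Phi$ and the known types of root stabilizers, and no type-specific case analysis of $E_7$ or $E_8$ is needed.
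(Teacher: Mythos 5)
Your argument is correct, and for types $E_7$ and $E_8$ it is essentially the paper's proof: conjugate so that the two maximal sets share a root $\alpha$, note that deleting $\alpha$ lands in $\mathcal{M}(W_\alpha)$ (your maximality-transfer argument is exactly the right justification, since any root of $\Phi_\alpha$ orthogonal to $M\setminus\{\alpha\}$ is automatically orthogonal to all of $M$), and then invoke transitivity of the root stabilizer, of type $D_6$ resp.\ $E_7$; the paper phrases this as a surjection of orbit sets $[R]\mapsto[R\cup\{\alpha\}]$, but it is the same reduction. Where you genuinely differ is type $D_n$: the paper settles it directly by observing that the group of signed permutations with evenly many sign changes acts transitively on the sets described in Lemma \ref{lem:dmax}(i), whereas you push the same stabilizer induction through the reducible stabilizer of type $A_1 + D_{n-2}$, which requires the additional (and correctly supplied) observation that a maximal orthogonal set of a reducible system is the disjoint union of maximal orthogonal sets of its mutually orthogonal components, so that transitivity factors over the product, down to a trivial base case in small rank. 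Both routes are sound; the paper's base case is shorter, while yours is more uniform, using only transitivity of $W$ on roots and the stabilizer types recalled in Section \ref{sec:constructions}, with no type-specific computation. The only step left implicit, which you may want to state, is that the element $u \in W_{\alpha'}$ carrying $M\setminus\{\alpha'\}$ to $M'\setminus\{\alpha'\}$ fixes $\alpha'$ and hence carries $M$ to $M'$.
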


\begin{proof}
Recall from Section \ref{sec:roots} that the group $W(D_n)$ can be regarded as
the group of signed permutations of $n$ objects in which there is an even number
of sign changes. Such a group acts transitively on the set described in Lemma
\ref{lem:dmax} (i).

Now suppose $W$ has type $E_7$, and let $\al$ be a root of $W$. Then by Section
2, the stabilizer $W_\al$ is a Weyl group of type $D_6$ whose root system is the
set $\Phi_\alpha$ of roots that are orthogonal to $\alpha$.  Since $W_\alpha$
acts transitively on $\Phi_\alpha$, it follows that there is a well-defined
bijection $[R]\mapsto [R\cup\{\al\}]$ from the set of $W_\alpha$-orbits on
$\mathcal{M}(W_\alpha)$ to the set of $W$-orbits on $\mathcal{M}(W)$, where the
orbit $[R]$ of every 6-tuple $R\in \mathcal{M}(W_\alpha)$ is sent to the orbit
$[R\cup\{\al\}]$ of the 7-tuple $R\cup\{\al\}$. It then follows that $W$ acts
transitively on $\mathcal{M}(W)$, as desired.

Finally, if $W$ has type $E_8$, then for each root $\alpha$ of $W$ the stablizer
$W_\alpha$ is of type $E_7$. A similar argument to the one above shows that $W$
acts transitively on $\mathcal{M}(W)$ because $W_\alpha$ acts transitively on
$\mathcal{M}(W_\alpha)$.
\end{proof}

We are ready to define coplanar quadruples. The following proposition offers
multiple equivalent characterizations of them.

\begin{prop}\label{prop:coll4}
Let $Q = \{\be_1, \be_2, \be_3, \be_4\}$ be a set of four mutually orthogonal
roots for a simply laced Weyl group $W$ with root system $\allroots$, and let
$\gamma = (\be_1 + \be_2 + \be_3 + \be_4)/2$. The following are equivalent:
\begin{enumerate}
    \item[{\rm (i)}]{$\gamma$ is a root (i.e., the elements of $Q$ sum to twice
     a root);}
 \item[{\rm (ii)}]{$Q$ is contained in a subsystem $\Psi$ of type $D_4$;}
 \item[{\rm (iii)}]{there is a unique subsystem $\Psi$ of type $D_4$ such that
     $(Q\cup\{\gamma\}) \subset \Psi \subseteq \allroots$, and we have $$ \Psi =
     \left\{\pm \be_1, \ \pm \be_2, \ \pm \be_3, \ \pm \be_4, \ (\pm \be_1 \pm
     \be_2 \pm \be_3 \pm \be_4)/2\right\} ,$$ where all the signs are chosen
 independently.}
\end{enumerate}
\end{prop}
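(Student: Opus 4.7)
The approach is to prove the cycle (iii) $\Rightarrow$ (ii) $\Rightarrow$ (i) $\Rightarrow$ (iii). The first implication is immediate from the definition of subsystem. For (ii) $\Rightarrow$ (i), I would observe that a maximal orthogonal set of roots in $D_4$ has exactly $2k = 4$ elements by Lemma \ref{lem:dmax}(i), so the four mutually orthogonal roots in $Q$ are already maximal in $\Psi$ and have the form
$\{s_1(\ep_{i_1}+\ep_{j_1}),\ s_2(\ep_{i_1}-\ep_{j_1}),\ s_3(\ep_{i_2}+\ep_{j_2}),\ s_4(\ep_{i_2}-\ep_{j_2})\}$
for signs $s_r \in \{\pm 1\}$ in suitable coordinates for $\Psi$. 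A direct computation shows their sum equals $\pm 2\ep_a \pm 2\ep_b$ for some $a \in \{i_1,j_1\}$ and $b \in \{i_2,j_2\}$, so $\gamma$ is of the form $\pm \ep_a \pm \ep_b$, a root of $\Psi$.

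For the main implication (i) $\Rightarrow$ (iii), I would first use $B(\be_i, \be_j) = 2\delta_{ij}$ to compute $B(\be_i, \gamma) = 1$, so $s_{\be_i}(\gamma) = \gamma - \be_i$. Iterating the pairwise commuting reflections $s_{\be_1}, \ldots, s_{\be_4}$ on $\gamma$ then produces all $2^4 = 16$ half-sum vectors $(\pm \be_1 \pm \be_2 \pm \be_3 \pm \be_4)/2$, each a root of $\Phi$ by $W$-invariance. Together with $\pm \be_i$, this realizes the proposed 24-element set $\Psi$ as a subset of $\Phi$.

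To identify $\Psi$ as a $D_4$ subsystem, let $V' = \Span_\Q(Q)$, a 4-dimensional subspace. I would show $\Psi = \Phi \cap V'$: any root $\al = \sum_j c_j \be_j \in \Phi \cap V'$ satisfies $c_j = B(\al, \be_j)/2 \in \{0, \pm 1/2, \pm 1\}$ by the simply laced bound $B(\al, \be_j) \in \{0, \pm 1, \pm 2\}$ (with $\pm 2$ forcing $\al = \pm \be_j$), and orthogonality gives $B(\al, \al) = 2\sum_j c_j^2 = 2$; hence either $\al = \pm \be_j$ for some $j$, or all $c_j = \pm 1/2$. Since $V'$ is preserved by every reflection $s_\al$ with $\al \in \Psi$, the set $\Psi = \Phi \cap V'$ is closed under reflections and is therefore a subsystem; a simply laced rank-$4$ root system with 24 roots is necessarily of type $D_4$. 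Uniqueness then follows because any $D_4$ subsystem $\Psi'$ of $\Phi$ containing $Q$ has rank 4 and contains the basis $\{\be_1,\be_2,\be_3,\be_4\}$ of $V'$, so $\Span_\Q(\Psi') = V'$, giving $\Psi' \subset \Phi \cap V' = \Psi$, and matching cardinalities forces $\Psi' = \Psi$.

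The main obstacle is establishing that all 16 half-sum vectors are genuine roots of $\Phi$, not merely vectors of $B$-length squared 2; the reflection trick described above reduces this precisely to the single assumption $\gamma \in \Phi$, which is what makes the equivalence clean.
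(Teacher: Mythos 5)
Your proof is correct, and its central move for (i) $\Rightarrow$ (iii) --- computing $B(\be_i,\gamma)=1$ and applying the commuting reflections $s_{\be_1},\dots,s_{\be_4}$ to $\gamma$ to produce all sixteen half-sums $(\pm\be_1\pm\be_2\pm\be_3\pm\be_4)/2$ as roots --- is exactly the paper's key step. Where you diverge is in how the resulting $24$-element set is recognized and shown unique: the paper simply chooses Euclidean coordinates $\be_1=\ep_1-\ep_2$, $\be_2=\ep_1+\ep_2$, $\be_3=\ep_3-\ep_4$, $\be_4=\ep_3+\ep_4$ and reads off $\Psi=\{\pm\ep_i\pm\ep_j\}$, obtaining uniqueness from the observation that any subsystem containing $Q\cup\{\gamma\}$ must already contain all $24$ roots; you instead prove the intrinsic characterization $\Psi=\Phi\cap\Span_\Q(Q)$ by a coefficient-and-norm computation (using $c_j=B(\al,\be_j)/2$ and $\sum_j c_j^2=1$) and then invoke the classification of simply laced rank-$4$ root systems to identify $D_4$ from the count of $24$ roots. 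Your route buys a slightly stronger and cleaner uniqueness statement (the unique $D_4$ subsystem containing $Q$ alone, realized as $\Phi$ intersected with the span of $Q$), at the cost of appealing to the ADE classification in rank $4$, which the paper avoids by its explicit coordinate choice. Likewise for (ii) $\Rightarrow$ (i), you use Lemma \ref{lem:dmax}~(i) to write an arbitrary orthogonal quadruple in $D_4$ explicitly and sum it directly, whereas the paper checks one quadruple and transports it by the transitivity of Lemma \ref{lem:transitive}; both are sound, and yours avoids the transitivity input.
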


\begin{proof}
We first prove that (i) implies (iii).  Assume that $\gamma$ is a root. Any root
subsystem containing $Q\cup \{\gamma\}$ also contains $s_{\be_i}(\be_i) =
-\be_i$ for each $i$, as well as all roots of the form $$ s_{\be_1}^{\epsilon_1}
s_{\be_2}^{\epsilon_2} s_{\be_3}^{\epsilon_3} s_{\be_4}^{\epsilon_4}(\gamma) ,$$
where we have $\epsilon_i \in \{0, 1\}$ for all $i$. The 16 roots listed above
can also be expressed as $$ (\pm \be_1 \pm \be_2 \pm \be_3 \pm \be_4)/2 .$$ We
have constructed all 24 roots in the set $\Psi$ listed in the statement, and
this is the cardinality of a root system of type $D_4$. To prove (iii) it now
suffices to show that $\Psi$ is a root system of type $D_4$. Because the
elements of $Q$ are orthogonal vectors of the same length, we may choose
Euclidean coordinates $\be_1 = \ep_1 - \ep_2$, $\be_2 = \ep_1 + \ep_2$, $\be_3 =
\ep_3 - \ep_4$, and $\be_4 = \ep_3 + \ep_4$. With respect to these coordinates,
we have $$ \Psi = \{\pm \ep_i \pm \ep_j : 1 \leq i < j \leq 4\} ,$$ which indeed
forms a root system of type $D_4$, as desired.

It is immediate that (iii) implies (ii).

In the usual notation for the simple roots of type $D_4$, the orthogonal roots
$\al_1$, $\al_3$, $\al_4$, and $\al_1 + 2\al_2 + \al_3 + \al_4$ sum to $2 \al$,
where $\al$ is the root $\al_1 + \al_2 + \al_3 + \al_4$. Lemma
\ref{lem:transitive} applied to a root system of type $D_4$ then implies that
the sum of every orthogonal quadruple of roots in a root system of type $D_4$ is
equal to $2 \al'$ for some root $\al'$. It follows that (ii) implies (i), which
completes the proof.
\end{proof}

\begin{defn}\label{def:coll4}
A set $Q$ of four mutually orthogonal roots for a simply laced Weyl group is
called a {\it \coplanar quadruple} if it satisfies the equivalent conditions of
Proposition \ref{prop:coll4}. In this case, we call the set $\Psi$ from
Proposition \ref{prop:coll4} \emph{the $D_4$-subsystem associated to $Q$}.
\end{defn}

Coplanar quadruples can be described explicitly in coordinates in type $D$:
\begin{lemma}
    \label{lem:d4coplanar} Let $W$ be a Weyl group of type $D_n$ for $n$ even
      and $n\ge 4$. Then four positive roots of $W$ form a coplanar quadruple if
      and only if they consist of two collinear pairs of roots, i.e., if and
      only if they are of the form $\ep_i+\ep_j, \ep_i-\ep_j,
      \ep_k+\ep_l,\ep_k-\ep_l$ for four distinct indices $i,j,k,l$ where $i<j$
      and $k<l$.
  \end{lemma}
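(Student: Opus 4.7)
My plan is to prove both directions by exploiting the explicit orthogonality criterion in type $D_n$ from Section \ref{sec:constructions}: two roots of $D_n$ are orthogonal iff their supports are disjoint or they have the same support but are not negatives of each other.

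For the backward direction, I would simply verify directly that if the four roots are $\ep_i\pm\ep_j$ and $\ep_k\pm\ep_l$ with $\{i,j\}\cap\{k,l\}=\emptyset$, then they are pairwise orthogonal by the criterion above, and their sum is $2(\ep_i+\ep_k)$, which is twice the positive root $\ep_i+\ep_k$. So by Proposition \ref{prop:coll4}(i), they form a coplanar quadruple.

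For the forward direction, suppose $Q=\{\be_1,\be_2,\be_3,\be_4\}$ is a coplanar quadruple of positive roots, so $\gamma:=(\be_1+\be_2+\be_3+\be_4)/2$ is a root. Since every root of $D_n$ has exactly two nonzero coordinates (each equal to $\pm 1$), the vector $2\gamma$ has exactly two nonzero coordinates. I would then analyze the possible support patterns using the orthogonality criterion. If all four supports were pairwise disjoint, the sum would have $8$ nonzero coordinates, contradicting the fact that $2\gamma$ has only $2$. So some pair, say $\be_1$ and $\be_2$, must share support $\{i,j\}$; since they are positive and distinct, we get $\{\be_1,\be_2\}=\{\ep_i+\ep_j,\ep_i-\ep_j\}$, whose sum is $2\ep_i$. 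The remaining roots $\be_3,\be_4$ are orthogonal to both $\be_1$ and $\be_2$, hence orthogonal to $\ep_i$ and $\ep_j$, so their supports lie in $[n]\setminus\{i,j\}$. If $\be_3$ and $\be_4$ had disjoint support, then $\be_3+\be_4$ would contribute $4$ nonzero coordinates disjoint from $\{i\}$, making the total sum have $5$ nonzero coordinates, again a contradiction. Hence $\be_3$ and $\be_4$ share a common support $\{k,l\}\subseteq[n]\setminus\{i,j\}$ and are $\{\ep_k+\ep_l,\ep_k-\ep_l\}$, giving the claimed form.

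The main obstacle—if there is one—is simply keeping the case analysis organized; no subtle argument is needed, because the two-nonzero-coordinate structure of type $D$ roots and the clean orthogonality criterion do most of the work. One small point to handle carefully is ruling out a third root sharing the support $\{i,j\}$, but this is immediate since there are only two positive roots with a given two-element support.
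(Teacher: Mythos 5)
Your proposal is correct and follows essentially the same route as the paper's proof: verify the ``if'' direction by computing the sum, and for the ``only if'' direction use the support/orthogonality criterion in type $D_n$ to force two collinear pairs by counting nonzero coordinates of the sum. The paper's argument is just a slightly terser version of the same support-counting case analysis.
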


\begin{rmk}
\label{rmk:d4matching}
In the setting of Lemma \ref{lem:d4coplanar}, we may naturally identify the
coplanar quadruple $\{\ep_i\pm\ep_j,\ep_k\pm\ep_l\}$ with the matching
$\{ij,kl\}$ of the set $\{i,j,k,l\}$.
\end{rmk}

\begin{rmk}
\label{rmk:Daction}
Recall that reflections in $W(D_n)$ act on the reflection representation as
signed permutations, with $s_{\al}(\ep_i)=\ep_j$ if $\al=\ep_i-\ep_j$ and
$s_\al(\ep_i)=-\ep_j$ if $\al=\ep_i+\ep_j$.  It follows that $W(D_n)$ acts on
the set $\{\ep_1^2,\cdots,\ep_n^2\}$ as ordinary permutations, with
$s_\al(\ep_i^2)=\ep_j^2$ for all distinct $i, j$.  When $n$ is even, it then
follows from Lemmas \ref{lem:dmax} and \ref{lem:d4coplanar} that the action of
$W(D_n)$ on $n$-roots factors through the map $\phi:W(D_n)\ra S_n$ from Equation
\eqref{eq:phimap} to induce an action of $S_n$ on $n$-roots. In particular, each
reflection $r\in W(D_n)$ acts in the same way as $\phi(r)$ on every $n$-root of
$W(D_n)$.
\end{rmk}

\begin{proof}[Proof of Lemma \ref{lem:d4coplanar}]
The ``if'' implication holds since the four roots in the given form sum to twice
the positive root $\ep_i+\ep_k$. To prove the ``only if'' implication, let
$Q=\{\be_1,\be_2,\be_3,\be_4\}$ be a coplanar quadruple. Recall from Section 2
that two roots in type $D_n$ are orthogonal if and only if they have the same or
disjoint support. It follows that if no two roots in $Q$ have the same support,
then the supports of $\be_1,\be_2,\be_3,\be_4$ contain a total of eight distinct
coordinates $\ep_i$, in which case the sum $\gamma=\be_1+\be_2+\be_3+\be_4$
cannot be twice a root. We may therefore assume, without loss of generality,
that $\{\be_1,\be_2\}=\{\ep_i\pm\ep_j\}$ for some $i<j$. This implies
$\beta_1+\beta_2=2\ep_i$. The condition that $Q$ is an orthogonal set summing to
twice a root then forces us to have $\{\be_3,\be_4\}=\{\ep_k\pm\ep_l\}$ for some
elements $k,l$ distinct from $i$ and $j$ with $k<l$.
\end{proof}

The next proposition shows that the action of $W$ on $n$-roots is local to
coplanar quadruples in the following sense: whenever a reflection in $W$ does
not fix a maximal orthogonal set $R$ of roots, it must change exactly four
elements of $R$ that form a \coplanar quadruple, and it changes these four
elements to another coplanar quadruple with the same associated $D_4$-subsystem.

\begin{prop}\label{prop:four}
Let $W$ be a Weyl group of type $E_7$, $E_8$, or $D_n$ for $n$ even, let $\al$
be a root, and let $R$ be a maximal set of orthogonal positive roots. Suppose
that neither $\al$ nor $-\al$ is an element of $R$.
\begin{enumerate}
    \item[{\rm (i)}] The root $\al$ is orthogonal to all but precisely four
     elements $Q = \{\be_1, \be_2, \be_3, \be_4\}$ of $R$.  The elements of $Q$
     form a \coplanar quadruple, and we have $ 2\al = \pm \be_1 \pm \be_2 \pm
     \be_3 \pm \be_4$ for suitable choices of signs.
 \item[{\rm (ii)}] Let $\Psi$ be the $D_4$ subsystem associated to $Q$. Then we
     have $\alpha\in \Psi$, and the set $s_\al(Q)=\{s_\al(\be_i):1\le i\le 4\}$
     is also a \coplanar quadruple whose associated $D_4$-subsystem is $\Psi$.
\end{enumerate}
\end{prop}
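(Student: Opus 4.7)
Part (i) is essentially a one-line linear algebra computation, and part (ii) is a direct consequence of Proposition~\ref{prop:coll4}. The whole argument rests on a single identity, $\sum c_\beta^2 = 1$, that comes from expressing $\alpha$ in the orthogonal basis of $V$ provided by $R$.

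For part (i), the starting observation is that in each of types $E_7$, $E_8$, and $D_n$ (with $n$ even), a maximal orthogonal set of positive roots has size equal to the rank $n = \dim V$ (this is Lemma~\ref{lem:dmax}(ii) in type $D_n$, and well known in types $E_7$ and $E_8$). Hence $R$ is an orthogonal basis of $V$. Expand $\alpha = \sum_{\beta \in R} c_\beta \beta$; taking inner products with each $\beta$ and using $B(\beta, \beta) = 2$ gives $c_\beta = B(\alpha, \beta)/2$. Because $W$ is simply laced and $\alpha \neq \pm \beta$ for all $\beta \in R$, we have $B(\alpha, \beta) \in \{-1, 0, 1\}$, so $c_\beta \in \{0, \pm 1/2\}$. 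Evaluating $B(\alpha, \alpha) = 2$ yields $\sum_\beta c_\beta^2 = 1$, and since each nonzero $c_\beta^2$ equals $1/4$, exactly four of the coefficients are nonzero. Label these $\beta_1, \beta_2, \beta_3, \beta_4$ and write $2\alpha = \sum_{i=1}^4 \epsilon_i \beta_i$ with $\epsilon_i \in \{\pm 1\}$; the remaining $n - 4$ elements of $R$ are orthogonal to $\alpha$, which proves the first assertion. To promote $Q = \{\beta_1, \beta_2, \beta_3, \beta_4\}$ to a coplanar quadruple, apply Proposition~\ref{prop:coll4}(i)$\Rightarrow$(iii) to the orthogonal quadruple $\{\epsilon_1 \beta_1, \dots, \epsilon_4 \beta_4\}$, whose half-sum is the root $\alpha$; this yields a $D_4$-subsystem $\Psi$ containing $\pm \beta_i$ for all $i$, hence containing $Q$ itself. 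Proposition~\ref{prop:coll4}(ii)$\Rightarrow$(i) then shows $Q$ is a coplanar quadruple, and the uniqueness clause in (iii) identifies its associated $D_4$-subsystem with $\Psi$.

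For part (ii), Proposition~\ref{prop:coll4}(iii) lists all $24$ roots of $\Psi$ and in particular contains $\alpha = (\epsilon_1 \beta_1 + \dots + \epsilon_4 \beta_4)/2$, so $\alpha \in \Psi$. Because $\Psi$ is a root subsystem containing $\alpha$, the reflection $s_\alpha$ stabilizes $\Psi$, and since $s_\alpha$ preserves $B$, the image $s_\alpha(Q)$ is a set of four mutually orthogonal roots contained in $\Psi$. Proposition~\ref{prop:coll4}(ii)$\Rightarrow$(i) now shows that $s_\alpha(Q)$ is a coplanar quadruple, and the uniqueness asserted in (iii) (applied to the $D_4$-subsystem $\Psi$, which contains both $s_\alpha(Q)$ and the root $s_\alpha(\gamma)$ where $\gamma = (\beta_1+\beta_2+\beta_3+\beta_4)/2$) forces the associated $D_4$-subsystem of $s_\alpha(Q)$ to be $\Psi$.

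There is no serious obstacle in this proof. The simply-laced hypothesis and the assumption $\pm \alpha \notin R$ are used precisely where they are needed, namely to confine the coefficients $c_\beta$ to $\{0, \pm 1/2\}$; everything else is formal bookkeeping with Proposition~\ref{prop:coll4}.
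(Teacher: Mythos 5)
Your proof is correct, and for part (i) it takes a genuinely different route from the paper. The paper first invokes Lemma \ref{lem:transitive} to reduce to one fixed maximal set $R$ and then verifies the claim type by type in explicit coordinates (a chosen $R$ in type $D_n$, a chosen $R$ in type $E_8$ with two cases for the shape of $\al$, and an embedding of $E_7$ as $\theta^\perp$ inside $E_8$). You instead give a uniform, type-free argument: since a maximal orthogonal set of positive roots in these types is an orthogonal basis of $V$, writing $\al=\sum_{\be\in R}c_\be\be$ gives $c_\be=B(\al,\be)/2\in\{0,\pm\tfrac12\}$ (simply laced, $\pm\al\notin R$), and $B(\al,\al)=2$ forces $\sum_\be c_\be^2=1$, hence exactly four nonzero coefficients and $2\al=\pm\be_1\pm\be_2\pm\be_3\pm\be_4$; coplanarity of $Q$ then comes from Proposition \ref{prop:coll4} applied to $\{\ep_i\be_i\}$. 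This is cleaner and avoids the case analysis, at the cost of leaning on the fact that maximal orthogonal sets have cardinality equal to the rank in types $E_7$ and $E_8$ — which you flag as well known; it also follows from Lemma \ref{lem:transitive} together with the existence of an orthogonal basis of roots, so it would be worth citing that explicitly rather than leaving it as folklore. The paper's coordinate computations, by contrast, are self-contained and re-use the explicit realizations set up in Section \ref{sec:constructions}. Your part (ii) is essentially the paper's argument: $\al\in\Psi$ by Proposition \ref{prop:coll4}(iii), $s_\al$ preserves $\Psi$, and uniqueness of the $D_4$-subsystem containing $s_\al(Q)\cup\{s_\al(\gamma)\}$ identifies it with $\Psi$.
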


\begin{proof}
To prove (i), it suffices by Lemma \ref{lem:transitive} to do so for a fixed
$R$.  Suppose first that $W$ is of type $D_n$, and choose $$ R = \{ \ep_1 +
    \ep_2, \ \ep_1 - \ep_2,\ \ep_3 + \ep_4, \ \ep_3 - \ep_4, \ \ldots, \
    \ep_{n-1} + \ep_n, \ \ep_{n-1} - \ep_n \}.$$ The root $\al$ must be of the
    form $\pm \ep_i \pm \ep_j$, where $i$ and $j$ come from different parts of
    the partition $\{\{1, 2\}, \{3, 4\}, \ldots, \{n-1, n\}\}$. It follows that
    the support of $\al$ has one element in common with the support of each of
    precisely four elements of $R$ making up two collinear pairs, and that $\al$
    is orthogonal to all the other elements of $R$.  Furthermore, the roots
    $\be_1$, $\be_2$, $\be_3$, and $\be_4$ that are not orthogonal to $\al$ are
    of the form $\pm \ep_h \pm \ep_i$ and $\pm \ep_j \pm \ep_k$, where $|\{h, i,
    j, k\}| = 4$. It follows that $2\al$ can be written in the form $\pm
    \beta_1\pm\be_2\pm\beta_3\pm\beta_4$ for suitable choices of signs.

Next, suppose that $W$ has type $E_8$, and choose $$ R = \{ 2(\ep_1 + \ep_2), \
2(\ep_1 - \ep_2), \ 2(\ep_3 + \ep_4), \ 2(\ep_3 - \ep_4), \ 2(\ep_5 + \ep_6), \
2(\ep_5 - \ep_6), \ 2(\ep_7 + \ep_8), \ 2(\ep_7 - \ep_8) \}.$$ If $\al$ has the
form $2(\pm \ep_i \pm \ep_j)$, then the proof is completed using the same
argument as in type $D_8$.  The other possibility is that we have $\al = \sum_{i
= 1}^8 \pm \ep_i$, where the signs are chosen so that there is an even number of
minus signs. In this case, $\al$ is orthogonal to precisely one of the roots
$\{2(\ep_j - \ep_{j+1}), \ 2(\ep_j + \ep_{j+1})\}$, according as $\ep_j$ and
$\ep_{j+1}$ occur in $\al$ with the same or with opposite coefficients. It
follows that $\al$ is orthogonal to precisely four elements of $R$, and that $2
\al$ can be expressed in the required form.

Now suppose that $W$ has type $E_7$. By Section \ref{sec:constructions}, we may
identify the root system of $W$ with the set of roots orthogonal to the highest
root $\theta$ in the root system of type $E_8$, so that $R \cup \{\theta\}$ is a
maximal set of orthogonal roots in type $E_8$. By the above paragraph, $\al$ is orthogonal
to four of the roots in $R \cup \{\theta\}$, but one of these roots is $\theta$
itself. It follows that $\al$ is orthogonal to three elements of $R$, and that
$2 \al$ can be expressed as a signed sum of the other four elements of $R$, as
required. This completes the proof of (i).

It follows from (i) that we have $\alpha\in \Psi$, because condition (i) implies
condition (iii) in Proposition \ref{prop:coll4}. The element
$\gamma=(\be_1+\be_2+\be_3+\be_4)/2$ is a root because $Q$ is a coplanar
quadruple, and the set $s_\al(Q)$ is a \coplanar quadruple because its elements
sum to twice $s_\al(\gamma)$, which is a root because $\gamma$ is.  We also have
$s_\al(Q)\se \Psi$ because both $\al$ and $Q$ are in $\Psi$.  This implies that
$\Psi$ must be the $D_4$-subsystem associated to $s_\al(Q)$, proving (ii).
\end{proof}

\subsection{Crossings, nestings, and alignments}

We examine coplanar quadruples more closely in this subsection and classify them
into three types, namely, crossings, nestings, and alignments. As we will
explain in Remark \ref{rmk:cna}, our terminology comes from the theory of
matchings, but the following definition makes sense for all the simply laced
root systems considered in this paper.

\begin{defn}\label{def:nestcross}
Let $Q = \{\be_1, \be_2, \be_3, \be_4\}$ be a \coplanar quadruple of positive
orthogonal roots, let $\Psi$ be the $D_4$-subsystem associated to $Q$, let
$\leq$ be the partial order on $\Psi$ relative to the induced simple roots of
$\Psi$, and let $\gamma$ be the root $(\be_1 + \be_2 + \be_3 + \be_4)/2$. We say
that $Q$ is
\begin{enumerate}
    \item[{\rm (i)}]{a {\it crossing} if $\be_i \leq \gamma$ for all $i$ and $Q$
     contains the unique $\leq$-maximal element of $Q \cup (-s_\gamma(Q))$;}
 \item[{\rm (ii)}]{a {\it nesting} if $\be_i \leq \gamma$ for all $i$ and $Q$
     contains the unique $\leq$-minimal element of $Q \cup (-s_\gamma(Q))$;}
 \item[{\rm (iii)}]{an {\it alignment} otherwise.}
\end{enumerate} We also call each crossing, nesting, and alignment a
    \emph{feature} of \emph{type C}, \emph{type N}, and \emph{type A},
    respectively.
\end{defn}

\begin{theorem}\label{thm:cna}
Let $\allroots$ be a root system for a Weyl group of type $E_7$, $E_8$, or $D_n$
for $n$ even.  Let $Q$ be a \coplanar quadruple of positive roots of
$\allroots$, let $\Psi$ be the associated $D_4$-subsystem, and let $\Psi^+ =
\Psi \cap \posroots$ be the induced positive system of $\Psi$.
\begin{enumerate}
\item[{\rm (i)}]
The set $\Psi^+$ contains precisely three distinct quadruples of mutually
orthogonal roots. These quadruples are pairwise disjoint and partition $\Psi^+$.

\item[{\rm (ii)}]
The three quadruples of orthogonal roots in $\Psi^+$  are all coplanar. Among
them there is exactly one crossing, $\Psi^+_C$, exactly one nesting, $\Psi^+_N$,
and exactly one alignment, $\Psi^+_A$. In particular, the quadruple $Q$ cannot
be both a crossing and a nesting, and the three conditions in Definition
\ref{def:nestcross} are mutually exclusive.

\item[{\rm (iii)}]
Each quadruple in $\{\Psi^+_C,\Psi^+_N, \Psi^+_A\}$ uniquely determines both of
the other two.

\item[{\rm (iv)}]
{If $R$ is a set of mutually orthogonal roots that is disjoint from $\Psi$, then
either each of the three sets $\{R \cup \Psi^+_C, \ R \cup \Psi^+_N, \ R \cup
\Psi^+_A\}$ consists of mutually orthogonal roots, or none of them does.}

\item[{\rm (v)}]
The crossing $\Psi^+_C$ contains no root from the induced simple system of
$\Psi$.

\item[{\rm (vi)}]
For each $x\in \{C,N,A\}$, let $\gamma_x$ be the product of the roots in
$\Psi^+_x$, and let $\sigma(\gamma_x)$ be the sum of the components of $\gamma_x$. Then we have $\sigma(\gamma_A) < \sigma(\gamma_N) =
\sigma(\gamma_C)$ and $\gamma_C = \gamma_N + \gamma_A.$ Moreover, if $\al$ is
any component in one of the three $4$-roots $\gamma_C,\gamma_N$ and $\gamma_A$,
then the reflection $s_\al$ sends the other two $4$-roots to each other; for
example, if $\al\in \Psi^+_C$ then $s_\al$ sends $\gamma_N$ and $\gamma_A$ to
each other.
\end{enumerate}
\end{theorem}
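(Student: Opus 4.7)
The plan is to reduce every assertion to an explicit computation inside a standard Euclidean model of $D_4$. By Proposition \ref{prop:coll4}(iii), $\Psi$ has type $D_4$, so we may choose an orthonormal basis $\epsilon_1,\ldots,\epsilon_4$ of $\Span_\Q \Psi$ with $\Psi=\{\pm\epsilon_i\pm\epsilon_j: 1\le i<j\le 4\}$. Because any two positive systems of a $D_4$ root system are $W(D_4)$-conjugate and claims (i)--(vi) depend only on the intrinsic data $(\Psi,\Psi^+)$, we may refine the basis so that $\Psi^+=\{\epsilon_i\pm\epsilon_j:i<j\}$; the induced simple system then becomes $\{\epsilon_1-\epsilon_2,\epsilon_2-\epsilon_3,\epsilon_3-\epsilon_4,\epsilon_3+\epsilon_4\}$ and the highest root becomes $\theta_\Psi=\epsilon_1+\epsilon_2$.

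In this model, Lemma \ref{lem:dmax}(ii) identifies the maximal orthogonal subsets of $\Psi^+$ with perfect matchings of $\{1,2,3,4\}$, giving three quadruples $M_{12\mid 34}$, $M_{13\mid 24}$, and $M_{14\mid 23}$, where $M_{ij\mid kl}:=\{\epsilon_i\pm\epsilon_j,\epsilon_k\pm\epsilon_l\}$. They are pairwise disjoint and together exhaust $\Psi^+$, giving (i). Their half-sums are $\epsilon_1+\epsilon_3$, $\theta_\Psi$, and $\theta_\Psi$ respectively, so each is coplanar. Since $\epsilon_1+\epsilon_2\in M_{12\mid 34}$ does not satisfy $\epsilon_1+\epsilon_2\le \epsilon_1+\epsilon_3$, the quadruple $M_{12\mid 34}$ is an alignment. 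For the other two, the formula $s_{\theta_\Psi}(\beta)=\beta-\theta_\Psi$ (valid for each $\beta$ in these quadruples, since $B(\theta_\Psi,\beta)=1$ follows from the coplanar relation together with orthogonality) shows that $-s_{\theta_\Psi}$ exchanges $M_{13\mid 24}$ and $M_{14\mid 23}$; a direct comparison of simple-root coefficient vectors identifies $\epsilon_1+\epsilon_3\in M_{13\mid 24}$ as the unique $\le$-maximum and $\epsilon_2-\epsilon_3\in M_{14\mid 23}$ as the unique $\le$-minimum of the eight-element union. Hence $\Psi^+_C=M_{13\mid 24}$ and $\Psi^+_N=M_{14\mid 23}$, completing (ii). Part (iii) follows from the uniqueness clause in Proposition \ref{prop:coll4}(iii); part (iv) holds because each $\Psi^+_x$ is a $\Q$-basis of $\Span_\Q\Psi$, so orthogonality to $\Psi^+_x$ reduces to orthogonality to $\Span_\Q\Psi$, a condition independent of $x$; and part (v) is immediate since $\Psi^+_C=\{\epsilon_1\pm\epsilon_3,\epsilon_2\pm\epsilon_4\}$ shares no element with the listed simple system.

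For (vi), the factorizations
\[
\gamma_A=(\epsilon_1^2-\epsilon_2^2)(\epsilon_3^2-\epsilon_4^2),\quad
\gamma_C=(\epsilon_1^2-\epsilon_3^2)(\epsilon_2^2-\epsilon_4^2),\quad
\gamma_N=(\epsilon_1^2-\epsilon_4^2)(\epsilon_2^2-\epsilon_3^2)
\]
expand to yield the Pl\"ucker-type identity $\gamma_C=\gamma_N+\gamma_A$. Since $\sigma(\gamma_x)$ is twice the half-sum root of $\Psi^+_x$, the strict inequality $\sigma(\gamma_A)<\sigma(\gamma_N)=\sigma(\gamma_C)$ reduces to $\epsilon_1+\epsilon_3<\epsilon_1+\epsilon_2$, which holds because $\epsilon_2-\epsilon_3$ is a simple root of $\Psi$. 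For the reflection claim, Remark \ref{rmk:Daction} says that $W(\Psi)$ acts on $4$-roots through the sign-forgetting map $W(D_4)\to S_4$, which permutes $\{\epsilon_1^2,\ldots,\epsilon_4^2\}$ with each $s_{\epsilon_i\pm\epsilon_j}$ acting as the transposition $(i,j)$. This transposition negates the $4$-root whose underlying matching contains $\{i,j\}$ and interchanges the other two up to sign; passing to absolute values (i.e., to positive $n$-roots), this is precisely the asserted swap.

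The main technical hurdle is the coordinate reduction in the first paragraph: one must verify that the labels $C,N,A$ produced by the standard model depend only on the intrinsic data $(\Psi,\Psi^+)$ and not on the particular $W(\Psi)$-conjugation used to normalize the basis. Once this invariance is established, every part of the theorem becomes an elementary calculation in the polynomial ring $\Q[\epsilon_1,\epsilon_2,\epsilon_3,\epsilon_4]$.
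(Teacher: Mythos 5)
Your proof is correct and follows essentially the same route as the paper's: both enumerate the three mutually orthogonal quadruples partitioning $\Psi^+$ explicitly in a $D_4$ model, identify the crossing, nesting, and alignment via the highest root of $\Psi$ and the unique $\le$-maximal and $\le$-minimal elements of $Q \cup (-s_\gamma(Q))$, and settle (iii)--(vi) by the same arguments (each quadruple spans $\Span(\Psi)$ for (iv), inspection for (v), and the Ptolemy identity for (vi)); the only difference is that the paper computes in the induced simple-root coordinates of $\Psi$ while you normalize Euclidean coordinates. The ``technical hurdle'' you flag at the end is not a genuine one: the labels of Definition \ref{def:nestcross} are intrinsic to the pair $(\Psi,\Psi^+)$ and you compute them directly for the actual roots, so the normalization only requires the existence of an orthogonal basis in which $\Psi^+$ is the standard positive system, which is exactly the $W(\Psi)$-conjugacy of positive systems you already invoke.
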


\begin{proof}
By Section \ref{sec:constructions}, the roots orthogonal to a given root in type
$D_4$ form a subsystem of type $3A_1$. Therefore each positive root lies in a
unique quadruple of mutually orthogonal positive roots, which proves (i).

Let $\{\al_1,\al_2,\al_3,\al_4\}$ be the induced simple roots of $\Psi$, with
$\al_2$ corresponding to the branch node in the Dynkin diagram.  Then the three
quadruples from (i) are given by \begin{align*} \Psi^+_1 &= \{\al_1 + \al_2, \
    \al_2 + \al_3, \ \al_2 + \al_4, \ \al_1 + \al_2 + \al_3 + \al_4\}, \\
    \Psi^+_2 &= \{\al_2, \ \al_1 + \al_2 + \al_3, \ \al_1 + \al_2 + \al_4, \
    \al_2 + \al_3 + \al_4\}, \quad \text{\ and\ }\\ \Psi^+_3 &= \{\al_1, \
    \al_3, \ \al_4, \ \al_1 + 2\al_2 + \al_3 + \al_4\} .
\end{align*} The roots in $\Psi^+_3$ add up to twice the root $\al = \al_1 +
    \al_2 + \al_3 + \al_4$. The root $\al$ is strictly lower in the $\le$ order
    than one of the roots in $\Psi^+_3$; therefore $\Psi^+_3$ is an alignment by
    Definition \ref{def:nestcross}.  The roots in $\Psi^+_1$ and $\Psi^+_2$ both
    add up to $2 \theta$, where $\theta = \al_1 + 2\al_2 + \al_3 + \al_4$.  The
    root $\theta$ is strictly higher in the $\le$ order than each element of
    $\Psi^+_1$ and $\Psi^+_2$.  Note that we have $s_\theta(\Psi^+_1) =
    -\Psi^+_2$ and $s_\theta(\Psi^+_2) = -\Psi^+_1$.  Furthermore, the roots
    $\al_2$ and $\al_1 + \al_2 + \al_3 + \al_4$ are the unique $\leq$-minimal
    and unique $\leq$-maximal elements of $\Psi^+_1 \ \cup\ \Psi^+_2$,
    respectively. This implies that $\Psi^+_1$ is a crossing and that $\Psi^+_2$
    is a nesting, and it also follows that none of $\Psi^+_1, \Psi^+_2$ and
    $\Psi^+_3$ is both a crossing and a nesting. The quadruple $Q$ must be one
    of $\Psi^+_1, \Psi^+_2$ and $\Psi^+_3$, and (ii) follows.

Part (iii) follows from (ii), since each of  $\Psi^+_C, \Psi^+_N$ and $\Psi^+_A$
uniquely determines $\Psi^+$ as its associated $D_4$-subsystem by Proposition
\ref{prop:coll4} (iii).

Part (iv) holds as each of the quadruples in $\{\Psi^+_C, \Psi^+_N, \Psi^+_A\}$
is a basis for the span of $\Psi^+$, so that any root that is orthogonal to
every element of one quadruple is also orthogonal to every element of each of
the other quadruples.

Finally, the claims in (v) and (vi) can all be verified by inspection or direct
computation based on the enumerate of $\Psi^+_C=\Psi^+_1, \Psi^+_N=\Psi^+_2$
and $\Psi^+_A=\Psi^+_3$. For the equation $\gamma_C=\gamma_N+\gamma_A$ and the
assertion about $s_\al$ in (vi), one can alternatively prove them using the
usual realizations of the root system and Weyl group of type $D_4$, where the
simple roots are $\al_1 = \ep_1 - \ep_2$, $\al_2 = \ep_2 - \ep_3$, $\al_3 =
\ep_3 - \ep_4$ and $\al_4 = \ep_3 + \ep_4$ and the group $W$ acts as signed
permutations.  Under this realization, we have \begin{equation}
\label{eq:D4} \gamma_C = (\ep_1^2 - \ep_3^2)(\ep_2^2 - \ep_4^2),\;
      \gamma_N=(\ep_1^2 - \ep_4^2)(\ep_2^2 - \ep_3^2),\;\text{and}\;
  \gamma_A=(\ep_1^2 - \ep_2^2)(\ep_3^2 - \ep_4^2), \end{equation} and the
  equation $\gamma_C=\gamma_N+\gamma_A$ follows as the terms expressed in
  coordinates satisfy the Ptolemy relation \[ (A-C)(B-D) = (A-D)(B-C) +
  (A-B)(C-D).  \qedhere \]
\end{proof}

\begin{rmk}\label{rmk:cna}
In the setting of Theorem \ref{thm:cna}, the coordinate forms of the $4$-roots
$\gamma_C, \gamma_N, \gamma_A$  given in Equation (\ref{eq:D4}) correspond  via
the bijection of Lemma \ref{lem:dmax} (ii) to the perfect matchings of the set
$[4]$ given by the crossing $m_C=\{\{1,3\}, \{2,4\}\}$, the nesting
$m_N=\{\{1,4\},\{2,3\}\},$ and the alignment $m_A=\{\{1,2\}, \{3,4\}\}$,
respectively.  Definition \ref{def:nestcross} generalizes the notion of
crossings, nestings, and alignments in the sense that a coplanar quadruple in
the sense of the definition is a crossing, nesting, or alignment if and only if
the corresponding perfect matching of the set $[4]$ is a crossing, nesting, or
alignment, respectively, in the context of matchings.
\end{rmk}

\begin{rmk}\label{rmk:moves}
Let $R$ be a maximal orthogonal set of positive roots and let $\alpha$ be a
positive root not in $R$. Proposition \ref{prop:four} shows that the reflection
$s_\alpha$ changes precisely four elements in $R$ which form a \coplanar
quadruple $Q$, and that $Q'=s_\al(Q)$ is another \coplanar quadruple with the
same associated $D_4$-subsystem as $Q$. Theorem \ref{thm:cna} (vi) reveals more
about $\al,Q$ and $Q'$: it shows that $Q$ and $Q'$ are two distinct features
from the set $\{\Psi^+_C,\Psi^+_N,\Psi^+_A\}$ inside the $D_4$-subsystem $\Psi$
associated to $Q$, while $\al$ is in the remaining feature.  We will say that
$s_\alpha$ \emph{moves} $Q$ in this case; we will also say that $s_\alpha$ (or
$\al$) \emph{moves an $X$ to a $Y$} and call $s_\al$ an \emph{XY move}, where
$X$ and $Y$ are the distinct types of $Q$ and $Q'$, respectively. Note that
knowledge of $Q$ and $Y$ is enough to determine $Q'$ by Theorem \ref{thm:cna}
(iii), even if $\al$ is not known. Note also that Theorem \ref{thm:cna} (vi)
guarantees that $XY$ moves exist for any distinct elements $X$, $Y$ in
$\{C,N,A\}$, so any two coplanar quadruples sharing the same $D_4$-subsystem can
be connected, up to sign, by a reflection.
\end{rmk}

The next proposition shows how to distinguish crossings, nestings, and
alignments from each other using only the heights of their components. Recall
from Section \ref{sec:basics2} that in a root system with simple system $\{\alpha_1,\al_2,
\cdots,\al_n\}$, the height of each root $\al=\sum_{i=1}^n c_i\al_i$ is the
integer $\rootht(\al)=\sum_{i=1}^n c_i$.

\begin{prop}\label{prop:htseq}
Let $W$ be a Weyl group of type $E_7$, $E_8$, or $D_n$ for $n$ even. Let $Q =
\{\be_1, \be_2, \be_3, \be_4\}$ be a \coplanar quadruple of orthogonal positive
roots, ordered so that we have $h_1 \leq h_2 \leq h_3 \leq h_4$ where $h_i=\rootht(\be_i)$ for each $i$.
\begin{enumerate}
    \item[{\rm (i)}]{We have $h_1 + h_2 + h_3 \ne h_4$ and $h_2 + h_3 \ne h_1 +
     h_4$.}
 \item[{\rm (ii)}]{If $h_1 + h_2 + h_3 < h_4$, then $Q$ is an alignment.}
 \item[{\rm (iii)}]{If $h_1 + h_2 + h_3 > h_4$ and $h_2 + h_3 > h_1 + h_4$, then
     $Q$ is a nesting. In this case, we also have $h_1 < h_2$.}
 \item[{\rm (iv)}]{If $h_1 + h_2 + h_3 > h_4$ and $h_2 + h_3 < h_1 + h_4$, then
     $Q$ is a crossing. In this case, we also have $h_1 > 1$ and $h_3 < h_4$.}
\end{enumerate}
\end{prop}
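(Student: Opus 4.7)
The plan is to combine the explicit coordinate description of $\Psi^+_C$, $\Psi^+_N$, and $\Psi^+_A$ established in the proof of Theorem \ref{thm:cna} with the $\Z$-linearity of the height function $\rootht$, reducing the proposition to an elementary inequality check in four positive integers.

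To set notation, let $\{\al_1, \al_2, \al_3, \al_4\}$ denote the induced simple roots of $\Psi$, labelled so that $\al_2$ corresponds to the branch node of the $D_4$ Dynkin diagram, and let $a_i = \rootht(\al_i)$ for each $i$. Each $a_i$ is a positive integer because each $\al_i$ is a positive root of $\Phi$. By Theorem \ref{thm:cna}(ii), $Q$ coincides with exactly one of $\Psi^+_A$, $\Psi^+_N$, or $\Psi^+_C$, and applying $\rootht$ to the explicit expressions from the proof of Theorem \ref{thm:cna} yields the multiset of heights $\{a_1,\ a_3,\ a_4,\ a_1+2a_2+a_3+a_4\}$ in the alignment case, $\{a_1+a_2,\ a_2+a_3,\ a_2+a_4,\ S\}$ with $S := a_1+a_2+a_3+a_4$ in the crossing case, and $\{a_2,\ S-a_4,\ S-a_3,\ S-a_1\}$ in the nesting case.

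I would then identify $h_4$ in each case and compute both $h_4 - (h_1+h_2+h_3)$ and $(h_2+h_3) - (h_1+h_4)$ in closed form. In the alignment case, the fourth element strictly dominates each of the others, so $h_4 = a_1+2a_2+a_3+a_4$ and $h_4 - (h_1+h_2+h_3) = 2a_2 \ge 2$, proving (ii). In the crossing case $h_4 = S$, and writing the other three heights as $S - a_i - a_j$ for $\{i,j\} \subset \{1,3,4\}$ gives $h_1 = a_2 + \min(a_1,a_3,a_4)$, so that $(h_1+h_2+h_3) - h_4 = 2a_2 > 0$ and $(h_1+h_4) - (h_2+h_3) = 2\min(a_1,a_3,a_4) \ge 2$, together with the refinements $h_1 \ge 2$ and $h_4 - h_3 \ge 2$. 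In the nesting case $h_1 = a_2$ and $h_4 = S - \min(a_1,a_3,a_4)$, and a parallel computation yields $(h_1+h_2+h_3) - h_4 = 2(a_2 + \min(a_1,a_3,a_4)) \ge 4$ and $(h_2+h_3) - (h_1+h_4) = 2\min(a_1,a_3,a_4) \ge 2$, with $h_2 - h_1 = a_1+a_3+a_4 - \max(a_1,a_3,a_4) \ge 2$ confirming $h_1 < h_2$.

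The three cases produce three mutually exclusive strict sign patterns for the pair $\bigl(h_4 - (h_1+h_2+h_3),\ (h_1+h_4) - (h_2+h_3)\bigr)$, and Theorem \ref{thm:cna}(ii) guarantees that exactly one type occurs for $Q$. This simultaneously gives part (i) (since each pattern consists of strict inequalities) and the converse implications (ii)--(iv). The main obstacle is purely bookkeeping: tracking which of the three non-maximal heights is the smallest in the crossing case, and which of the three non-minimal heights is the largest in the nesting case. The key trick is to express $h_1$ or $h_4$ via a single $\min$ or $\max$ over a subset of the $a_i$ instead of fully sorting all three non-extremal heights, which keeps the arithmetic closed-form and free of subcases.
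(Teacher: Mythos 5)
Your proposal is correct and follows essentially the same route as the paper: both reduce to the explicit lists of $\Psi^+_A$, $\Psi^+_C$, $\Psi^+_N$ in terms of the induced simple roots of the associated $D_4$-subsystem from the proof of Theorem \ref{thm:cna}, compute the signs of $h_4-(h_1+h_2+h_3)$ and $(h_1+h_4)-(h_2+h_3)$ in each case (the paper does this via the root-lattice identities $\be_4\pm\be_1-\be_2-\be_3=\pm2\al_2$, $\pm2\al_i$, etc., which is just your height arithmetic phrased vectorially), and conclude the converses from mutual exclusivity plus Theorem \ref{thm:cna}(ii).
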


\begin{proof}
Let $\Psi$ be the type $D_4$ subsystem associated to $Q$, and let $\{\al_1,
\al_2, \al_3, \al_4\}$ be the simple system of $\Psi$ induced by $\posroots$,
with $\al_2$ corresponding to the branch node in the Dynkin diagram.  Then as in
the proof of Theorem \ref{thm:cna}, the set $\Psi^+$ decomposes into the union
of the crossing $\Psi^+_C$, nesting $\Psi^+_N$, and alignment $\Psi^+_A$ given
below:
\begin{align*} \Psi^+_C &= \{\al_1 + \al_2, \ \al_2 + \al_3, \ \al_2 + \al_4, \
      \al_1 + \al_2 + \al_3 + \al_4\}, \\ \Psi^+_N &= \{\al_2, \ \al_1 + \al_2 +
      \al_3, \ \al_1 + \al_2 + \al_4, \ \al_2 + \al_3 + \al_4\}, \quad \text{\
      and\ }\\ \Psi^+_A &= \{\al_1, \ \al_3, \ \al_4, \ \al_1 + 2\al_2 + \al_3 +
      \al_4\} .\end{align*}

If $Q=\Psi^+_A$, then we have $\be_4 - \be_1 - \be_2 - \be_3 = 2\al_2$, and
$\be_4 + \be_1 - \be_2 - \be_3 = 2\al_2 + 2\al_i$ for some $i \in \{1, 3, 4\}$.

If $Q=\Psi^+_N$, then we have $\be_1 < \be_j$ for all $j \in \{2, 3, 4\}$. We
also have $\be_4 - \be_1 - \be_2 - \be_3 = -2\al_i - 2\al_2$ for some $i \in
\{1, 3, 4\}$, and $\be_4 + \be_1 - \be_2 - \be_3 = -2\al_i$ for some $i \in \{1,
3, 4\}$.

If $Q=\Psi^+_C$, then we have $\be_4 > \be_j$ for all $j \in \{1, 2, 3\}$. We
also have $\be_4 - \be_1 - \be_2 - \be_3 = -2\al_2$ and $\be_4 + \be_1 - \be_2 -
\be_3 = 2\al_i$ for some $i \in \{1, 3, 4\}$.  Furthermore, none of the $\be_i$
is a simple root of $\Phi$, because none of the $\be_i$ is even a simple root of
$\Psi$.

All the claims in the proposition follow from the above observations. Note that
the conditions in (ii), (iii), and (iv) are exclusive and exhaustive because of
part (i).
\end{proof}

\begin{rmk}\label{rmk:squaresum}
    It can be shown that if $W$ is a Weyl group of type $E_7, E_8,$ or $D_n$ for
    $n$ even and the components of a positive $n$-root $\beta$ have heights
    $h_1, h_2, \ldots, h_n$, then the number $\sum_{i = 1}^n h_i^2$ depends only on $W$
  and is independent of the choice of $\be$.
\end{rmk}

\subsection{Intersections of coplanar quadruples}
Let $R$ be a maximal orthogonal set of roots of $W$.  In this subsection, we
first focus on type $E_8$ and show that coplanar quadruples in $R$ gives rise to
a Steiner quadruple system in this type. We will then use this result to count
coplanar quadruples in $R$ and deduce how coplanar quadruples in $R$ can overlap
with each other, in the general case.

\begin{defn}
\label{def:steiner}
A {\it Steiner system} $S(t,k,N)$ is a collection $B$ of $k$-element subsets of
the set $[N]=\{1,2,\dots, N\}$ with the property that every $t$-element subset
is contained in a unique element of $B$. The elements of $B$ are called
\emph{blocks}, and we write each block $\{a,b,c,\dots\}$ where $a<b<c<\dots$ as
$abc\dots$. We call $B$ a \emph{Steiner triple system} if $k=3$ and a
\emph{Steiner quadruple system} if $k=4$.
\end{defn}

\begin{rmk}
    \label{rmk:steiner_unique}
    A Steiner system $S(t,k,N)$ is also known as a $t$-$(N,k,1)$ \emph{design},
    which is a special kind of $t$-designs \cite[Section
    4.1]{design07}.
    It is well known that, up to isomorphism (by permutations), there is a
    unique Steiner triple system $S(2,3,7)$ and a unique Steiner quadruple
    system $S(3,4,8)$ \cite{barrau1908}. The following 14 quadruples form an
    example of a Steiner quadruple system, and removing the element 8 from all
    the quadruples on the left results in a Steiner triple system.  $$
    \begin{tabular}{c  c} 1  2  4  8  \quad& \quad  3  5  6  7 \\ 2 3  5  8
          \quad &\quad  1  4  6  7 \\ 3  4  6  8  \quad& \quad  1 2  5  7 \\ 4 5
          7 8  \quad& \quad  1  2  3  6 \\ 1  5  6 8  \quad& \quad  2  3  4 7 \\
          2  6 7  8  \quad& \quad  1  3  4 5 \\ 1  3  7  8  \quad& \quad  2 4  5
          6 \\
      \end{tabular} $$
  \end{rmk}

\begin{lemma}\label{lem:e8coplanar}
Let $\be_1, \be_2, \be_3$ be three mutually orthogonal positive roots of type
$E_8$.
\begin{enumerate}
    \item[{\rm (i)}]{There exists a unique positive root $\be_4$ such that
     $\{\be_1, \be_2, \be_3, \be_4\}$ is a \coplanar quadruple.}
 \item[{\rm (ii)}]{If $R$ is a maximal orthogonal set of positive roots of type
     $E_8$, and $\{\be_1, \be_2, \be_3\} \subset R$, then we have $\be_4 \in
 R$.}
\end{enumerate}
\end{lemma}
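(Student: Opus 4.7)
The plan is to prove uniqueness in part (i) via a short inner-product integrality argument, and then to deduce existence in (i) together with (ii) by a counting argument inside a maximal orthogonal set of positive roots.

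For uniqueness, given two positive roots $\beta_4$ and $\beta_5$ each making a coplanar quadruple with $\{\beta_1,\beta_2,\beta_3\}$, Proposition~\ref{prop:coll4} produces roots $\gamma_j=(\beta_1+\beta_2+\beta_3+\beta_j)/2$ for $j=4,5$. Expanding $B(\gamma_4,\gamma_5)$ by bilinearity, the pairwise orthogonalities $B(\beta_i,\beta_k)=2\delta_{ik}$ on $\{1,2,3\}$ together with the orthogonality of both $\beta_4$ and $\beta_5$ to $\{\beta_1,\beta_2,\beta_3\}$ kill all the cross terms, leaving $B(\gamma_4,\gamma_5)=(6+B(\beta_4,\beta_5))/4$. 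Since the bilinear form on any pair of roots in a simply laced root system takes values in $\{-2,-1,0,1,2\}$, this integrality constraint forces $B(\beta_4,\beta_5)\in\{-2,2\}$, and positivity then gives $\beta_4=\beta_5$.

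For existence in (i) and statement (ii), I would extend $\{\beta_1,\beta_2,\beta_3\}$ to a maximal orthogonal set $R$ of positive roots; in type $E_8$ one has $|R|=8$, and by Lemma~\ref{lem:transitive} all such $R$ are $W$-equivalent, so I may count coplanar quadruples inside a single concrete $R$, for instance one whose $8$ elements split into four collinear pairs with supports $\{\ep_{2i-1},\ep_{2i}\}$ for $i=1,\dots,4$. Applying the sum-is-twice-a-root criterion of Proposition~\ref{prop:coll4}, a direct inspection shows the coplanar quadruples in $R$ are of two types: $\binom{4}{2}=6$ quadruples formed by the union of two complete collinear pairs, and $8$ quadruples formed by picking one root per pair-label so that the signs obey the parity condition required for the sum to halve to an $E_8$ root of the form $\sum_i\pm\ep_i$. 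This totals $14$ coplanar quadruples, and since $14\cdot\binom{4}{3}=56=\binom{8}{3}$ while uniqueness forbids any triple from lying in two distinct coplanar quadruples of $R$, a double-counting argument shows that every triple in $R$ extends to exactly one coplanar quadruple inside $R$. This yields both the existence in (i) and statement (ii).

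The main obstacle is the uniqueness step: the identity $B(\gamma_4,\gamma_5)=(6+B(\beta_4,\beta_5))/4$, combined with the integrality of $B$ on pairs of simply laced roots, is precisely tight enough to squeeze out $\beta_4=\beta_5$, and one must notice this rather than approach uniqueness by brute enumeration. Once this is in hand the coplanar quadruples in $R$ furnish a copy of the unique Steiner system $S(3,4,8)$ of Remark~\ref{rmk:steiner_unique} and the count falls out; the only subtlety there is the parity condition cutting $2^4$ sign choices down to $8$.
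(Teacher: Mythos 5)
Your proof is correct, but it takes a genuinely different route from the paper's. The paper invokes Carter's result that $W(E_8)$ acts transitively on ordered triples of orthogonal roots and that the orthogonal complement of such a triple is a subsystem of type $A_1 + D_4$; it then fixes one explicit triple, reads off the unique completing root $\be_4$, and for (ii) observes that $\be_4$ spans the $A_1$ summand, so any root orthogonal to the triple is either $\pm\be_4$ or orthogonal to $\be_4$, and maximality of $R$ forces $\be_4 \in R$. You instead prove uniqueness by the self-contained integrality computation $B(\gamma_4,\gamma_5)=\bigl(6+B(\be_4,\be_5)\bigr)/4 \in \Z$, which correctly forces $B(\be_4,\be_5)=\pm 2$ and hence $\be_4=\be_5$ by positivity, and you obtain existence and (ii) simultaneously by counting the $6+8=14$ coplanar quadruples in one concrete maximal orthogonal set, transferring the count via Lemma~\ref{lem:transitive}, and double counting $14\cdot\binom{4}{3}=56=\binom{8}{3}$. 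Your count and the parity analysis of the $(1,1,1,1)$-type quadruples are correct, and in effect you prove the $E_8$ case of Corollary~\ref{cor:mcount} and the Steiner structure of Lemma~\ref{lem:steiner}~(i) directly rather than deducing them from this lemma afterward, which is a pleasant byproduct. What the paper's route buys is the structural description $A_1+D_4$ of the orthogonal complement (which makes (ii) immediate and is conceptually reusable); what yours buys is independence from Carter's classification machinery and a uniqueness argument valid in any simply laced type. Two small points you leave implicit, both at the same level of detail the paper itself tolerates: the transfer of the count requires that the twisted action $w(\{\be_i\})=\{\abs{w(\be_i)}\}$ preserve coplanarity, which follows from Proposition~\ref{prop:coll4}~(iii) since the associated $D_4$-subsystem is closed under sign changes of the $\be_i$; and you use that an inclusion-maximal orthogonal set of positive roots in $E_8$ has exactly $8$ elements, a fact the paper also assumes without comment.
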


\begin{proof}
By \cite[Lemma 11 (iii)]{carter72} and its proof, if $W = W(E_8)$, then $W$ acts
transitively on ordered triples of orthogonal roots, and the set of roots
orthogonal to three given mutually orthogonal roots is a root system of type
$A_1 + D_4$. Since the action of orthogonal triples of roots is transitive, it
suffices to prove (i) for a fixed choice of $\be_1$, $\be_2$, and $\be_3$. If we
choose $\be_1 = \al_3 = 2(\ep_2 - \ep_1)$, $\be_2 = \al_5 = 2(\ep_4 - \ep_3)$,
and $\be_3 = \al_7 = 2(\ep_6 - \ep_5)$, then it follows from the explicit
description of the root system in Section \ref{sec:constructions} that the only
positive root that forms a \coplanar quadruple with $\be_1,\be_2$ and $\be_3$ is
the root $\be_4 = 2(\ep_8 - \ep_7)$. This proves (i).

The uniqueness property of (i) proves that the set of \coplanar quadruples
corresponds to the $A_1$ summand of the $A_1 + D_4$ subsystem. This implies that
if $\be$ is any positive root that is orthogonal to all of $\be_1$, $\be_2$, and
$\be_3$, then either $\be_4 = \be$, or $\be_4$ is orthogonal to $\be$. The
maximality of $R$ in the statement of (ii) implies that $\be_4$ cannot be
orthogonal to all elements of $R$. It follows that $\be_4 \in R$, which proves
(ii).
\end{proof}

\begin{lemma}\label{lem:steiner}
Let $W$ be a Weyl group of type $E_8$, and let $R$ be a maximal orthogonal set
of roots.
\begin{enumerate}
    \item[{\rm (i)}]{The \coplanar quadruples of $R$ endow $R$ with the
     structure of a Steiner quadruple system $S(3,4,8)$.}
 \item[{\rm (ii)}]{Any two \coplanar quadruples of $R$ intersect in either $0$,
     $2$, or $4$ elements.}
%\item[{\rm (iii)}]{Suppose that $R$ consists of positive roots and that we have
%     $R' \subset R$, where $R' = \{\be_1, \be_2, \be_3, \be_4\}$ is
%     non-\coplanar subset of $R$ of size $4$.  Then $R$ is the only maximal
%     orthogonal set of positive roots that contains $R'$.}
\end{enumerate}
\end{lemma}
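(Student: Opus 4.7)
Part~(i) should follow directly from Lemma~\ref{lem:e8coplanar}. Any three distinct elements of $R$ form a mutually orthogonal triple of positive roots, so by part~(i) of that lemma there is a unique positive root $\beta_4$ completing them to a coplanar quadruple, and by part~(ii) this $\beta_4$ automatically lies in $R$. Since every coplanar quadruple has exactly four elements by definition, the coplanar quadruples of $R$ form a family of $4$-subsets of the $8$-element set $R$ in which every $3$-subset is contained in a unique block. This is exactly the defining property of a Steiner quadruple system $S(3,4,8)$.

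For part~(ii), I would rule out the two forbidden intersection sizes $1$ and $3$ in turn, using only the design structure just established. Distinct blocks cannot meet in $3$ points, since the unique completion from~(i) would force them to coincide, and size-$4$ intersection obviously forces equality; so the only case requiring real work is an intersection of size exactly one. I would handle this with a double counting argument in $S(3,4,8)$, whose standard parameters (recovered from $\binom{8}{3}/\binom{4}{3}$, $\binom{7}{2}/\binom{3}{2}$, $\binom{6}{1}/\binom{2}{1}$) give $14$ blocks total, $7$ blocks through each point, and $3$ blocks through each pair. Fixing a block $B_1$ and letting $n_i$ denote the number of other blocks $B$ with $|B\cap B_1|=i$, counting (point, block) incidences with $x\in B_1$ gives $n_1+2n_2 = 4\cdot 6 = 24$, while counting (pair, block) incidences over the six pairs in $B_1$ gives $n_2 = 6\cdot 2 = 12$. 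Subtracting forces $n_1 = 0$, as required.

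The substantive content is already packaged into Lemma~\ref{lem:e8coplanar}, so I do not expect any real obstacle. Part~(i) is a direct translation of that lemma into the language of block designs, and part~(ii) reduces to a short counting argument in $S(3,4,8)$. An alternative to the double count would be to invoke the (less elementary) fact that the complement of every block in the unique $S(3,4,8)$ is itself a block, but I would prefer to keep the argument self-contained and use the counting above.
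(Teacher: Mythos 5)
Your proof is correct, and part (i) is exactly the paper's argument: it is an immediate translation of Lemma \ref{lem:e8coplanar} into the defining property of an $S(3,4,8)$. For part (ii) you take a genuinely different route. The paper rules out intersections of size $1$ and $3$ by working with the explicit $S(3,4,8)$ displayed in Remark \ref{rmk:steiner_unique}: the blocks in the left column are $\{8\}$ together with a line of the Fano plane, so any two of them meet in $8$ plus the unique intersection point of the two lines, and the general case follows because each right-column block is the complement of the corresponding left-column block; implicitly this leans on the uniqueness of $S(3,4,8)$ up to isomorphism (or an exhaustive check) to transfer the property to the Steiner system carried by $R$. You instead give an intrinsic double-counting argument valid in \emph{any} $S(3,4,8)$: distinct blocks cannot meet in $3$ points by uniqueness of completion, the design parameters give $7$ blocks per point and $3$ per pair, and the two incidence counts $n_1+2n_2=4\cdot 6=24$ and $n_2=6\cdot 2=12$ force $n_1=0$. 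This is self-contained (no appeal to the classification of $S(3,4,8)$ or to a specific model), at the cost of losing the Fano-plane picture that the paper's version foreshadows and reuses later (e.g.\ in the proof of Corollary \ref{cor:mcount} and in Section \ref{sec:e7}). Both arguments are sound.
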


\begin{proof}
Part (i) is immediate from Lemma \ref{lem:e8coplanar}.  To prove part (ii), we
need to show that any two distinct quadruples from the Steiner quadruple system
are either disjoint or overlap in precisely two elements. This can be proved by
an exhaustive check, or by arguing as follows.

The quadruples in the left column of the table consist of the element 8 together
with three points forming a line in the Fano plane (see Section \ref{sec:e7}).
Any two such quadruples intersect in two elements: the element 8 and the unique
point on the intersection of the two lines of the Fano plane. The general case
follows by combining this observation with the fact that each quadruple in the
right column is the complement of the corresponding quadruple in the left
column.
%Let $R'$ be the set in the statement of (iii). It follows from (i) that for
%each element $\be_i \in R'$, the triple $R' \backslash \{\be_i\}$ is contained
%in a unique \coplanar quadruple $(R' \backslash \{\be_i\}) \cup \{\gamma_i\}$.
%We cannot have $\gamma_i = \gamma_j$ for $i \ne j$, because this would imply
%that the triple $(R' \backslash \{\be_i, \be_j\}) \cup \{\gamma_i\}$ was
%contained in two \coplanar quadruples. It follows that $R = R' \cup \{\gamma_1,
%\gamma_2, \gamma_3, \gamma_4\}$, which completes the proof of (iii).
\end{proof}

\begin{cor}\label{cor:mcount}
Let $W$ be a Weyl group of type $E_7$, $E_8$, or $D_n$ for $n$ even and $n=2k\ge
4$. Let $R$ be a maximal orthogonal set of positive roots. Then the number $M$
of  \coplanar quadruples contained in $R$  does not depend on $R$. We have $M =
\binom{k}{2}$ if $W$ has type $D_{2k}$, $M = 7$ if $W$ has type $E_7$, and $M =
14$ if $W$ has type $E_8$. \end{cor}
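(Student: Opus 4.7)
The plan is to handle the three types separately, using Lemma \ref{lem:transitive} to reduce in each case to a single convenient $R$ (since any two maximal orthogonal sets of positive roots lie in one $W$-orbit and coplanarity is a $W$-invariant property of quadruples).

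For type $D_{2k}$, by Lemma \ref{lem:dmax}(ii), $R$ decomposes into $k$ collinear pairs $\{\ep_{i_r}\pm\ep_{j_r}\}$ with pairwise disjoint supports. Lemma \ref{lem:d4coplanar} then says that the coplanar quadruples of positive roots inside $R$ are exactly the unions of two such collinear pairs; hence $M=\binom{k}{2}$.

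For type $E_8$, Lemma \ref{lem:steiner}(i) tells us that the coplanar quadruples of $R$ form a Steiner quadruple system $S(3,4,8)$. Counting $(T,B)$-pairs with $T$ a $3$-subset of $R$ contained in the block $B$ gives $M\cdot\binom{4}{3}=\binom{8}{3}$, so $M=14$.

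For type $E_7$, use the construction of Section \ref{sec:constructions} to identify the root system of $E_7$ with the subsystem of $E_8$ roots orthogonal to the highest root $\theta$ of $E_8$. Then $R\cup\{\theta\}$ is a set of $8$ mutually orthogonal roots of $E_8$, hence maximal. Before counting, I would first record the compatibility: a set of four mutually orthogonal $E_7$-roots is a coplanar quadruple in the sense of Definition \ref{def:coll4} inside $E_7$ if and only if it is coplanar inside $E_8$, because Proposition \ref{prop:coll4}(i) requires the half-sum $\gamma$ to be a root, and such a $\gamma$ lies in the span of the four roots, which is orthogonal to $\theta$ and hence already inside $E_7$. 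So the coplanar quadruples of $R$ are precisely the coplanar quadruples of $R\cup\{\theta\}$ that do not contain $\theta$. By the $E_8$ case, $R\cup\{\theta\}$ carries $14$ coplanar quadruples forming an $S(3,4,8)$. A standard double-count (pairs containing $\theta$ versus blocks through $\theta$) gives $\binom{7}{2}=(\text{blocks through }\theta)\cdot\binom{3}{2}$, so $7$ blocks contain $\theta$; thus $M=14-7=7$.

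The main obstacle is the $E_7$ step, and specifically making sure that the coplanar quadruples of $R$ inside $E_7$ are the same as those inside $E_8$; the rest is essentially a counting exercise in the Steiner design $S(3,4,8)$ supplied by Lemma \ref{lem:steiner}.
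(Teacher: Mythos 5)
Your proposal is correct and follows essentially the same route as the paper: type $D_{2k}$ via the collinear-pair/matching description, type $E_8$ via the Steiner quadruple system of Lemma \ref{lem:steiner}(i), and type $E_7$ by adjoining the highest root $\theta$ of $E_8$ and counting the blocks of $S(3,4,8)$ avoiding $\theta$. Your explicit check that coplanarity in $E_7$ agrees with coplanarity in $E_8$ (the half-sum lies in the span of the quadruple, hence is orthogonal to $\theta$), and your double-counting of the $14$ and $7$ blocks, simply make precise details the paper leaves implicit.
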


\begin{proof}
If $W$ has type $D_{2k}$, then $R$ determines a perfect matching of the set
$\{1, 2, \ldots, 2k\}$ with $k$ blocks by Lemma \ref{lem:dmax}, and the
\coplanar quadruples in $R$ correspond bijectively to pairs of these blocks by
Remark \ref{rmk:d4matching}. It follows that $M$ does not depend on $R$ and
equals $\binom{k}{2}$.

In type $E_8$, the result follows from Lemma \ref{lem:steiner} (i).

If $W$ has type $E_7$, then as in the proof of Proposition \ref{prop:four}, we
may again identify the root system of $W$ with the set  of roots orthogonal to
the  the highest root $\theta$ in the root system of type $E_8$. The set $R \cup
\{\theta\}$ is then a maximal orthogonal set of roots in type $E_8$. The
\coplanar quadruples of $R$ are in bijection with the quadruples of $S(3,4,8)$
that exclude a fixed element, and there are $7$ such quadruples, so we are done.
\end{proof}

\begin{prop}\label{prop:overlap}
    Let $W$ be a Weyl group of type $E_7$, $E_8$, or $D_n$ for $n$ even. Let
    $R$ be a maximal orthogonal subset of positive roots, and suppose $Q_1$ and $Q_2$
    are \coplanar quadruples of roots contained in $R$. 
    \begin{enumerate}
        \item[{\rm (i)}]{The intersection $Q_1 \cap Q_2$ has size 0, 2, or 4.}
        \item[{\rm (ii)}]{If $|Q_1 \cap Q_2| = 2$, then there is a root
     subsystem $\Psi \subseteq \allroots$ of type $D_6$ that contains both $Q_1$
     and $Q_2$. In this case, each of the sets $Q_1$, $Q_2$, $Q_1 \cap Q_2$, and
     $Q_1 \cup Q_2$ consists of collinear pairs of roots with respect to $\Psi$,
     and the symmetric difference $Q_1 \ \Delta\ Q_2$ is also a \coplanar
 quadruple.}
\end{enumerate}
\end{prop}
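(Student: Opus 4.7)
The plan is to settle part (i) by a short case analysis on the type of $W$, and to prove part (ii) uniformly by exhibiting a natural $D_6$ root subsystem $\Psi$ containing $Q_1 \cup Q_2$ and then applying the collinear-pair structure of Lemma \ref{lem:dmax} inside $\Psi$.

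For part (i), the case $W = W(E_8)$ is exactly Lemma \ref{lem:steiner}(ii). For $W = W(E_7)$, we embed $\allroots$ into a root system of type $E_8$ as in the proof of Proposition \ref{prop:four}: the set $R \cup \{\theta\}$ is then a maximal orthogonal set in $E_8$, and both $Q_1$ and $Q_2$ remain coplanar quadruples there, so Lemma \ref{lem:steiner}(ii) applies. For $W = W(D_n)$, we invoke Lemma \ref{lem:d4coplanar}: every coplanar quadruple in $R$ consists of two collinear pairs, and under the bijection of Lemma \ref{lem:dmax}(ii) this corresponds to an unordered pair of $2$-blocks of a perfect matching of $[n]$. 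Two such pairs of $2$-blocks share $0$, $1$, or $2$ blocks, contributing $0$, $2$, or $4$ roots to $Q_1 \cap Q_2$.

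For part (ii), suppose $|Q_1 \cap Q_2| = 2$, write $Q_1 \cup Q_2 = \{\be_1, \ldots, \be_6\}$, and set $S = R \setminus (Q_1 \cup Q_2)$. Define
\[\Psi = \{\al \in \allroots : B(\al, \be) = 0 \text{ for every } \be \in S\}.\]
We first check that $\Psi$ is a subsystem of type $D_6$ containing $Q_1 \cup Q_2$. Membership of each $\be_i$ in $\Psi$ is automatic since $R$ is an orthogonal set and $\be_i \in R \setminus S$. The type-$D_6$ assertion follows by iterating the stabilizer calculations of Section \ref{sec:constructions}: in $E_8$ we have $|S| = 2$ and the root-stabilizer $E_7$ has root-stabilizer $D_6$; in $E_7$ we have $|S| = 1$ and the root-stabilizer is $D_6$; in $D_n$ we have $|S| = n-6$, consisting of $(n-6)/2$ collinear pairs, and the roots orthogonal to all of them are precisely those supported on the remaining six coordinates, forming a $D_6$ subsystem.

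Inside $\Psi \cong D_6$, the six pairwise orthogonal roots $\be_1, \ldots, \be_6$ form a maximal orthogonal set, so Lemma \ref{lem:dmax}(ii) partitions them uniquely into three collinear pairs with respect to $\Psi$. Since $Q_1$ and $Q_2$ are coplanar quadruples lying in $\Psi$, Lemma \ref{lem:d4coplanar} forces each of them to be a union of two of these three collinear pairs. Uniqueness of the pairing, together with $|Q_1 \cap Q_2| = 2$, then forces $Q_1$ and $Q_2$ to share exactly one collinear pair, so that $Q_1 \cap Q_2$, $Q_1 \cup Q_2$, and $Q_1 \, \Delta \, Q_2$ are unions of one, three, and two collinear pairs respectively; applying Lemma \ref{lem:d4coplanar} once more to $Q_1 \, \Delta \, Q_2$ finishes the proof. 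The step requiring the most care, rather than a deep obstacle, is the uniform identification of $\Psi$ as a $D_6$ subsystem across the three types via these iterated stabilizer computations.
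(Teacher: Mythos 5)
Your proof is correct and follows essentially the same route as the paper: part (i) via the Steiner quadruple system in $E_8$ (with $E_7$ embedded in $E_8$) and collinear pairs in type $D$, and part (ii) by taking $\Psi$ to be the roots orthogonal to $R \setminus (Q_1 \cup Q_2)$, identifying it as a $D_6$ subsystem through the stabilizer computations of Section \ref{sec:constructions}, and reducing to the type-$D$ collinear-pair analysis. The only cosmetic difference is that for $E_7$ in part (ii) you use the $D_6$ root-stabilizer directly instead of first adjoining the highest root of $E_8$, and you spell out the reduction inside $\Psi$ that the paper handles by citing its own type-$D_n$ case.
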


\begin{proof}
If $W$ has type $D_n$, then the assertions follow from Lemma
\ref{lem:d4coplanar}.

Suppose that $W$ has type $E_8$. In this case, part (i) follows from Lemma
\ref{lem:steiner} (ii).  If $|Q_1 \cap Q_2| = 2$, then $|Q_1 \cup Q_2| = 6$, and
there are precisely two elements $\al, \be \in R$ that are orthogonal to every
root in $Q_1 \cup Q_2$. Let $\Psi$ be the set of roots in $\Phi$ that are
orthogonal to $\al$ and $\be$. Then $\Psi$ forms a root system of type $D_6$ by
Section \ref{sec:constructions}, proving the first assertion of (ii), and the
sets $Q_1, Q_2, Q_1\cap Q_2$ and $Q_1\cup Q_2$ all lie in $\Psi$.  The other
assertions of (ii) now follow by applying the type $D_n$ case of the result to
$\Psi$.

Finally, suppose that $W$ has type $E_7$. We identify the root system with the
set of roots orthogonal to the  the highest root $\theta$ in the root system of
type $E_8$ as usual.  Then $R \cup \{\theta\}$ is a maximal orthogonal set of
roots in type $E_8$. The assertions in this case follow by applying the argument
of the previous paragraph with $\al = \theta$.
\end{proof}

\section{Quasiparabolic structure}\label{sec:quasi}
Let $X=X(W)$ be the set of all maximal orthogonal sets of positive roots of $W$,
and recall from the introduction that $W$ acts on $X$ naturally via the action
$w(\{\be_1,\cdots,\be_n\})=\{\abs{w(\be_1)}, \cdots, \abs{w(\be_n)}\}$.  In
this section, we recall the notion of a quasiparabolic set as defined
by Rains and Vazirani \cite{rains13}, and we use the concepts of crossings and
nestings to endow the $W$-set $X$ with a quasiparabolic structure.

\subsection{Quasiparabolic sets}
Quasiparabolic sets were introduced by Rains and Vazirani for a general Coxeter
system as follows.

\begin{defn}
\cite[Section 2, Section 5]{rains13}
\label{def:qpset}
Let $W$ be a Coxeter group with generating set $S$ and set of reflections $T$. A
{\it scaled $W$-set} is a pair $(\mathcal{X}, \lambda)$, where $\mathcal{X}$ is a $W$-set and
$\lambda : \mathcal{X} \rightarrow \Z$ is a function satisfying $|\lambda(sx) -
\lambda(x)| \leq 1$ for all $s \in S$.  An element $x \in \mathcal{X}$ is {\it
$W$-minimal}  if $\lambda(sx) \geq \lambda(x)$ and is {\it $W$-maximal} if
$\lambda(sx) \leq \lambda(x)$ for all $s \in S$.

A {\it quasiparabolic set} for $W$ is a scaled $W$-set $\mathcal{X}$ satisfying the
following two properties:
\begin{enumerate}
    \item[(QP1)] for any $r \in T$ and $x \in \mathcal{X}$, if $\lambda(rx) = \lambda(x)$,
     then $rx = x$;
 \item[(QP2)] for any $r \in T$, $x \in \mathcal{X}$, and $s \in S$, if $\lambda(rx) >
     \lambda(x)$ and $\lambda(srx) < \lambda(sx)$, then $rx = sx$.
\end{enumerate} For a quasiparabolic set $\mathcal{X}$, we define $\le_Q$ to be the weakest
partial order such that $x \leq_Q rx$ whenever $x \in \mathcal{X}$, $r \in T$, and
    $\lambda(x) \leq \lambda(rx)$.
\end{defn}

Rains and Vazirani call $\lambda(x)$ the {\it height} of $x$, and $\leq_Q$ the
{\it Bruhat order}, but we will refer to them as the {\it level} of $x$ and the
{\it quasiparabolic order} because of the potential for confusion in the context
of this paper. It follows from \cite[Proposition 5.16]{rains13} that $\lambda$
is a rank function with respect to the partial order $\leq_Q$, so that every
covering relation $x <_Q y$ satisfies $\lambda(y) = \lambda(x) + 1$.

We will show that the set $X=X(W)$ forms a quasiparabolic set for the Weyl group $W$ in type $E_7,
E_8$ or $D_{n}$ with $n$ even under a suitable level function defined in terms
of coplanar quadruples. We define the level function and some other useful
statistics below.
\begin{defn}\label{def:stats} Let $W$ be a Weyl group of type $E_7, E_8$ or
      $D_{n}$ for $n$ even.  Let $R$ be a set of mutually orthogonal roots of
      $W$, and let $\beta$ be a positive $n$-root of $W$.
      \begin{enumerate}
          \item[(i)] We define the {\it crossing number} $C(R)$, the
               \emph{nesting number} $N(R)$, and the \emph{alignment number}
               $A(R)$ of $R$ to be the numbers of crossings, nestings, and
               alignments contained in $R^+$, respectively.
           \item[(ii)] We define the \emph{type} of $R$ to be the monomial
      $A^{A(R)}C^{C(R)}N^{N(R)}$, and define the {\it level} $\lambda(R)$ of $R$
      to be $C(R) + 2N(R)$.
  \item[(iii)] If $R$ is a maximal orthogonal set of roots, then we say that $R$
      is \emph{noncrossing}, \emph{nonnesting}, and \emph{alignment-free} if
      $C(R)=0$, $N(R)=0$ and $A(R)=0$, respectively; we also call $R$ {\it
      maximally crossing}, {\it maximally nesting}, or {\it maximally aligned}
      if we have $N(R)=A(R)=0$, $C(R)=A(R)=0$, or $C(R)=N(R)=0$, respectively.
\end{enumerate} We also apply all the above definitions to $\beta$ by applying them to
    the set of components of $\beta$. (Note that the
    definitions of type in (ii) and in Definition \ref{def:nestcross} are
consistent.)
\end{defn}

\begin{rmk}
\label{rmk:sum}
By Corollary \ref{cor:mcount}, when $R$ is a maximal orthogonal set of positive
roots, the sum of the numbers $C(R), N(R)$ and $A(R)$ is a constant depending
only on $W$ and not on $R$; therefore each of these numbers achieves the maximal
possible value when the other two equal zero. This justifies the terms
``maximally crossing'', ``maximally nesting'', and ``maximally aligned'' in
Definition \ref{def:stats}.(iii).
\end{rmk}

\begin{exa}\label{exa:type}
Suppose that $W$ has type $D_6$, and that $R=\{\ep_1 \pm
\ep_2,\ep_3\pm\ep_6,\ep_4 \pm \ep_5\}$ is the maximal orthogonal set of positive
roots corresponding to the matching $\{12,36,45\}$ (via the natural bijection of
Lemma \ref{lem:dmax} (ii)). In this case, $R$ contains two alignments,
corresponding to the pairs $\{12,45\}$ and $\{12, 36\}$, and one nesting,
corresponding to the pair $\{36, 45\}$. The type of $R$ is therefore $A^2 C^0
N^1$ (or $AAN$).
\end{exa}

We now state the main theorem of this section. Its proof will occupy the next
subsection.

\begin{theorem}\label{thm:qp}
Let $W$ be a Weyl group of type $E_7$, $E_8$, or $D_n$ for $n$ even, and let $X$
be the set of maximal orthogonal sets of positive roots of $W$, regarded as a
$W$-set under the action \[ w(\{\be_1,\cdots,\be_n\})=\{\abs{w(\be_1)}, \cdots,
\abs{w(\be_n)}\}.\] Then the set $(X, \lambda)$ is a quasiparabolic set for $W$,
where $\lambda : X \rightarrow \Z$ is the level function $\lambda(x) = C(x) +
2N(x)$.
\end{theorem}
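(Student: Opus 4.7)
The plan is to verify the three axioms of Definition \ref{def:qpset} for $(X,\lambda)$, using Proposition \ref{prop:four} and Theorem \ref{thm:cna} as the main structural tools. For any reflection $r = s_\alpha$ and $x \in X$, either $\pm\alpha \in x$ (in which case $rx = x$ and $\lambda(rx) = \lambda(x)$), or else by Proposition \ref{prop:four} the reflection $r$ changes exactly four elements of $x$ forming a coplanar quadruple $Q \subseteq x$, replacing them with a coplanar quadruple $Q' \subseteq rx$ sharing the same associated $D_4$-subsystem $\Psi$. By Theorem \ref{thm:cna}(vi) and Remark \ref{rmk:moves}, $Q$ and $Q'$ are two distinct elements of $\{\Psi^+_C,\Psi^+_N,\Psi^+_A\}$, so $r$ performs an $XY$-move for some distinct $X,Y \in \{C,N,A\}$.

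I would then compute $\lambda(rx) - \lambda(x)$ by decomposing it as $[f(Q') - f(Q)] + \Delta'$, where $f(A) = 0$, $f(C) = 1$, $f(N) = 2$ is the level contribution of each coplanar quadruple and $\Delta'$ accounts for type changes of the other coplanar quadruples in $x$ affected by $r$. By Proposition \ref{prop:overlap}(i), every other coplanar quadruple in $x$ meets $Q$ in $0$ or $2$ elements, and by (ii) each quadruple $Q''$ meeting $Q$ in two elements lies with $Q$ in a common $D_6$-subsystem of $\Phi$. A helpful preliminary observation is that if $\alpha$ is a simple root of $\Phi$ lying in $\Psi$, then $\alpha$ is automatically a simple root of $\Psi$ (since a simple root of $\Phi$ cannot be written as a nontrivial positive combination of other positive roots), so Theorem \ref{thm:cna}(v) forces $\alpha \notin \Psi^+_C$; hence when $r = s_\alpha$ is a \emph{simple} reflection, the $XY$-move must lie among CA, AC, CN, NC, giving $|f(Q')-f(Q)| = 1$.

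The main obstacle is the ``miracle'' identity corresponding to Proposition \ref{prop:miracle} announced in the introduction: I would need to prove that the correction $\Delta'$ vanishes for simple reflections (so that the scaled $W$-set bound is exact and tight), and more generally that $\Delta'$ never cancels the contribution from $Q$ for any reflection. I would attempt this by pairing each quadruple $Q''$ meeting $Q$ in two elements with a partner inside the enclosing $D_6$-subsystem and analyzing, case by case over the six $XY$-move types, how each pair's combined type-contribution transforms under $s_\alpha$. In type $D_{2k}$ this reduces to a combinatorial identity about how the matching structure on the relevant six coordinates transforms when a single block is rearranged, and the Steiner-quadruple-system rigidity of Lemma \ref{lem:steiner} would then transport the identity to type $E_8$, with $E_7$ following by restriction to the centralizer of a fixed root in $E_8$ as in the proofs of Proposition \ref{prop:four} and Corollary \ref{cor:mcount}.

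Granted the miracle identity, the scaled $W$-set condition and axiom (QP1) follow immediately: the bound $|\lambda(sx) - \lambda(x)| \le 1$ for $s \in S$ is the miracle itself, and (QP1) holds because whenever $rx \neq x$ the reflection $r$ moves a quadruple and the identity forces $\lambda(rx) \neq \lambda(x)$. For axiom (QP2), given $r \in T$, $s \in S$ with $\lambda(rx) > \lambda(x)$ and $\lambda(srx) < \lambda(sx)$, I would set $y = rx$ and analyze how $s$ acts on both $x$ and $y$: since $s$ raises the level from $x \to sx$ but lowers it from $y \to sy$, the two coplanar quadruples moved by $s$ in $x$ and in $y$ must both lie in $D_4$-subsystems containing $\alpha_s$, and a case analysis on the feature types of the quadruples moved by $r$ and $s$ would force $sx = rx$ as required.
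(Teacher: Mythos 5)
Your overall plan is the paper's plan: isolate the level-change ``miracle'' (the paper's Proposition \ref{prop:miracle}), prove it by localizing overlapping quadruples into common $D_6$-subsystems via Proposition \ref{prop:overlap}, and then read off the scaled-$W$-set property and (QP1); your observation that a simple root of $\Phi$ is an induced simple root of the associated $D_4$-subsystem, so that simple reflections can only perform $AC$, $CA$, $CN$, $NC$ moves, is exactly how the paper pins down part (i) of that proposition. The genuine gap is in your treatment of (QP2). Write $Q_r,Q_s\subseteq x$ for the quadruples moved by $r$ and $s$. Whenever $Q_r\neq Q_s$ one automatically has $rx\neq sx$: any component $\be\in Q_s\setminus Q_r$ survives into $rx$ but is removed from $sx$ and cannot reappear, because the replacement feature is disjoint from $Q_s$ (Theorem \ref{thm:cna}(i)). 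So in the cases $Q_r\cap Q_s=\emptyset$ and $|Q_r\cap Q_s|=2$ no ``case analysis on feature types'' can force $sx=rx$; what must be shown is that the hypotheses $\lambda(rx)>\lambda(x)$ and $\lambda(srx)<\lambda(sx)$ are contradictory. The disjoint case does follow from your miracle for simple reflections ($s$ moves the same quadruple $Q_s$ in $rx$, with the same local type, so it raises the level of $rx$ as well). But for $|Q_r\cap Q_s|=2$ you supply no mechanism at all, and this is exactly where the paper has to work: it argues by induction on the rank, reducing the configuration to the $D_6$-subsystem of Proposition \ref{prop:overlap}(ii), with base cases $D_4$ and $D_6$ settled by identifying $X$ with the fixed-point-free involutions of the symmetric group, invoking \cite[Section 4]{rains13}, and checking that $\lambda=C+2N$ agrees with the Rains--Vazirani level $(\ell(\iota)-k)/2$. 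Your proposal contains no base-case verification and no rank induction for (QP2), so the axiom is not actually established; Example \ref{exa:moves} shows that non-simple reflections can retype quadruples away from the one they move, so the overlap case cannot be dismissed by inspecting feature types alone.

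Two smaller inaccuracies in the part you did sketch. First, the Steiner quadruple system of Lemma \ref{lem:steiner} only yields the intersection pattern of quadruples in type $E_8$ (Proposition \ref{prop:overlap}(i)); it does not ``transport'' the $D_{2k}$ identity to $E_8$. The actual transport is a counting identity summing local differences over the $D_6$-subsystems generated by the moved quadruple together with each overlapping one, with the rank-$6$ case checked by hand as in Lemma \ref{lem:d6miracle}. Second, the statement you would need in rank $6$ is not simply that the correction vanishes: as Example \ref{exa:moves} shows, an $AC$ move by a non-minimal root can retype other quadruples, so the local-to-global matching must be formulated with the minimality condition (no positive root $\al'<\al$ performing the same kind of move), which holds trivially for simple roots and is what makes your $\Delta'$ vanish in that case. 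Your pairing idea is compatible with this, but the minimality refinement is a key piece you would still have to formulate and prove before the simple-reflection case, and hence the scaled property and (QP1), follow.
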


\subsection{Proof of Theorem \ref{thm:qp}}

\label{sec:proof}
We will prove Theorem \ref{thm:qp} by showing that the set $X$ is a scaled
$W$-set satisfying the axioms (QP1) and (QP2) of Definition \ref{def:qpset}. To
this end, we first study how the action of a reflection $s_\al$ corresponding to
a root $\alpha$ can affect the level of a maximal orthogonal set $R$ of roots.
Recall from Remark \ref{rmk:moves} that $s_\alpha$ must replace a coplanar
quadruple in $R$ with a feature of a different type whenever $R$ does not
contain $\pm\al$. We will therefore examine how such feature replacements affect
the level function $\lambda$. Also recall from Section \ref{sec:roots} that the
root system $\Phi$ is equipped with a natural partial order $\le$ defined by the
condition that $\al\le \beta$ if and only if $\beta-\al$ is a nonnegative linear
combination of simple roots. We will frequently use the order $\le$ throughout
the proofs.

\begin{exa}\label{exa:moves}
Let $W=W(D_6)$ and $R=\{\ep_1 \pm \ep_2,\ep_3\pm\ep_6,\ep_4 \pm \ep_5\}$ be as
in Example \ref{exa:type}. Let $\al = \ep_2 - \ep_4$, so that $s_\al$ acts as
the transposition $(2, 4)$.  In this case, the set $s_\al(R)$  corresponds to
the matching $\{14,25,36\}$ and has type $CCC$. The reflection $s_\al$ changes
the alignment $Q = \{\ep_1 \pm \ep_2, \ \ep_4 \pm \ep_5\}$ to the crossing
$Q'=s_\al(Q) = \{\ep_1 \pm \ep_4, \ \ep_2 \pm \ep_5\}$, so $\al$ moves an $A$ to
a $C$.  Note that while $s_\al$ changes the quadruple $Q$  from an $A$ to a $C$
locally, globally $s_\al$ does not change the type of $R$ from $AAN$ to $ACN$
but to $CCC$. This is because after the application of $s_\al$, each collinear
pair of roots in $Q$ becomes a new collinear pair that forms a new type of
coplanar quadruple with the collinear pair of roots $\ep_3\pm\ep_6$ outside $Q$.

Part (ii) of the next proposition, however, will imply that if $\al$ is minimal
among roots moving an $A$ to a $C$, then the global change in the type of $R$
will mirror this local change, so that if $R$ has type $A^pC^qN^r$ then
$s_\al(R)$ has type $A^{p-1}C^{q+1}N^r$. In our example, the root
$\al'=\ep_2-\ep_3$ satisfies the minimality condition since it is simple. The
reflection $s_{\al'}$ changes the coplanar quadruple
$\{\ep_1\pm\ep_2,\ep_3\pm\ep_6\}$ of type $A$ to the coplanar quadruple
$\{\ep_1\pm\ep_3,\ep_2\pm\ep_6\}$ of type $C$, and changes the set $R$ of type
$AAN$ to a maximal orthogonal set of type $ACN$.
\end{exa}

\begin{prop}\label{prop:miracle}
Let $W$ be a Weyl group of type $E_7$, $E_8$, or $D_{n}$ with $n$ even, and let
$R$ be a maximal set of  orthogonal positive roots of type $A^p C^q N^r$. Let
$\lambda=C+2N$ be the level function from Definition \ref{def:stats}.
\begin{enumerate}
    \item[{\rm (i)}]{If $\al_i$ is a simple root, then either $\al_i \in R$, or
         $\lambda(s_{\al_i}(R)) \ne \lambda(R)$.  If $\lambda(s_{\al_i}(R)) >
         \lambda(R)$, then we have $\lambda(s_{\al_i}(R)) = \lambda(R) + 1$, and
         either (1) $s_{\al_i}$ moves an $A$ to a $C$ and $s_{\al_i}(R)$ has
         type $A^{p-1}C^{q+1}N^r$, or (2) $s_{\al_i}$ moves a $C$ to an $N$ and
     $s_{\al_i}(R)$ has type $A^pC^{q-1}N^{r+1}$.}
 \item[{\rm (ii)}]If $Q$ is an alignment in $R$, $Q'$ is the corresponding
     crossing quadruple, and $R'=(R\setminus Q)\cup Q'$,  then $d = \lambda(R')
     - \lambda(R)$ is a positive odd number.  If there is a positive root $\al$
     such that $s_\alpha(Q)=Q'$ but no positive root $\al'<\al$ moves an $A$ in
     $R$ to
     a $C$, then $R'$ has type $A^{p-1}C^{q+1}N^{r}$.
 \item[{\rm (iii)}]If $Q$ is a crossing in $R$, $Q'$ is the corresponding
      nesting quadruple, and $R'=(R\setminus Q)\cup Q'$, then $A(R) = A((R
      \backslash Q) \cup Q')$ and $d = \lambda(R') - \lambda(R)$ is a positive
      odd number.  If there is a positive root $\al$ such that $s_\alpha(Q)=Q'$
      but no positive root $\al'<\al$ moves a $C$ in $R$ to an $N$, then $R'$
      has type $A^{p}C^{q-1}N^{r+1}$.
  \item[{\rm (iv)}] If $Q$ is an alignment in $R$, $Q'$ is the corresponding
      nesting quadruple, and $R'=(R\setminus Q)\cup Q'$, then $d = \lambda((R
      \backslash Q) \cup Q') - \lambda(R)$ is a strictly positive even number.
\end{enumerate}
\end{prop}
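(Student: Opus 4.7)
The strategy for parts (ii)--(iv) is to decompose $\Delta\lambda = \lambda(R')-\lambda(R)$ into local contributions using Proposition \ref{prop:overlap}. Each coplanar quadruple $Q_1 \subseteq R$ distinct from $Q$ either is disjoint from $Q$ (and hence is unchanged in $R'$) or intersects $Q$ in exactly two elements, in which case $Q\cup Q_1$ lies in a unique $D_6$-subsystem $\tilde\Psi$ of $\Phi$. Within $\tilde\Psi$, the six roots of $R\cap\tilde\Psi$ form three collinear pairs (blocks) $m_1,m_2,m_3$, where $\{m_1,m_2\}=Q$; the move $Q\to Q'$ rearranges $m_1\cup m_2$ into two new collinear pairs $m_1',m_2'$ while fixing $m_3$. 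Via the matching bijection of Lemma \ref{lem:dmax}(ii), each such $\tilde\Psi$ contributes to $\Delta\lambda$ a combinatorial change depending only on how the crossing/nesting/alignment pattern of the three-block matching transforms under this rearrangement.

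\textbf{Case analysis.} Let $a<b<c<d$ denote the coordinates occurring in $m_1\cup m_2$, so $Q$ corresponds to the matching $\{ab,cd\}$ (alignment), $\{ac,bd\}$ (crossing), or $\{ad,bc\}$ (nesting) according to its type, and let $\{e,f\}$ with $e<f$ denote the coordinates of $m_3$. I will enumerate cases by the positions of $e,f$ among the five intervals $(-\infty,a),(a,b),(b,c),(c,d),(d,\infty)$. For each of the fifteen cases one computes the types of the two pairs $\{m_1,m_3\},\{m_2,m_3\}$ before the move and of $\{m_1',m_3\},\{m_2',m_3\}$ after, from which the contribution $\Delta_{m_3}\lambda$ and the local $(\Delta A,\Delta C,\Delta N)$ are read off directly. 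I expect every $\Delta_{m_3}\lambda$ to be a nonnegative \emph{even} integer, vanishing except in a short list of ``bad'' configurations---for example, in (ii), when $e$ or $f$ lies in $(b,c)$. Combining these with the contribution $+1$ from $Q\to Q'$ itself in the AC and CN cases and $+2$ in the AN case yields the parity claims: $\Delta\lambda$ is a positive odd integer in (ii) and (iii), and a strictly positive even integer in (iv).

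\textbf{Minimality and part (i).} For the type refinements in (ii) and (iii) I argue by contrapositive: every bad $m_3$ produces an alignment (respectively crossing) in $R$ admitting an AC (respectively CN) root strictly smaller than $\alpha$ in the root poset of $\Phi$. For instance, in (ii) with $b<e<f<c$, the alignment $\{m_1,m_3\}\subseteq R$ has minimal AC root $\ep_b-\ep_e$ (with $\ep$ the coordinate basis of $\tilde\Psi$), and I will verify that $\ep_b-\ep_e<\alpha$ in $\Phi$ for each of the four candidates $\alpha\in\Psi^+_N$ by exhibiting the difference as a nonnegative sum of simple roots of $\Phi$; analogous comparisons handle the remaining bad configurations. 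Since positive roots of $\tilde\Psi$ are positive roots of $\Phi$, these poset comparisons transfer uniformly to types $E_7$, $E_8$, and $D_{2k}$. Part (i) then follows from (ii) and (iii) via Theorem \ref{thm:cna}(v, vi): a simple reflection $s_{\alpha_i}$ that moves a coplanar quadruple $Q$ must have $\alpha_i$ as an induced simple root of the associated $D_4$-subsystem $\Psi$, hence $\alpha_i\in\Psi^+_A\cup\Psi^+_N$, so $s_{\alpha_i}$ can only perform an AC/CA or CN/NC move; since simple roots are minimal in the root poset, the minimality hypotheses of (ii) and (iii) hold vacuously, giving $|\Delta\lambda|=1$. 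The main obstacle is the fifteen-case analysis, together with the uniform verification that the smaller AC/CN root constructed in $\tilde\Psi$ remains strictly smaller than \emph{every} candidate $\alpha\in\Psi^+_N$ (resp.\ $\Psi^+_A$ for part (iii)) in each bad configuration.
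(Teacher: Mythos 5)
Your plan follows essentially the same route as the paper's proof: localize the change in $\lambda$ to $D_6$-subsystems via Proposition \ref{prop:overlap}, settle the rank-$6$ situation by an explicit case analysis on three-block matchings (this is exactly the content of the paper's Lemma \ref{lem:d6miracle}), prove the type refinements in (ii)--(iii) by the same contrapositive construction of a strictly smaller AC/CN root, and deduce (i) from (ii)--(iii) using that simple roots are minimal in the root order, with your direct treatment of (iv) being a cosmetic variant of the paper's composition of (ii) and (iii). The computations you defer (the nonnegative even local contributions and the comparisons $\al'<\al$) are precisely the ones the paper carries out, and they do check out, so the proposal is correct and essentially identical in approach.
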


\begin{lemma}\label{lem:d6miracle}
Proposition \ref{prop:miracle} holds if $W$ has type $D_6$.
\end{lemma}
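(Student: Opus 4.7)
The plan is to translate all four assertions into matching combinatorics and reduce to a finite verification. By Lemma~\ref{lem:dmax}(ii), the set $X = X(W(D_6))$ is in bijection with the 15 perfect matchings of $[6]$. By Remark~\ref{rmk:Daction}, each reflection in $W(D_6)$ acts on matchings as a transposition via the homomorphism $\phi$ of Equation~\eqref{eq:phimap}. By Remark~\ref{rmk:d4matching} and Corollary~\ref{cor:mcount}, every matching contains exactly three coplanar quadruples, one for each pair of blocks, whose feature type matches the classical crossing/nesting/alignment notion for the corresponding pair of blocks by Remark~\ref{rmk:cna}.

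For part (i), I would observe that $s_{\alpha_i}$ acts as the transposition $(i, i+1)$ for $i < 6$ and as $(5,6)$ for $i = 6$, while the condition $\alpha_i \in R$ translates to the matching containing the block $\{i, i+1\}$ or $\{5, 6\}$, respectively. When $\alpha_i \notin R$, the integers $i$ and $i+1$ lie in two distinct blocks $b_1, b_2$ of $M$ whose pair forms a single coplanar quadruple $Q$; the other two coplanar quadruples (each involving the third block, which omits both $i$ and $i+1$) are fixed by the transposition. Thus the global level change equals the local change on $Q$. A short enumeration of how each of the six possible transpositions among the four positions $e_1 < e_2 < e_3 < e_4$ of the elements of $Q$ transforms the three pairings shows that the adjacent-position transpositions $(e_1,e_2)$, $(e_2,e_3)$, $(e_3,e_4)$ realize only A$\leftrightarrow$C or C$\leftrightarrow$N moves, never A$\leftrightarrow$N. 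Therefore $\lambda$ changes by $\pm 1$, and the level-increasing direction yields the type changes claimed in the statement.

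For parts (ii)--(iv), I would analyze the feature-replacement move directly on a matching $M = \{b_1, b_2, e\}$ where $\{b_1, b_2\}$ is the coplanar quadruple $Q$ of the specified type. The new matching $M' = \{b_1', b_2', e\}$ has the same third block, and the total level change $d = \lambda(M') - \lambda(M)$ decomposes as the local change on $Q$ (equal to $+1$, $+1$, or $+2$ in parts (ii), (iii), (iv), respectively) plus the auxiliary changes on $\{b_1', e\}$ and $\{b_2', e\}$ relative to $\{b_1, e\}$ and $\{b_2, e\}$. Enumerating the positions of the two elements of $e$ on $[6]$ relative to the four elements of $b_1 \cup b_2$ (a handful of cases up to the symmetry of relabeling) gives the value of $d$ in each case, from which the parity and positivity claims follow, as does the invariance $A(M) = A(M')$ in part (iii).

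The main obstacle is the minimality clause in parts (ii) and (iii). To verify it, I would parametrize the four positive roots realizing the chosen move $Q \to Q'$ as $\ep_{a_2} \pm \ep_{a_3}$ and $\ep_{a_1} \pm \ep_{a_4}$, where $a_1 < a_2 < a_3 < a_4$ are the elements of $b_1 \cup b_2$, and observe that $\ep_{a_2} - \ep_{a_3}$ is the unique $\le$-minimum among them. The hypothesis that no positive root $\alpha' < \alpha$ moves an A to a C in $R$ restricts the possible positions of the elements of $e$ relative to $a_1, a_2, a_3, a_4$; in each admissible configuration, I would check directly that the auxiliary quadruples $\{b_1', e\}$ and $\{b_2', e\}$ retain the types of $\{b_1, e\}$ and $\{b_2, e\}$, so that $M'$ has type $A^{p-1}C^{q+1}N^r$. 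If an auxiliary type were to change, one could exhibit a strictly smaller positive root, supported on coordinates of $e$ and $b_1 \cup b_2$, that performs an A-to-C move on a different coplanar quadruple of $R$, contradicting the minimality. The argument for (iii) is analogous with C-to-N replacing A-to-C. The resulting finite bookkeeping, involving only the 15 matchings and the three coplanar quadruples within each, completes the proof.
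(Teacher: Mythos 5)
Your plan is correct and follows essentially the same route as the paper: identify the $D_6$ $n$-roots with perfect matchings of $[6]$, prove (i) by noting that a simple reflection acts as an adjacent transposition which changes the type of only the coplanar quadruple it moves, and settle (ii)--(iv) by a finite case check, handling the minimality clauses exactly as the paper does, by exhibiting a strictly smaller positive root performing an A-to-C (resp.\ C-to-N) move whenever the type of an auxiliary quadruple would change. One small wording fix for (i): the other two coplanar quadruples are not literally fixed by the transposition (their blocks containing $i$ or $i+1$ do change); what holds, and what your argument needs, is that their types are preserved, because swapping the adjacent values $i$ and $i+1$ does not alter the relative order of the four entries involved.
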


\begin{proof}[Proof of Lemma \ref{lem:d6miracle}]
Throughout the proof, we will identify both $R$ and the coplanar quadruples
within $R$ with their corresponding matchings (as in Lemma \ref{lem:dmax} and
Remark \ref{rmk:d4matching}). Recall that we will often write a 2-block
$\{a,b\}$ in a matching as $ab$.

Suppose that $\al_i$ is a simple root, so that the reflection $s_{\al_i}$ acts
as the transposition $(i, i+1)$. We will assume that $\al_i \not\in R$, so that
we have $R=\{ai,(i+1)b,ef\}$ and the coplanar quadruple moved by $s_{\al_i}$ is
$Q=\{ai,(i+1)b\}$.  Let $Q'$ be any other coplanar quadruple in $R$. Then $Q'$
is of the form $\{xy,ef\}$ with $x\in \{a,b\}$ and $y\in \{i,i+1\}$, and we have
$s_{\al_i}(Q')=\{\{x,s_{\al_i}(y)\},ef\}$. Since the numbers $i$ and $i+1$ are
only distance 1 apart, the four elements $x, s_{\al_i}(y), e,f$ appearing in
$Q'$ have the same relative order as the numbers $x,y,e,f$; therefore $Q'$ and
$s_{\al_i}(Q')$ have the same type. It follows that $Q$ is the only quadruple in
$R$ that is changed to a quadruple of another type by  $s_{\al_i}$.  Note that
$Q$ will be an alignment if $a < i < i+1 < b$; $Q$ will be a crossing if $a < b
< i$ or $i+1 < a < b$ or $b < i < i+1 < a$; and $Q$ will be a nesting if $b < a
< i$ or $i+1 < b < a$.  We have $\lambda(s_{\al_i}(R)) = \lambda(R)+1$ in the
first three of these six cases and $\lambda(s_{\al_i}(R))=\lambda(R)-1$ in the
last three cases. The first of the six cases corresponds to the situation in
(1), and the second and third cases correspond to the situation in (2). Part (i)
follows.

Suppose that $Q$ and $Q'$ are as in the statement of (ii), with $Q$ being the
alignment $\{a_1a_2,b_1b_2\}$ for some $a_1 < a_2 < b_1 < b_2$.  If $b_1 = a_2 +
1$, then the simple root $s_{a_2}$ moves an $A$ to a $C$ by moving $Q$, and we
have $R'=s_{\al_2}(R)$; therefore $R'$ has type $A^{p-1}C^{q+1}N^{r}$ by (i). If
$b_1>a_2+1$, then $Q$ must be one of the following five quadruples \[
    \{12,56\},\quad \{12,45\},\quad \{12, 46 \},\quad \{23,
    56\},\quad\text{and\quad} \{13,56\}.  \] Direct computation shows that $d$
    equals $5,1,3,1$, and $3$ in these cases, respectively. It follows that $d$
    is a positive odd number.

To prove the second assertion in (ii), we prove its contrapositive. If $R'$ does
not have type $A^{p-1}C^{q+1}N^r$, then $Q$ must be one of five quadruples
listed in the  last paragraph. We claim that in each case, for every positive
root $\al$ such that $s_\al(Q)=Q'$, there exists a positive root $\al'<\al$ that
moves another alignment in $R$ other than $Q$ to a crossing. Specifically, we may
always take $\al'$ to be $\ep_2-\ep_3$ in the first three cases and
$\ep_4-\ep_5$ in the last two cases. For example, the only possibilities for
$\al$ if $Q=\{12,56\}$ are $\ep_2\pm\ep_5$ and $\ep_1\pm\ep_6$, and for all
these possibilities the root $\al'=\ep_2-\ep_3$ is smaller than $\al$ and moves
an alignment in $R$ other than $Q$ to a crossing. This completes the proof of
the desired contrapositive.

A similar argument proves (iii). This time, we have $Q= \{a_1a_2,b_1b_2\}$ for
some $a_1 < b_1 < a_2 < b_2$. If we have $b_1=a_1+1$ or $b_2=a_2+1$, then the
simple root $\al_{a_1}$ or $\al_{a_2}$ moves $Q$ to a nesting $Q'$, so $R'$ has
type $A^pC^{q-1}N^{r+1}$ by (i). The only remaining possibility for $Q$ is
$\{14,36\}$. If $\al$ is a positive root such that $s_\al(Q)=Q'$, then $\al\in
\{\ep_1\pm\ep_3,\ep_4\pm\ep_6\}$. Direct computation shows that any simple root
$\al'<\al$ moves some crossing in $R$ other than $Q$ to a nesting.

Finally, (iv) follows by combining (ii) and (iii), since $R'$ can be obtained
from by first replacing $Q$ in $R$ with its corresponding crossing $Q''$ and
then replacing $Q''$ in the result with $Q'$.
\end{proof}

\begin{proof}[Proof of Proposition \ref{prop:miracle}]
If $W$ has type $D_4$ then there are only three possibilities for $R$, and all
the assertions follow by direct verification.  By Lemma \ref{lem:d6miracle}, we
may therefore assume that the rank of $W$ is at least $7$.

For each coplanar quadruple $Q\se R$, we define $H_Q$ to be the set of all
6-subsets $H$ of $R$ such that there exists a $D_6$-subsystem of $\allroots$
containing both $\Psi_Q$ and $H$, where $\Psi_Q$ is the $D_4$-subsystem
associated with $Q$.  By Proposition \ref{prop:overlap}, if $Q_1$ is any
coplanar quadruple in $R$, then either $Q_1=Q$, or $Q_1 \cap Q = \emptyset$, or
$\abs{Q_1\cap Q}=2$ and there is a unique element of $H_\Psi$ that contains both
$Q$ and $Q_1$.

We now prove (iii). By tracking the contributions towards the crossing number
made by the three types of coplanar quadruples $Q_1\se R$ just mentioned, we
note that
\begin{equation}
    \label{eq:Cdiff} C(R') - C(R) = -1 + \sum_{H \in H_Q} \big(C((H \backslash
      Q) \cup Q') - C(H) + 1 \big).
  \end{equation}
Here, the term $-1$ comes from the case $Q_1=Q$, since the crossing $Q$ in $R$
is replaced by the non-crossing feature $Q'$ as we change $R$ to $R'$. In the
second case where $Q_1\cap Q=\emptyset$, the quadruple $Q_1$ lies in both $R$
and $R'$ and thus does not contribute to the difference $C(R')-C(R)$.  Finally,
every $Q_1$ with $\abs{Q_1\cap Q}=2$ appears together with $Q$ in a unique
element $H$ of $H_Q$ and contributes a term in the sum over $H_Q$, where we have
added 1 to the difference $C((H\backslash Q)\cup Q')-C(H)$ to account for the
fact that the change from $Q$ to $Q'$ in $H$ has been recorded by the term $-1$
in the first case.

Similar arguments based on the facts that  $N(Q')=N(Q)+1$ and  $A(Q')=A(Q)$ show
that
\begin{equation}
    \label{eq:Ndiff} N(R') - N(R) = 1 + \sum_{H \in H_Q} \big(N((H \backslash Q)
      \cup Q') - N(H) - 1 \big)
  \end{equation} and
  \begin{equation}
      \label{eq:Adiff} A(R') - A(R) = \sum_{H \in H_Q} \big(A((H \backslash Q)
      \cup Q') - A(H) \big).
  \end{equation} Since $\lambda=C+2N$, it follows from Equations
    \eqref{eq:Cdiff} and \eqref{eq:Ndiff} that
    \begin{equation}
        \label{eq:leveldiff} \lambda(R') - \lambda(R) = 1 + \sum_{H \in H_Q}
      \big(\lambda((H \backslash Q) \cup Q') - \lambda(H) - 1 \big).
  \end{equation}
By Lemma \ref{lem:d6miracle}.(iii), each summand in the sum over $H_Q$ is zero
in Equation \eqref{eq:Adiff} and is a nonnegative even number in Equation
\eqref{eq:leveldiff}; therefore we have $A(R')=A(R)$ and the number
$d=\lambda(R')-\lambda(R)$ is a positive odd number.

To prove the last assertion in (ii), suppose that $s_\al(Q)=Q'$ for some
positive root $\al$, but no positive root $\al' < \al$ moves a $C$ to an $N$ in
$R$. The same minimality condition then applies if $R$ is replaced by an element
of $H_Q$, so every summand in the sums over $H_Q$ in Equations \eqref{eq:Cdiff}
and \eqref{eq:Ndiff} is zero by Lemma \ref{lem:d6miracle} (iii). It follows that
$R'$ has type $A^pC^{q-1}N^{r+1}$, which proves Proposition \ref{prop:miracle}
(iii).

The proof of Proposition \ref{prop:miracle} (ii) follows by a similar but
shorter argument, and the proof of Proposition \ref{prop:miracle} (iv) follows
by combining parts (ii) and (iii).

Finally, to prove Proposition \ref{prop:miracle} (i), assume that $\al_i$ is a
simple root such that $\al_i \not\in R$.  We have already proved part (i) if $W$
has type $D_4$, and this implies that either $\al_i$ moves an $A$ to a $C$ or a
$C$ to an $N$. In the former case, the conclusions follow from part (ii), and in
the latter case, they follow from part (iii), in each case because the simple
root $\al_i$ is minimal in the order $\le$.
\end{proof}

\begin{proof}[Proof of Theorem \ref{thm:qp}]
We first prove (i). Proposition \ref{prop:miracle} (i) proves that $(X,
\lambda)$ is a scaled $W$-set, so it suffices to show $(X,\lambda)$ satisfies
the axioms (QP1) and (QP2). We do so by induction on $n$.

If $n=4$ or $n=6$, then $W$ has type $D$, and the axioms (QP1) and (QP2) can be
proved by direct verification or as follows. Suppose $n=2k$. By Remark
\ref{rmk:Daction}, it suffices to show that $(X,\lambda)$ is a quasiparabolic
set for the symmetric group $S_{n}$.  We may identify the set $X$ with the set
$X'$ of fixed-point free involutions in $S_{n}=W(A_{n-1})$, with each collinear
pair $\{\ep_i\pm\ep_j\}$ in a maximal set $R\in X$ corresponding to a factor
$(i,j)$ in an involution $\iota\in X'$. Under this identification, the actions
of $S_n$ on $X'$ and $X$ coincide with each other, so it suffices to show that
$X'$ is a quasiparabolic set for $S_n$ under the level function $\lambda$.
Rains and Vazirani \cite[Section 4]{rains13} proved that $X'$ is a
quasiparabolic set for $S_n$ under the level function $h$ given by
$h(\iota)=(\ell(\iota) - k)/2$, where $\ell$ denotes Coxeter length, so it
further suffices to show that whenever an involution $\iota \in S_n$ corresponds
to a maximal set $R$ of positive orthogonal roots, we have $(\ell(\iota) - k)/2
= \lambda(R)$. This can be proved by an exhaustive check or by induction on
$\lambda(R)$  by using the first three cases in the second paragraph of the
proof of Lemma \ref{lem:d6miracle}.  For example, the involution $$ \iota =
(13)(26)(45) = s_2 s_1 s_3 s_5 s_4 s_3 s_5 s_4 s_2 \in S_6 $$ corresponds to the
set $R=\{13,26,45\}$ of type $ACN$. In this case, we have $(\ell(\iota) - k)/2 =
(9-3)/2$ and $\lambda(R)=3$, as required.

Now assume $n\ge 7$. Let $r=s_\alpha \in T$ be the reflection corresponding to a
root $\al$, and let $x \in X$. If $rx \ne x$, then $\pm \al$ are not in $x$, so
$r$ moves an $A$ to a $C$, or a $C$ to an $N$, or an $A$ to an $N$, or vice
versa by Remark \ref{rmk:moves}. It follows from Proposition \ref{prop:miracle}
(ii), (iii), and (iv) that $\lambda(rx)> \lambda(x)$ in all the first three
cases and $\lambda(rx)<\lambda(x)$ in the last three cases; therefore, axiom (QP1)
holds.

To prove axiom (QP2), assume that we have $r \in T$, $x \in X$, $s \in S$,
$\lambda(rx) > \lambda(x)$ and $\lambda(srx) < \lambda(sx)$.  Then the
definition of scaled $W$-sets forces $\lambda(rx) =\lambda(sx)= \lambda(x)+1$,
so each of $r$ and $s$ must be an $AC$ or $CN$ move by Proposition
\ref{prop:miracle} (ii), (iii), and (iv). Let $Q_1$ and $Q_2$ be the coplanar
quadruples of roots in $R$ moved by $r$ and $s$, respectively. Then $Q_1$ and
$Q_2$ are disjoint, or coincide with each other, or intersect in two elements by
Proposition \ref{prop:overlap}. If $Q_1$ and $Q_2$ were disjoint, we would have
$Q_2\se r(R)$, so that $s$ would move $Q_2$ in $r(R)$, and Proposition
\ref{prop:miracle} (ii) and (iii) would imply that $
\lambda(srx)=\lambda(rx)+1>\lambda(rx)=\lambda(sx), $ contradicting the
assumption that $\lambda(srx)<\lambda(sx)$.  If $Q_1=Q_2$, then the fact that
$\lambda(rx)=\lambda(sx)=\lambda(x)+1$ implies that $s$ and $r$ must both be
$AC$ moves or both be $CN$ moves, according as $Q_1=Q_2$ is an alignment or
crossing, by Proposition \ref{prop:miracle}. It follows from Remark
\ref{rmk:moves} that $rx=sx$. Finally, if $\abs{Q_1\cap Q_2}=2$, then
Proposition \ref{prop:overlap} implies that there is a subsystem $\Sigma$ of
type $D_6$ containing both $Q_1$ and $Q_2$.  Applying the inductive hypothesis
to $\Sigma$ proves that $rx = sx$, which completes the proof.
\end{proof}

We end this subsection by recording some useful consequences of Proposition
\ref{prop:miracle} concerning sums of $n$-roots:

\begin{cor}\label{cor:moveup}
Let $W$ be a Weyl group of type $E_7$, $E_8$, or $D_{n}$ with $n$ even.
\begin{enumerate}
\item[{\rm (i)}]{If $\be \leq_Q \gamma$ are two positive $n$-roots that are
     comparable in the quasiparabolic order, then we have $\sigma(\be) \leq
 \sigma(\gamma)$, with equality if and only if we have $A(\be) = A(\gamma)$.}
\item[{\rm (ii)}] {If $\al_i$ is a simple root and $R$ is a maximal orthogonal
     set of positive roots, then we have $$ \sigma(s_{i}(R)) =
     \begin{cases} \sigma(R) - 2\al_i & \text{\ if\ } \al_i \in R \text{\ or\ }
              \al_i \text{\ moves\ a\ } C \text{\ to\ an\ } A,\\ \sigma(R) +
              2\al_i & \text{\ if\ } \al_i \text{\ moves\ an\ } A \text{\ to\ a\
              } C,\\ \sigma(R) & \text{\ otherwise}.
      \end{cases}$$}
  \item[{\rm (iii)}]{If $\al_i$ is a simple root and $\beta$ is a positive
         $n$-root of type $A^pC^qN^r$ such that $\sigma(s_{\al_i}(\be)) >
         \sigma(\be)$, then $s_{\al_i}(\be)$ is a positive $n$-root of type
         $A^{p-1}C^{q+1}N^r$, and we have $\sigma(s_{\al_i}(\be)) - \sigma(\be)
     = 2\al_i$. If $\be$ is nonnesting, then so is $s_{\al_i}(\be)$.}
\end{enumerate}
\end{cor}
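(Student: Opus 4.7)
The plan is to prove (ii) first as the main calculation, derive (iii) directly from it, and establish (i) by induction on covering relations in the quasiparabolic order.

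For (ii), the key is to combine linearity of reflections with Proposition \ref{prop:coll4}. If $\alpha_i\in R$, then $s_{\alpha_i}$ negates $\alpha_i$ and fixes every other (orthogonal) component of $R$, so the sum decreases by $2\alpha_i$. If $\alpha_i\notin R$, Proposition \ref{prop:four} shows that $s_{\alpha_i}$ moves a single coplanar quadruple $Q=\{\beta_1,\beta_2,\beta_3,\beta_4\}\subset R$ and fixes every other element of $R$. Writing $2\gamma=\beta_1+\beta_2+\beta_3+\beta_4$ with $\gamma\in\Phi$ by Proposition \ref{prop:coll4}, linearity yields
\[
\sum_{j=1}^4\bigl(s_{\alpha_i}(\beta_j)-\beta_j\bigr) \;=\; 2\bigl(s_{\alpha_i}(\gamma)-\gamma\bigr) \;=\; -2B(\alpha_i,\gamma)\alpha_i,
\]
with $B(\alpha_i,\gamma)\in\{-1,0,1\}$ in the simply laced setting. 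I would then match the three values to the three simple-root moves allowed by Proposition \ref{prop:miracle}(i) using the explicit $D_4$ data in the proof of Theorem \ref{thm:cna}(vi), where $\gamma_C=\gamma_N$ and $\gamma_A<\gamma_C$ in the induced positive system of the associated $D_4$-subsystem: $B(\alpha_i,\gamma)=-1$ gives $+2\alpha_i$ and corresponds to an AC move; $B(\alpha_i,\gamma)=+1$ gives $-2\alpha_i$ and corresponds to a CA move; $B(\alpha_i,\gamma)=0$ gives no change and corresponds to a CN or NC move. The fact that a simple root $\alpha_i\notin R$ never flips the sign of a positive component (because $s_{\alpha_i}(\beta)<0$ for $\beta\in\posroots$ forces $\beta=\alpha_i$) ensures the signed-sum formula matches the sum of positive components.

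For (iii), the hypothesis $\sigma(s_{\alpha_i}(\beta))>\sigma(\beta)$ together with the case analysis in (ii) forces $s_{\alpha_i}$ to execute an AC move on the components of $\beta$, since this is the unique case in which $\sigma$ strictly increases. Proposition \ref{prop:miracle}(i), case (1), then gives that $s_{\alpha_i}(\beta)$ has type $A^{p-1}C^{q+1}N^r$, and $\sigma(s_{\alpha_i}(\beta))-\sigma(\beta)=2\alpha_i$ is immediate from (ii). Since an AC move preserves the nesting count, $\beta$ nonnesting implies $s_{\alpha_i}(\beta)$ nonnesting.

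For (i), I use that $\lambda$ is a rank function on $X$ by \cite[Proposition 5.16]{rains13} applied to Theorem \ref{thm:qp}, so it suffices by induction to treat a cover $\beta\lessdot_Q\gamma$, i.e., $\gamma=s_\alpha\beta$ with $\lambda(\gamma)=\lambda(\beta)+1$. By Proposition \ref{prop:miracle}(ii)--(iv), the move $s_\alpha$ is AC or CN, since an AN move would give an even positive change in $\lambda$. Computing as in (ii) but with positive components, an AC cover produces $\sigma(\gamma)-\sigma(\beta)=\sigma(\Psi^+_C)-\sigma(\Psi^+_A)>0$ in the root partial order, where $\Psi$ is the $D_4$-subsystem associated to the moved quadruple; a CN cover produces $\sigma(\Psi^+_N)=\sigma(\Psi^+_C)$, hence $\sigma(\gamma)=\sigma(\beta)$. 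Moreover, CN moves globally preserve $A$ (established in the proof of Proposition \ref{prop:miracle}(iii)) while AC moves strictly decrease $A$, so $\sigma(\beta)=\sigma(\gamma)$ along a chain from $\beta$ to $\gamma$ iff every cover in the chain is CN, iff $A(\beta)=A(\gamma)$. The most delicate point is the sign convention: in (ii) for $\alpha_i\in R$, the formula refers to the signed sum, whereas the $\sigma$ in (i) and (iii) denotes the positive-component sum; the two conventions agree whenever $\alpha_i\notin R$, so the overall case analysis remains consistent.
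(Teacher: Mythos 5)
Your parts (ii) and (iii), and the inequality part of (i), are correct and follow essentially the same route as the paper: the paper likewise reduces (i) to a single level-increasing reflection, reads off the $\sigma$-changes from Theorem \ref{thm:cna} (vi), and obtains (ii) by locating the simple root inside the associated $D_4$-subsystem (your linearity formula $\sum_j(s_{\al_i}(\be_j)-\be_j)=-2B(\al_i,\gamma)\al_i$ is a tidier way of packaging the paper's ``explicit computation''; just note that $B(\al_i,\gamma)\ne\pm 2$ because $\gamma$, being half the sum of four positive roots, is a non-simple positive root, and that $\al_i$ lies in the induced simple system of $\Psi$, which is what pins down the three values of $B(\al_i,\gamma)$ against the three move types). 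Your reduction of (i) to covers via the rank-function property, which lets you discard $AN$ moves, is a legitimate variant of the paper's argument, which instead treats arbitrary level-increasing reflections and handles $AN$ moves directly via Theorem \ref{thm:cna} (vi).

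The one genuine soft spot is your assertion in (i) that ``AC moves strictly decrease $A$'' globally. This is true, but it is not a quotable result: Proposition \ref{prop:miracle} (ii) pins down the type change $A^{p-1}C^{q+1}N^r$ only under the minimality hypothesis on the moving root, and a non-minimal reflection effecting an $A\to C$ replacement can change several other quadruples' types at once (cf.\ Example \ref{exa:moves}), so the strict decrease of the global alignment number needs an argument. You need it for the direction $A(\be)=A(\gamma)\Rightarrow\sigma(\be)=\sigma(\gamma)$: without it, a chain mixing $CN$ and $AC$ covers could in principle return $A$ to its original value while increasing $\sigma$. Two ways to close this: either rerun the local-to-global $H_Q$ decomposition from the proof of Proposition \ref{prop:miracle} for the alignment number (in type $D_6$ every $A\to C$ replacement strictly decreases $A$, and the correction terms are all nonpositive), or, more cheaply, use your own part (ii) together with Proposition \ref{prop:miracle} (i) to observe that $\rootht(\sigma(x))+2A(x)$ is unchanged by every simple reflection (it is untouched when $\al_i$ is a component, and changes by $\mp 2\pm 2$ in the $AC$/$CA$ cases), hence is constant on the single $W$-orbit $X$; then $\sigma(\be)=\sigma(\gamma)$ if and only if $\rootht(\sigma(\be))=\rootht(\sigma(\gamma))$ if and only if $A(\be)=A(\gamma)$, which is exactly the equality criterion. (To be fair, the paper's own one-line justification of the ``if and only if'' in (i) is no more detailed than yours, but since you state the unproved claim explicitly, you should supply one of these patches.)
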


\begin{proof}
All the claims can be proved by examining the effects of various types of
(simple) reflections on sums and levels of $n$-roots recorded in Theorem
\ref{thm:cna} and Proposition \ref{prop:miracle}. We first prove (i). By the
definition of $\le_Q$, it is enough to consider the case where $r \in T$ is a
reflection and $\gamma = r(\beta)$ satisfies $\lambda(\beta)<\lambda(\gamma)$.
By Proposition \ref{prop:miracle}, this implies that  $r$ is an $AC$, $CN$, or
$AN$ move. The proof of (i) then follows from Theorem \ref{thm:cna} (vi).

We now prove (ii). If $\al_i \in R$, then $\sigma(s_i(R))=\sigma(R)-2\al_i$
because $s_{i}(\al_i) = -\al_i$. If $\al_i$ is a $CN$ or $NC$ move, Theorem
\ref{thm:cna} (vi) implies that $\sigma(s_{\al_i}(R)) = \sigma(R)$. It follows
from Proposition \ref{prop:miracle} (i) that the only other possibility is that
$\al_i$ moves an $A$ to a $C$ or vice versa. By Theorem \ref{thm:cna} (vi), this
can only happen if $\al_i$ is the unique simple root in the corresponding
nesting, i.e., the root $\al_2$ in the explicitly constructed sets
$\Psi_2^+=\Psi^+_N$ in the proof of that theorem. Explicit computation shows
that root sums in this case differ by $2\al_i$ in the precise manner described
in the statement, which completes the proof of (ii).

The first part of (iii) follows immediately from (ii) and Proposition
\ref{prop:miracle} (i).  The assertion about nonnesting $n$-roots follows from
the special case $r=0$.
\end{proof}

\begin{rmk}\label{rmk:burns}
Burns and Pfeiffer \cite[Theorem 1.2]{burns23} prove that if $T$ is a maximal
order abelian subgroup of one of the groups $W$ in Theorem \ref{thm:qp}, then
$T$ is elementary abelian of order $2^n$, where $n$ is the rank of $W$.  They
also prove that the set of all such subgroups forms a single conjugacy class
\cite[Theorem 3.1]{burns23}.  It follows that the stabilizers of the elements $x
\in X$ can be defined abstractly from the group structure of $W$: they are the
normalizers of the maximal order abelian subgroups of $W$.
\end{rmk}

\subsection{Extremal elements}
\label{sec:minmax}

In this subsection, we identify $X$ with the set $\Phi^+_n$ of positive
$n$-roots as usual and discuss an application of Theorem \ref{thm:qp} concerning
the maximally aligned and maximally nested $n$-roots, which turn out to be the
unique $W$-minimal and $W$-maximal elements of the set $X$.  The uniqueness of
the maximally aligned and maximally nested $n$-root is not a priori clear, but
it will follow conveniently from the general theory of quasiparabolic sets.

\begin{prop}
    \label{prop:minmax}
Let $W$ be a Weyl group of rank $n$ of types $E_7$, $E_8$, or $D_n$ for $n$
even, and let $M$ be the number of \coplanar quadruples in a positive $n$-root.
\begin{enumerate}
    \item[{\rm (i)}]{There is a unique positive $n$-root, $\theta_A$, of type
     $A^M$, and it corresponds to the unique $W$-minimal element of the
 quasiparabolic set $X$ of Theorem \ref{thm:qp}.}
\item[{\rm (ii)}]{There is a unique positive $n$-root, $\theta_N$,  of type
     $N^M$, and it corresponds to the unique $W$-maximal element of the
 quasiparabolic set $X$ of Theorem \ref{thm:qp}.}
    \end{enumerate}
\end{prop}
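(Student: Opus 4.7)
The plan is to deduce both statements from Theorem \ref{thm:qp} combined with the general Rains--Vazirani theory of quasiparabolic sets \cite{rains13}. By Lemma \ref{lem:transitive}, $W$ acts transitively on $X$, and by Theorem \ref{thm:qp}, $(X,\lambda)$ is a quasiparabolic set over a finite Coxeter group. The standard Rains--Vazirani analysis then provides a unique element $\theta_A \in X$ that minimizes $\lambda$ (equivalently, the unique $\leq_Q$-minimum, equivalently the unique $W$-minimum), and dually a unique $\lambda$-maximum $\theta_N$. The proposition thus reduces to showing that $\theta_A$ has type $A^M$ and $\theta_N$ has type $N^M$.

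To verify that $\theta_A$ has type $A^M$, I would argue by contradiction: if $\theta_A$ contained a crossing $Q$, I would take the associated $D_4$-subsystem $\Psi$ (so $Q = \Psi^+_C$) and pick any $\alpha \in \Psi^+_N$. Since $Q$ is already a maximal orthogonal subset of $\Psi^+$, we have $\theta_A \cap \Psi^+ = Q$ and in particular $\alpha \notin \theta_A$. Proposition \ref{prop:four} then guarantees that $s_\alpha$ moves exactly the unique coplanar quadruple in $\theta_A$ that lies inside $\Psi$, which must be $Q$, and Theorem \ref{thm:cna}(vi) identifies $s_\alpha(Q)$ with $\Psi^+_A$. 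So $s_\alpha(\theta_A)$ is obtained from $\theta_A$ by a single $C\to A$ move, which strictly lowers $\lambda$ by Proposition \ref{prop:miracle}(ii), contradicting the $\lambda$-minimality of $\theta_A$. A parallel argument with $\alpha \in \Psi^+_A$ and Proposition \ref{prop:miracle}(iii) rules out nestings in $\theta_A$. The statement for $\theta_N$ follows by an exactly dual argument, where any alignment or crossing in $\theta_N$ would admit, respectively, an $A\to C$ or $C\to N$ move that strictly raises $\lambda$, contradicting the $\lambda$-maximality of $\theta_N$.

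I expect the main obstacle to be the first step, namely the precise invocation of Rains--Vazirani for existence and uniqueness of the extrema. Should that citation not be available in the form I need, I would instead split the claim into two pieces. First, use Proposition \ref{prop:miracle}(i) to show directly that any $R \in X$ of type $A^M$ is $W$-minimal: for every simple $\alpha_i \notin R$, the absence of crossings and nestings in $R$ forces the only allowable move to be $A\to C$, so $\lambda(s_iR) = \lambda(R)+1$. Second, deduce from Rains--Vazirani's (narrower) uniqueness of $W$-minima in a transitive quasiparabolic set that at most one such $R$ exists. Existence of an $A^M$ element can then be established uniformly by starting at any element of $X$ and greedily applying simple reflections that decrease $\lambda$; the process terminates in a $W$-minimal element, which by the preceding analysis (run in reverse, using Proposition \ref{prop:miracle}(ii)--(iv) via possibly non-simple reflections whenever crossings or nestings remain) must be of type $A^M$. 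The argument for $\theta_N$ is dual throughout.
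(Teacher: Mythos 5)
Your proposal is correct and takes essentially the same route as the paper: both rest on the Rains--Vazirani uniqueness of $W$-minimal and $W$-maximal elements in the single $W$-orbit $X$, together with the existence of feature-changing reflections (Remark \ref{rmk:moves}, Theorem \ref{thm:cna}(vi)) and the level estimates of Proposition \ref{prop:miracle}. The only cosmetic difference is direction: the paper builds an $A^M$ (resp.\ $N^M$) element by iterating level-decreasing (resp.\ increasing) moves and then matches it with the extreme element, while you argue by contradiction that the unique level-minimizer and level-maximizer must already be of type $A^M$ and $N^M$ — the same argument in contrapositive form, with the remaining uniqueness step (type $A^M$ forces $\lambda=0$, type $N^M$ forces $\lambda=2M$) being immediate.
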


\begin{proof}
We recall that by Theorem 2.8, Remark 2.9, and Corollary 2.10 of \cite{rains13},
every orbit of a quasiparabolic set contains at most one $W$-minimal and at most
one $W$-maximal element.  If such a $W$-minimal or $W$-maximal element exists,
then it can be identified as the unique element in the orbit with the minimal or maximal
possible level, respectively.

The set $X$ is finite, so it has at least one $W$-maximal and one $W$-minimal
element.  Since $X$ consists of a single $W$-orbit by Lemma
\ref{lem:transitive}, it follows from the paragraph preceding this proposition
that $X$ has a unique $W$-maximal and a unique $W$-minimal element, and that
they are the unique elements with the minimal and maximal possible level.

Let $R$ be a maximal orthogonal set of positive roots. If $R$ contains any
coplanar quadruple $Q$ that is a crossing or a nesting, then by Remark
\ref{rmk:moves}  we can find a reflection $s_\al$ that moves $Q$ to an alignment
or a crossing, respectively, and in both cases Proposition \ref{prop:miracle}
implies that $\lambda(s_\al(R))<\lambda(R)$. Iterating this procedure proves the
existence of an $n$-root of type $A^M$, which achieves the lowest possible value
of $\lambda$.  The uniqueness property of the previous paragraph then completes
the proof of (i). Part (ii) can be proved similarly, by using the fact that any
alignment or crossing in $R$ would induce a level-increasing $AC$ or $CN$ move.
\end{proof}

We will prove shortly, in Proposition \ref{prop:neststab}, that $W$ also has a
unique maximally crossing element, $\theta_C$. The element $\theta_C$ will be
the unique minimal element in a quasiparabolic set of a parabolic subgroup of
$W$.

\begin{rmk}
    \label{rmk:monoidal}
In Theorem \ref{thm:positive} below, we will introduce another partial order,
$\leq_\Be$, on the positive $n$-roots. The argument of \cite[Section 6]{gx3} can
be adapted to show that, under suitable identifications, $\leq_\Be$ refines the
{\it monoidal order} introduced by Cohen, Gijsbers, and Wales \cite[Section
3]{cohen06}.  The quasiparabolic order has $\theta_N$ and $\theta_A$ as its
unique maximal and unique minimal elements, whereas the monoidal order (and
$\leq_\Be$) has $\theta_C$ as its unique maximal element and has multiple
minimal elements.
\end{rmk}

%We should also mention that the set $X$ is an admissible $W$-orbit of sets of
%orthogonal positive roots in the sense of Cohen, Gijsbers, and Wales
%\cite{cohen06}, so it admits another partial order, called the \emph{monoidal
%order}, defined in \cite[\S 3]{cohen06}. We note that the quasiparabolic order
%is different from the monoidal order. In particular, while the quasiparabolic
%order has $\theta_N$ and $\theta_A$ as its unique maximal and unique minimal
%elements, it can be shown that the monoidal order has $\theta_C$ as its unique
%maximal element and has multiple minimal elements.  We should also mention that
%the set $X$ is an admissible $W$-orbit of sets of orthogonal positive roots in
%the sense of Cohen, Gijsbers, and Wales \cite{cohen06}, so it admits another
%partial order, called the \emph{monoidal order}, defined in \cite[\S
%3]{cohen06}.

\section{Feature-avoiding elements}\label{sec:non}
In this section, we develop the properties of $n$-roots that avoid
features of a given type: the alignment-free, noncrossing and nonnesting
elements.  We show that the alignment-free elements form a quasiparabolic set
$X_I$ of a maximal standard parabolic subgroup $W_I$ of $W$, and that the unique
maximally crossing $n$-root $\theta_C$ is the unique $W_I$-minimal
element of $X_I$.  We also show that the sets of noncrossing elements and of
nonnesting elements both form bases of the Macdonald representation 
$\macd{\Phi}{nA_1}$ (Definition \ref{def:mac}).  Moreover,
the basis of noncrossing elements may be viewed as a canonical basis and behaves
in a way that is reminiscent of the set of simple roots of a root system
(Theorem \ref{thm:positive}).  The basis of nonnesting elements admits an
interesting combinatorial characterization: it is a distributive lattice induced
by a suitable Bruhat order (Theorem \ref{thm:nonnest}). Finally, we introduce
the notion of $\sigma$-equivalence classes to tie together the alignment-free,
noncrossing, and nonnesting elements. These equivalence classes turn out to be
intervals with respect to the quasiparabolic order on $X$, and the set $X_I$ of
alignment-free elements form the top class with respect to a natural partial
order. Any set of $\sigma$-equivalence class representatives forms a basis of
the Macdonald representation, and the change of basis matrices between any pair
of such bases, including the noncrossing and nonnesting bases, are unitriangular
(Theorem \ref{thm:nesttri}).

Throughout the rest of this section, we assume that we are working with
a Weyl group $W$ of rank $n$ and type $E_7, E_8$ or $D_n$ for $n$ even. All
results hold independently of the rank and type of $W$, and we shall omit the
statement of the above assumption except in the main theorems. We define the
\emph{sum} of each positive $n$-root $\gamma$ to be the sum of the components of
$\gamma$, and we denote it by $\sigma(\gamma)$.

\subsection{Alignment-free elements}
Recall from Proposition \ref{prop:minmax} that $W$ has a unique positive
$n$-root $\theta_N$ that avoids both alignments and crossings. We will use
$\theta_N$ to help study general alignment-free elements.

\begin{prop}\label{prop:ancomp}
    Let $\theta_N$ be the unique positive $n$-root of type $N^M$.
    \begin{enumerate}
        \item[{\rm (i)}]{A noncrossing $n$-root (i.e., one of type $A^pN^r$) has
     a simple component.}
 \item[{\rm (ii)}]{The $n$-root $\theta_N$ has a unique simple component,
     $\al_x$.}
 \item [{\rm (iii)}] If $\al_i$ is a simple root, then
     $B(\sigma(\theta_N),\al_i)\ge 0$, where equality holds if and only if
     $\al_i\neq\al_x$.
 \item[{\rm (iv)}] Let $W_I$ be the parabolic subgroup of $W$ generated by the
     set $S \backslash \{\al_x\}$. Then the stabilizer of $\sigma(\theta_N)$ is
     precisely $W_I$, and we have $B(\sigma(\theta_N), \al_i) \geq 0$ for all
     simple roots $\al_i$.
\end{enumerate}
\end{prop}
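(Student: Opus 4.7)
The plan is to prove the four parts in order. Part (i) supplies the starting point, part (ii) carries the main combinatorial difficulty, and parts (iii) and (iv) follow relatively routinely from Corollary \ref{cor:moveup} and standard Coxeter theory.

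For part (i), I would argue by contradiction: suppose $\be$ is noncrossing with no simple component. For every simple $\al_i$, we have $\al_i \notin \be$, and Proposition \ref{prop:miracle}(i) forces $s_i$ to move a quadruple in $\be$ by $\pm 1$ in level. Since $\be$ has no crossings, the only possibilities are an AC move (level $+1$) or an NC move (level $-1$). In both cases, Corollary \ref{cor:moveup}(ii) yields $B(\sigma(\be), \al_i) \le 0$. Writing $\sigma(\be) = \sum c_j \al_j$ with $c_j \ge 0$ and noting $\sigma(\be) \ne 0$ as a nonzero sum of positive roots, we obtain
\[
    B(\sigma(\be), \sigma(\be)) = \sum_j c_j\, B(\sigma(\be), \al_j) \le 0,
\]
contradicting the positive-definiteness of $B$.

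For part (ii), (i) provides at least one simple component of $\theta_N$. To show uniqueness, suppose $\al_x$ and $\al_y$ are two distinct simple components. The crux is to exhibit a coplanar quadruple $Q \subseteq \theta_N$ containing both. Once such a $Q$ exists, it has two elements of height $1$, which by Proposition \ref{prop:htseq} rules out both the crossing case (which requires $h_1 > 1$) and the nesting case (which requires $h_1 < h_2$). Hence $Q$ must be an alignment, contradicting $\theta_N$ being of type $N^M$. The existence of $Q$ is type-specific: in $D_{2k}$ the (possibly coinciding) collinear pairs containing $\al_x$ and $\al_y$ directly furnish a coplanar quadruple inside $\theta_N$; in $E_8$, Lemma \ref{lem:e8coplanar} makes the quadruples of $\theta_N$ into a Steiner system $S(3,4,8)$ in which every pair lies in three blocks; in $E_7$, the standard embedding $\theta_N \cup \{\theta\} \subset E_8$ (with $\theta$ the highest root of $E_8$) reduces to the $E_8$ count and yields two quadruples per pair inside $\theta_N$. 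This combinatorial step is the main obstacle, and a fully uniform argument does not appear readily available.

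For part (iii), if $\al_i \in \theta_N$ then the pairwise orthogonality of components yields $B(\sigma(\theta_N), \al_i) = 2$. If $\al_i \notin \theta_N$, the argument from (i) forces $s_i$ to be an NC move (AC and CA cannot apply since $\theta_N$ has no alignment or crossing, while AN and NA are excluded for simple roots because their level changes are even by Proposition \ref{prop:miracle}(iv)). The ``otherwise'' case of Corollary \ref{cor:moveup}(ii) then gives $B(\sigma(\theta_N), \al_i) = 0$. Together with the uniqueness from (ii), this proves (iii). For part (iv), (iii) exhibits $\sigma(\theta_N)$ as a dominant vector whose isotropy set of simple roots is exactly $\{\al_i : i \in I\}$; the standard fact that the $W$-stabilizer of such a dominant vector is the standard parabolic $W_I$ (see, e.g., \cite[\S1.12]{humphreys90}) completes the proof. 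The second assertion of (iv) is a restatement of (iii).
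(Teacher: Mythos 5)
Your proposal is correct in substance, but parts (i) and (ii) take genuinely different routes from the paper's proof, so a comparison is in order. For (i), the paper argues locally: it picks a component $\be$ of minimal height, finds a simple root $\al_i$ with $B(\be,\al_i)=1$, and observes that $\be$ and $\al_i$ would then both be induced simple roots of the $D_4$-subsystem attached to the coplanar quadruple moved by $s_{\al_i}$ (using the explicit lists of $\Psi^+_A$ and $\Psi^+_N$ from the proof of Theorem \ref{thm:cna}), which is impossible since two simple roots cannot pair positively. Your argument is global: since no component is simple and the $n$-root is noncrossing, every simple reflection performs an AC or NC move, so Corollary \ref{cor:moveup}(ii) forces $B(\sigma(\be),\al_i)\le 0$ for every $i$, contradicting positive definiteness of $B$ applied to $\sigma(\be)=\sum_j c_j\al_j$ with $c_j\ge 0$. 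This is a clean and arguably shorter alternative. For (ii), a uniform argument does exist, contrary to your assessment, and it is the one the paper uses: if $\al_x,\al_y$ are both components of $\theta_N$, take a path $P$ from $x$ to $y$ in the Dynkin diagram and set $\be=\sum_{p\in P}\al_p$; then $B(\be,\al_x)=B(\be,\al_y)=1$, so both simple roots lie in the coplanar quadruple moved by $s_\be$ (Proposition \ref{prop:four}), and a nesting contains only one induced simple root of its $D_4$-subsystem, a contradiction. Your type-by-type existence argument (three blocks per pair in the Steiner system $S(3,4,8)$ in type $E_8$, two blocks avoiding $\theta$ in type $E_7$, collinear pairs in type $D_{2k}$) is correct, and your use of Proposition \ref{prop:htseq} to force a quadruple with two height-one elements to be an alignment is valid; note only the small slip in the degenerate $D_{2k}$ case: if $\al_x,\al_y$ form a single collinear pair (necessarily $\al_{n-1},\al_n$), that pair alone is not a quadruple, and you must adjoin any other collinear pair of $\theta_N$, which exists since $k\ge 2$. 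Parts (iii) and (iv) essentially coincide with the paper's proof (the paper gets $B(\sigma(\theta_N),\al_x)>0$ from $s_x(\sigma(\theta_N))=\sigma(\theta_N)-2\al_x$ rather than from orthogonality of components, and likewise concludes via \cite[Theorem 1.12(a)]{humphreys90}).
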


\begin{proof}
Let $\gamma$ be a noncrossing $n$-root of type $A^pN^r$, and let $R$ be the set
of components of $\gamma$. Assume for a contradiction that $R$ contains no
simple root, and let $\be$ be a root of minimal height in $R$. The bilinear form
$B$ has the property that $B(\al,\al')\in \{-2,-1,0,1,2\}$ for any roots
$\al,\al'$, with $B(\al,\al')=2$ if and only if $\al=\al'$.  Furthermore, for
any positive root $\al$, there exists a simple root $\al_i$ such that
$B(\al,\al_i)>0$ \cite[Theorem 1.5]{humphreys90}. It follows that there exists a
simple root $\al_i$ such that $B(\be, \al_i) = 1$.

Since $\al_i$ is not in $R$, it moves a coplanar quadruple $Q\se R$, and we have
$\be\in Q$ since $B(\beta,\al_i)\neq 0$. Let $\Psi$ be the $D_4$-subsystem
associated to $Q$.  By hypothesis, $Q$ is either an alignment or a nesting, and
$\be$ is an element of $Q$ of minimal height. It follows from the explicit
description of the sets $\Psi^+_A=\Psi^+_3$ and $\Psi^+_N=\Psi^+_2$ in the proof
of Theorem \ref{thm:cna} that $\be$ is a root in the induced simple system of
$\Psi$. However, since $\al_i$ is simple root of $W$, $\al_i$ is also
in this induced simple system.  This is a contradiction, because we cannot have
$B(\gamma_1, \gamma_2) > 0$ for two simple roots $\gamma_1$ and $\gamma_2$ in a
root system. This completes the proof of (i).

Now suppose further that $\gamma=\theta_N$. It follows from (i) that $R$
contains a simple root, so assume for a contradiction that $R$ contains two
simple roots, $\al_x$ and $\al_y$.  Let $P$ be a path from $x$ to $y$ in the
Dynkin diagram $\Gamma$, and let $\be$ be the root $\sum_{p \in P} \al_p$. Note
that $B(\be, \al_x) = B(\be, \al_y) = 1$, so that $\al_x$ and $\al_y$ are both
elements of the \coplanar quadruple $Q$ consisting of the roots moved by $\be$.
Let $\Psi$ be the $D_4$-subsystem associated with $Q$. Then $\al_i,\al_j$ are
both induced simple roots of $\Psi$ since they are simple roots of $W$. However,
the type of $R$ is $N^M$, so $Q$ is a nesting and thus contains a unique minimal
root by the description of the set $\Psi^+_N=\Psi^+_2$ in the proof of Theorem
\ref{thm:cna}.  This is a contradiction, and (ii) follows.

To prove (iii), let ${\al_i}$ be a simple root. If $\al_i = \al_x$, then $\al_x$
is a component of $\theta_N$ and we have $$ s_{i}(\sigma(\theta_N)) =
\sigma(\theta_N) - 2\al_i .$$ If $\al_i \ne \al_x$, then $\al_i$ is not a
component of $\theta_N$, and Proposition \ref{prop:miracle} (i) implies that
$\al_i$ moves an $N$ to a $C$. Corollary \ref{cor:moveup} (ii) then implies that
$$ s_{i}(\sigma(\theta_N) = \sigma(\theta_N) .$$ Equation \ref{eq:ref} implies
    that  $B(\al_i, \sigma(\theta_N))\ge 0$ for all $i$, with equality holding
    if and only if $\al_i\neq \al_x$.

Part (iv) follows from (iii) by \cite[Theorem 1.12 (a)]{humphreys90}, which says
that the stabilizer of $\sigma(\theta_N)$ in $W$ is generated by the simple
reflections it contains.
\end{proof}

\begin{prop}\label{prop:neststab}
Let $\theta_N$ be the unique positive $n$-root of type $N^M$, let $\al_x$ be the
unique simple component of $\theta_N$, and let $W_I$ be the parabolic subgroup
of $W$ generated by the set $S \backslash \{\al_x\}$.
\begin{enumerate}
    \item[{\rm (i)}]{The $W_I$-orbit of positive $n$-roots that contains
     $\theta_N$ is a quasiparabolic set $(X_I, \lambda_I)$ for $W_I$, where
 $\lambda_I$ is the restriction of $\lambda$ to $X_I$.}
\item[{\rm (ii)}]{The following are equivalent for a positive $n$-root $\be$:
        \begin{enumerate}
            \item[{\rm (1)}]{$\be$ has type $C^q N^r$ for some $q$ and $r$;}
            \item[{\rm (2)}]{$\sigma(\be) = \sigma(\theta_N)$;}
            \item[{\rm (3)}]{$\be$ is an element of $X_I$.}
    \end{enumerate}} \noindent In particular, the elements of the quasiparabolic set $X_I$
        are precisely the alignment-free positive $n$-roots.
    \item[{\rm (iii)}]{There is a unique positive $n$-root, $\theta_C$, of type
     $C^M$, and it corresponds to the unique $W$-minimal element of the
 quasiparabolic set $X_I$.}
\item[{\rm (iv)}]{If $\al$ is a root of $W_I$ and $\be$ is an $n$-root in $X_I$
     whose components do not contain $\pm\al$, then $\lambda(s_\al(\be)) =
 \lambda(\be) + 1 \mod 2$. }
\end{enumerate}
\end{prop}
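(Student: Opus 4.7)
The plan is to treat the four parts in the order $(i)$, $(ii)$, $(iii)$, $(iv)$, with $(ii)$ being the bulk of the work. Part $(i)$ is essentially free: Theorem \ref{thm:qp} makes $X$ a quasiparabolic $W$-set, and $X_I$ is by definition a single $W_I$-orbit, so the general result of Rains and Vazirani that each parabolic orbit inherits a quasiparabolic structure under restriction of the level function (see \cite[Section 5]{rains13}) applies directly.

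For part $(ii)$, I would prove $(1)\Leftrightarrow(2)$ and $(1)\Leftrightarrow(3)$ separately. The equivalence $(1)\Leftrightarrow(2)$ is immediate from Proposition \ref{prop:minmax} (which puts every $\beta \in X$ below $\theta_N$ in the quasiparabolic order) combined with Corollary \ref{cor:moveup}(i) (which forces $\sigma(\beta)=\sigma(\theta_N)$ precisely when $A(\beta)=A(\theta_N)=0$). The direction $(3) \Rightarrow (1)$ follows by induction on the length of a $w \in W_I$ with $\beta = w \cdot \theta_N$: a simple reflection $s_i$ with $\alpha_i \neq \alpha_x$ can by Proposition \ref{prop:miracle}(i) only execute an $AC$, $CA$, $CN$, or $NC$ move, and the first two are ruled out on an alignment-free input.

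The main obstacle is the converse $(1) \Rightarrow (3)$, which I plan to handle by induction on $\lambda(\theta_N) - \lambda(\beta)$. The base case $\beta = \theta_N$ is forced by uniqueness of the $W$-maximum. For the inductive step, given an alignment-free $\beta$ of type $C^q N^r$ with $q > 0$, the key is to exhibit a $CN$ move $s_\gamma$ belonging to $W_I$; then $s_\gamma \beta$ is alignment-free of strictly higher level by Proposition \ref{prop:miracle}(iii), lies in $X_I$ by induction, and pulls $\beta$ back into $X_I$. To produce $\gamma$, I pick any crossing $Q \subset \beta$ and let $\gamma_1, \gamma_2, \gamma_3, \gamma_4$ be the induced simple roots of the associated $D_4$-subsystem $\Psi$ with $\gamma_2$ at the branch node; setting $\gamma = \gamma_1$, the explicit descriptions in the proof of Theorem \ref{thm:cna} put $\gamma_1 \in \Psi^+_A$ (so $s_{\gamma_1}$ is a $CN$ move on $Q$ by Theorem \ref{thm:cna}(vi) and Remark \ref{rmk:moves}) and give $B(\gamma_1, \sigma(Q)) = 0$, using that $\sigma(\Psi^+_C) = 2(\gamma_1 + 2\gamma_2 + \gamma_3 + \gamma_4)$. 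Since components of $\beta$ outside $Q$ lie in $\Psi^\perp$, this extends to $B(\gamma_1, \sigma(\beta)) = 0$, and since $\sigma(\beta) = \sigma(\theta_N)$ by $(1) \Leftrightarrow (2)$, Proposition \ref{prop:ancomp}(iv) places $s_{\gamma_1}$ in the stabilizer $W_I$.

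Parts $(iii)$ and $(iv)$ follow quickly. For $(iii)$, the fact that $X_I$ is a single-orbit quasiparabolic set for $W_I$ supplies a unique $W_I$-minimal element via the analog of Proposition \ref{prop:minmax} for $X_I$; since every element of $X_I$ has type $C^q N^r$ with $q + r = M$ and level $q + 2r$, the minimum level $M$ is uniquely attained by the maximally crossing element, establishing existence and uniqueness of $\theta_C$. For $(iv)$, both $\beta$ and $s_\alpha \beta$ lie in $X_I$ and are therefore alignment-free, so $s_\alpha$ must execute a $CN$ or $NC$ move on $\beta$, and Proposition \ref{prop:miracle}(iii) supplies the odd parity of $\lambda(s_\alpha \beta) - \lambda(\beta)$.
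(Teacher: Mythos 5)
Your overall decomposition is sound, and parts (i) and (iv) match the paper's arguments. Part (ii) is where you genuinely diverge: the paper proves $(1)\Rightarrow(2)$ by induction on the number of crossings using $CN$ moves and Corollary \ref{cor:moveup}(ii), and then gets $(2)\Rightarrow(3)$ in one stroke from transitivity of $W$ on $X$ (Lemma \ref{lem:transitive}) together with the fact that $W_I$ is exactly the stabilizer of $\sigma(\theta_N)$ (Proposition \ref{prop:ancomp}(iv)). You instead obtain $(1)\Leftrightarrow(2)$ order-theoretically from $\beta\le_Q\theta_N$ plus Corollary \ref{cor:moveup}(i) (note this silently uses the Rains--Vazirani fact that the $W$-maximal element is the maximum of $\le_Q$, \cite[Theorem 2.8]{rains13}, which is fine but should be cited), and you prove $(1)\Rightarrow(3)$ by an induction that manufactures a $CN$-moving reflection lying in $W_I$ via the computation $B(\gamma_1,\sigma(\Psi^+_C))=B(\gamma_1,2(\gamma_1+2\gamma_2+\gamma_3+\gamma_4))=0$. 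That construction is correct, and it even yields slightly more than the paper's route (alignment-free $n$-roots are connected to $\theta_N$ by $CN$ moves effected by reflections of $W_I$), at the cost of redoing work that the transitivity argument gets for free.

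There are, however, two steps whose stated justifications do not go through. First, in $(3)\Rightarrow(1)$ you exclude $AC$ and $CA$ moves because the input is alignment-free. That reason only rules out $AC$ (there is no alignment to move); a $CA$ move consumes a crossing and \emph{creates} an alignment, so alignment-freeness of the input does not forbid it. The correct exclusion is via sums: every intermediate $n$-root $w'(\theta_N)$ with $w'\in W_I$ has sum $w'(\sigma(\theta_N))=\sigma(\theta_N)$, while by Corollary \ref{cor:moveup}(ii) a $CA$ move would change the sum by $-2\alpha_i$ (this is exactly the paper's argument, and it also eliminates the need for the inductive alignment-free hypothesis). Second, in part (iii) you assert that the minimum level in $X_I$ is $M$, ``uniquely attained by the maximally crossing element'' -- but this presupposes the existence of an element of type $C^M$, which is the nontrivial point; the Rains--Vazirani machinery only guarantees a unique $W_I$-minimal element of minimal level, and nothing yet says that element has no nestings. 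You need the step the paper supplies: if $\beta\in X_I$ has a nesting, an $NC$ move preserves $\sigma$ (Theorem \ref{thm:cna}(vi)), hence stays in $X_I$ by (ii), and strictly lowers the level (Proposition \ref{prop:miracle}(iii)); iterating produces the type-$C^M$ element, after which uniqueness follows as you say. Both repairs are short and use only tools you already cite, but as written these two points are genuine gaps.
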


\begin{proof}
Part (i) follows from Theorem \ref{thm:qp} by restriction.

We now prove the implication (1) $\Rightarrow$ (2) of part (ii).  Let $\be$ be
an $n$-root of type $C^q N^r$. If $q = 0$, then $\beta=\theta_N$ by Proposition
\ref{prop:minmax} (ii) and (2) follows immediately, so suppose that $q > 0$. By
Remark \ref{rmk:moves} and Proposition \ref{prop:miracle} (iii), there exists a
reflection $\al$ that moves a crossing to a nesting in such a way that the
$n$-root $\be' = s_{\al}(\be)$ has type $C^{q-1}N^{r+1}$.  Corollary
\ref{cor:moveup} (ii) proves that $\sigma(\be') = \sigma(\be)$.  It now follows
from induction on $q$ that $\sigma(\be) = \sigma(\theta_N)$, proving (2).

To prove (2) $\Rightarrow$ (3), assume that $\sigma(\be) = \sigma(\theta_N)$. By
Lemma \ref{lem:transitive}, there exists $w \in W$ such that $w(\theta_N) =
\be$, so we have $$
w\sigma(\theta_N)=\sigma(w(\theta_N))=\sigma(\be)=\sigma(\theta_N) .$$ It
follows that $w \in W_I$ and $\be \in X_I$, which proves (3).

To prove (3) $\Rightarrow$ (1), let $ \be = w(\theta_N)$ for some $w\in W_I$ and
let $w= s_{i_1}s_{i_2} \cdots s_{i_k}$ be a reduced word of $w$. Then each
simple reflection $s_{i_j}$ fixes $\sigma(\theta_N)$, so it follows from
Corollary \ref{cor:moveup} (ii)  that $s_{i_j}$ is a $CN$ or $NC$ move. It
follows from Proposition \ref{prop:miracle} (i) that $\be$ has type $C^q N^r$,
which implies (1) and completes the proof of (ii).

To prove (iii), note that the quasiparabolic set $X_I$ is finite and transitive,
so it follows, as in the proof of Proposition \ref{prop:minmax} (i), that $X_I$
has a unique minimal element (with respect to the quasiparabolic order), namely, the unique element having minimal
level in $X_I$.  The elements of $X_I$ are all of type $C^qN^r$ by (ii) where
$q+r=M$ and $M$ is as in Corollary \ref{cor:mcount}, so we have
$\lambda_I(\gamma)\ge M$ for any element $\gamma\in X_I$, with equality holding
if and only if $\gamma$ has type $C^M$. To prove (iii) it now remains to show
that such an element exists.

Let $\be$ be an $n$-root in $X_I$, and suppose $\beta$ has type $C^q N^r$ where
$r > 0$. Then $\sigma(\be)=\sigma(\theta_N)$, and $\beta$ admits an $NC$ move by
a reflection $s_\al$ corresponding to some root $\al$.  Theorem \ref{thm:cna}
(vi) implies that $\sigma(s_{\al}(\be)) = \sigma(\be)=\sigma(\theta_N)$, so
$s_\al(\be)$ is in $X_I$ and has type $C^a N^b$ by (ii).  Proposition
\ref{prop:miracle} (iii) implies that $a > q$ and $b < r$, and that $s_\al(\be)$
has a lower level than $\be$. It follows that $X_I$ has an element of type
$C^M$, which completes the proof of (iii).

Suppose that $\al$ and $\be$ are as in the statement of (iv). Since neither of
$\pm \al$ is a component of $\be$, the reflection $s_\al$ must move a $C$ to an
$N$ or vice versa by Remark \ref{rmk:moves} and (ii).  Proposition
\ref{prop:miracle} (iii) then implies that $\lambda(\be)$ and
$\lambda(s_\al(\be))$ have opposite parities.
\end{proof}

\begin{rmk}
\label{rmk:xht}
With some work, it can be shown that the positive $n$-roots in $X_I$ are also
exactly the positive $n$-roots $\beta$ with the property that every component
$\al$ of $\beta$ has $x$-height 1, in the sense that $\al$ expands into a linear
combination of simple roots where the simple root $\al_x$ appears with
coefficient $1$. Since the simple roots of $W_I$ do not include $\al_x$, it
follows that no root of $W_I$ divides any $n$-root in $X_I$. In other words, the
``if'' condition in Proposition \ref{prop:neststab} (iv) in fact holds for every
root $\al$ of $W_I$ and every $n$-root $\beta\in X_I$.  This implies that the
sets of all $n$-roots in $X_I$ with even levels and of all $n$-roots in $X_I$
with odd levels are interchanged by $s_\al$ for every $\al\in W_I$.  In
particular, these two sets have the same cardinality.
\end{rmk}

\subsection{Two bases} 
\label{sec:twobases}
The goal of this subsection is to prove that the noncrossing $n$-roots and the
nonnesting $n$-roots each form a basis for the Macdonald representation
$\macd{\Phi}{nA_1}$.  The
proof is based on a commutative version of Bergman's diamond lemma
\cite{bergman78}, which is a special case of Newman's diamond lemma
\cite{newman42}. We define the {\it crossing order}, $\leq_C$, on the set of
positive $n$-roots by declaring that $\be \leq_C \gamma$ if either $\sigma(\be)
< \sigma(\gamma)$, or both $\sigma(\be) = \sigma(\gamma)$ and $\be \geq_Q
\gamma$, where $\leq_Q$ is the quasiparabolic order.  Similarly, we define the
{\it nesting order}, $\leq_N$, on the set of positive $n$-roots by declaring
that $\be \leq_N \gamma$ if either $\sigma(\be) < \sigma(\gamma)$ (with respect
    to the order
$\le$ on roots), or both
$\sigma(\be) = \sigma(\gamma)$ and $\be \leq_Q \gamma$.

Given any relation of the form $\gamma \gamma_C = \gamma \gamma_N + \gamma
\gamma_A$ among three $n$-roots in the setting of Theorem \ref{thm:cna} (where
$\gamma_C, \gamma_N,$ and $\gamma_A$ are the crossing, nesting, and alignment
corresponding to the same type-$D_4$ subsystem of the root system of $W$,
respectively), 
we
have $\gamma\gamma_A<_C\gamma\gamma_C$ and
$\gamma\gamma_A<_N\gamma\gamma_N$ because
$\sigma(\gamma_A)<\sigma(\gamma_C)=\sigma(\gamma_N)$ by Theorem \ref{thm:cna}
(vi). We also have $\gamma_C<_N \gamma_N$ and $\gamma_N <_C \gamma_C$ by
Proposition \ref{prop:miracle} (iii) and the definition of $\le_Q$, because for
any component $\al$ of $\theta_A$, the reflection $s_\al$ moves the components
of $\gamma_C$ to those of $\gamma_N$ by Theorem \ref{thm:cna} (vi).  It also
follows that $\lambda(\gamma\gamma_C)< \lambda(\gamma\gamma_N)$ and
$\gamma\gamma_C \leq_Q \gamma\gamma_N$.  We may therefore regard the relations
$\gamma \gamma_C = \gamma \gamma_N + \gamma \gamma_A$ as directed \emph{reduction
rules}, each of which operates on a single term $\lambda_i \beta_i$ in a linear
combination $\sum_i \lambda_i \beta_i$, where the $\beta_i$ are positive
$n$-roots.  Each reduction rule can be used either (a) to express a positive
$n$-root $\gamma\gamma_C$ containing a crossing as the sum of two other positive
$n$-roots $\gamma\gamma_A, \gamma\gamma_N$ that are strictly lower than it in
the crossing order, or (b) to express a positive $n$-root $\gamma\gamma_N$
containing a nesting as a linear combination of two other positive $n$-roots
$\gamma\gamma_A,\gamma\gamma_C$ that are strictly lower than it in the nesting
order.

In order to apply the diamond lemma, we need to know (a) that it is never
possible to apply an infinite sequence of reduction rules to a linear
combination of $n$-roots, and (b) that the reduction rules are {\it confluent}.
The latter condition means that if $m$ is a linear combination of $n$-roots and
if $f_1$ and $f_2$ are two different reductions that can be applied to $m$, then
the linear combinations $f_1(m)$ and $f_2(m)$ themselves have a common
reduction, $m'$. In other words, it is possible to reduce $f_1(m)$ to $m'$ by
applying a suitable sequence of reductions, and it is possible to reduce
$f_2(m)$ to the same $m'$ by applying a possibly different sequence of
reductions. If these two conditions hold, the conclusion of the diamond lemma is
that every element of the module may be uniquely expressed as an element to
which no reduction rules may be applied; in other words, a unique linear
combination of noncrossing $n$-roots, or a unique linear combination of
nonnesting $n$-roots.

Conversely, the diamond lemma guarantees that if each element $m$ can be
uniquely expressed as a linear combination of nonnesting (or noncrossing)
$n$-roots, then the reduction relations are confluent.

\begin{lemma}\label{lem:d4d6}
There are $2$ nonnesting positive $4$-roots in type $D_4$, and $5$ nonnesting
positive $6$-roots in type $D_6$.  The nonnesting positive $n$-roots are
linearly independent in the Macdonald representation $\macd{\Phi}{nA_1}$ in each case.
\end{lemma}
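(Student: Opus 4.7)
The plan is to parametrize positive $n$-roots in type $D_n$ by perfect matchings of $[n]$, count the nonnesting ones via a classical Catalan bijection, and then establish linear independence by a leading monomial argument in the symmetric algebra $\Sym(V)$.

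By Lemma \ref{lem:dmax}(ii), every maximal set of orthogonal positive roots in type $D_n$ (with $n = 2k$) has the form $\{\ep_{i_\ell} \pm \ep_{j_\ell} : 1 \leq \ell \leq k\}$ with $i_\ell < j_\ell$, where $m = \{\{i_\ell, j_\ell\}\}$ is a perfect matching of $[n]$. The corresponding positive $n$-root is
$$\gamma_m = \prod_{\ell = 1}^k (\ep_{i_\ell}^2 - \ep_{j_\ell}^2).$$
By Remark \ref{rmk:d4matching} and Definition \ref{def:nestcross}, $\gamma_m$ is nonnesting precisely when the matching $m$ has no nested pair of blocks. Since the nonnesting perfect matchings of $[2k]$ are enumerated by the Catalan number $C_k$, this gives $C_2 = 2$ nonnesting positive $4$-roots in $D_4$ and $C_3 = 5$ nonnesting positive $6$-roots in $D_6$, which handles the counting part of the statement.

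For linear independence, I would set $x_i = \ep_i^2$ and equip $\Q[x_1, \ldots, x_n]$ with the lexicographic order induced by $x_1 > x_2 > \cdots > x_n$. Since lex order is multiplicative and each factor $(x_{i_\ell} - x_{j_\ell})$ with $i_\ell < j_\ell$ has leading term $x_{i_\ell}$, the leading monomial of $\gamma_m$ is $\prod_\ell x_{i_\ell}$, with coefficient $+1$: this is the product of the openers (smaller endpoints) of the matching $m$. Hence, to obtain linear independence of the $\gamma_m$ in $\Sym(V)$, and therefore in the Macdonald representation $\macd{\Phi}{nA_1}$, it is enough to show that distinct nonnesting matchings have distinct opener sets.

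This last step is the classical fact that a nonnesting perfect matching of $[2k]$ is uniquely recovered from its opener set by a FIFO procedure: one scans the positions left to right, enqueues each opener, and pairs each closer with the head of the queue. For the small cases in the statement this can also be checked by direct enumeration of the opener sets of the $2$ matchings of $[4]$ and the $5$ matchings of $[6]$. I do not anticipate any serious obstacle; the argument is elementary, requires only the routine verification that lex order behaves multiplicatively on the factors $x_{i_\ell} - x_{j_\ell}$, and in fact generalizes beyond $k \in \{2, 3\}$, which is consistent with its role as the base case for the diamond lemma argument applied later in this section.
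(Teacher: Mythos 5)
Your proof is correct, and it is a more structured version of what the paper does by hand. The paper's own proof simply lists the two nonnesting $4$-roots in type $D_4$ and the five nonnesting matchings in type $D_6$, and checks independence by comparing coefficients of the monomials $\ep_1^2\ep_2^2\ep_3^2$, $\ep_1^2\ep_2^2\ep_4^2$, $\ep_1^2\ep_2^2\ep_5^2$, $\ep_1^2\ep_3^2\ep_4^2$, $\ep_1^2\ep_3^2\ep_5^2$ --- which are precisely the opener monomials $\prod_\ell \ep_{i_\ell}^2$ that your lex-leading-term argument singles out, so your proof is the structural explanation of why that particular coefficient check succeeds. Your ingredients are all sound: the dictionary between maximal orthogonal sets and matchings is Lemma \ref{lem:dmax}(ii), and the identification of nested pairs of blocks with nestings of coplanar quadruples is most precisely Lemma \ref{lem:d4coplanar} together with Remark \ref{rmk:cna} (rather than Definition \ref{def:nestcross} alone, a minor citation point); multiplicativity of the lex order, injectivity of the opener-set map on nonnesting matchings via the FIFO recovery, and the Catalan count are standard, and for $k=2,3$ everything can be verified by direct listing, so there is no hidden reliance on the later diamond-lemma machinery for which this lemma serves as the base case. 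What your route buys is uniformity: it proves linear independence of the nonnesting $n$-roots in every type $D_{2k}$ at once, whereas the paper only needs and only proves the $D_4$ and $D_6$ cases here, recovering the general statement afterwards through Theorem \ref{thm:indep}.
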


\begin{proof}
In type $D_4$, the set in question is $\{(\ep_1^2-\ep_2^2)(\ep_3^2-\ep_4^2),\
(\ep_1^2-\ep_3^2)(\ep_2^2-\ep_4^2)\}$, which is clearly linearly independent.
In type $D_6$, the nonnesting positive $6$-roots correspond to the  matchings $$
\{14, 25, 36\}, \ \{13, 25, 46\}, \ \{13, 24, 56\}, \ \{12, 35, 46\},
\,\text{and}\; \{12, 34, 56\}.$$ One can check that this set is linearly
independent by comparing coefficients of $\ep_1^2 \ep_2^2 \ep_3^2$, $\ep_1^2
\ep_2^2 \ep_4^2$, $\ep_1^2 \ep_2^2 \ep_5^2$, $\ep_1^2 \ep_3^2 \ep_4^2$, and
$\ep_1^2 \ep_3^2 \ep_5^2$.
\end{proof}

\begin{theorem}\label{thm:indep}
Let $W$ be a Weyl group of type $E_7$, $E_8$, or $D_n$ for $n$ even. Let
$\macd{\Phi}{nA_1}$ be the Macdonald representation of $W$.
\begin{enumerate}
    \item[{\rm (i)}]{The nonnesting positive $n$-roots form a $\Q$-basis for
     $\macd{\Phi}{nA_1}$.}
 \item[{\rm (ii)}]{The noncrossing positive $n$-roots form a $\Q$-basis for
     $\macd{\Phi}{nA_1}$.}
 \item[{\rm (iii)}]{The alignment-free positive $n$-roots span
     $\macd{\Phi}{nA_1}$.}
\end{enumerate}
\end{theorem}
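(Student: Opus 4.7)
The plan is to apply a commutative version of Bergman's diamond lemma for parts (i) and (ii), treating the identity $\gamma\gamma_C = \gamma\gamma_N + \gamma\gamma_A$ of Theorem \ref{thm:cna}(vi) as an oriented rewriting rule among positive $n$-roots, and to give a direct inductive argument for part (iii).

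For part (i), I would orient the identity as the rewriting rule $\gamma\gamma_N \mapsto \gamma\gamma_C - \gamma\gamma_A$, so that each reduction replaces a positive $n$-root containing a specified nesting coplanar quadruple by two positive $n$-roots that are both strictly smaller in the nesting order $\leq_N$. Smallness follows from $\sigma(\gamma_A) < \sigma(\gamma_N) = \sigma(\gamma_C)$ in Theorem \ref{thm:cna}(vi) together with $\gamma\gamma_C <_Q \gamma\gamma_N$, which is a direct consequence of the level increase for a $CN$-move in Proposition \ref{prop:miracle}(iii). Since the set of positive $n$-roots is finite, the multiset order on $\Q$-linear combinations induced by $<_N$ is well-founded, so every reduction sequence terminates. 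Spanning is automatic, as $\macd{\Phi}{nA_1}$ is cyclically generated by $\pi_\Psi$ and every $W$-translate of a positive $n$-root is $\pm$ a positive $n$-root. Part (ii) follows from the mirror argument with the orientation $\gamma\gamma_C \mapsto \gamma\gamma_N + \gamma\gamma_A$ and the crossing order $\leq_C$.

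The principal obstacle is local confluence. Two competing reductions at a positive $n$-root $\beta$ correspond to two distinct coplanar quadruples $Q_1, Q_2$ in the components of $\beta$, and by Proposition \ref{prop:overlap} we have $|Q_1 \cap Q_2| \in \{0, 2, 4\}$. The cases $|Q_1 \cap Q_2| \in \{0, 4\}$ are immediate, as the reductions either act on disjoint factors of $\beta$ and commute, or coincide. In the remaining case $|Q_1 \cap Q_2| = 2$, Proposition \ref{prop:overlap} places $Q_1 \cup Q_2$ inside a $D_6$-subsystem $\Psi$, reducing the verification to a computation in the Macdonald representation for $W(D_6)$. I would settle this case using Lemma \ref{lem:d4d6}: any discrepancy between the two continued reductions would yield a nontrivial $\Q$-linear relation among the five nonnesting positive $6$-roots in $\Psi$, contradicting the lemma. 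For part (ii) one needs the parallel statement that the noncrossing positive $6$-roots in $D_6$ are linearly independent, proved by the same direct coefficient comparison as in Lemma \ref{lem:d4d6}. Newman's lemma then promotes local to global confluence, so every element of $\macd{\Phi}{nA_1}$ has a unique normal form as a $\Q$-linear combination of nonnesting (respectively, noncrossing) positive $n$-roots, completing (i) and (ii).

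For part (iii), I would induct on the height of $\sigma(\theta_N) - \sigma(\beta)$, viewed as a nonnegative integer combination of simple roots. By Corollary \ref{cor:moveup}(i), $\sigma(\theta_N)$ is the $\leq$-maximum of $\{\sigma(\gamma) : \gamma \in X\}$, attained exactly when $\gamma$ is alignment-free by Proposition \ref{prop:neststab}(ii); hence the height above is a well-defined nonnegative integer vanishing precisely on $X_I$. If $\beta$ is alignment-free, there is nothing to prove. Otherwise $\beta$ contains an alignment $Q_A$, and writing $\beta = \gamma\gamma_A$ for the product decomposition corresponding to $Q_A$, the rearranged identity $\gamma\gamma_A = \gamma\gamma_C - \gamma\gamma_N$ expresses $\beta$ as a $\Q$-linear combination of two positive $n$-roots whose sums $\sigma$ are strictly larger than $\sigma(\beta)$ by Theorem \ref{thm:cna}(vi). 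By induction these lie in the span of the alignment-free positive $n$-roots, and hence so does $\beta$.
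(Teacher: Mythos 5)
Your proposal is correct and follows essentially the same route as the paper: the Ptolemy relation of Theorem \ref{thm:cna}(vi) oriented as a rewriting rule, termination via the nesting and crossing orders, local confluence handled through Proposition \ref{prop:overlap} with the two-element overlap case settled inside a $D_6$-subsystem using Lemma \ref{lem:d4d6}, and part (iii) by rewriting alignments with strictly increasing $\sigma$. The only cosmetic differences are that the paper packages the overlap analysis as an induction on rank and obtains independence of the noncrossing elements in $D_4$ and $D_6$ by counting against part (i), whereas you check it directly and phrase (iii) as an induction on the height deficit from $\sigma(\theta_N)$ rather than simply invoking finiteness.
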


\begin{proof}
We first prove (i) by using the reduction rule $\gamma \gamma_N = \gamma
\gamma_C - \gamma \gamma_A$ of Theorem \ref{thm:cna} (vi) to express an $n$-root
that contains a nesting as a linear combination of $n$-roots that are strictly
lower in the nesting order.  There are no infinite descending chains in the
crossing order because there are only finitely many $n$-roots. It remains to
show that the reductions $f_i$ are confluent, by induction on the rank $n$.  By
Lemma \ref{lem:d4d6}, this is already known to be the case in types $D_4$ and
$D_6$, so we assume from now on that we have $n > 6$.

If two reductions, $f_i$ and $f_j$, affect different terms $\lambda_i \be_i$ in
the linear combination $m = \sum_i \lambda_i \be_i$, or if $f_i$ and $f_j$
affect disjoint components of the same term $\lambda_i \be_i$, then $f_i$ and
$f_j$ commute. It is then immediate that $f_i(m)$ and $f_j(m)$ have a common
reduction, namely, $f_if_j(m) = f_jf_i(m)$.  The proof of confluence now reduces
to proving that if $f_i$ and $f_j$ change at least one component in the same
$n$-root $\be$, then $f_i(\be)$ and $f_j(\be)$ have a common reduction.  In this
case, if $Q_i$ and $Q_j$ are the sets of components of $\be$ that are moved by
$f_i$ and $f_j$ respectively, then Proposition \ref{prop:overlap} (i) implies
that either $Q_i = Q_j$, or $|Q_i \cap Q_j| = 2$. In the first case, we have
$f_i = f_j$, and there is nothing to prove. In the latter case, Proposition
\ref{prop:overlap} (ii) implies that there is a root subsystem $\Psi$ of type
$D_6$ that contains $Q_i$ and $Q_j$ as coplanar quadruples. Confluence now
follows by applying the inductive hypothesis to $\Psi$, which completes the
proof of (i).

We now prove (ii) by using the reduction rule $\gamma \gamma_C = \gamma \gamma_N
+ \gamma \gamma_A$ of Theorem \ref{thm:cna} (vi) to express an $n$-root that
contains a crossing as a sum of $n$-roots that are strictly lower in the
crossing order $\leq_C$. It follows that the noncrossing positive $n$-roots
form a spanning set. There are $2$ noncrossing positive $4$-roots in type $D_4$,
corresponding to the matchings $\{12, 34\}$ and $\{14,23\}$, and $5$ noncrossing
positive $6$-roots in type $D_6$, corresponding to the matchings $$ \{16, 25,
34\}, \ \{16, 23, 45\}, \ \{14, 23, 56\}, \ \{12, 36, 45\}, \,\text{and}\; \{12,
34, 56\}.$$ These spanning sets are bases of $\macd{\Phi}{nA_1}$ by (i), and the
rest of the argument used to prove (i) now applies {\it mutatis mutandis}.

Part (iii) follows by expressing the reduction rule in the form $\gamma \gamma_A
= \gamma \gamma_C - \gamma \gamma_N.$ By Theorem \ref{thm:cna} (vi), we have
$\sigma(\gamma\gamma_A) < \sigma(\gamma\gamma_N) = \sigma(\gamma\gamma_C)$. This
implies that the relation can only be applied finitely many times before the
procedure terminates, and (iii) follows.
\end{proof}

We will refer to the bases of nonnesting and noncrossing positive $n$-roots as
the \emph{nonnesting basis} and \emph{noncrossing basis} of the Macdonald
representation.

\subsection{Properties of the noncrossing basis}
\label{sec:ncbasis}
In this subsection, we show that the noncrossing basis  behaves in the Macdonald
representation in many ways like a simple system in the reflection
representation. In particular, every $n$-root decomposes into the noncrossing
basis with coefficients of like sign, and the noncrossing $n$-roots are
precisely the minimal ones that are minimal in the sense that they are not
further decomposable.
This minimality property yields an elementary algebraic characterization.  We
also show that the maximally crossing $n$-root $\theta_C$ has a maximal
decomposition into the noncrossing basis in a natural sense, and that simple
reflections act on the noncrossing basis in a way reminiscent of the way they
act on a simple system in the reflection representation. In addition, as we
explain in Remark \ref{rmk:othersimple}, 
the noncrossing basis is a sign-coherence basis in the sense of cluster
algebras, and it also essentially agrees with an IC basis in the sense of Du
\cite{Du}.
For the above reasons, we
may think of the noncrossing basis as the canonical basis of the Macdonald
representation.

\begin{lemma}
    \label{lem:pm1}
{If $\be$ and $\lambda \be$ are both $n$-roots for some scalar $\lambda$, then we must have $\lambda = \pm 1$.}
\end{lemma}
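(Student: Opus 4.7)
The plan is to invoke unique factorization in the polynomial ring $\Q[\al_1,\ldots,\al_n] \subseteq \Sym(V)$, together with the fact that in a simply laced root system all roots have the same length. The relevant unique factorization property was already noted in the paragraph immediately following Definition~\ref{def:mac}: the components of an $n$-root are its irreducible factors in $\Q[\al_1,\ldots,\al_n]$, uniquely determined up to reordering and nonzero scalars.

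First I would write $\be = \prod_{i=1}^n \be_i$ and $\lambda\be = \prod_{i=1}^n \gamma_i$, where both $\{\be_i\}$ and $\{\gamma_i\}$ are sets of $n$ mutually orthogonal roots in $\Phi$. Since each $\be_i$ and each $\gamma_i$ is a nonzero homogeneous linear polynomial in $\Q[\al_1,\ldots,\al_n]$, they are irreducible in this UFD. Comparing the two factorizations of the same polynomial $\lambda\be$, I conclude that there exist a permutation $\pi \in S_n$ and nonzero scalars $c_1,\ldots,c_n \in \Q^\times$ such that $\gamma_i = c_i\,\be_{\pi(i)}$ for every $i$, and consequently $\lambda = \prod_{i=1}^n c_i$.

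The next step is to pin down the scalars $c_i$. Since $W$ has simply laced type, all roots of $\Phi$ have the same length under the $W$-invariant form $B$; explicitly, $B(\al,\al)=2$ for every $\al \in \Phi$. But $\gamma_i = c_i\,\be_{\pi(i)}$ is a root, so $2 = B(\gamma_i,\gamma_i) = c_i^2\, B(\be_{\pi(i)},\be_{\pi(i)}) = 2c_i^2$, forcing $c_i = \pm 1$. Hence $\lambda = \prod c_i \in \{+1,-1\}$, as claimed.

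The argument is short and should present no real obstacle; the only point worth double-checking is that the components $\be_i$ really are irreducible elements of the UFD (which is clear because they are nonzero linear forms), and that the ambient simply laced hypothesis is in force so that root lengths coincide. Both conditions are built into the setup of the paper.
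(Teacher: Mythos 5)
Your proof is correct, but it takes a genuinely different route from the paper's. The paper deduces the claim from group-theoretic input: Theorem \ref{thm:indep} is invoked to see that $\lambda\in\Q$, Lemma \ref{lem:transitive} produces an element $w\in W$ with $w(\be)=\pm\lambda\be$, and the finite order of $w$ forces $\lambda$ to be a rational root of unity, hence $\pm1$. You instead argue locally and algebraically: unique factorization in $\Q[\al_1,\dots,\al_n]$ (already recorded in the paragraph after Definition \ref{def:mac}) matches the components of $\lambda\be$ with those of $\be$ up to nonzero rational scalars $c_i$, and the simply laced normalization $B(\al,\al)=2$ for all roots (equivalently, reducedness of $\Phi$: the only root multiples of a root are $\pm$ it) pins each $c_i$ to $\pm1$, so $\lambda=\prod c_i=\pm1$. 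The one point you pass over silently is that $\lambda\in\Q$ before the UFD comparison, but this is immediate: $\lambda\be$ is a product of roots and hence lies in $\Q[\al_1,\dots,\al_n]$, while $\be\neq 0$ has rational coefficients, so the ratio of any matching coefficients shows $\lambda\in\Q^\times$. Your version buys independence from Theorem \ref{thm:indep} and from the transitivity statement of Lemma \ref{lem:transitive}, so it applies verbatim to products of $k$ orthogonal roots for any $k$ and in any simply laced type; the paper's version is shorter to state given the machinery already in place, and illustrates the finite-order trick, but is logically heavier.
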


\begin{proof}
Theorem \ref{thm:indep} implies that the scalar $\lambda$ in (ii) lies in $\Q$.
Lemma \ref{lem:transitive} implies that there exists $w \in W$ such that $w(\be)
= \lambda \be$. Because $w$ has finite order, it follows that $\lambda$ is a
root of unity, and this forces $\lambda=\pm 1$.
\end{proof}

\begin{theorem}\label{thm:positive}
Let $W$ be a Weyl group of type $E_7$, $E_8$, or $D_n$ for $n$ even, and let
$\Be$ be the set of noncrossing positive $n$-roots.
\begin{enumerate}
    \item[{\rm (i)}]{Every $n$-root is a $\Z$-linear combination of elements of
     $\Be$, with coefficients of like sign. This sign is positive if the
 $n$-root is positive, and is negative if the $n$-root is negative.}
\item[{\rm (ii)}]{A positive $n$-root is noncrossing if and only if it is not a
     positive linear combination of other positive $n$-roots.}
 \item[{\rm (iii)}] Define $\gamma\le_\Be\gamma'$ for positive $n$-roots
     $\gamma=\sum_{\be \in \Be} c_\be \be$ and $\gamma'=\sum_{\be\in \Be}c'_\be
     \be$ whenever $c_\be \leq d_\be$ for all $\be \in \Be$. Then $\le_\Be$ is a
     partial order on the set $X$ of positive $n$-roots. The maximally crossing
     element $\theta_C$ is the unique maximal element of $X$ with respect to
     $\le_\Be$.
 \item[{\rm (iv)}]{If $\gamma \in \Be$ and $\al_i$ is a simple root, then we
     have $$ s_{\al_i}(\gamma) = \begin{cases} -\gamma & \text{\ if\ } \al_i |
         \gamma;\\ \gamma + \gamma' & \text{\ otherwise,\ for\ some\ } \gamma'
         \in \Be {\text{\ such\ that\ }} \al_i | \gamma'.\\
 \end{cases} $$}
\end{enumerate}\end{theorem}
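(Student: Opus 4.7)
For part (i), my plan is to apply the reduction rule $\gamma \gamma_C = \gamma \gamma_N + \gamma \gamma_A$ of Theorem \ref{thm:cna} (vi) repeatedly to any positive $n$-root containing a crossing, which rewrites it as a sum of two positive $n$-roots strictly lower in the crossing order $\leq_C$. Since $\leq_C$ is well-founded on the finite set of positive $n$-roots, iteration terminates in a nonnegative integer combination of noncrossing positive $n$-roots; negative $n$-roots will be handled by negation. For part (ii), I would deduce the forward direction from (i) together with the linear independence of $\Be$ (Theorem \ref{thm:indep} (ii)) and Lemma \ref{lem:pm1}: a positive decomposition of $\gamma \in \Be$ into distinct positive $n$-roots, expanded via (i), would force each summand to be a positive multiple of $\gamma$, which Lemma \ref{lem:pm1} rules out. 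The reverse direction is immediate from the reduction rule.

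For part (iii), the relation $\leq_\Be$ is manifestly a partial order. For the unique maximum, I would argue as follows. Suppose $\gamma \ne \theta_C$ is a positive $n$-root. By Proposition \ref{prop:neststab} (iii), $\gamma$ is not of type $C^M$, so it contains a coplanar quadruple $Q$ of type $A$ or $N$. Writing $\gamma = \gamma_X \gamma_0$ with $X$ the type of $Q$ and $\gamma_0$ the product of the remaining components, the relation $\gamma_C = \gamma_N + \gamma_A$ yields $\gamma_C \gamma_0 = \gamma + \gamma_Y \gamma_0$, where $Y$ is the other element of $\{A, N\}$. Since $\gamma_Y \gamma_0$ is a positive $n$-root with nonnegative $\Be$-expansion by (i), we get $\gamma <_\Be \gamma_C \gamma_0$. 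Iterating this strict ascent in the finite poset terminates at a positive $n$-root of type $C^M$, which is $\theta_C$ by Proposition \ref{prop:neststab} (iii).

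For part (iv), I would first handle the easy case: if $\al_i$ is a component of $\gamma$, then all other components are orthogonal to $\al_i$ and fixed by $s_{\al_i}$, so $s_{\al_i}(\gamma) = -\gamma$. Otherwise, Proposition \ref{prop:miracle} (i) forces $\al_i$ to move some coplanar quadruple $Q \subset \gamma$ of type $A$ or $N$ (since $\gamma$ is noncrossing). A direct calculation in the explicit $D_4$-realization from the proof of Theorem \ref{thm:cna} shows that $s_{\al_i}$ sends the $4$-root $\gamma_X$ of $Q$ to the crossing $4$-root $\gamma_C$ in the same $D_4$-subsystem $\Psi$ with a positive sign. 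Thus $s_{\al_i}(\gamma) = \gamma_0 \gamma_C$, and applying $\gamma_C = \gamma_N + \gamma_A$ gives $s_{\al_i}(\gamma) = \gamma + \gamma_0 \gamma_Y$, where $Y$ is the remaining element of $\{A, N\}$. Since $\al_i \in \Psi^+_Y$, the simple root $\al_i$ divides $\gamma_0 \gamma_Y$.

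The main obstacle will be showing that $\gamma' := \gamma_0 \gamma_Y$ lies in $\Be$, i.e., is noncrossing. The critical observation is that by Proposition \ref{prop:miracle} (i), $\gamma_0 \gamma_C$ has exactly one crossing, namely $\Psi^+_C$. When $Y = N$, Proposition \ref{prop:miracle} (iii) applied to this lone crossing yields that $\gamma_0 \gamma_N$ is noncrossing; the minimality hypothesis holds because any root moving a crossing in $\gamma_0 \gamma_C$ to its corresponding nesting must lie in $\Psi^+_A$, so choosing $\al$ to be the $\leq$-minimal element of $\Psi^+_A \cap \Phi^+$ satisfies the requirement vacuously. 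When $Y = A$, I would invoke the symmetric $CA$-analog of Proposition \ref{prop:miracle} (iii), obtainable by the same $D_6$ base case argument as in Lemma \ref{lem:d6miracle} with the roles of $A$ and $N$ interchanged throughout.
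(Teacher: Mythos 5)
Parts (i)--(iii) of your proposal, and the case of (iv) where $\al_i$ moves an alignment (your $Y=N$ case), follow the paper's own proof essentially verbatim: the same termination-of-reductions argument for (i), the same use of linear independence and Lemma \ref{lem:pm1} for (ii), the same one-step ascent $\gamma <_\Be s_\al(\gamma)=\gamma+\gamma_1\gamma''$ for (iii), and the same application of Proposition \ref{prop:miracle} (iii) with $\al$ chosen minimal in the alignment quadruple, exploiting that $s_{\al_i}(\gamma)$ has a unique crossing.

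The gap is in the remaining case of (iv), where $\al_i$ moves a nesting of $\gamma$ (your $Y=A$ case). The ``CA-analog of Proposition \ref{prop:miracle} (iii) with the roles of $A$ and $N$ interchanged'' that you invoke is false, already in the $D_6$ base case, so it cannot be ``obtainable by the same argument as in Lemma \ref{lem:d6miracle}''. Concretely, in type $D_6$ let $R$ be the $6$-root corresponding to the matching $\{15,26,34\}$, of type $CNN$; its unique crossing is $Q=\{\ep_1\pm\ep_5,\ \ep_2\pm\ep_6\}$, and the positive roots moving this $C$ to an $A$ are exactly the four roots $\ep_1\pm\ep_6,\ \ep_2\pm\ep_5$ of the associated nesting. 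The root $\al=\ep_2-\ep_5$ is $\le$-minimal among them and satisfies $s_\al(Q)=\{\ep_1\pm\ep_2,\ \ep_5\pm\ep_6\}$, yet $(R\setminus Q)\cup s_\al(Q)$ corresponds to $\{12,34,56\}$ and has type $AAA$: both nestings are destroyed as well, so neither the interchanged type formula ($A^{p+1}C^{q-1}N^{r}$) nor the interchanged invariance statement ($N(R)=N(R')$) holds, even though the minimality hypothesis is satisfied. Note this is precisely the configuration your argument must handle, since it arises from $\gamma=\theta_N=\{16,25,34\}$ and $\al_i=\al_1$. The underlying reason is that $A$ and $N$ are not interchangeable: the level function $\lambda=C+2N$, the height criteria of Proposition \ref{prop:htseq}, and the direction of the order $\le$ all distinguish them, and Proposition \ref{prop:miracle} (ii)--(iii) control only level-\emph{increasing} moves, whereas a $CA$ replacement is level-decreasing, so the local change need not be mirrored globally (compare Example \ref{exa:moves} read in reverse). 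In fact the same example shows that in this case $\gamma_0\gamma_A$ need not have type $A^{p+1}N^{r-1}$ at all (here it is $A^3$ rather than $AN^2$), so the noncrossingness of $\gamma_0\gamma_A$ genuinely requires a different argument than the one you sketch; as it stands, this case of (iv) is unproven in your write-up.
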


\begin{proof}
Let $\be$ be a positive $n$-root. By the proof of Theorem \ref{thm:indep}, the
result of applying reductions of the form $\gamma \gamma_C = \gamma \gamma_N +
\gamma \gamma_A$ to $\be$ until this is no longer possible has the effect of
expressing $\be$ as a positive integer linear combination of noncrossing
$n$-roots, and this procedure will always terminate after finitely many steps.
This proves (i) for positive $n$-roots, and the statement for negative $n$-roots
follows because $n$-roots occur in positive-negative pairs.

If $\be$ is a positive $n$-root that contains a crossing, then $\be$ is a
positive linear combination of other positive $n$-roots by applying the
reduction rule in the first paragraph. Conversely, suppose that $\be$ is a
noncrossing $n$-root and that $\be = \sum_i \lambda_i \be_i$, where $\lambda_i >
0$ and $\be_i$ is a positive $n$-root that is different from $\be$  for each
$i$. Part (i) implies that each of the $\be_i$ is a positive linear combination
of noncrossing $n$-roots. Because no cancellation can occur in the sum, Theorem
\ref{thm:indep} (ii) implies that this is only possible if each $\be_i$ is a
multiple of $\be$.  Collecting terms, we then have $\be = \lambda \be_i$. Lemma
\ref{lem:pm1} and the assumption that $\be_i$ is positive then imply that
$\lambda=1$ and $\be = \be_i$, which is a contradiction.

The relation $\le_\Be$ in (iii) is clearly a partial order on $X$.  Since $X$ is
finite, it contains at least one maximal positive $n$-root with respect to
$\le$. To prove (iii), it then suffices to show that for every  $n$-root
$\gamma\in X$ not equal to $\theta_C$, there is an element $\gamma'\in X$ such
that $\gamma<_\Be\gamma'$.  Let $\gamma\in X$ be an $n$-root not equal to
$\theta_C$, so that we can factorize $\gamma$ as $\gamma_1 \gamma'$, where
$\gamma'$ is either an alignment or a nesting. In either case, Theorem
\ref{thm:cna} (vi) implies that there exists a reflection $s_\al$ such that
$s_\al(\gamma')$ is a crossing, and that we have $s_\al(\gamma') = \gamma' +
\gamma''$, where $\gamma''$ is a nesting if $\gamma'$ is an alignment, or vice
versa. We then have $$ s_\al(\gamma) = \gamma + \gamma_1\gamma'' ,$$ where
$\gamma_1\gamma''$ is also a positive $n$-root. If we write  the $n$-root
$s_\al(\gamma)$ as $s_\al(\gamma) = \sum_{\be \in \Be} e_\be \be$, then it
follows from (i) that $c_\be \leq e_\be$ for all $\be \in \Be$. It follows that
$\gamma<_\Be s_\al(\gamma)$, proving (iii).

In the situation of (iv), it is immediate that if $\al_i$ is a component of
$\gamma$, then $s_{i}(\gamma) = -\gamma$.  Suppose from now on that this is not
the case, and let $A^pN^r$ be the type of $\gamma$.  Let $R$ be the set of
components of $\gamma$, let $Q\se R$ be the coplanar quadruple moved by
$\al_{i}$, and let $\Psi$ be the $D_4$-subsystem of $Q$. Then the sets $Q,
Q'=s_i(Q)$ and $Q''=\Psi^+\setminus (Q\cup Q')$ are the three distinct coplanar
quadruples partitioning the induced positive system by Proposition
\ref{prop:four} (ii) and Theorem \ref{thm:cna}. Let $\Psi^+_A, \Psi^+_C$ and
$\Psi^+_N$ be the alignment, crossing, and nesting in $\Psi^+$, respectively.
Then since $\al_i$ is a simple root, Proposition \ref{prop:miracle} (i) implies
that we must have one of the following two situations:
\begin{enumerate}
    \item[(1)] $Q=\Psi^+_A$, $Q'=\Psi^+_C$, $s_i(\gamma)$ has type
         $A^{p-1}CN^r$, and $Q''=\Psi^+_N$;
     \item[(2)] $Q=\Psi^+_N$, $Q'=\Psi^+_C$, $s_i(\gamma)$ has type
         $A^pCN^{r-1}$, and $Q''=\Psi^+_A$.
 \end{enumerate} Let $\gamma_x=\prod_{\beta\in x}\beta$ for all $x\in
    \{Q,Q',Q''\}$, and write $\gamma=\gamma_1\gamma_Q$. Then we have
    $\gamma_{Q'}=\gamma_Q+\gamma_{Q''}$, and thus,
    \[s_i(\gamma)=\gamma_1\gamma_{Q'}=\gamma+\gamma_1\gamma_{Q''} \] in both of
    the above cases. We have $\al_i\in \Psi$ by Proposition \ref{prop:four}
    (ii), and $\al_i$ must lie in the induced simple system of $\Psi$ since it
    is simple. Theorem \ref{thm:cna} (v) then implies that $\al\notin Q'$, and
    we have $\al_i\notin Q$ by assumption, so we have $\al\in Q''$. It follows
    that $\al$ divides the $n$-root $\gamma''=\gamma_1\gamma_{Q''}$.

It remains to prove that $\gamma''$ is noncrossing.  We treat case (1) first.
If $\al$ is any root that is minimal in $Q$, then $\al$ moves the crossing $Q'$
to the nesting $Q''$ by Theorem \ref{thm:cna} (vi), so that
$\gamma''=s_\al(s_i(\gamma))$.  Since $s_i(\gamma)$ has type $A^{p-1}CN^r$, $Q'$
is the unique crossing in $s_i(\gamma)$; therefore any root that moves a $C$ in
$s_i(\gamma)$ to an $N$ must move $Q'$, and it must move $Q'$ to $Q''$ by
Proposition \ref{prop:four} (ii). Together with Theorem \ref{thm:cna} (vi), this
further implies that any root moving $Q'$ to $Q''$ must come from $Q$, so it
follows that $\al$ is minimal among roots moving a $C$ in $s_i(\gamma)$ to an
$N$. It follows from Proposition \ref{prop:miracle} (iii) that
$\gamma''=s_\al(s_i(\gamma))$ has type $A^{p-1}N^{r+1}$; therefore $\gamma''$ is
noncrossing. A similar argument shows that $\gamma''$ has type $A^{p+1}N^{r-1}$
in case (2), so $\gamma''$ is noncrossing in both cases.
\end{proof}

\begin{rmk}\label{rmk:othersimple}
\begin{enumerate}
\item[(i)]{Since the Weyl group $W$ acts transitively on $n$-roots in types
     $E_7, E_8$, and $D_n$ for $n$ even, the first assertion of Theorem
     \ref{thm:positive} (i) is equivalent to the assertion that the noncrossing
     basis $\Be$ is a \emph{sign-coherent basis} of the Macdonald representation
     $\macd{\Phi}{nA_1}$ in the sense of cluster algebras (\cite[Definition 2.2
     (i)]{cao19}, \cite[Definition 6.12]{fomin07}); that is, with respect to
     $\Be$, every element of $W$ acts on  $\macd{\Phi}{nA_1}$ by a matrix where
     the entries in each column all have the same sign. It would be interesting
     to know whether these entries, i.e., the coefficients appearing in the expansion
     of arbitrary $n$-roots into the noncrossing elements, have any
 interpretation in terms of categorification.}
\item[(ii)]{There are other constructions of the basis of noncrossing $n$-roots.  For
        example, one can modify the monomial bases of \cite{fan97} by
        specializing the parameter to $1$ and twisting by the sign
        representation, where the monomial basis in turn agrees with a suitable
        IC basis in the sense of Du \cite{Du} by a result of the first named
        author and Losonczy \cite[Theorem 3.6]{gl99}.
        However, the $n$-root approach has the significant advantage that it is
        relatively easy, given an arbitrary group element $w$ and an arbitrary
        $n$-root $\al$, to express $w(\al)$ as a linear combination of basis
        elements. The bases in type $D_n$ may be constructed diagrammatically in
        terms of perfect matchings, as we explain at the end of Section
        \ref{sec:D}. There is also a diagrammatic construction in types $E_7$
        and $E_8$, as described in \cite{tomdieck97} and \cite{green09}, but we
        do not pursue this here because it is not easy to recover the components
    of a basis $n$-root by inspection of the corresponding diagram.}
 \item[(iii)]{With some more work, it can be shown that every component of a
     noncrossing $n$-root has odd height, and conversely, that every root of odd
 height occurs as a component of some noncrossing $n$-root.}
\end{enumerate}
\end{rmk}

\subsection{Properties of the nonnesting basis}
\label{sec:nnbasis}
In this subsection, we show that the nonnesting basis is naturally indexed by a
distributive lattice whose unique maximal and minimal elements are given by the
maximally crossing and aligned $n$-roots $\theta_C$ and $\theta_A$,
respectively. This lattice is induced by the left weak Bruhat order $\le_L$ of
$W$ and is isomorphic to a lattice consisting of certain fully commutative
elements. We recall that $\le_L$ is defined by the condition that $v\le_L w$ if
$w=uv$ for some $u\in W$ such that $\ell(w)=\ell(u)+\ell(v)$, or,  equivalently,
by the condition that $\ell(wv\inverse)+\ell(v)=\ell(w)$. An element $w$ in a
simply laced Weyl group is fully commutative precisely when no reduced word for
$w$ contains a factor of the form $s_i s_j s_i$ \cite{FC}.

\begin{defn}\label{def:nestchain}
Let $\theta,\theta'$ be two nonnesting positive $n$-roots.  A {\it nonnesting
sequence from $\theta$ to $\theta'$} is a (possibly trivial) sequence
$(\theta_i) = (\theta_0=\theta, \theta_1, \ldots, \theta_r=\theta')$ of positive
nonnesting $n$-roots such that for all $1 \leq j \leq r$ there exists a simple
root $\al_{i_j}$ such that
\begin{equation}
    \label{eq:link} s_{i_j}(\theta_{j-1})=\theta_j\;\text{ and
          }\;\sigma(\theta_j)>\sigma(\theta_{j-1}).
      \end{equation}  If $s_{i_1}, s_{i_2}, \cdots, s_{i_r}$ are simple
    reflections satisfying the condition in \eqref{eq:link}, we say that $\bfw =
    s_{i_1} s_{i_2} \cdots s_{i_r}$ is a \emph{$(\theta,\theta')$-word}, and we
    call the element $w$ expressed by $\bfw$ a
    \emph{$(\theta,\theta')$-element}. Note that we have $w(\theta')=\theta$.
\end{defn}

\begin{rmk}
    \label{rmk:link}
Let $\theta$ be a nonnesting positive $n$-root of type $A^pC^q$ and let $\al_i$
be a simple root. The condition that $\sigma(s_i(\theta))>\sigma(\theta)$ is
equivalent to the condition that $B(\sigma(\theta),\al_i)<0$ by Equation
\eqref{eq:ref}, because $\sigma(s_i(\theta))=s_i(\sigma(\theta))$. In addition,
by Corollary \ref{cor:moveup} (ii) and (iii), the condition that
$\sigma(s_i(\theta))>\sigma(\theta)$ is also equivalent to the condition that
$\sigma(s_i(\theta))=\sigma(\theta)+2\al_i$, or the condition that $s_i$ moves
$\theta$ to an $n$-root of type $A^{p-1}C^{q+1}$.  It follows that if
$(\theta_0,\theta_1,\cdots,\theta_r)$ is a nonnesting sequence, then we have
$\lambda(\theta_j)=\lambda(\theta_{j-1})+1$ for all $1\le j\le r$. In
particular, every nonnesting sequence is a saturated chain with respect to the
quasiparabolic order $\le_Q$.
\end{rmk}

\begin{rmk}
    \label{rmk:rigid}
Let $\theta$ and $\theta'$ be two positive $n$-roots with
$\lambda(\theta')>\lambda(\theta)$, and let $w\in W$ be an element such that
$w(\theta')=\theta$. Let $\bfw=s_{i_1}\cdots s_{i_r}$ be an arbitrary word for
$w$. By the definition of quasiparabolic sets, applying a simple reflection
decreases the level by at most 1, so any element taking $\theta'$ to $\theta$
has length at least $\lambda(\theta')-\lambda(\theta)$. It follows that $r\ge
\lambda(\theta')-\lambda(\theta)$. It also follows that if
$r=\lambda(\theta')-\lambda(\theta)$, then $\bfw$ is reduced and successively
applying the simple reflections $s_{i_r}, \cdots, s_{i_2}, s_{i_1}$ starting
from $\theta'$ must reduce the level by 1 at each step. In particular, if
$r=\lambda(\theta')-\lambda(\theta)$ and $\theta'$ is nonnesting, then it
follows from Proposition \ref{prop:miracle} (i) that each of these simple
reflections is a $CA$ move, so that conversely the sequence
$\theta_\bfw:=(\theta_0,\cdots, \theta_r)$ defined by $\theta_0=\theta,
\theta_j=s_{i_j}(\theta_{j-1})$ for $1\le j\le r$ must be a nonnesting sequence
by Remark \ref{rmk:link}.
\end{rmk}

\begin{prop}\label{prop:nestchain}
Let $W_I\subset W$ be the
parabolic subgroup of Proposition \ref{prop:neststab} (i), and let $\theta$ be a
nonnesting positive $n$-root of type $A^pC^q$.
\begin{enumerate}
    \item[{\rm (i)}] If $\theta$ is the maximally crossing element $\theta_C$,
             then  we have $B(\sigma(\theta), \al_i)\ge 0$ for every simple root
             $\al_i$. Otherwise, there exists a simple root $\al_i$ such that
             $B(\sigma(\theta), \al_i)<0$.
         \item[{\rm (ii)}] There exists a nonnesting sequence from $\theta$ to
          $\theta_C$, and we have
          $\rootht(\sigma(\theta))=\rootht(\sigma(\theta_C))-2p$.

      \item[{\rm (iii)}]{Every $(\theta,\theta_C)$-word is reduced. Every
      $(\theta,\theta_C)$-element is fully commutative and has length $p$. Every
  shortest element taking $\theta_C$ to $\theta$ has length $p$.}

\item[{\rm (iv)}]{There is a unique
$(\theta,\theta_C)$-element $w$. It is
        the unique shortest element in the coset $wW_I$, and is also
    the unique shortest element in $W$ taking $\theta_C$ to $\theta$.}

%\item[{\rm (v)}]{Every reduced word for the element $w \in W$ of (iv) is associated to some
    %nonnesting sequence starting at $\theta_0$.}

\item[{\rm (v)}] There exists a nonnesting sequence from the maximally aligned
        element $\theta_A$ to $\theta_C$ that includes $\theta$.
\end{enumerate}
\end{prop}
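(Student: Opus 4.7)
The proof plan rests on two algebraic observations that hold for every positive $n$-root $\eta$: first, $|\sigma(\eta)|^2 = 2n$, since the components are pairwise orthogonal roots of squared length $2$; second, $\sigma$ is $W$-equivariant along $AC$ moves, because Corollary \ref{cor:moveup}(iii) gives $\sigma(s_i(\eta)) = \sigma(\eta) + 2\al_i$ while the tabulation from Corollary \ref{cor:moveup}(ii) gives $B(\sigma(\eta), \al_i) = -2$ exactly when $s_i$ is a simple $AC$ move. Part (i) at $\theta = \theta_C$ is immediate from $\sigma(\theta_C) = \sigma(\theta_N)$ (Proposition \ref{prop:neststab}(ii)) together with Proposition \ref{prop:ancomp}(iv). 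For $\theta \neq \theta_C$, I argue by contradiction: $\theta$ nonnesting and $\theta \neq \theta_C$ give $\sigma(\theta) \neq \sigma(\theta_C)$ via Proposition \ref{prop:neststab}(ii), and $\theta \leq_Q \theta_N$ combined with Corollary \ref{cor:moveup}(i) gives $\sigma(\theta) \leq \sigma(\theta_C)$, so $v := \sigma(\theta_C) - \sigma(\theta)$ is a nonzero nonnegative combination of simple roots with $B(v,v) = 4n - 2B(\sigma(\theta), \sigma(\theta_C)) > 0$. If $B(\sigma(\theta), \al_i) \geq 0$ for every simple $\al_i$, then $B(\sigma(\theta), v) = \sum_i c_i B(\sigma(\theta), \al_i) \geq 0$, forcing $B(\sigma(\theta), \sigma(\theta_C)) \geq 2n$ and hence $B(v,v) \leq 0$, a contradiction. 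Part (ii) then follows by induction on $p$: (i) supplies a simple $\al_i$ with $B(\sigma(\theta),\al_i) < 0$, Remark \ref{rmk:link} identifies $s_{\al_i}$ as an $AC$ move to a nonnesting $\theta_1$ of type $A^{p-1}C^{q+1}$ with $\sigma(\theta_1) = \sigma(\theta) + 2\al_i$, and prepending $s_{\al_i}$ to a sequence for $\theta_1$ completes the construction; the height identity holds because each step adds the height-$2$ root $2\al_i$ to $\sigma$.

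For (iii), Remark \ref{rmk:link} forces $r = \lambda(\theta_C) - \lambda(\theta) = p$, and Remark \ref{rmk:rigid} simultaneously gives reducedness and the fact that every shortest element in $W$ taking $\theta_C$ to $\theta$ has length $p$. To prove full commutativity, suppose some reduced expression of a $(\theta, \theta_C)$-element contains a consecutive $s_i s_j s_i$ with $\al_i, \al_j$ adjacent in the Dynkin diagram. By Remark \ref{rmk:rigid} this expression arises from a nonnesting sequence, so letting $\eta$ be the $n$-root just before the subword and $\eta_2 := s_j s_i(\eta)$, two successive $AC$ moves yield $\sigma(\eta_2) = \sigma(\eta) + 2\al_i + 2\al_j$, and
\[ B(\sigma(\eta_2), \al_i) = B(\sigma(\eta),\al_i) + 2B(\al_i,\al_i) + 2B(\al_j,\al_i) = -2 + 4 - 2 = 0. \]
This rules out $s_i$ being an $AC$ move on $\eta_2$ (the only value compatible with an $AC$ move being $-2$), contradicting the nonnesting property of the sequence.

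For (iv), $W$-equivariance of $\sigma$ along $AC$ moves gives $w(\sigma(\theta_C)) = \sigma(\theta)$ for every $(\theta, \theta_C)$-element $w$, so all such $w$ lie in a single left coset of $W_I = \Stab(\sigma(\theta_C))$ (Proposition \ref{prop:ancomp}(iv)). The standard parabolic coset theorem then yields uniqueness, provided I check $w(\al_j) > 0$ for every $j \neq x$. If instead $w(\al_j) < 0$, then a reduced expression of $w$ ends in $s_j$, so $w' := ws_j$ would have length $p-1$ and $w'(s_j(\theta_C)) = \theta$. Proposition \ref{prop:miracle}(i) limits $s_j(\theta_C)$ to three mutually exclusive cases: (a) $\al_j$ is a component of $\theta_C$, so $s_j$ fixes $\theta_C$ and $w'$ takes $\theta_C$ to $\theta$ in length $p-1$, violating Remark \ref{rmk:rigid}'s length bound; (b) $s_j$ is a $CA$ move on $\theta_C$, which forces $B(\sigma(\theta_C),\al_j) = 2$ and, combined with $\al_j$ being a non-component, gives $j = x$; or (c) $s_j$ is a $CN$ move, raising $\lambda(s_j(\theta_C))$ to $M+1$ which $w'$ of length $p-1$ cannot reduce to $\lambda(\theta) = M - p$. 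All three cases contradict $j \neq x$, so $w$ is the unique minimal coset representative. Global uniqueness in $W$ follows because Remark \ref{rmk:rigid} identifies every length-$p$ element taking $\theta_C$ to $\theta$ as a $(\theta, \theta_C)$-element.

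For (v), I concatenate the nonnesting sequence from $\theta$ to $\theta_C$ produced by (ii) with a nonnesting sequence from $\theta_A$ to $\theta$. The latter will be supplied by Remark \ref{rmk:rigid} once I produce an element $u \in W$ with $u(\theta) = \theta_A$ and $\ell(u) = \lambda(\theta) - \lambda(\theta_A) = M - p$. The natural choice is $u := w_A w^{-1}$, where $w_A$ is the unique $(\theta_A, \theta_C)$-element from (iv); it clearly satisfies $u(\theta) = w_A(\theta_C) = \theta_A$, and $\ell(u) \geq M - p$ holds by the quasiparabolic axioms. The main obstacle is the matching upper bound $\ell(u) \leq M - p$, equivalent to $w$ being a right factor of $w_A$ in some reduced expression. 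I plan to establish this by induction on $p$: writing $w = s_i w_1$ for the simple $AC$ reflection $s_i$ supplied by (i) and $w_1$ the unique $(\theta_1, \theta_C)$-element for $\theta_1 := s_i(\theta)$, the inductive hypothesis gives $\ell(w_A w_1^{-1}) = M - p + 1$, and it remains to verify that $\al_i$ is a right descent of $w_A w_1^{-1}$ so that $u = w_A w_1^{-1} s_i$ has length exactly one less. Locating this descent is the technical heart of (v) and is where the full commutativity and uniqueness from (iii)--(iv) enter crucially: they constrain the reduced expressions of $w_A$ enough to exhibit $s_i$ as the required descent.
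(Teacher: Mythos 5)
Parts (i)--(iv) of your proposal are essentially correct. In (i) you replace the paper's argument (which places $\sigma(\theta_C)$ in the fundamental domain $D$ and uses that the $W$-orbit of $\sigma(\theta_C)$ meets $D$ only once) by a direct positive-definiteness computation using $B(\sigma(\eta),\sigma(\eta))=2n$ and $\sigma(\theta)\le\sigma(\theta_C)$ from Corollary \ref{cor:moveup} (i); this is a valid alternative. Your full-commutativity argument in (iii) is the paper's computation recast through the bilinear form (an $s_is_js_i$ factor would force $B(\sigma(\eta_2),\al_i)=0$, which is incompatible with an $AC$ move), and your (iv) reaches the same conclusion as the paper -- that no reduced word for a shortest element taking $\theta_C$ to $\theta$ can end in $s_j$ with $j\ne x$ -- by a three-case analysis of $s_j(\theta_C)$, where the paper instead observes via Remark \ref{rmk:rigid} that the last letter must be a $CA$ move on $\theta_C$ and hence equals $s_x$ by Proposition \ref{prop:ancomp} (iii). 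Both routes legitimately land on the minimal-coset-representative characterization.

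Part (v), however, contains a genuine gap. You reduce everything to the upper bound $\ell(w_Aw^{-1})\le q=M-p$, equivalently $w\le_L w_A$, and you do not prove it: the inductive step -- that $\al_i$ is a right descent of $w_Aw_1^{-1}$ -- is precisely the nontrivial content, and the appeal to ``full commutativity and uniqueness constraining the reduced expressions of $w_A$'' is a plan, not an argument. Note that level considerations alone cannot supply this descent: an element of length $M-p+2$ taking $\theta$ to $\theta_A$ would contradict nothing you have established, since the quasiparabolic axioms only give the lower bound $M-p$. The paper avoids this issue entirely by importing a result from the quasiparabolic machinery: since $\theta_A$ is the unique $W$-minimal element of $X$ (Proposition \ref{prop:minmax} (i)), Theorem 2.8 of Rains--Vazirani \cite{rains13} produces $u\in W$ with $u(\theta_A)=\theta$ and $\ell(u)=\lambda(\theta)-\lambda(\theta_A)=q$ directly. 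Then $u^{-1}w$ takes $\theta_C$ to $\theta_A$, has length at most $p+q=M$ and at least $M$ by Remark \ref{rmk:rigid}, hence has length exactly $M$ and must equal the unique $(\theta_A,\theta_C)$-element; the concatenation of the two reduced words is therefore reduced, and Remark \ref{rmk:rigid} converts it into a nonnesting sequence from $\theta_A$ to $\theta_C$ that reaches $\theta$ after $q$ steps. This argument in particular yields the relation $w\le_L w_A$ that you were trying to prove, so to complete your version you would need either to invoke that theorem (or an equivalent ``length-additive path from the minimal element'' property of quasiparabolic sets) or to reprove that piece of the Rains--Vazirani theory; as written, the descent claim is unsupported.
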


\begin{proof}
The first assertion of (i) follows from Proposition \ref{prop:ancomp} (iii) and
the fact that $\sigma(\theta_C)=\sigma(\theta_N)$ by Proposition
\ref{prop:neststab}.  Let $V$ be the reflection representation of $W$ and let $$
D = \{v \in V : B(v, \al_i) \ge 0 \text{\ for\ all\ simple\ roots\ } \al_i\}. $$
The set $D$ is a fundamental domain for the action of $W$ on $V$ by
\cite[Theorem 1.12 (a)]{humphreys90}, and we have $\theta_C\in D$. If $\theta$
is a nonnesting $n$-root different from $\theta_C$, then $\theta$ and $\theta_C$
are conjugate under the action of $W$ by Lemma \ref{lem:transitive}, and therefore
so are $\sigma(\theta)$ and $\sigma(\theta_C)$. It follows that $\theta\not\in
D$; therefore we have $B(\sigma(\theta),\al_i)<0$ for some simple root $\al_i$.

We prove (ii) by induction on $p$. If $p=0$, then $\theta=\theta_C$ by
Proposition \ref{prop:neststab} (iii) and the conclusion of (ii) holds
trivially. If $p>0$, then $\theta\neq \theta_C$ and there exists a simple root
$\al_i$ with $B(\sigma(\theta),\al_i)<0$ by (i). The simple reflection $s_i$
satisfies the condition \eqref{eq:link}, adds 2 to the height of the sum, and
sends $\theta$ to an $n$-root of type $A^{p-1}C^{q+1}$ by Remark \ref{rmk:link},
so (ii) follows by induction.

Let $\bfw=s_{i_1}\cdots s_{i_r}$ be a $(\theta,\theta_C)$-word expressing a
$(\theta,\theta_C)$ element $w$. Then $w$ takes $\theta_C$ to $\theta$, and we
have $r=\lambda(\theta_C)-\lambda(\theta)=p$ by by Remark \ref{rmk:link}. Remark
\ref{rmk:rigid} then implies that $\bfw$ is reduced.
%The fact that $r=\lambda(\theta_C)-\lambda(\theta)$ implies that $\bfw$ is
%reduced by Remark \ref{rmk:rigid}.
It follows that $\ell(w)=r=p$. Since an element taking $\theta_C$ to $\theta$
has length at least $\lambda(\theta_C)-\lambda(\theta)=p$ by Remark
\ref{rmk:rigid} and $w$ is such a shortest element, it also follows that every
shortest element taking $\theta_C$ to $\theta$ has length $p$.

To prove (iii), it remains to show that $\bfw$ cannot contain a factor of the
form $s_{\al_i} s_{\al_j} s_{\al_i}$. By Remark \ref{rmk:link} and direct
computation, such a factor would imply the existence of a subsequence
$(\theta_a, \theta_{a+1}, \theta_{a+2}, \theta_{a+3})$ such that \[
s_{\al_i}(\theta_{a+2}) = s_{\al_i}(\theta_a + 2\al_i + 2\al_j) = \theta_a +
2\al_i + 2\al_j = \theta_{a+2}.  \] This contradicts the fact that
$s_{\al_i}(\theta_{a+2})=\theta_{a+3}$, which completes the proof of (iii).

Let $w$ be a shortest element taking $\theta_C$ to $\theta$. Then $w$ has length
$p=\lambda(\theta_C)-\lambda(\theta)$ by (iii). Let $\bfw=s_{i_1}\dots s_{i_p}$
be a reduced word for $w$. Remark \ref{rmk:rigid} implies that if we start from
$\theta_C$ and apply $s_{i_p}, \cdots, s_{i_2}, s_{i_1}$ successively, each
simple reflection must be a $CA$ move.  In particular, $s_{i_p}$ must perform a
$CA$ move on $\theta_C$, so we have
\[s_{i_p}(\sigma(\theta_C))=\sigma(s_{i_p}(\theta_C))=\sigma(\theta_C)-2\al_{i_p}\]
by Corollary \ref{cor:moveup} (ii). This implies that
$B(\sigma(\theta_N),\al_{i_p})=B(\sigma(\theta_C), \al_{i_p})>0$, so it follows
from Proposition \ref{prop:ancomp} (iii) that $\al_{i_p}=\al_x$ where $\al_x$ is
the unique Coxeter generator of $W$ not in $I$. In other words, every reduced
word for $w$ ends in $s_{\al_x}$.  It follows from \cite[Proposition 1.10
(c)]{humphreys90} that $w$ is the unique shortest element in $wW_I$.

If $w'$ is another shortest element taking $\theta_C$ to $\theta$, then
$w'w\inverse(\theta_C)=\theta_C$, so $w'w\inverse\in W_I$ by Proposition
\ref{prop:ancomp} (iv). It follows that the cosets $wW_I$ and $w'W_I$ are equal; therefore we have $w=w'$, because $w$ and $w'$ are both the unique shortest
element in the common coset $wW_I=w'W_I$ by the last paragraph. This proves the
uniqueness of the shortest element taking $\theta_C$ to $\theta$. Part (iii)
says that each $(\theta,\theta_C)$-element is such a shortest element, and (iv)
now follows.

Finally, to prove (v), we recall from Proposition \ref{prop:minmax} (i) that
$\theta_A$ is the unique minimal element of the set $X$. It follows from
\cite[Theorem 2.8]{rains13} that there exists an element $u\in W$ such that
$u(\theta_A)=\theta$ and $\ell(u)=\lambda(\theta)-\lambda(\theta_A)=q$. Let $v$
be the unique $(\theta,\theta_C)$-element, let $s_{i_1}\cdots s_{i_p}$ be a
reduced word for $v$, and let $s'_{i_1}\cdots s'_{i_q}$ be a reduced word for
$u\inverse$. Then $u\inverse v$ takes $\theta_C$ to $\theta_A$ and has length at
most $p+q=M=\lambda(\theta_C)-\lambda(\theta_A)$, so $u\inverse v$ must be the
unique $(\theta_A,\theta_C)$-element by (ii) and (iii), and the word
$\bfw=s'_{i_1}\cdots s'_{i_q}s_{i_1}\cdots s_{i_p}$ must be a reduced word for
$u\inverse v$. Remark \ref{rmk:rigid} now implies that starting from $\theta_A$
and applying $s'_{i_1}, \cdots, s'_{i_q}, s_{i_1},\cdots, s_{i_p}$ successively
yields a nonnesting sequence $\theta_\bfw$ from $\theta_A$ to $\theta_C$ that
reaches $\theta=v\theta_C$ after the first $q$ steps, and (v) follows.
\end{proof}

\begin{theorem}\label{thm:nonnest}
Let $W$ be a Weyl group of rank $n$ of types $E_7$, $E_8$, or $D_n$ for $n$
even. Let $M$ be the number of \coplanar quadruples in a maximal orthogonal
set, and let $W_I$ be the parabolic subgroup of Proposition \ref{prop:neststab}
(i).
\begin{enumerate}
    \item[{\rm (i)}]{There is a unique element $w_N \in W$ of minimal length
     such that $w_N(\theta_C) = \theta_A$.  The element $w_N$ is fully
 commutative and has length $\ell(w_N) = M$, and is the unique element of
 minimal length in the coset $w_N W_I$.}
\item[{\rm (ii)}]{The set $$ L= \{v(\theta_C) : v \leq_L w_N\} $$ is a complete,
     irredundantly described set of nonnesting positive $n$-roots. The set $L$
 has the structure of a distributive lattice, induced by the weak Bruhat order
 $\leq_L$.}
\item[{\rm (iii)}]{If $\gamma_1$ and $\gamma_2$ are positive $n$-roots
     satisfying $\rootht(\sigma(\gamma_1)) - \rootht(\sigma(\gamma_2)) = 2M$,
     and $w$ is an element expressed by a word $\bfw$ of length $M$ satisfying
     $w(\gamma_1) = \pm\gamma_2$, then $\bfw$ is reduced, and we must have
     $\gamma_1 = \theta_C$, $\gamma_2 = \theta_A$, $w = w_N$, and $w(\gamma_1) =
 \gamma_2$.}
\end{enumerate}
\end{theorem}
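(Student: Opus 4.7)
For part (i), I would apply Proposition~\ref{prop:nestchain}~(iii)--(iv) directly with $\theta = \theta_A$. Since $\theta_A$ is the unique positive $n$-root of type $A^M$ by Proposition~\ref{prop:minmax}, the unique $(\theta_A,\theta_C)$-element, which I name $w_N$, exists and satisfies $w_N(\theta_C) = \theta_A$ by Definition~\ref{def:nestchain}. Proposition~\ref{prop:nestchain}~(iii) then gives that $w_N$ is fully commutative with $\ell(w_N) = M$, and Proposition~\ref{prop:nestchain}~(iv) identifies $w_N$ as both the unique shortest element of $W$ taking $\theta_C$ to $\theta_A$ and the unique minimal-length representative of the coset $w_N W_I$.

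For part (ii), my plan is to exhibit the map $v \mapsto v(\theta_C)$ as a bijection from $\{v \in W : v \leq_L w_N\}$ onto the set of nonnesting positive $n$-roots, and then to transport the weak Bruhat order along it. In the forward direction, for any nonnesting $\theta$, Proposition~\ref{prop:nestchain}~(v) produces a nonnesting sequence from $\theta_A$ to $\theta_C$ passing through $\theta$; its associated reduced word for $w_N$ splits at $\theta$ as $\bfu \bfv$, where $v$ is the $(\theta,\theta_C)$-element, $\ell(w_N) = \ell(u) + \ell(v)$, and $v(\theta_C) = \theta$, so $v \leq_L w_N$. Conversely, for $v \leq_L w_N$, factor $w_N = uv$ with additive lengths; concatenating reduced words gives a reduced word for $w_N$ of length $M$, which by Remark~\ref{rmk:rigid} must be a $(\theta_A,\theta_C)$-word, and the induced nonnesting sequence passes through $v(\theta_C)$ at position $\ell(u)$, exhibiting $v$ as a $(v(\theta_C),\theta_C)$-element and $v(\theta_C)$ as nonnesting. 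Irredundance then follows from the uniqueness of the $(\theta,\theta_C)$-element in Proposition~\ref{prop:nestchain}~(iv). The distributive-lattice assertion reduces to showing that $[e,w_N]_L$ is a distributive lattice, which is the standard consequence of $w_N$ being fully commutative in the simply-laced Coxeter group $W$: the theorem of Stembridge identifies $[e,w]_L$ with the lattice of order ideals of the heap poset of any fully commutative $w$, which is distributive by Birkhoff.

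For part (iii), I proceed in four steps. First, extremality of $\theta_A$ and $\theta_N$ in $X$ (Proposition~\ref{prop:minmax}) together with Corollary~\ref{cor:moveup}~(i) gives $\sigma(\theta_A) \leq \sigma(\gamma_i) \leq \sigma(\theta_N) = \sigma(\theta_C)$, and Proposition~\ref{prop:nestchain}~(ii) applied to $\theta_A$ yields $\rootht(\sigma(\theta_C)) - \rootht(\sigma(\theta_A)) = 2M$; the height hypothesis thus forces $\sigma(\gamma_1) = \sigma(\theta_C)$ and $\sigma(\gamma_2) = \sigma(\theta_A)$. The equality clause of Corollary~\ref{cor:moveup}~(i) then gives $A(\gamma_1) = 0$ (so $\gamma_1 \in X_I$ by Proposition~\ref{prop:neststab}~(ii), of type $C^q N^{M-q}$) and $A(\gamma_2) = M$, identifying $\gamma_2 = \theta_A$ by the uniqueness in Proposition~\ref{prop:minmax}~(i). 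Second, $|w(\gamma_1)| = \gamma_2 = \theta_A$ means $w$ takes $\gamma_1$ to $\theta_A$ in the $W$-set $X$, and the scaled $W$-set axiom (Theorem~\ref{thm:qp}) gives $\lambda(\gamma_1) = \lambda(\gamma_1) - \lambda(\theta_A) \leq \ell(w) \leq M$; combined with $\lambda(\gamma_1) = 2M - q$ this forces $q = M$, hence $\gamma_1 = \theta_C$. Third, the same axiom yields the matching lower bound $\ell(w) \geq \lambda(\theta_C) - \lambda(\theta_A) = M$, so $\ell(w) = M$ and $w = w_N$ by Proposition~\ref{prop:nestchain}~(iv); since $\bfw$ has length $M = \ell(w)$, it is reduced. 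Finally, for the sign, each simple reflection along a $(\theta_A,\theta_C)$-word performs an $AC$ move, and by Theorem~\ref{thm:cna}~(vi) (equivalently the coordinate identity in~\eqref{eq:D4}) such a reflection sends the moved alignment $4$-root to the corresponding crossing $4$-root in $\Sym(V)$ with no sign change, while fixing the remaining components of the running positive $n$-root, since those components are orthogonal to the moving simple root by Proposition~\ref{prop:four}~(i). Iterating over the $M$ letters of $\bfw$ gives $w_N(\theta_C) = +\theta_A$ in $\Sym(V)$, whence $w(\gamma_1) = +\gamma_2$.

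The subtlest step is the sign analysis at the end of part~(iii): it needs the sharp form of Theorem~\ref{thm:cna}~(vi), which pins down $s_\alpha(\gamma_C) = \gamma_A$ in $\Sym(V)$ without a sign, rather than only the up-to-sign statement of Remark~\ref{rmk:moves}, and it relies on the fact that a reduced word for $w_N$ consists entirely of sign-preserving $AC$ moves. The distributive-lattice claim in part~(ii) is routine modulo the external theorem on weak-order intervals below a fully commutative element.
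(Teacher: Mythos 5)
Your parts (i) and (ii) follow the paper's proof almost verbatim: (i) is exactly the specialization of Proposition~\ref{prop:nestchain} to $\theta=\theta_A$, and (ii) uses the same prefix/suffix manipulation of reduced words via Remark~\ref{rmk:rigid} and Proposition~\ref{prop:nestchain}~(v); your irredundance argument (both $v$ and $v'$ are $(\theta,\theta_C)$-elements, hence equal by the uniqueness in Proposition~\ref{prop:nestchain}~(iv)) is a slight shortcut compared with the paper's coset argument, but it rests on the same fact and is valid. For (iii) you take a genuinely different, and correct, route: the paper runs a single pass along the given length-$M$ word, using Corollary~\ref{cor:moveup}~(ii) to force every letter to drop the height of the sum by $2$ and hence to be a $CA$ move, which simultaneously yields the types $C^M$ and $A^M$, the identification $\gamma_1=\theta_C$, $\gamma_2=\theta_A$, the absence of any sign change, and then $w=w_N$; you instead first pin down $\sigma(\gamma_1)=\sigma(\theta_C)$ and $\sigma(\gamma_2)=\sigma(\theta_A)$ globally, using that $\theta_A$ and $\theta_N$ are the $\le_Q$-minimum and maximum of $X$ (this uses the Rains--Vazirani result the paper invokes elsewhere, e.g.\ in the proof of Proposition~\ref{prop:nestchain}~(v), so it is available but worth citing) together with the equality clause of Corollary~\ref{cor:moveup}~(i), then get $\gamma_1=\theta_C$ and $w=w_N$ from level/length bounds, and only at the end treat the sign. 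The trade-off: the paper's bookkeeping is shorter because one sweep does everything, while your decomposition isolates the one delicate point --- that a $CA$ move acts on the product in $\Sym(\fh^*)$ with no sign --- and justifies it correctly: the moving root lies in the nesting $\Psi^+_N$ (Remark~\ref{rmk:moves}), and for such roots the swap $\gamma_C\leftrightarrow\gamma_A$ of Theorem~\ref{thm:cna}~(vi) is an exact equality (as one checks from Equation~\eqref{eq:D4}, where the sign flips of individual components cancel in pairs), while the remaining components are fixed by orthogonality (Proposition~\ref{prop:four}~(i)); you are right that this sharp, sign-free form is exactly what is needed, and that only this case of Theorem~\ref{thm:cna}~(vi) enters.
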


\begin{proof}
Part (i) follows from Proposition \ref{prop:nestchain} in the case where
$\theta_0 = \theta_A$.

Let $w_N \in W$ be as in (i) and let $v\le_L w_N$. We may complete a reduced
word $\mathbf{v}$ for $v$ to a reduced word of the form $\bfw=\mathbf{u}\cdot
\mathbf{v}$ for $w_N$. Remark \ref{rmk:rigid} implies that $\bfw$ gives rise to
a nonnesting sequence $\theta_\bfw$ from $\theta_A$ to $\theta_C$ that passes
$v(w_N)$. It follows that the elements of $L$ are indeed all nonnesting positive
$n$-roots.  Conversely, for every nonnesting positive root $\theta$, it follows
from Proposition \ref{prop:nestchain} (v) and its proof that
$\theta=v(\theta_C)$ for some element $v\le_L w_N$, so the list $L$ is complete.
Finally, if $v\le_L w_N$ and $v'\le_L w_N$ are elements such that
$v(\theta_C)=v'(\theta_C)$, then $v'v\inverse$ stabilizes $\theta_C$ and hence
$\sigma(\theta_C)$, so we have $v'v\in W_I$ and $v'W_I=vW_I$. Since $w_N$ is the
shortest element in $w_NW_I$, the elements $v'$ and $v$ must be the shortest
elements in $vW_I$ and $v'W_I$ as well, which implies that $v=v'$ as well. It
follows that the list $L$ irredundantly describes the positive nonnesting
$n$-roots, proving the first statement of (ii).

By \cite[Theorem 3.2]{FC}, the fact that $w_N$ is fully commutative implies that
the poset $\{x : x \leq_L w\}$ is a distributive lattice. This completes the
proof of (ii).

Suppose that the conditions of (iii) hold, and let $s_{i_1} s_{i_2} \cdots
s_{i_M}$ be a reduced expression for $w$. Since
$\rootht(\sigma(\gamma_1))-\rootht(\sigma(\gamma_2))=2M$, as we start from
$\theta_C$ and successively apply the simple reflection $s_{i_M}, \dots,
s_{i_2}, s_{i_1}$, the application of each simple reflection $s_{i_j}$ must
subtract $2$ from the height of the sum and change a $C$ to an $A$ by Corollary
\ref{cor:moveup} (ii). It is therefore not possible at any stage for a simple
reflection to negate a component of an $n$-root, which implies that we have
$w(\gamma_1)=\gamma_2$. Each simple reflection $s_{i_j}$ also causes no change
in the number of nestings by Proposition \ref{prop:miracle} (i), so the fact
that $\rootht(\sigma(\gamma_1))-\rootht(\sigma(\gamma_2))=2M$ implies that
$\gamma_1$ and $\gamma_2$ have types $C^M$ and $A^M$, respectively, so we have
$\gamma_1 = \theta_C$ and $\gamma_2 = \theta_A$. We then have $w = w_N$ by (i),
which completes the proof of (iii).
\end{proof}

\begin{defn}
    \label{def:wN}
    We call the element $w_N$ from Theorem \ref{thm:nonnest}, i.e., the unique
    element of minimal length that sends $\theta_C$ to $\theta_A$, the
    \emph{nonnesting element} of $W$.
\end{defn}

In Section \ref{sec:examples}, we will compute the nonnesting element
explicitly with the help of Theorem \ref{thm:nonnest} (iii).

\subsection{Sum equivalence}
We say that two positive $n$-roots $\beta$ and $\gamma$ of $W$  are \emph{sum
equivalent}, or {\it $\sigma$-equivalent}, if $\sigma(\beta) = \sigma(\gamma)$.
If $C$ and $C'$ are two $\sigma$-equivalence classes, then we write $C\le_\sigma
C'$ if $\sigma(\beta)\le \sigma(\gamma)$ for any $\beta\in C$ and $\gamma\in
C'$ in the usual order $\le$ on roots (Section \ref{sec:basics2}). The goal of this subsection is to show that the $\sigma$-equivalence
classes of $X$ are highly compatible with the quasiparabolic order $\le_Q$ and
the feature-avoiding $n$-roots.

\begin{prop}\label{prop:sumclass}
Let $\Be$ be the set of nonnesting positive $n$-roots of $W$.
\begin{enumerate}
    \item[{\rm (i)}]{If $\be, \be'\in \Be$ are nonnesting positive $n$-roots
             with $\sigma(\be) = \sigma(\be')$, then we have $\be = \be'$.}
         \item[{\rm (ii)}]{Each positive $n$-root $\gamma$ is
          $\sigma$-equivalent to a unique nonnesting $n$-root $f(\gamma)$ and a
          unique noncrossing $n$-root $g(\gamma)$. We have $f(\gamma)\le_Q
          \gamma$, and
          \begin{equation*} \gamma=f(\gamma)+\sum_{\beta\in \Be:
           \sigma(\beta)<\sigma(\gamma)} \lambda_{\beta,\gamma}\beta
   \end{equation*} for suitable integers $\lambda_{\be,\gamma}$.  }
  \item[{\rm (iii)}] Every $\sigma$-equivalence class contains a unique
       nonnesting $n$-root, $\be_1$, and a unique noncrossing $n$-root, $\be_2$.
       The $\sigma$-equivalence class containing $\be_1$ and $\be_2$ is equal to
       the
       interval $$ [\be_1, \be_2]_{Q} = \{ \gamma \in X : \be_1 \leq_Q \gamma \leq_Q
       \beta_2 \} $$ in the quasiparabolic set $X$.
   \item[{\rm (iv)}] The set of alignment-free positive $n$-roots is a
     $\sigma$-equivalence class and is equal to the interval
     $[\theta_C,\theta_N]_Q$ in the quasiparabolic set $X$. It is the unique
     maximal
     $\sigma$-equivalence class with respect to the partial order $\le_\sigma$.
\end{enumerate}
\end{prop}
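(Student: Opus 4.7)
The plan is to prove (ii) first by combining the reduction rules of Theorem~\ref{thm:indep} with the identities $\gamma_C = \gamma_N + \gamma_A$, $\sigma(\gamma_C) = \sigma(\gamma_N)$, and $\sigma(\gamma_A) < \sigma(\gamma_N)$ of Theorem~\ref{thm:cna}(vi); deduce (i) from the uniqueness half of (ii); derive (iii) formally from Corollary~\ref{cor:moveup}(i); and obtain (iv) from Propositions~\ref{prop:neststab}(ii) and~\ref{prop:nestchain}(ii).

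For existence in (ii), I would iteratively apply the identity $\delta\gamma_N = \delta\gamma_C - \delta\gamma_A$, always reducing the current $\sigma$-maximal positive $n$-root summand in the expression for $\gamma$. One such step replaces the leading positive $n$-root $\delta\gamma_N$ by a single positive $n$-root $\delta\gamma_C$ of the same sum, together with a correction $-\delta\gamma_A$ of strictly smaller sum. Moreover, $\delta\gamma_C$ differs from $\delta\gamma_N$ by an $NC$ reflection move (Remark~\ref{rmk:moves} and Theorem~\ref{thm:cna}(vi)), which by Proposition~\ref{prop:miracle}(iii) strictly decreases $\lambda$, so the new leading term is $\le_Q$-below the previous one. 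Iterating until the leading term contains no nesting produces a nonnesting $f(\gamma)$ with $\sigma(f(\gamma)) = \sigma(\gamma)$ and $f(\gamma) \le_Q \gamma$, together with an identity of the form displayed in (ii) with integer coefficients. The symmetric argument using $\delta\gamma_C = \delta\gamma_N + \delta\gamma_A$ yields the noncrossing $g(\gamma)$ with $\gamma \le_Q g(\gamma)$.

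For uniqueness in (ii), and hence for (i), I would combine the unicity of the basis expansion (Theorem~\ref{thm:indep}(i)) with the symmetry between the nonnesting and noncrossing reductions. The tracking above shows that in the unique nonnesting expansion of $\gamma$ exactly one nonnesting basis element carries $\sigma$-value $\sigma(\gamma)$, namely $f(\gamma)$, and it has coefficient $1$. Given a second nonnesting $n$-root $\beta'$ with $\sigma(\beta') = \sigma(\gamma)$, I would compare $\beta'$ and $f(\gamma)$ by passing to the noncrossing basis: both have the same top noncrossing term (their common $g$-value for the shared $\sigma$-class) by the symmetric tracking argument, so $\beta' - f(\gamma)$ expands in the noncrossing basis with only strictly lower-$\sigma$ terms, while its nonnesting expansion is literally $\beta' - f(\gamma)$, a combination of two top-$\sigma$ terms; the only way to reconcile these two expansions through the unitriangular change of basis is $\beta' = f(\gamma)$. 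The main obstacle is pinning down the symmetric claim that the noncrossing leading term depends only on the $\sigma$-class, and it is here that confluence of the reduction (via Bergman's diamond lemma as in Theorem~\ref{thm:indep}) must be invoked in tandem with the $\sigma$-tracking.

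Statements (iii) and (iv) then follow formally. For (iii), (i) and (ii) supply the unique $\beta_1 = f(\gamma)$ and $\beta_2 = g(\gamma)$ in each $\sigma$-class, and $\beta_1 \le_Q \gamma \le_Q \beta_2$ by (ii); conversely, any $\delta$ with $\beta_1 \le_Q \delta \le_Q \beta_2$ satisfies $\sigma(\beta_1) \le \sigma(\delta) \le \sigma(\beta_2) = \sigma(\beta_1)$ by Corollary~\ref{cor:moveup}(i), forcing $\sigma(\delta) = \sigma(\gamma)$. For (iv), Proposition~\ref{prop:neststab}(ii) identifies the alignment-free positive $n$-roots with the single $\sigma$-class of $\theta_N$; its unique nonnesting member is of type $C^M$ and so equals $\theta_C$ by Proposition~\ref{prop:neststab}(iii), while its unique noncrossing member is of type $N^M$ and so equals $\theta_N$ by Proposition~\ref{prop:minmax}(ii), so (iii) yields the interval description. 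Maximality in $\le_\sigma$ reduces via (iii) to $\sigma(\beta) \le \sigma(\theta_C)$ for every nonnesting $\beta$, and Proposition~\ref{prop:nestchain}(ii) together with Corollary~\ref{cor:moveup}(iii) supplies a nonnesting sequence from $\beta$ to $\theta_C$ along which $\sigma$ increases by positive integer combinations of simple roots, completing the proof.
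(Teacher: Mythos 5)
Your existence argument for (ii) (iterated replacement of a nesting by the corresponding crossing, tracking $\sigma$ and the level to get $f(\gamma)\le_Q\gamma$ and the displayed identity), and your assembly of (iii) and (iv), essentially match the paper. The genuine gap is in your uniqueness argument, which is the crux of the whole proposition: you assert that a second nonnesting $n$-root $\be'$ with $\sigma(\be')=\sigma(\gamma)$ has ``the same top noncrossing term (their common $g$-value for the shared $\sigma$-class)'' as $f(\gamma)$. But $g$ is defined by running reductions on an individual $n$-root, and the claim that $g(\be')=g(f(\gamma))$ for two a priori distinct nonnesting elements with equal sums is precisely the mirror image of the statement you are trying to prove (uniqueness of the feature-avoiding representative of a $\sigma$-class); nothing established before Proposition \ref{prop:sumclass} gives it. Your proposed fix via confluence does not close this: the diamond lemma guarantees that all reduction sequences starting from one fixed element reach the same normal form, but it says nothing that relates the normal-form data of two \emph{different} elements that merely happen to have equal $\sigma$. (The unitriangularity you appeal to at the end is Theorem \ref{thm:nesttri}, which in the paper is a consequence of this proposition, so it cannot be used either; the weak triangularity you can extract from the tracking only yields a contradiction \emph{after} one knows $g(\be')=g(f(\gamma))$.) Note also that a counting argument cannot substitute here: without (i) one only gets that the number of $\sigma$-classes is at most the number of nonnesting elements, which is not enough.

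The paper closes exactly this point by a different mechanism, proving (i) first and directly: if $\be,\be'$ are nonnesting with $\sigma(\be)=\sigma(\be')$, then by Proposition \ref{prop:nestchain} (ii) they have the same alignment number $p$, and by Proposition \ref{prop:nestchain} (i) either $p=0$, in which case $\be=\theta_C=\be'$ by Proposition \ref{prop:neststab} (iii), or there is a simple root $\al_i$ with $B(\sigma(\be),\al_i)=B(\sigma(\be'),\al_i)<0$; by Remark \ref{rmk:link} and Corollary \ref{cor:moveup} the \emph{same} reflection $s_i$ sends both $\be$ and $\be'$ to nonnesting $n$-roots with equal sums and $p-1$ alignments, and induction on $p$ finishes. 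This simultaneous-descent step, resting on the fundamental-domain statement in Proposition \ref{prop:nestchain} (i), is the ingredient your proposal is missing; once (i) is in hand, uniqueness of $f(\gamma)$ follows, and the uniqueness of the noncrossing representative is then obtained in (iii) by counting ($\#$ classes $=\#$ nonnesting $=\dim\macd{\Phi}{nA_1}=\#$ noncrossing, using Theorem \ref{thm:indep}), rather than inside (ii) as you attempt.
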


\begin{proof}
Suppose that $\be$ and $\be'$ are nonnesting positive $n$-roots with
$\sigma(\be') = \sigma(\be)$. It follows from Proposition \ref{prop:nestchain}
(ii) that $\be$ and $\be'$ have the same number of alignments, namely, the
number $p=(\rootht(\sigma(\theta_C))-\rootht(\sigma(\beta)))/2$. If $p=0$, then
we have $\be=\theta_C=\be'$ by Proposition \ref{prop:neststab} (iii). If $p>0$,
then neither $\be$ nor $\be'$ equals $\theta_C$, so there is a simple root
$\al_i$ satisfying $B(\sigma(\be),\al_i)=B(\sigma(\be'),\al_i)<0$ by Proposition
\ref{prop:nestchain} (i). By Remark \ref{rmk:link}, both $s_i(\be)$ and
$s_i(\be')$ are nonnesting positive $n$-roots with $p-1$ alignments, and we have
\[
    \sigma(s_i(\beta))=s_i(\sigma(\beta))=\sigma(\beta)+2\al_i=\sigma(\beta')+2\al_i=s_i(\sigma(\be'))=\sigma(s_i(\be')),
\] 
so (i) follows by induction on $p$.

Let $\gamma$ be a positive $n$-root, and let $\le_N$ be the nesting order
defined in Section \ref{sec:twobases}.
%To prove (ii), it suffices to find a
%nonnesting $n$-root $f(\gamma)\in \Be$ satisfying the required properties,
%because uniqueness follows from (i).
If $\gamma$ contains no nesting, we can simply take $f(\gamma)=\gamma$.
Otherwise, we can factorize $\gamma=\gamma'\gamma_N$ where $\gamma_N$ is a
nesting. By the second paragraph of Section \ref{sec:twobases}, we can write
$\gamma=\gamma'\gamma_C-\gamma'\gamma_A$ where we have (a) $\gamma'\gamma_C\le
_N\gamma$, because $\sigma(\gamma'\gamma_C)=\sigma(\gamma)$ and $
\gamma'\gamma_C<_Q \gamma$, and (b) $\gamma'\gamma_A\le _N\gamma$, because
$\sigma(\gamma'\gamma_A)<\sigma(\gamma)$. Taking $f(\gamma)=f(\gamma'\gamma_C)$
proves the existence of $f(\gamma)$ and the required expression for $\gamma$ by
induction on the order $\le_N$.  The uniqueness of $f(\gamma)$ follows from (i).
We can use a similar induction using the crossing order $\le_C$ to show that
$\gamma$ is $\sigma$-equivalent to a noncrossing $n$-root $g(\gamma)$ such that
$\gamma \leq_Q g(\gamma)$, and this completes the proof of (ii).

It follows from (ii) that every $\sigma$-equivalence class contains a unique
nonnesting $n$-root, and that the number of $\sigma$-equivalence classes equals
the number of nonnesting $n$-roots. The latter number is the dimension of the
Macdonald representation and also the number of noncrossing roots by Theorem
\ref{thm:indep} (i) and (ii).  Since each $\sigma$-equivalence class contains at
least one noncrossing $n$-root by (ii), it follows that each
$\sigma$-equivalence class must contain exactly one nonnesting element, and
exactly one noncrossing element. This proves the first sentence of (iii).

Let $E$ be a $\sigma$-equivalence class with unique nonnesting element $\beta_1$
and unique noncrossing element $\beta_2$. Then we have $[\be_1,\be_2]_Q\se E$ by
Corollary \ref{cor:moveup} (i). Conversely, if $\gamma$ is an $n$-root in $E$,
then (ii) and its proof imply that we may find a nonnesting $n$-root
$f(\gamma)\in E$ and a noncrossing $n$-root $g(\gamma)\in E$ such that
$f(\gamma)\le_Q \gamma\le_Q g(\gamma)$. We must have $f(\gamma)=\beta_1$ and
$g(\gamma_2)=\beta_2$ by the uniqueness of the nonnesting and noncrossing elements
in $E$; therefore we have $\gamma\in [\be_1,\be_2]_Q$. It follows that
$E=[\be_1,\be_2]_Q$.

For every nonnesting root $\beta$ not equal to $\theta_C$, there is a
nontrivial nonnesting sequence from $\beta$ to $\theta_C$ by by Proposition
\ref{prop:nestchain} (ii), so $\sigma(\beta)<\sigma(\theta_C)$ by Definition
\ref{def:nestchain}.  Part (iv) now follows from (iii) and Proposition
\ref{prop:neststab} (ii)--(iii).
\end{proof}

\begin{theorem}\label{thm:nesttri}
Let $W$ be a Weyl group of type $E_7$, $E_8$, or $D_n$ for $n$ even. If $\Be$ is
any set of $\sigma$-equivalence class representatives, then $\Be$ is a basis for
the Macdonald representation $\macd{\Phi}{nA_1}$. 
Furthermore, if we order each such basis $\Be=\{\beta_1,\cdots,\beta_k\}$ in a
way compatible with the order $\le_\sigma$, i.e., in such a way that $i<j$
whenever $\beta_i<_\sigma \beta_j$, then the change of basis matrix
between any two such bases is unitriangular with integer entries. In particular,
this is true for the change of basis matrix between the nonnesting basis and the
noncrossing basis.
\end{theorem}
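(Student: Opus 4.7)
The plan is to reduce everything to a single claim: when any set $\Be$ of $\sigma$-equivalence class representatives is expanded in the nonnesting basis (with matching ordering), the transition matrix is lower unitriangular with integer entries. Once this is established, both the basis statement and the change-of-basis statement follow by standard linear algebra.

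First, I would fix a linear extension of the partial order $\le_\sigma$ on the set of $\sigma$-equivalence classes, say $C_1 <_\sigma C_2 <_\sigma \cdots <_\sigma C_k$, where $k$ is the number of classes. Proposition \ref{prop:sumclass}(iii) gives $k = \dim \macd{\Phi}{nA_1}$, since both the nonnesting and noncrossing bases are indexed by the classes. Any set $\Be = \{\beta_1,\ldots,\beta_k\}$ of class representatives with $\beta_i \in C_i$ is then ordered compatibly with $\le_\sigma$, and the nonnesting basis $\Be_N = \{f_1,\ldots,f_k\}$, with $f_i$ the unique nonnesting $n$-root in $C_i$, inherits the same ordering.

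Next I would apply Proposition \ref{prop:sumclass}(ii) to each $\beta_i$ to obtain
\[
\beta_i = f_i + \sum_{\beta \in \Be_N,\ \sigma(\beta) < \sigma(\beta_i)} \lambda_{\beta,\beta_i} \beta
\]
with integer coefficients $\lambda_{\beta,\beta_i}$. Since $\sigma(f_j) < \sigma(f_i)$ forces $j < i$ in our linear extension, the transition matrix $M_{\Be}$ expressing the vectors $\beta_i$ in the basis $\Be_N$ is lower unitriangular over $\Z$. In particular $M_{\Be}$ is invertible, so $\Be$ is a basis of $\macd{\Phi}{nA_1}$, proving the first assertion of the theorem.

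For the change of basis between two representative bases $\Be$ and $\Be'$ (ordered compatibly with $\le_\sigma$ using the common linear extension), the transition matrix $\Be \to \Be'$ factors as $M_{\Be'}^{-1} M_{\Be}$. Lower unitriangular matrices over $\Z$ are closed under both inversion and multiplication, so this matrix is again lower unitriangular with integer entries. The noncrossing basis is a distinguished set of class representatives by Proposition \ref{prop:sumclass}(iii), so taking $\Be$ to be the nonnesting basis and $\Be'$ the noncrossing basis yields the final assertion. The only delicate point is verifying the ordering compatibility: one must check that the strict inequality $\sigma(f_j) < \sigma(f_i)$ in Proposition \ref{prop:sumclass}(ii) is compatible with the linear extension chosen, which is automatic once we insist on a common linear extension of $\le_\sigma$ for both bases. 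I expect no substantive obstacle beyond bookkeeping, since the heavy lifting is already carried out in Proposition \ref{prop:sumclass}.
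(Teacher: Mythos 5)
Your proposal is correct and takes essentially the same route as the paper: both expand an arbitrary set of $\sigma$-equivalence class representatives in the nonnesting basis via Proposition \ref{prop:sumclass}(ii), observe that the resulting integer matrix is unitriangular (using Theorem \ref{thm:indep}(i) for the basis property), and obtain the general change-of-basis statement by composing one such matrix with the inverse of another, with the final assertion following because the nonnesting and noncrossing elements are class representatives by Proposition \ref{prop:sumclass}(iii). Your explicit insistence on a common linear extension of $\le_\sigma$ is a minor bookkeeping point the paper leaves implicit, not a substantive difference.
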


\begin{proof}
By Proposition \ref{prop:sumclass} (ii), each element $\gamma \in \Be$ is the
sum of the nonnesting element $\gamma'$ that is $\sigma$-equivalent to $\gamma$
and a $\Z$-linear combination of nonnesting elements with strictly lower sums.
The nonnesting elements form a basis for $\macd{\Phi}{nA_1}$ by Theorem
\ref{thm:indep} (i), from which it follows that the set $\Be$ is also a basis,
and that the change of basis from $\Be$ to the nonnesting basis is unitriangular
with integer entries. If $\Be_1$ and $\Be_2$ are two such bases, then the change
of basis matrix from $\Be_1$ to $\Be_2$ is unitriangular with integer entries
because it is the product of the matrix changing $\Be_1$ to $\Be'$ with the
inverse of the matrix changing $\Be_2$ to $\Be'$, both of which are
unitriangular with integer entries.  Finally, the last assertion follows because
Proposition \ref{prop:sumclass} (iii) implies that both the nonnesting and
noncrossing bases are sets of $\sigma$-equivalence class representatives.
\end{proof}

\begin{rmk}\label{rmk:shell6}
Recall that the {\it M\"obius function}, $\mu$, of a partially ordered set $P$
is defined to satisfy $\mu(x, x)=1$, $\mu(x, y)=0$ if $x \not\leq y$, and $$
\sum_{z : x \leq z \leq y} \mu(x, z) = 0 $$ if $x < y$. A poset is {\it
Eulerian} if we have $\mu(x, y) = (-1)^{\lambda(y) - \lambda(x)}$ whenever $x
\leq y$.  It can be shown that if $x, y \in X$ correspond to a nonnesting and
noncrossing element, respectively, then the interval $I = [x, y]$ corresponds to
a $\sigma$-equivalence class if and only if $I$ is Eulerian.
\end{rmk}

\begin{rmk}\label{rmk:posetcong}
Reading \cite{reading02} defines a {\it poset congruence} to be an equivalence
relation on a poset $X$ such that
\begin{enumerate}
    \item[(i)]{each equivalence class is an interval;}
    \item[(ii)]{the projection mapping $x \in X$ to the maximal element in its
           equivalence class is order preserving; and}
       \item[(iii)]{the projection mapping $x \in X$ to the minimal element in
        its equivalence class is order preserving.}
\end{enumerate} It can be shown using \cite[Proposition 42]{watson14} that, in
    type $D_n$, the equivalence relation induced on $X$ by $\sigma$ is a poset
    congruence.  It can also be shown (by direct computational verification, for
    example) that the same is true in types $E_7$ and $E_8$.
\end{rmk}

\section{Examples}\label{sec:examples}
In this section, we give type-specific details about the $n$-roots in types
$D_n$ for $n$ even, $E_7$, and $E_8$.  In all types, we explicitly describe the
maximally aligned, crossing, and nesting $n$-roots $\theta_A, \theta_C$ and
$\theta_N$. We find the nonnesting element $w_N$ (Definition \ref{def:wN}), and
we use $w_N$ and Theorem
\ref{thm:nonnest} (ii) to deduce the dimension of the Macdonald representation
$\macd{\Phi}{nA_1}$. We also discuss type-specific properties of the set $X_I$
of alignment-free positive $n$-roots for all types. In addition, we explain
precise connections between the Macdonald representation $\macd{\Phi}{nA_1}$ of
type $D_{2k}$ and a Specht module of the symmetric group $S_{2k}$ (Proposition
\ref{prop:specht}),

We note that by Lemma \ref{lem:dmax} and Remark \ref{rmk:cna}, the noncrossing
and nonnesting positive $n$-roots of type $D_{2k}$ can be easily recovered from
the well-studied noncrossing and nonnesting perfect matchings of $[2k]$. More
generally, in all types, the nonnesting positive $n$-roots can be computed
efficiently via the elements the elements $\theta_C$ and $w_N$ by Theorem
\ref{thm:nonnest} (ii), and it is possible to construct the noncrossing
$n$-roots using Fan's construction of monomial cells in \cite{fan97}.  In the
notation of \cite{fan97}, the maximally aligned $n$-root $\theta_A$ can be
identified with the element $b_1 b_3 \cdots b_{2k-1}$ in type $D_{2k}$, with
$b_2 b_4 b_6 b_7$ in type $E_7$ (with the labelling of Figure \ref{fig:ade}
(d)), and with $b_2 b_3 b_5 b_7$ in type $E_8$ (with the labelling of Figure
\ref{fig:ade} (e)). In types $E_7$ and $E_8$, it is also possible to use a
computer program to find all noncrossing and nonnesting $n$-roots by generating
all the (finitely many) positive $n$-roots and then removing all $n$-roots where
a crossing or nesting can be found.  For these reasons, and to save space, we
have chosen not to list the noncrossing and nonnesting bases in type $E_7$ or
$E_8$ in this paper (although the complete lists are available upon request).

\subsection{Type \texorpdfstring{$D_{2k}$}{D}}
\label{sec:D}
If $W$ has type $D_n$ for an even integer $n=2k$, then the positive $n$-roots
can be naturally identified with the perfect matchings of $[n]$, as explained in
Lemma \ref{lem:dmax} (ii). Under this identification, the actions of $W$ on the
$n$-roots and on the matching agree, and the alignments, crossings, and nestings
in the $n$-roots correspond to the alignments, crossings, and nestings in the
matchings in the obvious way by Remark \ref{rmk:cna}.  We also recall from
Section \ref{sec:constructions} and Remark \ref{rmk:Daction} that the reflection
$r=s_\al\in W$ acts as the transposition $(ij)$ on the $n$-roots for each root
$\al=\ep_i\pm\ep_j$ of $W$, so that the action of $W$ factors through the
homomorphism $\phi: W\ra S_{2k}$ of Equation \eqref{eq:phimap} to induce an
action of $S_{2k}=W(A_{2k-1})$ on the $n$-roots, giving the Macdonald
representation $\macd{\Phi}{nA_1}$ the structure of an $S_{2k}$-module (where
the elements of $S_{2k}$ permute the indices of the terms $\ep_i^2$).  The above
facts will allow us to connect the theory of $n$-roots in type $D_n$ to some
widely studied type-$A$ objects and results.

Recall that the number of coplanar quadruples in each positive $n$-root is
$M=\binom{k}{2}$, the number of pairs of 2-blocks, by Corollary
\ref{cor:mcount}.

Let $\nu_A, \nu_C, $ and $\nu_N$ be the positive $n$-roots corresponding to the
matchings $\{12,34,\cdots, (n-1)n\}, \{1(k+1), 2(k+2), \cdots, k(2k)\}$ and
$\{1n, 2(n-1), \cdots, k(k+1)\}$, respectively. Every pair of 2-blocks in the
first matching forms an alignment, so the matching contains $\binom{k}{2}=M$
alignments.  It then follows from  Proposition \ref{prop:minmax} (i) that
$\nu_A$ is the unique maximally aligned $n$-root $\theta_A$ in the set $X$.
Similar arguments show that $\nu_C=\theta_C$ and $\nu_N=\theta_N$ by Proposition
\ref{prop:neststab} (iii) and Proposition \ref{prop:minmax} (ii), respectively.
Note that we have $\sigma(\theta_N)=2\sum_{i = 1}^k \ep_i$.

Let $w$ be the element expressed by the word $$ \bfw= \bfw_{2, k-2} \bfw_{3,
k-3} \cdots \bfw_{k, 0} ,$$ where $\bfw_{i, j} := s_i s_{i+2} s_{i+4} \cdots
s_{i+2j}$. For example, in type $D_8$, we have $w = (s_2 s_4 s_6)(s_3
s_5)(s_4)$, and the heap of $w$ is shown in Figure \ref{fig:heaps} (a).  The
word $\bfw$ has $M$ letters, and it is straightforward to verify that
$w(\theta_C)=\theta_A$, so it follows from Theorem \ref{thm:nonnest} (i) that
$w$ is the fully commutative nonnesting element $w_N$ and $\bfw$ is a reduced
word for it.

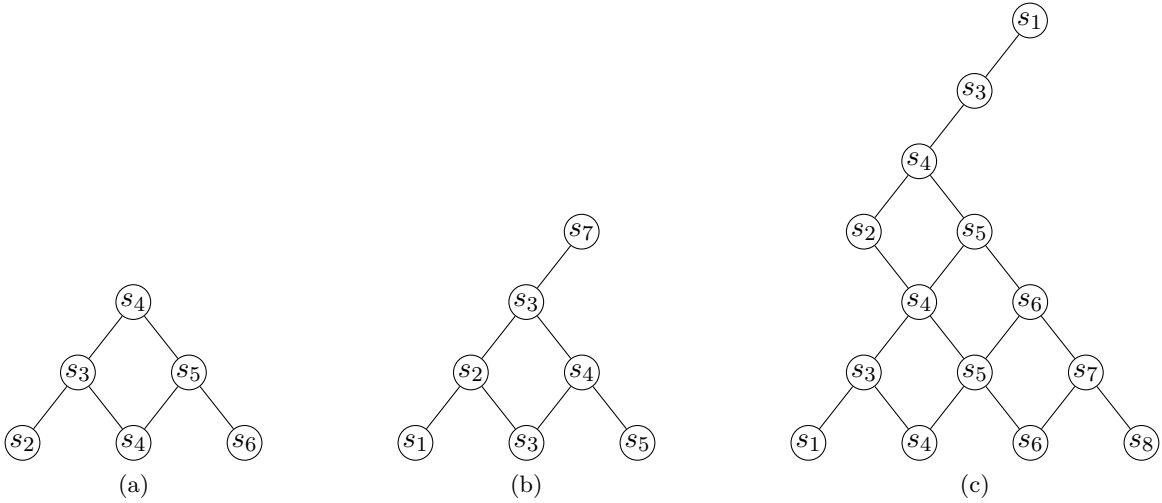
\begin{figure}[h!]
    \centering
    \subfloat[]{
\begin{tikzpicture}
\node[main node] (0) {$s_2$};
\node[main node] (1) [right=1cm of 0] {$s_4$};
\node[main node] (2) [right=1cm of 1] {$s_6$};
\node[main node] (3) [above right=0.6cm and 0.4cm of 0] {$s_3$};
\node[main node] (4) [above right=0.6cm and 0.4cm of 1] {$s_5$};
\node[main node] (5) [above right=0.6cm and 0.4cm of 3] {$s_4$};

\path[draw]
(0)--(3)--(5)--(4)--(2)
(4)--(1)--(3);
\end{tikzpicture}
    }\quad\quad\quad\quad
    \subfloat[]{
\begin{tikzpicture}
\node[main node] (0) {$s_1$};
\node[main node] (1) [right=1cm of 0] {$s_3$};
\node[main node] (2) [right=1cm of 1] {$s_5$};
\node[main node] (3) [above right=0.6cm and 0.4cm of 0] {$s_2$};
\node[main node] (4) [above right=0.6cm and 0.4cm of 1] {$s_4$};
\node[main node] (5) [above right=0.6cm and 0.4cm of 3] {$s_3$};
\node[main node] (6) [above right=0.6cm and 0.4cm of 5] {$s_7$};

\path[draw]
(0)--(3)--(5)--(4)--(2)
(4)--(1)--(3)
(5)--(6);

\end{tikzpicture}
    }\quad\quad\quad\quad
    \subfloat[]{
\begin{tikzpicture}
\node[main node] (0) {$s_1$};
\node[main node] (1) [right=1cm of 0] {$s_4$};
\node[main node] (2) [right=1cm of 1] {$s_6$};
\node[main node] (3) [right=1cm of 2] {$s_8$};
\node[main node] (4) [above right=0.6cm and 0.4cm of 0] {$s_3$};
\node[main node] (5) [above right=0.6cm and 0.4cm of 1] {$s_5$};
\node[main node] (6) [above right=0.6cm and 0.4cm of 2] {$s_7$};
\node[main node] (7) [above right=0.6cm and 0.4cm of 4] {$s_4$};
\node[main node] (8) [above right=0.6cm and 0.4cm of 5] {$s_6$};
\node[main node] (9) [above right=0.6cm and 0.4cm of 7] {$s_5$};
\node[main node] (10) [above left=0.6cm and 0.4cm of 7] {$s_2$};
\node[main node] (11) [above right=0.6cm and 0.4cm of 10] {$s_4$};
\node[main node] (12) [above right=0.6cm and 0.4cm of 11] {$s_3$};
\node[main node] (13) [above right=0.6cm and 0.4cm of 12] {$s_1$};

\path[draw]
(0)--(4)--(1)--(5)--(2)--(6)--(3)
(4)--(7)--(5)--(8)--(6)
(8)--(9)--(7)--(10)--(11)--(12)--(13)
(9)--(11);
\end{tikzpicture}
    }
    \caption{The heaps of the nonnesting elements of types $D_8$, $E_7$, and $E_8$}
    \label{fig:heaps}
\end{figure}

Since $w_N=w$ is fully commutative, the elements in the set $\{v\in W:v\le_L
w_N\}$ are in bijection with the order filters of the heap poset of $w_N$. (See
    \cite[Section 2.2]{FC} for the definition of the heap poset; an order filter
    of a poset $P$ is a subset of $P$ such that $y \in I$ whenever the
    conditions $y \in P$, $x \in I$, and $x \leq y$ hold.) These filters are in
    canonical correspondence with Dyck paths of order $k$, i.e., staircase walks
    from $(0, 0)$ to $(k, k)$ that lie strictly below (but may touch) the
    diagonal $y=x$. It is well known \cite[Theorem 1.5.1 (vi)]{stanley15} that
    the number of such paths is the $k$-th Catalan number,
    $C_k=\frac{1}{k+1}\binom{2k}{k}$. Theorem \ref{thm:nonnest} (ii) and Theorem
    \ref{thm:indep} imply that the number of nonnesting positive $n$-roots of
    $W$ is given by $C_k$, as are the number of noncrossing positive $n$-roots
    and the dimension of the Macdonald representation $\macd{\Phi}{nA_1}$.

The level function $\lambda$ in type $D_{2k}$ has a combinatorial interpretation
that is natural in the context of combinatorial game theory \cite{irie21}.  The
matching corresponding to an $n$-root $\be$ can be identified with a Steiner
system $S(1,2,2k)$, i.e., a collection of 2-blocks of $[2k]$ with the property
that any singleton lies in a unique 2-block. The level $\lambda(\be)$ then
counts the number of 2-element subsets $E$ of $[2k]$ with the property that the
matching corresponding to $\beta$ contains no 2-blocks of the form
$(E\setminus\{j\})\cup \{i\}$ where $i\le j$ and $j\in E$ (in particular, the
matching cannot contain $E$). With some more work, it can be shown that each
crossing gives rise to one such subset $E$, and each nesting gives rise to two
such subsets. This gives a combinatorial interpretation of the formula
$\lambda(x) = C(x)+2N(x)$, and also explains the appearance of the product of
odd quantum integers in \cite[Equation (4.2)]{irie21}.  In addition, the
quantity $C(m)+2N(m)$ associated to each matching $m$ appears as ``$C(m)+2U(m)$"
in the context of octabasis Laguerre polynomials in \cite[Sections 4 and
5]{simion96}, as the weight ``$\omega(m)$'' in the context of Gaussian
$q$-distributions in \cite[Theorem 4]{diaz09}, and as
``$\emph{cov}(m)-\emph{cro}(m)$'' in the
context of $q$-Bessel numbers in \cite[Section 4]{cheon15}.

The poset structure on the set $X_I$ in type $D_{2k}$ coincides with a familiar
one.

\begin{prop}\label{prop:abruhat}
Suppose $W$ has type $D_{2k}$. Then as a poset, the interval $X_I = [\theta_C,
\theta_N]$ in the quasiparabolic set $X$ is canonically isomorphic to the
symmetric group $S_k$ under the (strong) Bruhat order via the map $\varphi: S_k
\ra X_I$ sending each element $\tau\in S_n$ to the $n$-root
\begin{equation}\label{eq:bruhat}\varphi(\tau)=\prod_{i = 1}^k (\ep_i^2 -
      \ep_{\tau(i) + k}^2).
  \end{equation} Under this bijection, we have \[
    \lambda(\varphi(\tau))=M+\ell(\tau) \] for every $\tau\in S_k$, where $M$ is
    the number of coplanar quadruples in each $n$-root and $\ell$ denotes Coxeter
    length.
\end{prop}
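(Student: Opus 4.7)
The plan is to establish that $\varphi$ is a well-defined bijection onto $X_I$ with the correct level formula, and then match the quasiparabolic order on $X_I$ with the Bruhat order on $S_k$ by identifying the $W_I$-action on $X_I$ with left/right multiplication by transpositions on $S_k$.

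First I would characterize $X_I$ combinatorially. By Proposition \ref{prop:neststab} (ii), a positive $n$-root $\beta$ lies in $X_I$ iff $\sigma(\beta) = \sigma(\theta_N) = 2\sum_{i=1}^k \ep_i$. Under the bijection of Lemma \ref{lem:dmax} (ii) between positive $n$-roots and perfect matchings of $[2k]$, each 2-block $\{a,b\}$ with $a<b$ contributes $(\ep_a + \ep_b) + (\ep_a - \ep_b) = 2\ep_a$ to $\sigma(\beta)$. Hence $\beta \in X_I$ iff its matching pairs each element of $\{1,\ldots,k\}$ with an element of $\{k+1,\ldots,2k\}$, and such matchings correspond bijectively with $S_k$ via $\tau \mapsto \{\{i,\tau(i)+k\}:1\le i \le k\}$. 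This is exactly the map $\varphi$, so $\varphi$ is a bijection.

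Next I would compute $\lambda(\varphi(\tau))$. For $i_1 < i_2 \le k$, the 2-blocks $\{i_1,\tau(i_1)+k\}$ and $\{i_2,\tau(i_2)+k\}$ form a crossing if $\tau(i_1)<\tau(i_2)$ and a nesting if $\tau(i_1)>\tau(i_2)$, since $\tau(i_j)+k>k\ge i_2$. Thus $N(\varphi(\tau))$ equals the number of inversions of $\tau$, which is $\ell(\tau)$. Since $\varphi(\tau) \in X_I$ has no alignments and $C+N=M$, we obtain $\lambda(\varphi(\tau)) = C + 2N = M + \ell(\tau)$.

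For the order isomorphism, I would use Proposition \ref{prop:ancomp} to identify $\alpha_k = \ep_k - \ep_{k+1}$ as the unique simple component of $\theta_N$, so that $W_I$ is generated by $S \setminus \{\alpha_k\}$ and has root system of type $A_{k-1}+D_k$, consisting of $\ep_i - \ep_j$ for $1 \le i \neq j \le k$ and $\pm\ep_i \pm \ep_j$ for $k < i \neq j \le 2k$. By Remark \ref{rmk:Daction}, each reflection $r \in W_I$ acts on $X_I$ through its image in $S_{2k}$ under $\phi$, which is a transposition in either $S_{\{1,\ldots,k\}}$ or $S_{\{k+1,\ldots,2k\}}$. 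A direct calculation with the formula for $\varphi(\tau)$ shows that a transposition $(i,j) \in S_{\{1,\ldots,k\}}$ sends $\varphi(\tau)$ to $\varphi(\tau \circ (i,j))$, while $(i,j) \in S_{\{k+1,\ldots,2k\}}$ sends $\varphi(\tau)$ to $\varphi((i-k,j-k) \circ \tau)$. Thus under $\varphi$ the reflection action of $W_I$ on $X_I$ translates to multiplication by transpositions on $S_k$ on either side. Combined with the level formula, the defining generating conditions of $\le_Q$ on $X_I$ (weakest order with $x \le_Q rx$ when $\lambda(rx) \ge \lambda(x)$) and of the Bruhat order on $S_k$ (weakest order with $\tau \le t\tau$ when $\ell(t\tau) \ge \ell(\tau)$) coincide under $\varphi$, since the sets of pairs $\{(\tau,\tau'):\tau'=t\tau,\ t\text{ a transposition}\}$ and $\{(\tau,\tau'):\tau'=\tau t,\ t\text{ a transposition}\}$ are equal (transpositions form a single conjugacy class).

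The main obstacle is the last step, where one must carefully trace how the $W_I$-action on matchings corresponds to transposition multiplication on $S_k$, and verify that left and right multiplication yield the same generating relations. Once this identification is in hand, the coincidence of the two partial orders is immediate from their uniform characterizations.
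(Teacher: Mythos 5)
Your first two steps are correct and essentially identical to the paper's argument: the characterization of $X_I$ by the sum condition $\sigma(\beta)=2\sum_{i=1}^k\ep_i$ (hence matchings pairing $\{1,\dots,k\}$ with $\{k+1,\dots,2k\}$, giving the bijection $\varphi$), and the identification of nestings in $\varphi(\tau)$ with inversions of $\tau$, which yields $\lambda(\varphi(\tau))=C+2N=M+\ell(\tau)$.

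The gap is in the order-isomorphism step, exactly where you flag the ``main obstacle''. The poset in the statement is the interval $[\theta_C,\theta_N]$ inside the quasiparabolic set $X$ for the \emph{full} group $W$, so its covering relations have the form $x\lessdot_Q rx$ where $r$ ranges over all reflections of $W$, not only those of $W_I$. Your argument identifies the action of reflections of $W_I$ with left/right multiplication by transpositions of $S_k$ and then matches generating relations; as written this either silently replaces the interval order by the quasiparabolic order of $X_I$ as a $W_I$-set, or leaves open the possibility that some reflection $s_\al$ with $\al$ not a root of $W_I$ produces a level-increasing relation between two elements of $X_I$ that you have not accounted for. This does require a check (note for instance that $s_{\ep_i+\ep_j}$ with $i<j\le k$ is \emph{not} in $W_I$ yet acts nontrivially on $X_I$): one must verify that every reflection of $W$ carrying an element of $X_I$ to a different element of $X_I$ acts as a transposition within $\{1,\dots,k\}$ or within $\{k+1,\dots,2k\}$ and hence acts exactly as some reflection of $W_I$ does, while a reflection $s_{\ep_i\pm\ep_j}$ with $i\le k<j$ either fixes the given $n$-root (when $\{i,j\}$ is one of its blocks) or changes its sum and therefore leaves $X_I$. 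With that verification your route closes, since covers in the interval stay inside $X_I$ and left/right transposition relations generate the same Bruhat order. The paper sidesteps this issue by proving the converse direction directly for arbitrary reflections of $W$: if $\lambda(r\varphi(\tau))>\lambda(\varphi(\tau))$ with both $n$-roots in $X_I$, then, because $\varphi(\tau)$ is alignment-free, $r$ must move a crossing $\{\ep_i\pm\ep_{\tau(i)+k},\ \ep_j\pm\ep_{\tau(j)+k}\}$ to the corresponding nesting (Proposition \ref{prop:miracle} (iii)); the only possible such $r$ act as $(i,j)$ or $(\tau(i)+k,\tau(j)+k)$, and in either case $r\varphi(\tau)=\varphi(r'\tau)$ with $r'=(\tau(i),\tau(j))$, which is a Bruhat relation in $S_k$.
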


\begin{proof}
By Proposition \ref{prop:neststab} (ii), the set $X_I$ is the
$\sigma$-equivalence class of the $n$-root $\theta_N$. We noted earlier that
$\sigma(\theta_N)=2\sum_{i = 1}^k \ep_i$, which implies that $n$-roots in $X_I$
are precisely the positive $n$-roots whose components are all of the form $\ep_i
\pm \ep_j$, where $1 \leq i \leq k < j \leq 2k$. These are  precisely the
$n$-roots listed in the theorem, so the map $\varphi$ is surjective.  It is
clear that $\varphi$ is also injective, so $\varphi$ is a bijection.

The Bruhat order on $S_k$ is generated by relations of the form $\tau< r\tau$
where we have $\tau\in S_k$ and $r$ is a reflection $r=(\tau(i),\tau(j))\in S_k$
for some $i<j$ such that $\tau(i)<\tau(j)$ \cite[Section 5.9, Example
2]{humphreys90}. In this case, the quadruple $\{\ep_i\pm\ep_{\tau(i)+k},
\ep_j\pm\ep_{\tau(j)+k}\}$ contained in $\varphi(\tau)$ is a crossing and is
moved to the nesting $\{\ep_i\pm\ep_{\tau(j)+k}, \ep_j\pm\ep_{\tau(i)+k}\}$, so
we have $\lambda(\varphi(\tau))<\lambda(r\varphi(\tau))$ in $X_I$ by Proposition
\ref{prop:miracle} (iii).  Conversely, if we have
$\lambda(\varphi(\tau))<\lambda(r\varphi(\tau))$ in $X_I$ for some $\tau\in S_k$
and some reflection $r\in W$, then since $\varphi(\tau)$ has no alignments, $r$
must move a crossing in $\varphi(\tau)$ to a nesting by Proposition
\ref{prop:miracle} (iii). The crossing moved must be of the form
$\{\ep_i\pm\ep_{\tau(i)+k}, \ep_j\pm\ep_{\tau(j)+k}\}$ for some $i,j\in [k]$
such that $i<j$ and $\tau(i)<\tau(j)$, and the only possibilities for $r$ are
$(ij)$ and $(\tau(i)+k,\tau(j)+k)$. In either case, we have
$r\varphi(\tau)=\varphi(r'\tau)$ for the reflection $r'=(\tau(i),\tau(j))\in
S_k$, so that we have
\[\varphi\inverse(\varphi(\tau))=\tau<r'\tau=\varphi\inverse(r\varphi(\tau))\]
where $<$ denotes the Bruhat order in $S_k$.  It now follows that $\varphi$ is a
poset isomorphism.

To prove the last assertion, we note that each inversion of a permutation
$\tau\in S_k$ corresponds to a nesting in the corresponding alignment-free
$n$-root $\gamma=\varphi(\tau)$, and we recall that $\ell(\tau)$ equals the number
of inversions in $\tau$. It follows that $N(\gamma)=\ell(\tau)$; therefore we have
\[ \lambda(\gamma)=C(\gamma)+2
N(\gamma)=(C(\gamma)+N(\gamma))+N(\gamma)=M+N(\gamma)=M+\ell(\tau).\qedhere \]
\end{proof}

We now discuss the structure of the space  $\macd{D_n}{nA_1}$ underlying the
Macdonald representation as an $S_{2k}$-module. As a vector space,
$\macd{D_n}{nA_1}$ is isomorphic to the free vector space on the noncrossing
perfect matchings of $[n]=[2k]$, which is denoted by $V(n,k,0)$ in the work of
Rhoades \cite{rhoades17}. Furthermore, given a simple reflection $s_i=(i,i+1)\in
S_{n}$ and a noncrossing perfect matching $m$ corresponding to an $n$-root
$\gamma$, the reflection $s_i$ acts on $m$ in one of the following ways:
\begin{enumerate}
    \item[\rm{(1)}] if $i(i+1)$ is a 2-block in $m$, then the $n$-root $\gamma$ contains
         $\ep_i^2-\ep_{i+1}^2$ as a factor, so $s_i(m)=-m$;
     \item[\rm{(2)}] if $i(i+1)$ if not a 2-block in $m$, then $m$ contains two blocks
         $ia$ and $(i+1)b$ which either form an alignment (if $a<i<i+1<b$) or a
         nesting (if $b<a<i$ or $i+1<b<a$). In all cases, we have $s_i(m)=m''$
         where $m''$ is the matching
         $(m\setminus\{ia,(i+1)b\})\cup\{(i+1)a,ib\}$. Here, the blocks $(i+1)a$
         and $ib$ form a crossing, and  the Ptolemy relation
         $\gamma_C=\gamma_N+\gamma_A$ from Theorem \ref{thm:cna} (vi) implies
         that
         \begin{equation}
             \label{eq:resolve} s_i(m)=m''=m+m'
         \end{equation} where $m'$ is the perfect matching $m'=(m\setminus
     \{ia,(i+1)b\}) \cup \{i(i+1),ab\}$. Here, the blocks $i(i+1)$ and $ab$ form
     the nesting in the Ptolemy relation if $\{ia, (i+1)b\}$ is an alignment and
     form the alignment in the Ptolemy relation if  $\{ia, (i+1)b\}$ is a
     nesting. The matching $m'$ is noncrossing by Theorem \ref{thm:positive}
     (iv).
 \end{enumerate}
 It follows from the above analysis that the action of $S_{2k}$ on
 $\macd{D_n}{nA_1}$ agrees with the action of $S_{2k}$ on the space $V(n,k,0)$
 defined by Rhoades. The precise formula in Equation \eqref{eq:resolve} appears
 in the work of Kim \cite[Equation (1.3)]{kim24}.  By \cite[Proposition
 5.2]{rhoades17}, as an $S_{2k}$ module $V(n,k,0)$ is isomorphic to the Specht
 module $S^{(k,k)}$ corresponding to the 2-row partition $(k,k)$, so we may
 summarize our discussion as follows:

\begin{prop}
\label{prop:specht}
If $W$ has type $D_{n}$ for $n=2k$ even, then the $W$-action on the Macdonald
representation $\macd{D_n}{nA_1}$ factors through the map $\phi$ defined by
Equation \eqref{eq:phimap} to induce an $S_n$-module structure on
$\macd{D_n}{nA_1}$.  The resulting $S_n$-module is isomorphic to the Specht
module $S^{(k,k)}\cong V(n,k,0)$. In particular, it is irreducible.
\end{prop}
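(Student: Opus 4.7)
The proof plan is essentially to execute the analysis already sketched in the paragraph preceding the proposition, making each step precise. First I would invoke Remark \ref{rmk:Daction} to conclude that every reflection in $W(D_n)$ acts on any $n$-root in exactly the same way as its image under the sign-forgetting map $\phi$ of Equation \eqref{eq:phimap}; since the reflections generate $W$, and $S_n$ acts on $\macd{D_n}{nA_1}$ via $\phi$ in a well-defined way, this shows that the $W$-action factors through $\phi$ to give a genuine $S_n$-module structure on $\macd{D_n}{nA_1}$.

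Next I would identify $\macd{D_n}{nA_1}$ with Rhoades's space $V(n,k,0)$ as an $S_n$-module. On the level of vector spaces, Theorem \ref{thm:indep}(ii) says the noncrossing positive $n$-roots form a basis, and by Lemma \ref{lem:dmax}(ii) together with Remark \ref{rmk:cna} this basis is in canonical bijection with the noncrossing perfect matchings of $[2k]$, which is exactly the indexing set for $V(n,k,0)$. I would then verify that the action of a simple reflection $s_i = (i,i+1) \in S_n$ on a noncrossing basis element agrees with the defining action on $V(n,k,0)$ (as given by Kim's formula \cite[Equation (1.3)]{kim24}). There are two cases: (1) if $\{i,i+1\}$ is a block of the matching $m$, then $\ep_i^2 - \ep_{i+1}^2$ divides the $n$-root and so $s_i(m) = -m$; (2) otherwise, the two blocks $\{i,a\}$ and $\{i+1,b\}$ containing $i$ and $i+1$ form a coplanar quadruple (either an alignment or a nesting, by Proposition \ref{prop:htseq} applied to this size-two situation), and the reflection swaps them with the crossing $\{\{i,b\}, \{i+1,a\}\}$; the Ptolemy relation $\gamma_C = \gamma_N + \gamma_A$ from Theorem \ref{thm:cna}(vi) then yields $s_i(m) = m + m'$ for the unique noncrossing matching $m'$ obtained by replacing the two blocks with $\{i,i+1\}$ and $\{a,b\}$, and Theorem \ref{thm:positive}(iv) confirms $m'$ is noncrossing.

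Finally, having matched both the vector space and the action of the Coxeter generators, the $S_n$-modules $\macd{D_n}{nA_1}$ and $V(n,k,0)$ coincide, and I would cite \cite[Proposition 5.2]{rhoades17} to conclude $V(n,k,0) \cong S^{(k,k)}$, and hence irreducibility. The main obstacle, to the extent there is one, is simply the bookkeeping in case (2) of the simple-reflection verification: one must check that the resolution in Equation \eqref{eq:resolve} is precisely the one used by Rhoades and Kim, including signs, and that the roles of alignment versus nesting match up correctly with which side of the Ptolemy relation contributes the extra noncrossing term. All of this is local to a single coplanar quadruple, so the computation reduces to the rank-four case and can be verified in coordinates using Equation \eqref{eq:D4}.
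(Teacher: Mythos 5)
Your proposal is correct and follows essentially the same route as the paper: the paper's "proof" is precisely the discussion preceding the proposition, which uses Remark \ref{rmk:Daction} for factoring through $\phi$, identifies the noncrossing basis with the matching basis of $V(n,k,0)$, checks the two cases for a simple reflection via the Ptolemy relation and Theorem \ref{thm:positive}(iv), and cites \cite[Proposition 5.2]{rhoades17}. The only nitpick is that in case (2) the reason the two blocks form an alignment or nesting is simply that $m$ is noncrossing, not Proposition \ref{prop:htseq}, but this does not affect the argument.
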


\begin{rmk}\label{rmk:webgraph}
The nonnesting and noncrossing bases for the $S_n$-module $\macd{D_n}{nA_1}\cong
S^{(k,k)}$ are also studied extensively in the works of Russell--Tymoczko
\cite{russell19}, Im--Zhu \cite{im22}, Hwang--Jang--Oh \cite{hwang23}, and
Heard--Kujawa \cite{kujawa24}. In these papers, the noncrossing basis is called
the \emph{web basis}, and the nonnesting basis can be naturally identified with
the \emph{standard basis} (or the \emph{polytabloid} or \emph{Specht} basis) as
explained in \cite[Lemma 3.1]{im22} and \cite[Section 1]{hwang23}. Under this
identification, the isomorphism of \cite[Theorem 2.2]{russell19} associates each
nonnesting perfect matching with the unique noncrossing matching in the same
$\sigma$-equivalence class, and Theorem 5.5 of \cite{russell19} follows from
Theorem \ref{thm:nesttri} as a special case.  The restriction of the
quasiparabolic order to the noncrossing basis gives rise to the {\it web graph}
of \cite[Section 2.3]{russell19}, which therefore has the structure of a
distributive lattice by Theorem \ref{thm:nonnest} (ii).  Our definition of the
nesting number (Definition \ref{def:stats} (i)) agrees with the nesting number
of \cite{russell19} when restricted to noncrossing $n$-roots, and is inspired by
\cite{russell19}.  It also follows from \cite[Corollary 4.2]{hwang23} that if
$W$ has type $D_{2k}$ and we expand the maximally crossing $n$-root $\theta_C$
as a linear combination of the noncrossing basis, $\theta_C = \sum \lambda_\be
\be$, then the sum $\sum \lambda_\be$ of the nonnegative integers $\lambda_\be$
is given by the number $E_{k+1}$ in the family (1,1,1,2,5,16,272, \dots) of
\emph{Euler numbers}, which are characterized by the equation $$ \sec(x) +
\tan(x) = \sum_{i = 0}^\infty E_i \frac{x^i}{i!} .$$ Coefficients in the
expansion of the maximally crossing $2k$-root $\theta_C$ into the noncrossing
basis have a combinatorial interpretation in terms of the so-called web
permutations in  $S_k$ by \cite[Theorem 1.2]{hwang23}.
\end{rmk}

\subsection{Type \texorpdfstring{$E_7$}{E7}}
\label{sec:e7}
Suppose $W$ has type $E_7$.  We define $\nu_A$ to be the positive $7$-root with
the following components: \[ \al_2, \ \al_4, \ \al_6, \ \al_7, \ \al_2 +
2\al_3 + \al_4 + \al_7, \] \[ \al_2 + 2\al_3 + 2\al_4 + 2\al_5 + \al_6 + \al_7,
    \ 2\al_1 + 3\al_2 + 4\al_3 + 3\al_4 + 2\al_5 + \al_6 + 2\al_7 .\]

We define $\nu_C$ to be the positive $7$-root with the following components: \[
    \alpha_2 + \alpha_3 + \alpha_4 + \alpha_5 + \alpha_7,\quad \alpha_2 +
    2\alpha_3 + \alpha_4 + \alpha_7,\quad           \alpha_3 + \alpha_4 +
    \alpha_5 + \alpha_6 + \alpha_7,\quad        \alpha_1 + \alpha_2 + \alpha_3 +
\alpha_4 + \alpha_7 \] \[ \alpha_1 + \alpha_2 + 2\alpha_3 + \alpha_4 + \alpha_5
    + \alpha_7,\quad   \alpha_1 + 2\alpha_2 + 2\alpha_3 + \alpha_4 + \alpha_5 +
    \alpha_6 + \alpha_7,\quad    \alpha_1 + 2\alpha_2 + 3\alpha_3 + 3\alpha_4 +
2\alpha_5 + \alpha_6 + \alpha_7.  \]

We define $\nu_N$ to be the positive 7-root with the following components: $$
\alpha_7,\quad \alpha_2 + 2\alpha_3 + \alpha_4 + \alpha_7,\quad \alpha_1 +
\alpha_2 + 2\alpha_3 + \alpha_4 + \alpha_5 + \alpha_7,\quad \alpha_1 + \alpha_2
+ 2\alpha_3 + 2\alpha_4 + \alpha_5 + \alpha_6 + \alpha_7, $$ $$ \alpha_1 +
2\alpha_2 + 2\alpha_3 + \alpha_4 + \alpha_5 + \alpha_6 + \alpha_7, \quad
\alpha_1 + 2\alpha_2 + 2\alpha_3 + 2\alpha_4 + \alpha_5 + \alpha_7,\quad
\alpha_2 + 2\alpha_3 + 2\alpha_4 + 2\alpha_5 + \alpha_6 + \alpha_7 .$$

Finally, we define the element $w\in W$ to be the element expressed by the word
$$\bfw = (s_1s_3s_5)(s_2s_4)(s_3)(s_7).$$ The heap of $w$ is shown in Figure
\ref{fig:heaps} (b).

\begin{prop}\label{prop:e7answer}
If $W$ has type $E_7$, then the $7$-roots $\nu_A$, $\nu_C$ and $\nu_N$ given
above are  respectively the maximally aligned, maximally crossing, and maximally
nesting $7$-roots of $W$.  The element $w$ is the nonnesting element $w_N$, and $\bfw$
a reduced word for it.  The Macdonald representation $\macd{E_7}{7A_1}$ has
dimension $15$.
\end{prop}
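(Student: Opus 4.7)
The plan is to verify all claims by explicit computation, leveraging the general theory from earlier sections.

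Using the bilinear form $B$, one checks routinely that each of $\nu_A$, $\nu_C$, $\nu_N$ consists of seven mutually orthogonal positive roots, so each is a positive $7$-root. By Corollary \ref{cor:mcount}, each contains exactly $M = 7$ coplanar quadruples. To determine the type of $\nu_A$, I would enumerate its coplanar quadruples among the $\binom{7}{4} = 35$ four-subsets using the criterion in Proposition \ref{prop:coll4}(i), then apply the height test $h_1+h_2+h_3 < h_4$ of Proposition \ref{prop:htseq}(ii) to each; confirming that every coplanar quadruple in $\nu_A$ is an alignment shows $\nu_A$ has type $A^7$, and the uniqueness statement of Proposition \ref{prop:minmax}(i) then forces $\nu_A = \theta_A$. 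Analogous verifications using Proposition \ref{prop:htseq}(iii),(iv), combined with Proposition \ref{prop:minmax}(ii) and Proposition \ref{prop:neststab}(iii), yield $\nu_N = \theta_N$ and $\nu_C = \theta_C$.

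To identify $w_N$ and show that $\bfw$ is reduced, I would apply Theorem \ref{thm:nonnest}(iii) with $\gamma_1 = \nu_C$ and $\gamma_2 = \nu_A$. Summing component heights gives $\rootht(\sigma(\nu_A)) = 1+1+1+1+5+9+17 = 35$ and $\rootht(\sigma(\nu_C)) = 5+5+5+5+7+9+13 = 49$, so the required difference is $49 - 35 = 14 = 2M$. Applying the simple reflections in $\bfw = s_1 s_3 s_5 s_2 s_4 s_3 s_7$ one at a time (from right to left) to $\nu_C$, one then directly checks that $w(\nu_C) = \pm\nu_A$. Since $\bfw$ has precisely $M = 7$ letters, Theorem \ref{thm:nonnest}(iii) immediately concludes that $\bfw$ is reduced, $w = w_N$, and $w(\nu_C) = +\nu_A$ with no sign flip.

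For the dimension, Theorem \ref{thm:indep}(i) combined with Theorem \ref{thm:nonnest}(ii) gives $\dim \macd{E_7}{7A_1} = |\{v \in W : v \leq_L w_N\}|$. By Theorem \ref{thm:nonnest}(i), $w_N$ is fully commutative, so this count equals the number of order ideals of the heap poset of $w_N$ displayed in Figure \ref{fig:heaps}(b). I would enumerate these by splitting on whether the top generator $s_7$ and the middle copy of $s_3$ lie in the ideal: once both are excluded, the remaining five-element poset (in which $s_2$ covers $\{s_1, s_3\}$ and $s_4$ covers $\{s_3, s_5\}$) has $1 + 2 + 2 + 8 = 13$ order ideals, giving the total $1 + 1 + 13 = 15$. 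The principal obstacle in the overall proof is the bookkeeping in the first paragraph: locating the seven coplanar quadruples within each of $\nu_A$, $\nu_C$, $\nu_N$ among $35$ four-element subsets and running the height tests. Proposition \ref{prop:htseq}(i) guarantees these tests are always decisive, so the computation is fully mechanical and is most conveniently performed with a computer algebra system such as SageMath.
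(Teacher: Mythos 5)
Your proposal is correct, and its backbone is the same as the paper's: you invoke Theorem \ref{thm:nonnest}(iii) with the height-sum computation $\rootht(\sigma(\nu_C))-\rootht(\sigma(\nu_A))=49-35=2M$ and the direct check $w(\nu_C)=\pm\nu_A$ to get simultaneously that $\bfw$ is reduced, $w=w_N$, $\nu_C=\theta_C$, and $\nu_A=\theta_A$; and you obtain the dimension $15$ by counting order ideals of the heap of the fully commutative $w_N$ (the paper counts order filters, which is the same number by complementation, and your explicit $1+1+13=15$ count is a correct filling-in of what the paper leaves as ``direct computation''). Two remarks on where you diverge. First, your opening paragraph's brute-force determination of the types of $\nu_A$ and $\nu_C$ (enumerating all coplanar quadruples and running the height tests, then citing Propositions \ref{prop:minmax}(i) and \ref{prop:neststab}(iii)) is redundant: Theorem \ref{thm:nonnest}(iii) already forces $\nu_C=\theta_C$ and $\nu_A=\theta_A$ once the height-sum and length-$M$ conditions are checked, which is exactly why the paper never classifies their quadruples. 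Second, for $\nu_N$ the paper avoids any enumeration of quadruples: from the multiset of component heights $1,5,7,9,9,9,9$ alone, Proposition \ref{prop:htseq}(ii) rules out alignments (since $1+5+7>9$) and Proposition \ref{prop:htseq}(iv) rules out crossings (a crossing would have to exclude the height-$1$ component and contain at most one height-$9$ component, which the height multiset cannot supply), so all $M$ quadruples are nestings and Proposition \ref{prop:minmax}(ii) applies. Your mechanical quadruple-by-quadruple check accomplishes the same thing but needs a computer; the paper's height argument is a short hand verification. Neither difference is a gap, only a matter of economy.
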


\begin{proof}
Recall from Corollary \ref{cor:mcount} that the number of \coplanar quadruples
in any $7$-root is $M=7$.  Direct verification shows that
$\rootht(\sigma(\nu_C)) - \rootht(\sigma(\nu_A))=49-35 = 2M$ and that
$w(\nu_C)=\nu_A$, which implies the assertions about $\nu_A$, $\nu_C$,
$\bfw$ and  $w$ by Theorem \ref{thm:nonnest} (iii). The dimension of the
Macdonald representation $\macd{E_7}{7A_1}$ equals the cardinality of the set
$\{v\in W: v\le_L w\}$ by Theorem \ref{thm:indep} (i) and Theorem
\ref{thm:nonnest} (ii). As explained in Section \ref{sec:D}, this set is in
bijection with the order filters of the heap of the fully commutative $w_N$, and
direct computation shows that this heap has 15 filters, so the dimension of
$\macd{E_7}{7A_1}$ is 15.

%The $R' = \{\al_2, \al_4, \al_6, \al_7\}$ is a non-\coplanar quadruple in the
%set of components of $\nu_A$ since no coplanar quadruple can contain four
%induced simple roots by the proof of Theorem \ref{thm:cna}.  A short
%calculation shows that $$ w_N^{-1}(\al_2) = \al_1 + \al_2 + \al_3 + \al_4 +
%\al_7, \quad w_N^{-1}(\al_4) = \al_2 + \al_3 + \al_4 + \al_5 + \al_7, $$ $$
%w_N^{-1}(\al_6) = \al_3 + \al_4 + \al_5 + \al_6 + \al_7, \quad \text{and} \quad
%w_N^{-1}(\al_7) = \al_2 + 2\al_3 + \al_4 + \al_7 ,$$ and these are four of the
%components of $\nu_C$. Let $\theta$ denote the highest root in type $E_8$, so
%that $\beta_C \cup \{\theta\}$ is a maximal orthogonal set in type $E_8$. Lemma
%\ref{lem:steiner} (iii) applied to $\beta_C \cup \{\theta\}$ now implies that
%$w_N^{-1}(\beta_A) = \pm\beta_C$.

It remains to show that $\nu_N$ is the maximally aligned $n$-root $\theta_A$.
By inspection, the heights of the components of $\nu_N$ are 1, 5, 7, 9, 9, 9, 9
when listed in increasing order. The sum of the first three terms of this
sequence is bigger than the largest term, so $\nu_N$ cannot contain any
alignments by Proposition \ref{prop:htseq} (ii).  If $\nu_N$ contains a
crossing, it follows from Proposition \ref{prop:htseq} (iv) that the crossing
cannot contain any component of height 1, and that the crossing can contain at
most one component of height 9. It then follows from the listed heights that
$\nu_N$ contains no crossing either.  It follows that all the $M$ coplanar
quadruples in $\nu_N$ are nestings, so that $\nu_N=\theta_N$ by Proposition
\ref{prop:minmax} (ii).
\end{proof}

The set $X_I$ of alignment-free positive $7$-roots in type $E_7$ is intimately
related to the combinatorics of the \emph{Fano plane} (Figure \ref{fig:fano}),
the finite projective plane of order 2 over the field $\mathbb{F}_2$ with two
elements. We recall that any two points in the Fano share a unique line that
contains them both, so that the vertex labellings of the Fano points using the
labels $1,2,..,7$ correspond precisely to the Steiner triple systems $S(2,3,7)$
via the bijection that associates each line in the Fano plane with the triple of
labels for the vertices in that line. For example, the labellings shown in
Figure \ref{fig:fano} (a) and (b) correspond respectively to the Steiner systems
$L_C$ and $L_N$ from Proposition \ref{prop:e7top}. It is well known that the
automorphism group of the Fano plane is the simple group $GL(3, 2)$ of order
$168$, so that the number of inequivalent vertex labellings is $7!/168=30$.

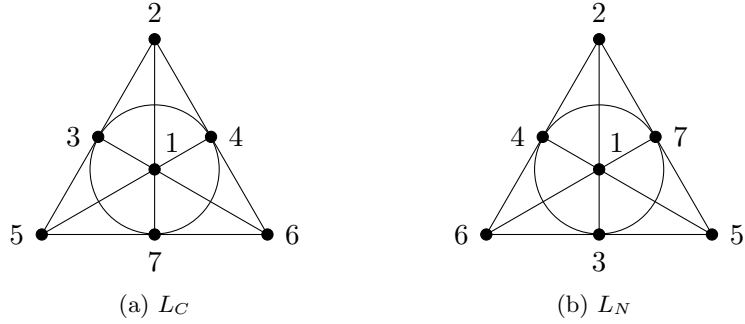
\begin{figure}[h!]
\centering
\subfloat[$L_C$]
{
\begin{tikzpicture}[
mydot/.style={
  draw,
  circle,
  fill=black,
  inner sep=1.5pt}
]
\draw
  (0,0) coordinate (A) --
  (3,0) coordinate (B) --
  ($ (A)!.5!(B) ! {sin(60)*2} ! 90:(B) $) coordinate (C) -- cycle;
\coordinate (O) at
  (barycentric cs:A=1,B=1,C=1);
\draw (O) circle [radius=3*1.717/6];
\draw (C) -- ($ (A)!.5!(B) $) coordinate (LC); 
\draw (A) -- ($ (B)!.5!(C) $) coordinate (LA); 
\draw (B) -- ($ (C)!.5!(A) $) coordinate (LB); 
\foreach \Nodo in {A,B,C,O,LC,LA,LB}
  \node[mydot] at (\Nodo) {};    
  \node [left=0.1cm of A] {$5$};
  \node [right=0.1cm of B] {$6$};
  \node [above=0.1cm of C] {$2$};
  \node [right=0.1cm of LA] {$4$};
  \node [left=0.1cm of LB] {$3$};
  \node [below=0.1cm of LC] {$7$};
  \node [above right=0.1cm and 0.001cm of O] {$1$};
\end{tikzpicture}
}
\quad\quad\quad\quad
\subfloat[$L_N$]
{
\begin{tikzpicture}[
mydot/.style={
  draw,
  circle,
  fill=black,
  inner sep=1.5pt}
]
\draw
  (0,0) coordinate (A) --
  (3,0) coordinate (B) --
  ($ (A)!.5!(B) ! {sin(60)*2} ! 90:(B) $) coordinate (C) -- cycle;
\coordinate (O) at
  (barycentric cs:A=1,B=1,C=1);
\draw (O) circle [radius=3*1.717/6];
\draw (C) -- ($ (A)!.5!(B) $) coordinate (LC); 
\draw (A) -- ($ (B)!.5!(C) $) coordinate (LA); 
\draw (B) -- ($ (C)!.5!(A) $) coordinate (LB); 
\foreach \Nodo in {A,B,C,O,LC,LA,LB}
  \node[mydot] at (\Nodo) {};    
  \node [left=0.1cm of A] {$6$};
  \node [right=0.1cm of B] {$5$};
  \node [above=0.1cm of C] {$2$};
  \node [right=0.1cm of LA] {$7$};
  \node [left=0.1cm of LB] {$4$};
  \node [below=0.1cm of LC] {$3$};
  \node [above right=0.1cm and 0.001cm of O] {$1$};
\end{tikzpicture}}
       \caption{The inequivalent labellings of the Fano plane corresponding to
        $\theta_C$ and $\theta_N$}
\label{fig:fano}
\end{figure}

\begin{prop}\label{prop:e7top}
If $W$ has type $E_7$, then every component of every 7-root $\gamma\in X_I$ has
the form \[ \eta_{abc}= \left( \sum_{i = 0}^7 \ep_i \right) - 2 (\ep_0 + \ep_a +
\ep_b + \ep_c) \] for a 3-element subset $abc:=\{a,b,c\}$ of the set [7], and
the map $\varphi$ sending each 7-root $\gamma\in X_I$ to the set
\[L_\gamma=\{abc: \eta_{abc} \,\vert\,\gamma\} \] gives a canonical bijection
from $X_I$ to the 30 inequivalent labellings of the Fano plane.  Under this
bijection, the  minimal element $\theta_C$ of $X_I$ corresponds to the labelling
$$ L_C=L_{\theta_C}=\{136, 145, 127, 235, 246, 347, 567\} ,$$ and the maximal
element $\theta_N$ corresponds to the labelling $$ L_N=L_{\theta_N}=\{123, 145,
246, 257, 347, 356, 167\} .$$
\end{prop}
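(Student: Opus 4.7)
The plan is to establish the proposition in three stages.

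First, I will identify the form of the components of $\gamma \in X_I$. By Proposition \ref{prop:ancomp}, $\theta_N$ has a unique simple component $\al_x$, and inspection of the components of $\nu_N = \theta_N$ listed in Proposition \ref{prop:e7answer} shows that $\al_x = \al_7$. Remark \ref{rmk:xht} then ensures that every component of every $\gamma \in X_I$ has $\al_7$-coefficient equal to $1$. Consulting the Fano-coordinate description in Section \ref{sec:constructions}: since $\al_7$ is the only simple root with a nonzero $\ep_0$-coefficient (namely $-1$), the $\al_7$-coefficient of any root equals minus its $\ep_0$-coefficient. The roots $\pm 2(\ep_i - \ep_j)$ have $\ep_0$-coefficient in $\{-2, 0, 2\}$, while the roots of the form $\sum_{i=0}^7 \pm \ep_i$ (four pluses and four minuses) have $\ep_0$-coefficient $\pm 1$. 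The positive roots with $\al_7$-coefficient $1$ are thus precisely those of the second type with $\ep_0$ appearing negatively, namely the $35$ roots $\eta_{abc}$ for 3-subsets $abc \subset [7]$.

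Second, I will show that $\varphi$ is a bijection onto the set of labellings. Since $B$ is $1/4$ of the Euclidean inner product, a short computation gives
\[ B(\eta_{abc}, \eta_{a'b'c'}) = -1 + |\{a,b,c\} \cap \{a',b',c'\}|, \]
so distinct $\eta_{abc}$ and $\eta_{a'b'c'}$ are orthogonal if and only if their index sets share exactly one element. For $\gamma \in X_I$, the set $L_\gamma$ is therefore a collection of $7$ triples from $[7]$ with pairwise intersection of size $1$. A standard counting argument (letting $d_x$ denote the number of triples in $L_\gamma$ containing $x$, one gets $\sum_x d_x = 21$ and $\sum_x \binom{d_x}{2} = 21$, which forces $d_x = 3$ for every $x$, after which the $3$ triples through a fixed $x$ must cover $[7] \setminus \{x\}$) shows that $L_\gamma$ is a Steiner triple system $S(2,3,7)$, and hence a Fano plane labelling. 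Injectivity of $\varphi$ is immediate, since a positive $n$-root is determined by its component set. For surjectivity, given any labelling $L$ I will verify that $\gamma_L := \prod_{abc \in L} \eta_{abc}$ lies in $X_I$ and satisfies $L_{\gamma_L} = L$: the Fano property that any two blocks meet in exactly one point guarantees orthogonality of the components via the formula above, so $\gamma_L$ is a positive $7$-root, and a direct computation gives $\sigma(\gamma_L) = -7\ep_0 + \sum_{i=1}^7 \ep_i$ independently of $L$, which must therefore equal $\sigma(\theta_N)$; hence $\gamma_L \in X_I$ by Proposition \ref{prop:neststab} (ii).

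Finally, I will identify $L_{\theta_C}$ and $L_{\theta_N}$ by converting each of the seven explicit components of $\nu_C$ and $\nu_N$ from Proposition \ref{prop:e7answer} into Fano coordinates via $\al_i = 2(\ep_i - \ep_{i+1})$ for $1 \le i \le 6$ and the given formula for $\al_7$, and then reading off the triple $abc$ from the resulting expression $(\sum_i \ep_i) - 2(\ep_0 + \ep_a + \ep_b + \ep_c)$. For instance, $\al_7 = \eta_{123}$ contributes the block $123 \in L_N$, and $\al_2 + \al_3 + \al_4 + \al_5 + \al_7$ simplifies to $\eta_{136}$, contributing $136 \in L_C$. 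The remaining twelve conversions proceed identically; this is the most computation-heavy step, but the main obstacle is only the bookkeeping, and the outputs match the two stated labellings.
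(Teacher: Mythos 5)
Your proposal is correct, but it reaches the conclusion by a genuinely different route than the paper. The paper verifies the two key properties (components of the form $\eta_{abc}$, triples forming a Steiner system) only for $\theta_C$, by inspecting its Fano-coordinate matrix, and then transports them to all of $X_I$ using the fact that $W_I\cong S_7$ acts transitively on $X_I$ by permuting the coordinates $\ep_1,\dots,\ep_7$; surjectivity of $\varphi$ then falls out of the uniqueness of $S(2,3,7)$ up to permutation (Remark \ref{rmk:steiner_unique}). You instead classify the admissible components directly (positive roots of $\al_7$-coefficient $1$ are exactly the $35$ roots $\eta_{abc}$), prove that each $L_\gamma$ is a Steiner triple system by the orthogonality formula $B(\eta_{abc},\eta_{a'b'c'})=|\{a,b,c\}\cap\{a',b',c'\}|-1$ together with a double-counting argument, and obtain surjectivity constructively by forming $\gamma_L=\prod_{abc\in L}\eta_{abc}$, computing $\sigma(\gamma_L)=-7\ep_0+\sum_{i=1}^7\ep_i$ and invoking Proposition \ref{prop:neststab}~(ii). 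Your version buys an explicit inner-product criterion and a constructive inverse to $\varphi$, which the paper's orbit argument does not make visible; the paper's version buys economy and avoids the one soft spot in your argument, namely the appeal to Remark \ref{rmk:xht}, which the paper only asserts ``with some work.'' That reliance is not a gap in substance, since the direction you need follows in one line: every component of $\theta_N$ has $\al_7$-coefficient $1$, and each $s_i$ with $i\neq 7$ alters only the $\al_i$-coefficient of a root, so the $W_I$-action (with absolute values) preserves this property across $X_I$ — but you should either cite this short argument or fall back on the paper's transport-from-$\theta_C$ mechanism rather than lean on the unproved remark. The final identification of $L_C$ and $L_N$ by converting the fourteen components to Fano coordinates is the same routine verification the paper performs via the matrices $M_C$ and $M_N$, and your two sample conversions are correct.
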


\begin{proof}
In the Fano coordinates, the components of the maximally crossing and maximally
nesting $n$-roots $\theta_C$ and $\theta_N$ are given by the rows of the
following matrices $M_C$ and $M_N$, respectively, where each ``$+$'' stands for
1 and each ``$-$'' stands for $-1$ for brevity. 
\begin{equation} M_C = \left[ \begin{matrix} {\text{--}} & {\text{--}} &
      {\text{+}} & {\text{--}} & {\text{+}} & {\text{+}} & {\text{--}} &
      {\text{+}}\\ {\text{--}} & {\text{--}} & {\text{+}} & {\text{+}} &
      {\text{--}} & {\text{--}} & {\text{+}} & {\text{+}}\\ {\text{--}} &
      {\text{--}} & {\text{--}} & {\text{+}} & {\text{+}} & {\text{+}} &
      {\text{+}} & {\text{--}}\\ {\text{--}} & {\text{+}} & {\text{--}} &
      {\text{--}} & {\text{+}} & {\text{--}} & {\text{+}} & {\text{+}}\\
      {\text{--}} & {\text{+}} & {\text{--}} & {\text{+}} & {\text{--}} &
      {\text{+}} & {\text{--}} & {\text{+}}\\ {\text{--}} & {\text{+}} &
      {\text{+}} & {\text{--}} & {\text{--}} & {\text{+}} & {\text{+}} &
      {\text{--}}\\ {\text{--}} & {\text{+}} & {\text{+}} & {\text{+}} &
      {\text{+}} & {\text{--}} & {\text{--}} & {\text{--}}\\
\end{matrix} \right], \; M_N = \left[ \begin{matrix} {\text{--}} & {\text{--}}
    & {\text{--}} & {\text{--}} & {\text{+}} & {\text{+}} & {\text{+}} &
    {\text{+}}\\ {\text{--}} & {\text{--}} & {\text{+}} & {\text{+}} &
    {\text{--}} & {\text{--}} & {\text{+}} & {\text{+}}\\ {\text{--}} &
    {\text{+}} & {\text{--}} & {\text{+}} & {\text{--}} & {\text{+}} &
    {\text{--}} & {\text{+}}\\ {\text{--}} & {\text{+}} & {\text{--}} &
    {\text{+}} & {\text{+}} & {\text{--}} & {\text{+}} & {\text{--}}\\
    {\text{--}} & {\text{+}} & {\text{+}} & {\text{--}} & {\text{--}} &
    {\text{+}} & {\text{+}} & {\text{--}}\\ {\text{--}} & {\text{+}} &
    {\text{+}} & {\text{--}} & {\text{+}} & {\text{--}} & {\text{--}} &
    {\text{+}}\\ {\text{--}} & {\text{--}} & {\text{+}} & {\text{+}} &
    {\text{+}} & {\text{+}} & {\text{--}} & {\text{--}}\\
\end{matrix} \right]
\end{equation}
By inspection, the components of $\theta_C$ have the properties (a) each of them
is a root of the form $\eta_{abc}$ for some triple $abc\se [7]$, and (b) the
triples corresponding to components form a Steiner triple system.

By Proposition \ref{prop:e7answer}, the rightmost generator appearing in $w_N$
is $s_7$, which implies that $I = S \backslash \{s_7\}$. It follows from Section
\ref{sec:constructions} that $W_I$ is a Weyl group of type $A_6$, isomorphic to
$S_7$, and that $W_I$ acts on $X_I$ by permuting the Fano coordinates. Since all
elements of $X_I$ are conjugate to $\theta_C$ under the action of $W_I$ by
Proposition \ref{prop:neststab} (ii) and (iii), it now follows from the previous
paragraph that for every 7-root $\gamma\in X_I$, the components of $\gamma$
satisfy the properties (a) and (b) satisfied by the components of $\theta_C$.
This implies that the map $\varphi$ takes each element to a Steiner triple
system (and thus one of the 30 inequivalent labellings of the Fano plane).  The
map $\varphi$ is clearly injective, and it is surjective because all Steiner
triple systems are isomorphic via the permutation action of $S_7$ by Remark
\ref{rmk:steiner_unique}. This proves the first sentence of the proposition. The
second sentence holds by inspection of the matrices $M_C$ and $M_N$.
\end{proof}

\begin{rmk}\label{rmk:labellings}
The labellings canonically corresponding to $X_I$ have the following additional
properties.
\begin{enumerate}
    \item[{\rm (i)}] The triples $ijk$ in the labelling $L_C$ corresponding to
         the 7-root $\theta_C$ appear in \cite[Section IV]{talamini10} as the
         triples indexing the ``globally invariant linear forms'' $\pm x_i\pm
         x_j\pm x_k$ of type $E_7$.

\item [{\rm (ii)}] The labelling $L_N$ corresponding to $\theta_N$ is the unique
        labelling with the property that if the digits are written in binary, then the third
  digit of each triple is the bitwise exclusive or (XOR) of the other two.
  \item[{\rm (iii)}] Recall from Remark \ref{rmk:xht} that
       $X_I$ naturally splits into two equal-sized components that are
       interchanged by the action of a reflection in $W_I$. As discussed in
       \cite{saniga16}, any two distinct labellings in the same component have
       precisely one triple in common.
   \item[{\rm (iv)}] The level $\lambda(\gamma)$ of each 7-root $\gamma$ in
       $X_I$ equals $(14-d)$, where $d$ is the number of 3-element subsets $E$ of
        the set $[7]$ with the property that the labelling $L_\gamma$ contains
        no blocks of the form $(E\setminus\{j\})\cup \{i\}$ where $i\le j$ and
        $j\in E$. This fact can be verified computationally, and is similar to
        the interpretation of the level function in type $D_{2k}$ via Steiner
        systems $S(1,2,2k)$ given in Section \ref{sec:D}.
\end{enumerate}
\end{rmk}

\begin{rmk}\label{rmk:e7chen} The noncrossing basis in type $E_7$ is illustrated
      by the diagram labelled ${\mathfrak M}_6$ in \cite[Appendix]{chen98},
      where each rectangle can be identified with a noncrossing basis element
      $\be$. A label of $i$ on a rectangle indicates that $\al_i | \beta$, i.e.,
      that $\al_i$ is a component of $\be$. The edges connecting rectangles
      refer to {\it star operations} in the sense of \cite{kl79}, which can be
      interpreted directly in terms of $n$-roots as follows. If $i$ and $j$ are
      adjacent vertices of the Dynkin diagram, then two noncrossing basis
      elements $\be$ and $\be'$ such that $\al_i | \be$ and $\al_j | \be'$ are
      joined by an edge if we have $s_is_j(\be) = \be'$ or, equivalently,
      $s_js_i(\be') = \be$. (A similar construction appears in \cite[Lemma
      2.8]{gx3}.) Note that the Dynkin diagram of type $E_7$ in \cite{chen98}
      differs from the Dynkin diagram of type $E_7$ shown in Figure
      \ref{fig:ade} (d) in the labelling of vertices, but it can be obtained by
      removing the vertex ``8'' and its incident edge from the Dynkin diagram of
      type $E_8$ shown in Figure \ref{fig:ade} (e).
  \end{rmk}

\begin{rmk}\label{rmk:utterly}
Ren--Sam--Schrader--Sturmfels \cite[Theorem 4.1]{ren13} give an ``utterly
explicit" basis for the 15-dimensional Macdonald representation in type $E_7$
that is natural in the context of the G\"opel variety in algebraic geometry.
The elements of the nonnesting basis and the noncrossing basis in type $E_7$ all
factorize into linear factors in $\Sym(\fh^*)$ by construction, but not all the
basis elements of \cite[Theorem 4.1]{ren13} do, even after extending scalars to
$\C$. It follows that the basis of \cite{ren13} is not the same as either the
noncrossing basis or the nonnesting basis, even after applying a change of basis
of $\fh^*$.
\end{rmk}

\subsection{Type \texorpdfstring{$E_8$}{E8}}\label{sec:e8}
Suppose $W$ has type $E_8$.  We define $\nu_A$ to be the positive $8$-root with
the following components: $$ \al_2, \ \al_3, \ \al_5, \ \al_7,\ \al_2 + \al_3 +
2\al_4 + \al_5, $$ $$\al_2 + \al_3 + 2\al_4 + 2\al_5 + 2\al_6 + \al_7, \ 2\al_1
+ 2\al_2 + 3\al_3 + 4\al_4 + 3\al_5 + 2\al_6 + \al_7, $$ $$ 2\al_1 + 3\al_2 +
4\al_3 + 6\al_4 + 5\al_5 + 4\al_6 + 3\al_7 + 2\al_8 .$$

We define $\nu_C$ to be the positive $8$-root with the following components: $$
\alpha_1 + \alpha_2 + \alpha_3 + \alpha_4 + \alpha_5 + \alpha_6 + \alpha_7 +
\alpha_8,\quad      \alpha_1 + \alpha_2 + \alpha_3 + 2\alpha_4 + \alpha_5 +
\alpha_6 + \alpha_7, $$ $$ \alpha_1 + \alpha_2 + \alpha_3 + 2\alpha_4 +
2\alpha_5 + \alpha_6,\quad \alpha_1 + \alpha_2 + 2\alpha_3 + 2\alpha_4 +
\alpha_5 + \alpha_6, $$ $$ \alpha_1 + \alpha_2 + 2\alpha_3 + 2\alpha_4 +
2\alpha_5 + \alpha_6 + \alpha_7,\quad \alpha_1 + \alpha_2 + 2\alpha_3 +
3\alpha_4 + 2\alpha_5 + \alpha_6 + \alpha_7 + \alpha_8, $$ $$ \alpha_1 +
\alpha_2 + 2\alpha_3 + 3\alpha_4 + 3\alpha_5 + 3\alpha_6 + 2\alpha_7 +
\alpha_8,\quad \alpha_1 + 3\alpha_2 + 3\alpha_3 + 5\alpha_4 + 4\alpha_5 +
3\alpha_6 + 2\alpha_7 + \alpha_8.  $$

We define $\nu_N$ to be the positive $8$-root with the following components: $$
\alpha_1,\quad \alpha_1 + \alpha_2 + 2\alpha_3 + 2\alpha_4 + \alpha_5,\quad
\alpha_1 + \alpha_2 + 2\alpha_3 + 2\alpha_4 + 2\alpha_5 + 2\alpha_6 + \alpha_7,
$$ $$ \alpha_1 + \alpha_2 + 2\alpha_3 + 3\alpha_4 + 2\alpha_5 + 2\alpha_6 +
\alpha_7 + \alpha_8,\quad \alpha_1 + \alpha_2 + 2\alpha_3 + 3\alpha_4 +
3\alpha_5 + 2\alpha_6 + 2\alpha_7 + \alpha_8, $$ $$ \alpha_1 + 2\alpha_2 +
2\alpha_3 + 3\alpha_4 + 2\alpha_5 + 2\alpha_6 + 2\alpha_7 + \alpha_8,\quad
\alpha_1 + 2\alpha_2 + 2\alpha_3 + 3\alpha_4 + 3\alpha_5 + 2\alpha_6 + \alpha_7
+ \alpha_8, $$ $$ \alpha_1 + 2\alpha_2 + 2\alpha_3 + 4\alpha_4 + 3\alpha_5 +
2\alpha_6 + \alpha_7 .$$

Finally, we define the element $w\in W$ to be the element expressed by the word
$$\bfw = (s_1s_4s_6s_8)(s_3s_5s_7)(s_4s_6)(s_2s_5)(s_4)(s_3)(s_1).$$ The heap of
$w$ is shown in Figure \ref{fig:heaps} (c).

\begin{prop}\label{prop:e8answer}
If $W$ has type $E_8$, then the $8$-roots $\nu_A$, $\nu_C$ and $\nu_N$ given
above are respectively the maximally aligned, maximally crossing, and maximally
nesting $8$-roots of $W$.  The element $w$ is the nonnesting element $w_N$, and $\bfw$
a reduced word for it.  The Macdonald representation $\macd{E_8}{8A_1}$ has
dimension $50$.
\end{prop}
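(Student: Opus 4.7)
The plan is to follow the strategy of the proof of Proposition \ref{prop:e7answer} essentially verbatim, with the larger rank handled by slightly more computation. First, Corollary \ref{cor:mcount} gives $M = 14$ for type $E_8$, so the target length for the nonnesting element is $M = 14$, and the target gap in root-sum heights between $\theta_C$ and $\theta_A$ is $2M = 28$. I would begin by tallying the heights of the listed components: summing over the components of $\nu_A$ gives $\rootht(\sigma(\nu_A)) = 4 + 5 + 9 + 17 + 29 = 64$, while summing over the components of $\nu_C$ gives $\rootht(\sigma(\nu_C)) = 8+8+8+8+10+12+16+22 = 92$, so the difference is $28 = 2M$ as required. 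Next I would check directly that $w(\nu_C) = \nu_A$ by applying the seven bracketed layers of $\bfw$ to $\nu_C$ in right-to-left order, observing at each stage that the resulting simple reflection moves a crossing to an alignment (this is tedious but routine, given explicit components). The word $\bfw$ visibly has length $4+3+2+2+1+1+1 = 14 = M$.

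With these two verifications in hand, Theorem \ref{thm:nonnest} (iii) applies with $\gamma_1 = \nu_C$, $\gamma_2 = \nu_A$, and $\bfw$ as the chosen word: it simultaneously concludes that $\bfw$ is reduced, that $\nu_C = \theta_C$, that $\nu_A = \theta_A$, and that $w = w_N$. This single appeal packages the first three assertions about the maximally crossing and maximally aligned $8$-roots, the identification of the nonnesting element, and the reducedness of the word.

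To pin down the dimension of $\macd{E_8}{8A_1}$, I would use Theorem \ref{thm:indep} (i) together with Theorem \ref{thm:nonnest} (ii) to identify the dimension with $|\{v \in W : v \leq_L w_N\}|$, and then, using the full commutativity of $w_N$ from Theorem \ref{thm:nonnest} (i), identify this set with the collection of order filters of the heap of $w_N$ shown in Figure \ref{fig:heaps} (c). The final step is a direct combinatorial count: an explicit enumeration of the 14-element poset's filters yields 50. This count is the most delicate numerical step, though not conceptually deep; it can be carried out by hand using a layered ``rank-by-rank'' tally or verified by SageMath as in the discussion preceding Theorem \ref{thm:qp}.

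The remaining claim, that $\nu_N$ is the maximally nesting $8$-root $\theta_N$, is established exactly as in the $E_7$ case via Proposition \ref{prop:htseq}. Listing the heights of the components of $\nu_N$ in increasing order yields $1, 7, 11, 13, 15, 15, 15, 15$. For any coplanar quadruple of these components with ordered heights $h_1 \leq h_2 \leq h_3 \leq h_4$, one has $h_1 + h_2 + h_3 \geq 1 + 7 + 11 = 19 > 15 \geq h_4$, so Proposition \ref{prop:htseq} (ii) rules out alignments. For a crossing, part (iv) requires $h_1 > 1$ and $h_3 < h_4$; quadruples containing $\al_1$ violate the first, and the remaining candidates have ordered triples starting at $(7, 11, 13)$ and satisfy $h_2 + h_3 \geq h_1 + h_4$ in every case (a finite check against the height multiset). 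Hence every coplanar quadruple in $\nu_N$ is a nesting, forcing the type of $\nu_N$ to be $N^M$, and Proposition \ref{prop:minmax} (ii) then identifies $\nu_N$ with $\theta_N$. The main obstacle is not conceptual but bookkeeping: the computations in paragraph one (verifying $w(\nu_C) = \nu_A$ term by term) and the filter count of the rank-14 heap require care, and these are best confirmed by computer algebra.
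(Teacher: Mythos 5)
Your proposal is correct and follows essentially the same route as the paper's proof: verify the height gap $\rootht(\sigma(\nu_C))-\rootht(\sigma(\nu_A))=2M=28$ and $w(\nu_C)=\nu_A$ with $\ell(\bfw)=14$, invoke Theorem \ref{thm:nonnest} (iii) to identify $\theta_C$, $\theta_A$, $w_N$ and the reducedness of $\bfw$, count the $50$ order filters of the heap for the dimension, and use the component heights $1,7,11,13,15,15,15,15$ with Proposition \ref{prop:htseq} (ii) and (iv) to rule out alignments and crossings in $\nu_N$, so that Proposition \ref{prop:minmax} (ii) gives $\nu_N=\theta_N$. The only cosmetic difference is that you track crossing-to-alignment moves step by step when checking $w(\nu_C)=\nu_A$, which Theorem \ref{thm:nonnest} (iii) renders unnecessary.
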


\begin{proof}
The statements can be proved using the same strategy used in the proof of
Proposition \ref{prop:e7answer} except for the following changes in numerical
details. The number $M$ of coplanar quadruples in an $n$-root is now 14, and the
number of order filters in the heap of the nonnesting element $w_N$ is 50. The
components of $\nu_N$ have heights 1, 7, 11, 13, 15, 15, 15,
and 15, which implies that $\nu_N$ has no alignments by Proposition
\ref{prop:htseq} (ii).  Furthermore, if $\nu_N$ had a crossing, then Proposition
\ref{prop:htseq} (iv) implies that the only possibility would be for the crossing
to contain roots of heights 7, 11, 13, and 15, but this is not possible
either because $11+13 > 7+15$.
\end{proof}

\begin{rmk}\label{rmk:schmidtv}
    \begin{enumerate}
        \item [{\rm (i)}]
Schmidt \cite[Lemma 3.4]{schmidt24} gives an explicit partition of the 120
positive-negative pairs of roots in type $E_8$ into 15 sets of size 8. The
components of $\theta_A$, $\theta_N$, and $\theta_C$ appear in Schmidt's list as
$V_1$, $V_{14}$, and $V_{15}$, respectively.
        \item [{\rm (ii)}] In standard coordinates, the components of the
             maximally crossing and maximally nesting $8$-roots $\theta_C=\nu_C$
             and $\theta_N=\nu_N$ are given by the rows of the following
             matrices $M'_C$ and $M'_N$, respectively, where each ``$+$'' stands for
1 and each ``$-$'' stands for $-1$ for brevity. 

         \[ M'_C = \left[
                     \begin{matrix} {\text{+}} & {\text{+}} & {\text{--}} &
                         {\text{--}} & {\text{--}} & {\text{--}} &
                         {\text{+}} & {\text{+}}\\ {\text{+}} & {\text{--}}
                         & {\text{+}} & {\text{--}} & {\text{--}} &
                         {\text{+}} & {\text{--}} & {\text{+}}\\ {\text{+}}
                         & {\text{--}} & {\text{--}} & {\text{+}} &
                         {\text{+}} & {\text{--}} & {\text{--}} &
                         {\text{+}}\\ {\text{--}} & {\text{+}} & {\text{+}}
                         & {\text{--}} & {\text{+}} & {\text{--}} &
                         {\text{--}} & {\text{+}}\\ {\text{--}} & {\text{+}}
                         & {\text{--}} & {\text{+}} & {\text{--}} &
                         {\text{+}} & {\text{--}} & {\text{+}}\\ {\text{--}}
                         & {\text{--}} & {\text{+}} & {\text{+}} &
                         {\text{--}} & {\text{--}} & {\text{+}} &
                         {\text{+}}\\ {\text{--}} & {\text{--}} &
                         {\text{--}} & {\text{--}} & {\text{+}} & {\text{+}}
                         & {\text{+}} & {\text{+}}\\ {\text{+}} & {\text{+}}
                         & {\text{+}} & {\text{+}} & {\text{+}} & {\text{+}}
                         & {\text{+}} & {\text{+}}\\
                 \end{matrix} \right], \; M'_N = \left[ \begin{matrix}
    {\text{+}} & {\text{--}} & {\text{--}} & {\text{--}} & {\text{--}} &
    {\text{--}} & {\text{--}} & {\text{+}}\\ {\text{--}} & {\text{+}} &
    {\text{+}} & {\text{+}} & {\text{--}} & {\text{--}} & {\text{--}} &
    {\text{+}}\\ {\text{--}} & {\text{+}} & {\text{--}} & {\text{--}} &
    {\text{+}} & {\text{+}} & {\text{--}} & {\text{+}}\\ {\text{--}} &
    {\text{--}} & {\text{+}} & {\text{--}} & {\text{+}} & {\text{--}} &
    {\text{+}} & {\text{+}}\\ {\text{--}} & {\text{--}} & {\text{--}} &
    {\text{+}} & {\text{--}} & {\text{+}} & {\text{+}} & {\text{+}}\\
    {\text{+}} & {\text{+}} & {\text{+}} & {\text{--}} & {\text{--}} &
    {\text{+}} & {\text{+}} & {\text{+}}\\ {\text{+}} & {\text{+}} &
    {\text{--}} & {\text{+}} & {\text{+}} & {\text{--}} & {\text{+}} &
    {\text{+}}\\ {\text{+}} & {\text{--}} & {\text{+}} & {\text{+}} &
    {\text{+}} & {\text{+}} & {\text{--}} & {\text{+}}\\
\end{matrix} \right]  \] All rows in $M'_C$ other than the bottom row contain
    four ``$+$'' and four ``$-$'', and the 14 quadruples recording the column
    numbers of the positive and negative entries in these rows form a  Steiner
    quadruple system. Furthermore, these 14 quadruples are precisely the ones
    indexing the ``globally invariant linear forms''  of type $E_8$ in
    \cite[Section V]{talamini10}.
\item [{\rm (iii)}] The matrix $M'_C$ above is a {\it Hadamard matrix}, meaning
     a matrix with entries in $\{+1, -1\}$ that has orthogonal rows (and,
     therefore, orthogonal columns). By rearranging the rows, the matrix can be
     expressed more simply as the Kronecker product $H \otimes H \otimes H$,
     where $$H = \left[
         \begin{matrix} \text{+1} & \text{+1} \\ \text{--1} & \text{+1}
     \end{matrix} \right].$$
    \end{enumerate}
\end{rmk}

\begin{rmk}\label{rmk:e8chen}
The noncrossing basis in type $E_8$ is illustrated by the diagram labelled
${\mathfrak M}_{50}$ in \cite[Appendix]{chen00}.
\end{rmk}

The set $X_I$ of alignment-free positive $8$-roots in type $E_8$ can be used to
give a convenient realization of the graph $\bar\Gamma_1$ studied by Schmidt in
\cite{schmidt24}. The graph $\bar\Gamma_1$, which is the complement of another
graph $\Gamma_1$, has the property that it is quantum isomorphic (in the sense
of \cite{atserias19}) but not isomorphic to the orthogonality graph $G_{E_8}$ of
the roots of type $E_8$. The vertices of $G_{E_8}$ are the 120 positive roots of
type $E_8$, and two roots are adjacent in $G_{E_8}$ if and only if they are
orthogonal.

To realize the graph $\bar\Gamma_1$ via $X_I$, recall from Remark \ref{rmk:xht}
that $X_I$ naturally splits into two equal-sized components, $X_I^e$ and
$X_I^o$, which consist of all the elements in $X_I$ with even levels and odd
levels, respectively. The components each have $240/2=120$ elements since
$\abs{X_I}=240$ by Proposition \ref{prop:poincare} (ii).
\begin{defn}
    \label{def:Gamma} We define $\Gamma$ to be the following graph: the vertex
          set is $X_I^e$, the set of all alignment-free positive $8$-roots of
          even parabolic level, and two vertices are adjacent if and only if
          they have no common components.
      \end{defn}
%We can construct an isomorphic copy, $\Gamma$, of $\bar\Gamma_1$ as follows:
%take $X_I^e$ to be the vertex set of $\Gamma$ (choosing $X_I^o$ will also
%work), and define two 8-roots in $X_I^o$ to be adjacent in $\Gamma$ if and only
%if they have disjoint sets of components.
\noindent The next four paragraphs recall Schmidt's construction of the graph
         $\Gamma_1$ and explain why $\Gamma$ is isomorphic to $\bar\Gamma_1$.
         (The isomorphism will also hold if we replace $X_I^e$ with $X_I^o$ in
         Definition \ref{def:Gamma}.)

Start with the folded halved 8-cube, where the vertices are the 64 pairs of the
form $\{x,\mathbf{1}+x\}$ for all length-8 binary strings $x\in \mathbb{F}_2^8$
with an even number of 1s (where $\mathbf{1}$ is the string with all entries
equal to $1$). We can naturally identify these vertices with the 64 positive
roots of the form $\al=(\sum_{i=1}^7\pm\ep_i)+\ep_8$, via the bijection sending
$\al$ to the pair $\{x_\al,\mathbf{1}+x_\al\}$ where $x_\al$ is the string
$(x_i)_{i=1}^8$ such that $x_i=1$ if and only if $\ep_i$ appears with
coefficient $-1$ in $\al$ for all $i\in[8]$. The 64 positive roots of the form
$(\sum_{i=1}^7\pm\ep_i)+\ep_8$  are precisely the positive roots of $x$-height 1
in the sense of Remark \ref{rmk:xht}, so they are also precisely the roots that
can appear as a component of an 8-root in $X_I$ by Remark \ref{rmk:xht}.

By definition, two vertices $\{x,\mathbf{1}+x\}$ and $\{y,\mathbf{1}+y\}$ in the
folded halved cube are adjacent if and only if $x$ and $y$ differ in 2 or 6
entries. It follows that in the complement of the folded halved cube, two
distinct vertices $\{x,\mathbf{1}+x\}$ and $\{y,\mathbf{1}+y\}$ are adjacent if
and only if $x$ and $y$ differ in 4 positions. This complement is denoted
$VO_6^+(2)$. The condition that $x$ and $y$ differ in 4 entries holds if and
only if the positive roots corresponding to $\{x,\mathbf{1}+x\}$ and
$\{y,\mathbf{1}+y\}$  are orthogonal; therefore each clique of size 8 in
$VO_6^+(2)$ corresponds to an 8-root in $X_I$.

Schmidt defines the vertex set of $\Gamma_1$ to be any orbit of cliques of size
8 under the group $\Z_2^6\rtimes A_8$ in $VO_6^+(2)$, where $A_8$ is the
alternating subgroup of $S_8$. There are two such orbits, both of size 120, and
the choice of the orbit does not matter, so we may assume that the orbit
contains a clique corresponding to an 8-root with even level, i.e., to a vertex,
$\beta$, of $\Gamma$. The vertices of $\Gamma_1$ then match precisely the
vertices of $\Gamma$ for the reasons sketched below. We have
$I=S\setminus\{s_1\}$ by Proposition \ref{prop:e8answer}; therefore we have
$W_I\cong W(D_7)$. The action of $W_I = W(D_7)$ on $X_I$ can be extended to an
action of a larger subgroup $G \leq W(E_8)$, generated by $W_I$ together with
the reflection $s_\theta$ corresponding to the highest root
$\theta=2(\ep_7+\ep_8)$.  We have $G \cong W(D_8) \cong N \rtimes S_8$, where $N
\cong \Z_2^7$ is the elementary abelian group of order $2^7$. By considerations
involving $x$-heights (in the sense of Remark \ref{rmk:xht}), each reflection in
$G$ changes every 8-root in $X_I$, and when it does so it changes the parabolic
level by an odd number because it moves a $C$ to an $N$ or vice versa.  It
follows that the commutator subgroup $G'\cong \Z_2^7\rtimes A_8$ of $G$ acts on
$X_I$ with $X_I^{e}$ and $X_I^{o}$ as its orbits.  This action induces a
transitive action of $G'/Z(G) \cong \Z_2^6\rtimes A_8$ on $\Gamma$ that matches
the action of $\Z_2^6\rtimes A_8$ on $\Gamma_1$.

Two vertices in $\Gamma_1$ are defined to be adjacent if and only if they are
cliques that intersect in exactly two elements from $VO_6^+(2)$. This occurs if
and only if their corresponding $8$-roots have two components in common. With
some more work, or using computation, one can show that two distinct 8-roots
whose levels have the same parity either have disjoint components or have
exactly two components in common. It follows that $\Gamma$ is isomorphic to
$\bar\Gamma_1$. To summarize, we have the following result.

\begin{rmk}
\label{rmk:quantumiso}
The graph $\Gamma$ from Definition \ref{def:Gamma}, i.e., the graph whose
vertices are the alignment-free positive $8$-roots of even parabolic level and
where two vertices are adjacent if and only if they have no common component, is
isomorphic to the graph $\bar\Gamma_1$ from \cite{schmidt24}. As a consequence,
the graph $\Gamma$ is quantum isomorphic but not isomorphic to the orthogonality
graph $G_{E_8}$ of the $E_8$ root system.
\end{rmk}

The graphs $\Gamma_{E_8}$ and $\bar\Gamma_1$ are known to be strongly
regular graphs with parameters $(120, 63, 30, 36)$. It follows that $\Gamma$ is
also such a graph.  Mathon and Street \cite[Table 2.2]{mathon97} mention that
the graphs $\Gamma_{E_8}$ and $\bar\Gamma_1$ each have 2025 $8$-cliques. The
8-cliques of $\Gamma_{E_8}$ are the positive $8$-roots of $E_8$, and the 8-cliques of $\Gamma\cong \bar\Gamma_1$
are classified by Fitz in \cite[Theorem 7.6]{fitz}. Finally, we
note that, as \cite{schmidt24} points out, there are other constructions of
$\Gamma_1$ in the literature \cite{brouwer89, mathon97}. However, the
construction in terms of $8$-roots has the advantages of being concise, and
being clearly related to the $E_8$ root system.

\begin{rmk}
    \label{rmk:rank4}
The group $\Aut(\Gamma_1)$ acts as a permutation group of rank 4 on $\Gamma_1$
\cite{brouwer89}.  It follows that if two $n$-roots $x,y\in X_I$ both have even
or odd levels, then $x$ and $y$ can be in one of four relative positions. These
can be shown to be the following (where $N$ is the elementary abelian group of
order $2^7$ mentioned above):
\begin{enumerate}
    \item[(i)]{$x=y$;}
    \item[(ii)]{$x$ and $y$ have precisely two common components;}
    \item[(iii)]{$x$ and $y$ have disjoint components,  and $y = n.x$ for some
             $n \in N$;}
         \item[(iv)]{$x$ and $y$ have disjoint components, and $y \ne n.x$ for
      any $n \in N$.}
\end{enumerate} The situations in (iii) and (iv) correspond to the edges in the
    graph $\Gamma$, and they show that the edges of $\Gamma$ naturally split into
    two types. This is not the case for the graph $G_{E_8}$: the automorphism
    group of $G_{E_8}$ has rank 3, and two vertices can only be in three
    relative positions: equality, adjacency, and non-adjacency.
\end{rmk}

\section{Concluding remarks}
\label{sec:conclusions}
\subsection{Poincar\'e polynomials}
Rains and Vazirani \cite[Section 8]{rains13} define the {\it Poincar\'e series}
of a quasiparabolic set $\mathcal{X}$ to be $PS(q) = \sum_{x \in \mathcal{X}}q^{\lambda(x)}$. They point
out that in many cases, the Poincar\'e series factorizes in a very simple way,
and the factors are often {\it quantum integers} $$ [d]_q := \frac{q^d-1}{q-1} =
1 + q + q^2 + \cdots + q^{d-1} .$$ These quantum integers often behave as if
they were the degrees of polynomial invariants of a Coxeter group, and in some
cases, the integers can be interpreted in terms of degrees of invariants in
characteristic 2 (see \cite[Section 8, Example 9.4]{rains13}).

\begin{prop}\label{prop:poincare}
Let $W$ be a Weyl group of type $E_7, E_8$, or $D_n$ for $n$ even.
\begin{enumerate}
    \item[{\rm (i)}] For the quasiparabolic set $X$ for $W$ consisting of all
     positive $n$-roots of $W$, equipped with the level function such that
     $\lambda(\theta_A)=0$,  we have $$ PS_X(q) =
     \begin{cases} \prod_{i = 2}^k [2i-1]_q & \text{\ if \ } W \text{\ is\ of\
      type\ } D_{2k},\\ [3]_q[5]_q[9]_q & \text{\ if \ } W \text{\ is\ of\ type\
      } E_7, \\ [3]_q[5]_q[9]_q[15]_q & \text{\ if \ } W \text{\ is\ of\ type\ }
      E_8.
  \end{cases} $$
\item[{\rm (ii)}] For the quasiparabolic set $X_I\se X$ for $W_I$ consisting of
     the alignment-free positive $n$-roots of $W$ (with its level function
     inherited from $X$), we have $$ PS_{X_I}(q) =
     \begin{cases} q^M\prod_{i = 2}^k [i]_q & \text{\ if \ } W \text{\ is\ of\
      type\ } D_{2k},\\ q^M[2]_q[3]_q[5]_q & \text{\ if \ } W \text{\ is\ of\
      type\ } E_7, \\ q^M[2]_q[3]_q[5]_q[8]_q & \text{\ if \ } W \text{\ is\ of\
      type\ } E_8,
  \end{cases} $$ where $M$ is the level $\lambda(\theta_C)$ of the minimal
    element of $X_I$ (which is also the number of coplanar quadruples in each
    $n$-root).  In particular, each of  the factors $[i]_q$ of $PS_{X_I}(q)$
    corresponds to a factor $[2i-1]_q$ of $PS_X(q)$ in (i).
    \end{enumerate}
\end{prop}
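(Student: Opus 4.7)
The plan is to handle the $D_{2k}$ cases bijectively, and the remaining $E_7$ and $E_8$ cases by direct computational enumeration.

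For part (ii) in type $D_{2k}$, I would invoke Proposition \ref{prop:abruhat} directly: the poset isomorphism $\varphi : S_k \to X_I$ satisfies $\lambda(\varphi(\tau)) = M + \ell(\tau)$, so
\[
PS_{X_I}(q) = \sum_{\tau \in S_k} q^{M + \ell(\tau)} = q^M \cdot [k]!_q = q^M \prod_{i=2}^{k}[i]_q,
\]
where the last equality uses $[1]_q = 1$ together with the standard factorization of the Poincar\'e polynomial of $S_k$. For types $E_7$ and $E_8$, I would verify $PS_{X_I}(q) = q^M[2]_q[3]_q[5]_q$ and $q^M[2]_q[3]_q[5]_q[8]_q$ by enumerating the 30 alignment-free $7$-roots (the Fano labellings of Proposition \ref{prop:e7top}) and the 240 alignment-free $8$-roots (Section \ref{sec:e8}) together with their levels in SageMath.

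For part (i) in type $D_{2k}$, I would identify $X$ with the set of perfect matchings of $[2k]$ (Lemma \ref{lem:dmax}) and prove $\sum_m q^{C(m) + 2N(m)} = \prod_{i=2}^k [2i-1]_q$ by induction on $k$. Given a matching $m$, I would condition on the index $j \in \{1, \ldots, 2k-1\}$ to which $2k$ is matched, and let $m''$ denote the matching of $[2k-2]$ obtained by deleting $j$ and $2k$ and relabeling the remaining elements in an order-preserving fashion. The key combinatorial observation is that the total contribution to the level $\lambda(m) = C(m) + 2N(m)$ from features involving the block $\{j, 2k\}$ is exactly $2k - 1 - j$, independent of $m''$: a block of $m$ lying entirely left of $j$ forms an alignment with $\{j, 2k\}$ (weight $0$), a block straddling $j$ forms a crossing (weight $1$), a block lying entirely right of $j$ forms a nesting (weight $2$), and if $l, c, r$ denote the respective counts, then $2l + c = j - 1$ and $c + 2r = 2k - 1 - j$ force the contribution $c + 2r$ to equal $2k - 1 - j$. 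Since features not involving $\{j, 2k\}$ contribute $\lambda(m'')$, we obtain $\lambda(m) = (2k - 1 - j) + \lambda(m'')$, and summing yields
\[
f_k(q) = \Bigl(\sum_{j=1}^{2k-1} q^{2k - 1 - j}\Bigr)\, f_{k-1}(q) = [2k-1]_q \cdot f_{k-1}(q),
\]
from which the formula follows by induction from the base case $f_1(q) = 1$. For types $E_7$ and $E_8$, I would again verify the factorizations $[3]_q[5]_q[9]_q$ and $[3]_q[5]_q[9]_q[15]_q$ by enumerating the 135 and 2025 positive $n$-roots with their levels in SageMath.

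The main obstacle is the lack of a uniform conceptual proof spanning all three families. A striking pattern across all types is that whenever $PS_{X_I}(q)/q^M = \prod_i [a_i]_q$ one also has $PS_X(q) = \prod_i [2a_i - 1]_q$ (with any factor $[1]_q = 1$ disappearing); a conceptual explanation, perhaps via the interval structure of $\sigma$-equivalence classes (Proposition \ref{prop:sumclass}) or via the characteristic-$2$ invariant-degree framework of \cite[Section 9]{rains13}, would unify the above arguments, but we do not pursue it here.
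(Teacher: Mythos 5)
Your proposal is correct, and for most of the statement it coincides with the paper's own argument: part (ii) in type $D_{2k}$ is obtained exactly as in the paper from Proposition \ref{prop:abruhat} together with the factorization $\sum_{\tau\in S_k}q^{\ell(\tau)}=\prod_{i=2}^k[i]_q$, and both parts in types $E_7$ and $E_8$ are, as in the paper, settled by direct computer enumeration of the (135 resp.\ 2025) positive $n$-roots and the (30 resp.\ 240) alignment-free ones. The one place where you genuinely diverge is part (i) in type $D_{2k}$: the paper simply identifies $X$ with perfect matchings of $[2k]$ and cites the known generating function for the statistic $C+2N$ from \cite{simion96}, \cite{diaz09}, or \cite{cheon15}, whereas you prove the identity $\sum_m q^{C(m)+2N(m)}=\prod_{i=2}^k[2i-1]_q$ from scratch by conditioning on the partner $j$ of $2k$. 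Your key step checks out: with $l$, $c$, $r$ denoting the blocks of $m\setminus\{j,2k\}$ lying left of $j$, straddling $j$, and lying between $j$ and $2k$, the block $\{j,2k\}$ contributes weights $0$, $1$, $2$ respectively, and the element counts $2l+c=j-1$, $c+2r=2k-1-j$ give a total contribution $c+2r=2k-1-j$ independent of the reduced matching $m''$, while deletion of $j$ and $2k$ preserves the features among the remaining blocks; this yields $f_k(q)=[2k-1]_q\,f_{k-1}(q)$ with base case $f_1(q)=1$, exactly the recursion underlying the cited results. So your write-up buys a self-contained proof of the $D$-case of (i) at the cost of a short induction, while the paper's version is shorter but leans on the literature; the final sentence of (ii), matching each factor $[i]_q$ with $[2i-1]_q$, is in both treatments just read off from the two explicit formulas, so no further argument is needed there.
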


\begin{proof}
We have verified both (i) and (ii) in types $E_7$ and $E_8$ computationally. (We
do not have conceptual proofs at the moment.) For type $D_n$, part (i) follows
from \cite[Equation (5.4)]{simion96}, or \cite[Theorem 4]{diaz09}, or
\cite[Corollary 3.3]{cheon15}, after we identify $X$ with the perfect matchings
of the set $[n]$ as usual. Finally, part (ii) for type $D_n$ follows from
Proposition \ref{prop:abruhat} and the well-known form $\prod_{i=2}^k [i]_q$ for
the Poincar\'e series $\sum_{\tau\in S_k}q^{\ell(\tau)}$ of the symmetric group
$S_k$.
\end{proof}

\begin{rmk}
\label{rmk:cohomology}
The exponents $3, 5, 9$ and $3, 5, 9, 15$ that respectively appear in the
Poincar\'e series $PS_X(q)$ of types $E_7$ and $E_8$ show up as the degrees of
generators in the cohomology modulo 2 of compact exceptional Lie groups
\cite{araki61}, and as the codimensions of generators of Chow rings associated
to linear algebraic groups in characteristic 2 \cite[Section 4]{petrov08}.
\end{rmk}

\begin{rmk}\label{rmk:xs}
Recall from Proposition \ref{prop:sumclass} that the set $X_I$ is the top
$\sigma$-equivalence class with respect to the order $\le_\sigma$.  It turns out
that for every $\sigma$-equivalence class $C$ in $X$, the polynomial
$PS_C(q)=\sum_{x\in C}q^{\lambda(x)}$ has the form \begin{equation}
    \label{eq:everyclass} PS_C(q)=\prod_{d \in D} q^{d-1} [d]_q
\end{equation} for some set of nonnegative integers $D$.  This is particularly
    remarkable because in general, there is no obvious way to turn a
    $\sigma$-equivalence class into a $W$-set for a suitable Weyl group $W$.  In
    type $D_n$, the integers from $D$ have an interpretation in terms of rook
    placements \cite[Theorem 1]{watson14}. Summing over all $\sigma$-equivalence
    classes gives rise to the expression for $PS_X(q)$ that appears in the
    abstract of \cite{billera15}. In types $E_7$ and $E_8$, we verified Equation
    \eqref{eq:everyclass} by computation.
\end{rmk}

\subsection{Coxeter elements}
\label{sec:coxeter_elements}
Let $d_1, d_2, \ldots, d_r$ be the numbers that appear in the factorization
$\prod_{i=1}^r [d_i]_q$ of the Poincare series $PS_X(q)$ in Proposition
\ref{prop:poincare} (i).  It follows easily from the definitions that
$\prod_{i=1}^r d_i$ is the number of positive $n$-roots, and that $\sum_{i=1}^r
(d_i-1) = 2M$, where $M$ is the number of \coplanar quadruples in an $n$-root.
It also turns out that the largest integer $d_r$ in each case (which is $n-1$ in
type $D_n$, is $9$ in type $E_7$, and is $15$ in type $E_8$) is equal to $h/2$,
where $h$ is the Coxeter number.

We recall that, by definition, a Coxeter element is a product of all the simple
reflections in some order, and the Coxeter number is the order of any Coxeter
element $c$. All such elements are conjugate, and therefore have the same order.
It turns out that $c^{h/2}$ acts as $-1$ in the reflection representation, and
therefore acts trivially on the set $X$. If $C$ is the cyclic group of order
$h/2$ generated by the action of $c$ on the positive $n$-roots, then it
can be shown that the nonidentity elements of $C$ act without fixed points on
the positive $n$-roots. The factor of $[h/2]_q$ in $PS_X(q)$ then implies that
the triple $(X, PS_X(q), C)$ satisfies the {cyclic sieving phenomenon} of
Reiner, Stanton, and White \cite{reiner04}: the number of fixed points of $c^d$
is equal to $PS_X(e^{2\pi id/m})$, where $m=h/2$.

It is possible, by choosing a suitable Coxeter element $c$ and $n$-root $\be$,
to find an orbit of $n$-roots $$ O=\{c^d(\be) : 0 \leq d < h/2\} $$ that
contains every positive root exactly once as one of its components. This can be
achieved in type $D_n$ by taking $\be = \theta_N$ and $c = s_1 s_2 \cdots
s_{n-1}s_n$. We also verified that such an orbit can also be found in types
$E_7$ and $E_8$, although it is necessary to make a different choice of $\be$.
The existence of such an orbit $O$ in type $E_8$ is related to the
Kochen--Specker theorem in quantum mechanics \cite{waegell15}.

\subsection{Feature-avoidance via quasiparabolic structure}
\label{sec:abstract_fa}
It can be shown that each of the three types of feature-avoiding $n$-roots in
the set $X$ can be characterized using only the quasiparabolic structure of $X$,
without reference to the combinatorics of $n$-roots.  Specifically, the
following holds for all $n$-roots $x\in X$:
\begin{enumerate}
    \item [{\rm (i)}] $x$ is alignment-free if and only if there does not exist
         a reflection $r$ such that $\lambda(r(x))-\lambda(x)$ is a strictly
         positive even number;
     \item [{\rm (ii)}] $x$ is noncrossing if and only if there is a sequence $$
          x <_Q r_1(x) <_Q r_2r_1(x) <_Q \cdots <_Q x_1 $$ where $x_1$ is the
          unique maximal element of $X$ and the level increases by $2$ at each
          step;
      \item [{\rm (iii)}] $x$ is nonnesting if and only if there does not exist
           a reflection $r$ such that $\lambda(r(x)) - \lambda(x)$ is a strictly
           negative even number.
   \end{enumerate}
In addition, Remark \ref{rmk:shell6} shows that the $\sigma$-equivalence classes
can be characterized as the Eulerian intervals between nonnesting and
noncrossing elements.  It may be interesting to use these characterizations to
extend the notions of feature-avoiding elements and $\sigma$-equivalence to more
general quasiparabolic sets.

\section*{Acknowledgements}
We are grateful to the referees for reading the paper carefully and suggesting
many improvements. We also thank Dana Ernst, Emily King, Heather Russell, and
Nathaniel Thiem for helpful conversations.


\begin{thebibliography}{99}



\bibitem{allcock13}
    D. Allcock.
\newblock Reflection centralizers in {C}oxeter groups.
\newblock {\em Transformation Groups}, 18(3):599--613, 2013.

\bibitem{araki61}
    S. Araki and Y. Shikata.
\newblock Cohomology mod 2 of the compact exceptional group ${E}_8$.
\newblock {\em Proceedings of the Japan Academy}, 37(10):619--622, 1961.

\bibitem{atserias19}
    A. Atserias, L. Man{\v{c}}inska, D.~E. Roberson, R.  {\v{S}}{\'a}mal, S.
    Severini, and A. Varvitsiotis.
\newblock Quantum and non-signalling graph isomorphisms.
\newblock {\em Journal of Combinatorial Theory, Series B}, 136:289--328, 2019.

\bibitem{barrau1908}
J.~A. Barrau.
\newblock On the combinatory problem of {S}teiner.
\newblock {\em Koninklijke Nederlandse Akademie van Wetenschappen Proceedings
  Series B Physical Sciences}, 11:352--360, 1908.

\bibitem{bergman78}
    G.~M. Bergman.
\newblock The diamond lemma for ring theory.
\newblock {\em Advances in Mathematics}, 29(2):178--218, 1978.

\bibitem{billera15}
    L. Billera, L. Levine, and K. M{\'e}sz{\'a}ros.
\newblock How to decompose a permutation into a pair of labeled {D}yck paths by
  playing a game.
\newblock {\em Proceedings of the American Mathematical Society},
  143(5):1865--1873, 2015.

\bibitem{brink96}
B. Brink.
\newblock On centralizers of reflections in {C}oxeter groups.
\newblock {\em Bull. Lond. Math. Soc.}, 28(5):465--470, 1996.

\bibitem{brouwer89}
A.~E. Brouwer, A.~V. Ivanov, and M.~H. Klin.
\newblock Some new strongly regular graphs.
\newblock {\em Combinatorica}, 9(4):339--344, 1989.

\bibitem{burns23}
J.~M. Burns and G. Pfeiffer.
\newblock Maximal order {A}belian subgroups of {C}oxeter groups.
\newblock {\em Glasgow Mathematical Journal}, 65(1):114--120, 2023.



\bibitem{cao19}
P. Cao and F. Li.
\newblock  Uniform column sign-coherence and the existence of maximal green sequences.
\newblock Journal of Algebraic Combinatorics. 50(4):403–417, 2019.

\bibitem{carter72}
R.~W. Carter.
\newblock Conjugacy classes in the {W}eyl group.
\newblock {\em Compositio Mathematica}, 25(1):1--59, 1972.

\bibitem{cerchiai10}
B.~L. Cerchiai and B. van Geemen.
\newblock From qubits to {E}7.
\newblock {\em Journal of Mathematical Physics}, 51(12):122203, 2010.

\bibitem{chen00}
Y.~Chen.
\newblock Left cells in the {W}eyl group of type ${E}_8$.
\newblock {\em Journal of Algebra}, 231(2):805--830, 2000.

\bibitem{chen98}
Y.~Chen and J. Shi.
\newblock Left cells in the {W}eyl group of type ${E}_7$.
\newblock {\em Communications in Algebra}, 26(11):3837--3852, 1998.

\bibitem{cheon15}
G. Cheon and J. Jung.
\newblock Some combinatorial applications of the $q$-{R}iordan matrix.
\newblock {\em Linear Algebra and its Applications}, 482:241--260, 2015.

\bibitem{coble1916}
A.~B. Coble.
\newblock Point sets and allied {C}remona groups, {I}{I}.
\newblock {\em Transactions of the American Mathematical Society},
  17(3):345--385, 1916.

\bibitem{cohen06}
A.~M. Cohen, D.~A.~H. Gijsbers, and D.~B. Wales.
\newblock A poset connected to {A}rtin monoids of simply laced type.
\newblock {\em Journal of Combinatorial Theory, Series A}, 113(8):1646--1666,
  2006.

 \bibitem{design07}
 C.~J. Colbourn and J.~H. Dinitz.
\newblock {Handbook of Combinatorial Designs, Second Edition.}
\newblock {(Discrete Mathematics and its Applications)}
\newblock {Chapman \& Hall/CRC}, 2006

\bibitem{colombo10}
E. Colombo, B. van Geemen, and E. Looijenga.
\newblock Del {P}ezzo moduli via root systems.
\newblock In {\em Algebra, Arithmetic, and Geometry: Volume I: In Honor of Yu.
  I. Manin}, pages 291--337. Springer, 2010.

\bibitem{diaz09}
R. Díaz and E. Pariguan.
\newblock On the {G}aussian $q$-distribution.
\newblock {\em Journal of Mathematical Analysis and Applications}, 358(1):1--9,
  2009.
  
\bibitem{Du}
J. Du.
\newblock IC bases and quantum linear groups.
\newblock {\em Algebraic groups and their generalizations: quantum and infinite-dimensional methods}, Proc. Sympos. Pure Math., 56, Part 2:135--148, 1994.


 
\bibitem{duff07}
M.~J. Duff and S. Ferrara.
\newblock ${E}_7$ and the tripartite entanglement of seven qubits.
\newblock {\em Physical Review D}, 76(2):025018, 2007.
 

\bibitem{fan97}
C.~K. Fan.
\newblock Structure of a {H}ecke algebra quotient.
\newblock {\em Journal of the American Mathematical Society}, 10(1):139--167,
  1997.

\bibitem{fitz}
E. Fitz.
\newblock Cliques of Orthogonal Bases in the $E_8$ Root System.
\newblock Honors Thesis, University of Colorado Boulder, 2025.
  

\bibitem{fomin07}
S. Fomin and A. Zelevinsky.
\newblock Cluster algebras IV: coefficients.
\newblock Compositio Mathematica. 143(1): 112–164, 2007.

\bibitem{green08}
R.~M. Green.
\newblock Representations of {L}ie algebras arising from polytopes.
\newblock {\em International Electronic Journal of Algebra}, 4(4):27--52, 2008.

\bibitem{green09}
R.~M. Green.
\newblock On the {M}arkov trace for {T}emperley--{L}ieb algebras of type
  ${E}_n$.
\newblock {\em Journal of Knot Theory and Its Ramifications}, 18(02):237--264,
  2009.

\bibitem{green23}
R.~M. Green.
\newblock Positivity properties for spherical functions of maximal {Y}oung
  subgroups.
\newblock {\em Annals of Combinatorics}, 28(3):889--908, 2024.

\bibitem{gl99}
R.~M. Green and J. Losonczy.
\newblock Canonical bases for Hecke algebra quotients.
\newblock {\em Mathematical Research Letters}, 6(2):213--222, 1999.

\bibitem{gx3}
R.~M. Green and T. Xu.
\newblock 2-roots for simply laced {W}eyl groups.
\newblock {\em Transformation Groups}, 30:699--740, 2025.

\bibitem{gx6}
R.~M. Green and T. Xu.
\newblock Orthogonal roots and generalized quantum {H}afnians.
\newblock In preparation.

\bibitem{kujawa24}
S.~D. Heard and J.~R. Kujawa.
\newblock {Positivity and Web Bases for Specht Modules of Hecke Algebras}.
\newblock {\em International Mathematics Research Notices}, 2024(4):2833--2872,
  04 2023.

\bibitem{humphreys90}
J.~E. Humphreys.
\newblock {\em Reflection groups and {C}oxeter groups}.
\newblock Number~29 in Cambridge Studies in Advanced Mathematics. Cambridge
  University Press, 1990.

\bibitem{hwang23}
B. Hwang, J. Jang, and J. Oh.
\newblock A combinatorial model for the transition matrix between the {S}pecht
  and web bases.
\newblock In {\em Forum of Mathematics, Sigma}, volume~11, page e82. Cambridge
  University Press, 2023.

\bibitem{im22}
M.~S. Im and J. Zhu.
\newblock Transitioning between tableaux and spider bases for {S}pecht modules.
\newblock {\em Algebras and Representation Theory}, pages 1--13, 2022.

\bibitem{irie21}
Y. Irie.
\newblock Combinatorial game distributions of {S}teiner systems.
\newblock {\em Electronic Journal of Combinatorics}, 28(4):P4.54, 2021.

\bibitem{kl79}
D. Kazhdan and G. Lusztig.
\newblock Representations of {C}oxeter groups and {H}ecke algebras.
\newblock {\em Inventiones Mathematicae}, 53(2):165--184, 1979.

\bibitem{kim24}
J. Kim.
\newblock An embedding of the skein action on set partitions into the skein
  action on matchings.
\newblock {\em Electronic Journal of Combinatorics}, 31(1):P1.17, 2024.


\bibitem{macdonald72}
I.~G. Macdonald.
\newblock Some irreducible representations of {W}eyl groups.
\newblock {\em Bulletin of the London Mathematical Society}, 4(2):148--150,
  1972.

\bibitem{mathon97}
R. Mathon and A.~P. Street.
\newblock Overlarge sets and partial geometries.
\newblock {\em Journal of Geometry}, 60(1-2):85--104, 1997.

\bibitem{newman42}
M.~H.~A. Newman.
\newblock On theories with a combinatorial definition of ``equivalence".
\newblock {\em Annals of Mathematics}, 43(2):223--243, 1942.

\bibitem{petrov08}
V. Petrov, N. Semenov, and K. Zainoulline.
\newblock ${J}$-invariant of linear algebraic groups.
\newblock {\em Annales Scientifiques de l'{\'E}cole Normale Sup{\'e}rieure},
  41(6):1023--1053, 2008.

\bibitem{rains13}
E.~M. Rains and M.~J. Vazirani.
\newblock Deformations of permutation representations of {C}oxeter groups.
\newblock {\em Journal of Algebraic Combinatorics}, 37(3):455--502, 2013.

\bibitem{reading02}
N. Reading.
\newblock Order dimension, strong {B}ruhat order and lattice properties for
  posets.
\newblock {\em Order}, 19:73--100, 2002.

\bibitem{reiner04}
V. Reiner, D. Stanton, and D. White.
\newblock The cyclic sieving phenomenon.
\newblock {\em Journal of Combinatorial Theory, Series A}, 108(1):17--50, 2004.

\bibitem{ren13}
Q. Ren, S.~V. Sam, G. Schrader, and B. Sturmfels.
\newblock The universal {K}ummer threefold.
\newblock {\em Experimental Mathematics}, 22(3):327--362, 2013.

\bibitem{rhoades17}
B. Rhoades.
\newblock A skein action of the symmetric group on noncrossing partitions.
\newblock {\em Journal of Algebraic Combinatorics}, 45:81--127, 2017.

\bibitem{russell19}
H.~M. Russell and J.~S. Tymoczko.
\newblock The transition matrix between the {S}pecht and web bases is unipotent
  with additional vanishing entries.
\newblock {\em International Mathematics Research Notices}, 2019(5):1479--1502,
  2019.
  
\bibitem{sagemath}
Sage Developers.
\newblock SageMath, the {S}age {M}athematics {S}oftware {S}ystem ({V}ersion 10.6). 2025.

\bibitem{saniga16}
M. Saniga.
\newblock Combinatorial intricacies of labeled {F}ano planes.
\newblock {\em Entropy}, 18(9):312, 2016.

\bibitem{schmidt24}
S. Schmidt.
\newblock Quantum isomorphic strongly regular graphs from the ${E}_8$ root
  system.
\newblock {\em Algebraic Combinatorics}, 7(2):515--528, 2024.

\bibitem{simion96}
R.~Simion and D.~Stanton.
\newblock Octabasic {L}aguerre polynomials and permutation statistics.
\newblock {\em Journal of Computational and Applied Mathematics},
  68(1):297--329, 1996.

\bibitem{stanley15}
R.~P. Stanley.
\newblock {\em Catalan numbers}.
\newblock Cambridge University Press, 2015.

\bibitem{FC}
J.~R. Stembridge.
\newblock On the fully commutative elements of {C}oxeter groups.
\newblock {\em J. Algebraic Combin.}, 5(4):353--385, 1996.

\bibitem{talamini10}
V. Talamini.
\newblock Flat bases of invariant polynomials and {$\hat{P}$}-matrices of
  ${E}_7$ and ${E}_8$.
\newblock {\em Journal of Mathematical Physics}, 51(2), 2010.

\bibitem{tomdieck97}
T. tom Dieck.
\newblock Bridges with pillars: a graphical calculus of knot algebra.
\newblock {\em Topology and its Applications}, 78(1-2):21--38, 1997.

\bibitem{waegell15}
M. Waegell and P.~K. Aravind.
\newblock Parity proofs of the {K}ochen--{S}pecker theorem based on the {L}ie
  algebra {E}8.
\newblock {\em Journal of Physics A: Mathematical and Theoretical},
  48(22):225301, 2015.

\bibitem{watson14}
M. Watson.
\newblock Bruhat order on fixed-point-free involutions in the symmetric group.
\newblock {\em Electronic Journal of Combinatorics}, 21(2):P2.20, 2014.







\end{thebibliography}
\end{document}